\journalname{...}
\date{ \phantom{b} \vspace{45mm}\phantom{e}}
\newcommand{\tnorm}{|\!|\!|}
\newcommand{\R}{\mathbb{R}}
\newcommand{\N}{\mathbb{N}}
\DeclareMathOperator{\Span}{span}
\newcommand{\Rn}[1][3]{\mathbb{R}^#1}
\newcommand{\abs}[1]{\lvert #1 \rvert}
\newcommand{\tr}[1]{\text{tr}(#1)}
\newcommand{\norm}[1]{\left\|#1\right\|}
\newcommand{\normcuad}[1]{\left\|#1\right\|^2}
\newcommand{\Gt}{\mathcal{G}_{T}} 
\newcommand{\Ght}{\mathcal{G}_{h,T}} 
\newcommand{\Gammat}[1][t]{\Gamma(#1)}
\newcommand{\Gammah}{\Gamma_h}
\newcommand{\Gammaht}[1][t]{\Gammah(#1)}
\newcommand{\Gammas}{{\Gamma^*}}
\newcommand{\Gammast}[1][t]{\Gamma^*(#1)}
\newcommand{\Gammatilden}{\widetilde{\Gamma}_h^k}
\newcommand{\Gammahj}[1][k]{\Gammah^{#1}}
\newcommand{\Gammasj}[1][k]{\Gamma_{*}^{#1}}
\newcommand{\Gammatildesn}{\widetilde{\Gamma}_{*}^k}
\newcommand{\Gammabar}{\Lambda}
\newcommand{\Gammabart}{\Lambda(\theta)}
\newcommand{\gradgbarra}{\nabla_{\Gammabar}}
\newcommand{\boundaryG}{\partial \Gamma_0}
\newcommand{\boundaryGt}[1][t]{\partial \Gamma(#1)}
\newcommand{\boundaryGht}[1][t]{\partial \Gammah(#1)}
\newcommand{\boundaryGhcero}{\partial \Gamma_{h,0}}
\newcommand{\boundaryGs}{\partial \Gammas}
\newcommand{\boundaryGscero}{\partial {\Gamma}_0^*}
\newcommand{\gradg}{\nabla_{\Gamma}}
\newcommand{\gradgh}{\nabla_{\Gammah}}
\newcommand{\gradght}{\nabla_{\Gammaht}}
\newcommand{\gradghs}{\nabla_{\Gammas}}
\newcommand{\gradghst}{\nabla_{\Gammat}}
\DeclareMathOperator{\Div}{div}
\newcommand{\Divg}{\Div_{\Gamma}}
\newcommand{\Divgh}{\Div_{\Gamma_h}}
\newcommand{\Divgs}{\Div_{\Gamma^*}}
\newcommand{\Divgbar}{\Div_{\Gammabar}}
\newcommand{\laplg}{\Delta_{\Gamma}}
\newcommand{\dermat}{\partial^{\bullet}}
\def\tensor{\otimes}
\newcommand{\dt}{\frac{d}{d t}}
\newcommand{\partdt}{\frac{\partial}{\partial t}}
\newcommand{\argu}[1][t]{(#1)}
\newcommand{\Otau}{\mathcal{O}(\bs\tau)}
\newcommand{\Lp}[1][2]{L^{#1}}
\newcommand{\Hk}[1][1]{H^{#1}}
\newcommand{\Wunoinf}{W^{1,\infty}}
\newcommand{\Linfomega}{L^\infty(\Omega)}
\newcommand{\Ldosgamma}{\Lp(\Gamma)}
\newcommand{\Ldosgammah}{\Lp(\Gammah)}
\newcommand{\Ldosgammas}{\Lp(\Gamma^*)}
\newcommand{\Hunogamma}{\Hk(\Gamma)}
\newcommand{\Hunogammas}{\Hk(\Gamma^*)}
\newcommand{\Hunogammalambda}{\Hk_{\lambda}(\Gamma)}
\newcommand{\Ldosgammast}[1][t]{\Lp(\Gamma^*(#1))}
\newcommand{\Hunogammat}[1][t]{\Hk(\Gamma(#1))}
\newcommand{\Hunogammast}[1][t]{\Hk(\Gamma^*(#1))}
\newcommand{\Lpgammat}[1][t]{\Lp(\Gamma(#1))}
\newcommand{\normHunog}[1]{\norm{#1}_{\Hk (\Gamma)}}
\newcommand{\normHunogs}[1]{\norm{#1}_{\Hk (\Gammas)}}
\newcommand{\normHunogt}[2][t]{\norm{#2}_{\Hk(\Gammat[#1])}}
\newcommand{\normHunogst}[1]{\norm{#1}_{\Hk (\Gammast)}}
\newcommand{\normHunogtvec}[2][t]{\norm{#2}_{\Hk(\Gammat[#1])}}
\newcommand{\normHunogstvec}[1]{\norm{#1}_{\Hk (\Gammast)}}
\newcommand{\normLinfomega}[1]{\norm{#1}_{\Linfomega}}
\newcommand{\normLdosg}[1]{\norm{#1}_{\Lp (\Gamma)}}
\newcommand{\normLdosgs}[1]{\norm{#1}_{\Lp (\Gammas)}}
\newcommand{\normLdosgh}[1]{\norm{#1}_{\Lp (\Gammah)}}
\newcommand{\normWunogt}[1]{\norm{#1}_{W^{1,\infty} (\Gammat)}}
\newcommand{\normWunomega}[1]{\norm{#1}_{W^{1,\infty} (\Omega)}}
\newcommand{\normWunost}[1]{\norm{#1}_{W^{1,\infty} (\Gammast)}}
\newcommand{\normLinfs}[1]{\norm{#1}_{L^{\infty} (\Gammas)}}
\newcommand{\intgamma}{\int_{\Gamma}}
\newcommand{\intgammat}{\int_{\Gammat}}
\newcommand{\intgammas}{\int_{\Gamma^*}}
\newcommand{\intgammah}{\int_{\Gammah}}
\newcommand{\intgammabar}{\int_{\Gammabar}}
\newcommand{\intbgamma}{\int_{\boundaryG}}
\newcommand{\intbgammas}{\int_{\boundaryGscero}}
\newcommand{\X}{\bs{X}}
\newcommand{\Xcero}{\X_0}
\newcommand{\invX}{(\X)^{-1}}
\newcommand{\invXt}{(\X\argu)^{-1}}
\newcommand{\Xb}{\bs{Y}}
\newcommand{\Xh}{\X_h}
\newcommand{\Xhcero}{\X_{h,0}}
\newcommand{\invXh}{(\Xh)^{-1}}
\newcommand{\Xhj}[1][k]{\Xh^{#1}}
\newcommand{\Xs}{{\bs{X}^*}}
\newcommand{\Xsj}[1][k]{\X_{*}^{#1}}
\newcommand{\invXs}{(\Xs)^{-1}}
\newcommand{\Xscero}{\Xs_0}
\newcommand{\Xhtilden}[1][k]{\widetilde{\X}_h^{#1}}
\newcommand{\Xstilden}[1][k]{\widetilde{\X}_{*}^{#1}}
\newcommand{\funs}[1]{#1^*_h}
\newcommand{\funss}[1]{#1^*}
\newcommand{\liftg}[1]{(#1)^{\Gamma}}
\newcommand{\liftgamma}[1]{(#1)^{\Gamma}}
\newcommand{\liftgbarra}[1]{(#1)^{\Gammabart}}
\newcommand{\liftlambda}[1]{(#1)^{\Gammabar}}
\newcommand{\lifts}[1]{(#1)^{\Gamma^*}}
\newcommand{\lifth}[1]{(#1)^{\Gammah}}
\newcommand{\liftb}[1]{(#1)^{\boundaryGhcero}}
\newcommand{\liftgammatilden}[1]{(#1)^{\Gammatilden}}
\newcommand{\liftgammatildesn}[1]{(#1)^{\Gammatildesn}}
\newcommand{\liftgammasj}[1]{(#1)^{\Gammasj}}
\newcommand{\bs}[1]{\boldsymbol{#1}}
\newcommand{\vel}{\bs{v}}
\newcommand{\testv}{\bs{\varphi}}
\newcommand{\testnu}{\bs{\psi}}
\newcommand{\testH}{\eta}
\newcommand{\idg}{\bs{x}}
\newcommand{\idh}{\bs{x}_h}
\newcommand{\ids}{\bs{x}^*}
\newcommand{\Hm}{\kappa}
\newcommand{\Hmcero}{\Hm_0}
\newcommand{\Hh}{\Hm_h}
\newcommand{\Hs}{\Hm^*}
\newcommand{\n}{\bs{\nu}}
\newcommand{\ncero}{\n_0}
\newcommand{\ns}{\bs{\nu}^*}
\newcommand{\nh}{\bs{\nu}_h}
\newcommand{\nhcero}{\n_{h,0}}
\newcommand{\conor}{\bs{\mu}}
\newcommand{\conors}{\conor^*}
\newcommand{\conorh}{\conor_h}
\newcommand{\vs}{\vel^*}
\newcommand{\vh}{\vel_h}
\newcommand{\btau}{\bs\tau}
\newcommand{\btauh}{\btau_h}
\newcommand{\btaus}{\btau^*}
\newcommand{\btauhunit}{\hat{\btau}_h}
\newcommand{\curvbdry}{\bs\kappa_{\partial}}
\newcommand{\curvbdrys}{\bs\kappa^*_{\partial}}
\newcommand{\curvbdryh}{\bs\kappa_{\partial,h}}
\newcommand{\Sparamunid}{\mathbb{S}_{p,\ell,h}}
\newcommand{\Sparam}{\mathbb{S}_{h}}
\newcommand{\Sparamcero}{\mathbb{S}_{h,0}}
\newcommand{\Sh}{\mathcal{S}_{h}(t)}
\newcommand{\Shcero}{\mathcal{S}_{h,0}(t)}
\newcommand{\Shs}{\mathcal{S}_{h}^*(t)}
\newcommand{\Shscero}{\mathcal{S}^*_{h,0}(t)}
\newcommand{\Otauh}{\mathcal{O}_h(\btauh)}
\newcommand{\Otauhb}{\mathcal{O}_h^{\partial}(\btauh)}
\newcommand{\Otauhs}{\mathcal{O}_h^*(\btauh)}
\newcommand{\Otauhsb}{\mathcal{O}_h^{*,\partial}(\btauh)}
\newcommand{\Ptau}{\mathcal{P}(\btau)}
\newcommand{\basisfo}[1][j]{B_{#1}}
\newcommand{\basisfg}[1][j]{b_{#1}}
\newcommand{\testnuh}{\testnu_h}
\newcommand{\testnuhs}{\testnu_h^*}
\newcommand{\testHh}{\testH_h}
\newcommand{\testHhs}{\testH_h^*}
\newcommand{\Q}{\mathcal{Q}}
\newcommand{\Qs}[1][\Gammas]{\mathcal{Q}_{#1}}
\newcommand{\Rs}[1][\Gammas]{\mathcal{R}_{#1}}
\newcommand{\Rscero}[1][\Gammas]{\mathcal{R}_{#1}^0}
\newcommand{\Ps}[1][h]{\mathcal{P}_{#1}}
\newcommand{\deltat}{\Delta t}
\newcommand{\idhn}[1][k]{\bs{x}_h^{#1}} 
\newcommand{\idsn}[1][k]{\bs{x}_{*}^{#1}}
\newcommand{\Hhn}[1][k]{\Hm_h^{#1}}
\newcommand{\Hsn}[1][k]{\Hm_{*}^{#1}}
\newcommand{\nhn}[1][k]{\bs{\nu}_h^{#1}}
\newcommand{\nsn}[1][k]{\bs{\nu}_{*}^{#1}}
\newcommand{\vhn}[1][k]{\vh^{#1}}
\newcommand{\vsn}[1][k]{\vel_{*}^{#1}}
\newcommand{\extraid}{\widetilde{\idg}_h^k}
\newcommand{\extrakappa}{\widetilde{\Hm}_h^k}
\newcommand{\extranu}{\widetilde{\bs{\nu}}_h^k}
\newcommand{\extraA}{\widetilde{A}_h^k}
\newcommand{\extrakappas}{\widetilde{\Hm}_{*}^k}
\newcommand{\extranus}{\widetilde{\bs{\nu}}_{*}^k}
\newcommand{\extraAs}{\widetilde{A}_{*}^k}
\newcommand{\phin}[1][k]{\testv^k}
\newcommand{\psin}[1][k]{\testnu^k}
\newcommand{\wn}[1][k]{w^k}
\newcommand{\xdotn}[1][k]{\dot{\idg}_h^{#1}}
\newcommand{\hdotn}[1][k]{\dot{\Hm}_h^{#1}}
\newcommand{\nudotn}[1][k]{\dot{\n}_h^{#1}}
\newcommand{\xsdotn}[1][k]{\dot{\idg}_h^{#1}}
\newcommand{\hsdotn}[1][k]{\dot{\Hm}_h^{#1}}
\newcommand{\nusdotn}[1][k]{\dot{\n}_h^{#1}}
\newcommand{\EX}{E_{\bs{X}}}
\newcommand{\ex}{e_{\bs{x}}}
\newcommand{\ederx}{\dermat \ex}
\newcommand{\ev}{e_{\vel}}
\newcommand{\en}{e_{\n}}
\newcommand{\eH}{e_\Hm}
\newcommand{\eR}{e_{\mathcal{R}}}
\newcommand{\dn}{d_{\n}}
\newcommand{\dv}{d_{\vel}}
\newcommand{\dH}{d_{\Hm}}
\newcommand{\dnk}{d_{\n}^k}
\newcommand{\dvk}{d_{\vel}^k}
\newcommand{\dHk}{d_{\Hm}}
\newcommand{\dxk}{d_{\idg}^k}
\newcommand{\exk}[1][k]{e_{\bs{x}}^{#1}}
\newcommand{\evk}[1][k]{e_{\bs{v}}^{#1}}
\newcommand{\enk}[1][k]{e_{\n}^{#1}}
\newcommand{\eHk}[1][k]{e_{\Hm}^{#1}}
\newcommand{\exdotn}[1][k]{\dot{e}_{\bs{x}}^{#1}}
\newcommand{\ehdotn}[1][k]{\dot{e}_{\Hm}^{#1}}
\newcommand{\enudotn}[1][k]{\dot{e}_{\n}^{#1}}
\begin{document}

\title{A convergent algorithm for mean curvature flow of surfaces with Dirichlet boundary conditions}

\titlerunning{A convergent algorithm for mean curvature flow of surfaces Dirichlet b.c.}        

\author{Bárbara S.\ Ivaniszyn \and Pedro Morin \and M.\ Sebastián Pauletti}

\authorrunning{} 

\institute{Universidad Nacional del Litoral and CONICET, 
Departamento de Matem\'atica, 
Facultad de Ingenier\'{\i}a Qu\'{\i}mica, Santiago del Estero 2829, S3000AOM Santa Fe, Argentina. 
Emails: 
\href{mailto:pmorin@fiq.unl.edu.ar}{pmorin@fiq.unl.edu.ar},
\href{mailto:ivaniszyn@gmail.com}{bivaniszyn@fiq.unl.edu.ar},
\href{mailto:spauletti@fiq.unl.edu.ar}{spauletti@fiq.unl.edu.ar}
}

\date{\today}

\maketitle

\begin{abstract}

    We establish convergence results for a spatial semidiscretization of Mean Curvature Flow (MCF) for surfaces with fixed boundaries. 
    Our analysis is based on Huisken’s evolution equations for the mean curvature and the normal vector, enabling precise control of discretization errors and yielding optimal error estimates for discrete spaces with piecewise polynomials of degree $p \geq 2$.
    Building on techniques recently developed by Kovács, Li, Lubich, and collaborators for closed surfaces, we extend these ideas to surfaces with boundaries by formulating appropriate boundary conditions for both the mean curvature and the normal vector. These boundary treatments are essential for proving convergence.
    The core of our analysis involves a classical error splitting strategy using auxiliary discrete functions that approximate the surface geometry, the mean curvature, and the normal vector. We estimate two types of errors for each variable to rigorously assess both stability and consistency.
    To effectively handle boundary conditions for the normal vector, we introduce a nonlinear Ritz projection into the analysis. As a result, we derive optimal $H^1$ error estimates for the surface position, velocity, mean curvature, and normal vector. Our theoretical findings are corroborated by numerical experiments.

	\keywords{mean curvature flow \and surfaces with boundary \and geometric evolution equations \and boundary conditions \and isogeometric analysis \and linearly implicit backward difference formula \and stability \and error estimates}
\end{abstract}

\tableofcontents 

\section{Introduction}
The mean curvature flow is a geometric evolution which has received much attention during the last decades; see~\cite{KLL2019} and the references therein. In this article we are interested in the mean curvature flow with a fixed boundary.

From the point of view of analysis, Stone~\cite{Sto1996} studied the mean curvature flow of parametrized surfaces with a fixed boundary, finding conditions for the existence of smooth solutions over finite time intervals. 
From the computational point of view, various methods have been proposed to approximate the solution of this problem in the case of parametrized surfaces with and without boundaries. In 1990, Dzuik presented the first algorithm based on a weak formulation from the following equation:
\begin{equation*}
	\dermat \idg =\laplg \idg,\quad \text{on }\Gammat,
\end{equation*}
where $\idg$ is the identity function on $\Gammat$. He then presents the Evolving Surface Finite Element Method (ESFEM)~\cite{Dzi1991} along with a numerical method for parametrized surfaces, with or without boundaries, where the movement of the nodes of the finite element mesh determines the approximate evolving surface. Later, Barrett, Garcke, and Nürnberg proposed another method~\cite{BGN2007,BGN2008a}, also using finite elements. It is also valid for the case of surfaces with fixed boundaries, as explained in~\cite{BGN2020275}. However, their scheme considers a different weak formulation than Dziuk's, as it is based on the following equations that relate the velocity $\vel$, the normal vector $\n$, the mean curvature $\Hm$, and the identity function:
\begin{align*}
	\vel\cdot \n = \Hm,\quad\ \Hm\n=\laplg\idg.
\end{align*}
In 2016, an adaptation of the semi-discrete scheme proposed by Dziuk in the context of Isogeometric Analysis (IgA)~\cite{BDQ2016} was presented, where the authors report good numerical results although without error estimates.

Regarding the analysis and the error estimation, significant progress has been made in recent years for the case of closed parametrized surfaces, that is, compact and without boundaries. The first convergence proof was presented in 2019 by Kov\'acs, Li, and Lubich in~\cite{KLL2019}. In this scheme, they include the evolution equations of the normal vector and the mean curvature (presented in~\cite{Huisken1984}) into the set of equations from where they obtain their weak formulation:
\begin{align*}
	\vel = \Hm\n, \quad \dermat \Hm=\laplg \Hm + |\gradg\n|^2\Hm,\quad \dermat \n=\laplg \n + |\gradg\n|^2\n.
\end{align*}
With this modification, although they increase the number of equations in the scheme, they manage to obtain the first error estimates for a discrete method for mean curvature flow of closed parametric surfaces. To this date, it is the only numerical scheme for which estimates have been obtained considering polynomial degree $p\geq 2$.

It is worth noting that in 2021, Buyang Li~\cite{Li2021} presented the first convergence proof for Dziuk's semi-discrete scheme using a matrix-vector formulation and some techniques previously used in~\cite{KLLP2017,KLL2019} under the restriction of using polynomial finite elements of degree $p\geq 6$. Later, in 2023, the same author and Genming Bai~\cite{BL2024} proposed a new approach to analyze not only the method proposed by Dziuk but also other methods such as those proposed in~\cite{BGN2007,BGN2008a,BGN2020275}, for which the error analysis is still an open problem. In this new approach, they perform the error analysis for the fully discrete scheme considering the projected distance between the calculated numerical surface and the exact surface instead of analyzing the error considering the distance between the two surfaces as done in~\cite{Li2021} or in~\cite{KLL2019}. With this new approach, they manage to improve the order of convergence by relaxing the polynomial degree restriction to $p\geq 3$. 

Previous studies on error analysis have focused on problems in closed surfaces.
As far as we know, no error estimates have yet been obtained for the numerical schemes proposed on surfaces with boundaries.
Following the ideas of~\cite{KLL2019}, in this article we present a convergent numerical method for mean curvature flow of parametric surfaces with boundaries. In particular, we consider the case where the boundary remains fixed during the evolution, that is, a Dirichlet-type boundary condition. We work in the framework of isogeometric analysis, but our analysis is valid also for the finite element method, because the required global regularity of the spline spaces can be taken as $C^0$. 

The outline of the paper is the following.
In Section~\ref{sec: MCF}, we propose a continuous weak formulation for the mean curvature flow problem that will be the basis of our method. 
We use Huisken's equations~\cite{Huisken1984} for the mean curvature and the normal, and derive suitable boundary conditions for the normal vector and its derivative in the conormal direction.

In Section~\ref{sec: spline approximation estimates}, we introduce the different discrete surfaces that we will use for the numerical method and its analysis, as well as the evolving spline spaces defined on these surfaces. Additionally, we study the geometric consistency errors, which are the errors made when approximating the continuous surface by a theoretical discrete one, as well as the continuous solutions by certain discrete approximations. 
We define a linear and a nonlinear Ritz projection operator. The latter is needed to appropriately handle the new boundary conditions and obtain the necessary estimates for the consistency of the method.

In Section~\ref{sec: semidisc espacial}, we present the semi-discrete scheme using tensor product spline spaces of polynomial degree $p\ge 2$ and study its consistency and stability. We obtain optimal order error estimates in the $\Hk$ norm for the errors in position, velocity, mean curvature, and normal vector under sufficient regularity conditions in the solution.

Finally, we furthermore detail relevant aspects regarding the implementation of the method and present numerical experiments that reflect its behavior.

\paragraph{Main Challenges.}
Here are the main challenges that we faced during the development of this article.

\begin{itemize}
	\item Boundary condition for the normal vector: the consideration of the evolution equation of the normal vector in the weak formulation of the problem, made it necessary to determine an adequate boundary condition for the normal vector. More precisely, to determine some relationship regarding the behavior, at the boundary of the surface, of the derivative of the normal vector with the conormal direction.
	\item Geometric errors: We do not use the oriented distance function to define the lift operators used to compare functions and bilinear forms defined on the surface $\Gammat$ and on a discrete surface $\Gammast$. This prevented us to rely on the usual results from the literature of evolutionary problems on surfaces~\cite{DE2007,K2018} regarding geometric perturbation errors. We thus had to extend results such as those used for the stationary Laplace-Beltrami problem in~\cite{BDN20201}.
	\item Ritz projection for the normal vector: one of the main challenges in proving the consistency of the semi-discrete scheme was the definition of an appropriate Ritz projection for the normal vector. The challenge was originated from the boundary conditions for the normal vector and the structure of the semi-discretization.
	\item Approximation properties of the nonlinear Ritz projection: 
  Due to our definition of the discrete ansatz space for the normal vector, which turns out to be non-conforming, the usual interpolation operators cannot be used to obtain an optimal order approximation error in $\Hk$.

	\item Stability analysis in the spatial semi-discretization: due to the boundary condition for the normal vector, the main difficulty in obtaining stability for the semi-discrete scheme was the appearance of a critical term in the energy estimate for the error in this variable. The appearance of the $L^2$ norm in the boundary of the error for the normal vector and the decision of how to handle it, to control it, was an important point in this article.
	
\end{itemize}

\paragraph{Main Contributions.}
These are the main contributions, sparked by the aforementioned challenges, that we have developed.
\begin{itemize}
\item In Section~\ref{sec: MCF}, Lemma~\ref{Lem: BC for nu}, we present an adequate boundary condition for the derivative in the conormal direction of the normal vector.
\item In Section~\ref{sec: weak formulation}, we present a weak formulation for the problem with Dirichlet boundary. 
\item In Section~\ref{sec: geometric error}, we present results corresponding to the geometric errors, without resorting to the oriented distance function to obtain optimal order estimates.
\item In Section~\ref{sec: Linear Ritz projection}, we define a linear Ritz projection with zero trace values and present in Proposition~\ref{prop: error estimate linear Ru} its approximation properties, as well as those of its material derivative in Proposition~\ref{prop: error estimate dermat linear Ru}.
\item In Section~\ref{sec: Nonlinear Ritz projection}, we define a nonlinear Ritz projection with optimal approximation properties in $\Hk$ norm, which are presented in Propositions~\ref{prop: error estimate non-linear Ru} and~\ref{prop: error estimate dermat Ritz proj on boundary}. 
\item In Section~\ref{sec: consistency}, we obtain optimal estimates for the consistency errors of the spatial semi-discretization proposed in Section~\ref{sec: spatial semi disc}. These are presented in Proposition~\ref{prop: consistency estimates}.
\item In Section~\ref{sec: error equations}, we present the error equations used as well as a set of key auxiliary results to compare and estimate errors between discrete surfaces and obtain stability estimates.
\item In Section~\ref{sec: stability}, in Lemmas~\ref{Lem: energy estimate ex ev} to~\ref{Lem: energy estimate en}, we obtain the energy estimates necessary to establish the stability of the semi-discrete scheme in Proposition~\ref{prop: stability}.
\item In Section~\ref{sec: convergence}, we present the main Theorem of this article where convergence for the spatial semi-discretization is established.
\end{itemize}

\subsection{Notation for evolving surfaces}
We follow the usual notation for surfaces and geometric partial differential equations from~\cite{BGN2020275}.
We consider surfaces $\Gammat$ for which there exists a $C^k$-map $\X:\Omega\times [0,T] \rightarrow \mathbb{R}^3$, such that $\X \argu := \X(\cdot,t)$ is a diffeomorphism for each $t\in [0,T]$ and
$$\Gammat=\{X(P,t): P\in \Omega\},\;\;\;\;\boundaryGt=\{X(P,t):P\in\partial \Omega\},$$
where $\Omega\subset \Rn[2]$ is a bounded domain with boundary $\partial\Omega$. 
We denote the resulting $C^k$-evolving surface as $\Gt=\underset{t\in [0,T]}{\bigcup} \Gammat\times \{t\}$.

The \textit{velocity} $\vel$ of $\Gammat$ is defined on $\Gt$ by 	
\begin{equation}\label{def: vel}
		\vel(\X(P,t),t)=\partial_t \X(P,t),\;\;\; \forall (P,t)\in \Omega\times[0,T] .
		\end{equation}

Given a smooth evolving surface $\Gt$, given by a global $C^1$ parametrization $\X:\Omega\times [0,T] \rightarrow \mathbb{R}^3$, and a smooth function $f$ defined on $\Gt$, the material derivative of $f$ is defined for $x=\X(P,t)\in \Gammat$, with $P\in\Omega$ and $0\leq t\leq T$ as
\begin{equation}
(\dermat f )(\X(P,t),t)=\partial_t f (\X(P,t),t). \label{def: dermat}
\end{equation}

When no confusion arises, we denote by $\Gamma$ one surface $\Gammat$ for a given $t$, omitting the variable $t$. 
If $\Gamma$ is a piecewise $C^1$ surface, we let $\gradg f$ ($\gradg \bs{f}$) be the tangential gradient of a scalar (vector-valued) function $f$ ($\bs{f}$), with $\bs{e}_i^T\gradg \bs{f}=\gradg (\bs f \cdot \bs{e}_i)^T$ for $i=1,2,3$; i.e., each row of $\gradg \bs{f}$ is the transpose of the surface gradient of each component $\bs{f}\cdot e_i$ of $\bs{f}$. We denote by $\Divg \bs{f}$ the surface divergence of a vector-valued function $\bs{f}$, and by $\Delta_{\Gamma} f=\Divg \gradg f$ the Laplace-Beltrami operator applied to a scalar function $f$; see \cite{BDN20201} for these notions.

We choose a smooth unit normal field $\n\argu:\Gt\rightarrow\mathbb{R}^3$ such that $\n(x,t)$ is a unit normal to $\Gammat$ at $x\in\Gammat$ for all $t\in [0,T]$. 
For $x\in\Gamma = \Gammat$, the map $A(x):=\gradg\n(x)\in\mathbb{R}^{3\times 3}$ is the Weingarten map at $x$; and we let $|A|$ denote the Frobenius norm of $A$. 
The matrix $A(x)$ is symmetric, one of its eigenvalues is zero (because $A(x)\n(x)=0$) and the others $\kappa_1(x)$ and $\kappa_2(x)$ are called the principal curvatures of $\Gamma$ at $x$, which correspond to orthogonal eigenvectors of $A(x)$ in the tangent space of $\Gamma$ at $x$. The mean curvature $\Hm$ of $\Gamma$ at $x$ is defined to be the trace of $A(x)$, i.e.,
\begin{equation*}
    \Hm(x):=\text{tr}(A(x))=\kappa_1(x)+\kappa_2(x).
\end{equation*}

\textbf{Transport formulae.} Let $\Gt$ be a $C^2$-evolving surface, let $\Omega_{\Sigma}\subseteq\Omega$ be a subdomain of $\Omega$ with a Lipschitz boundary, and let $\Sigma(t)=\X(\Omega_{\Sigma},t)$, then~\cite[Sec. 9.2, equation (9.6)]{ER2020}
\begin{equation}\label{eq: teo transp}
		\frac{d}{d t}\int_{\Sigma(t)} f = \int_{\Sigma(t)} \dermat f + f\,\textnormal{div}_{\Gammat}\, \bs{v}.
\end{equation}
If $f$ and $g$ are functions such that all the following quantities exist, we have~\cite[Sec. 9.2, equation (9.7) with $\mathcal{A}_{\Gamma}=I_3$]{ER2020}, 
\begin{equation}\label{eq: teo transp-grad-grad}
\begin{split}
	\frac{d}{d t}\int_{\Sigma(t)}\nabla_{\Gammat} f\cdot \nabla_{\Gammat} g= \int_{\Sigma(t)}&{} \nabla_{\Gammat} f\cdot \nabla_{\Gammat} \dermat f 
 +\nabla_{\Gammat} \dermat g\cdot \nabla_{\Gammat}g \\
 &+  \nabla_{\Gammat} f\cdot \mathcal{B}(\bs{v})\nabla_{\Gammat} g,
 \end{split}
\end{equation}
where $\mathcal{B}(\bs{v})=\textnormal{div}_{\Gammat}\bs{v}\,I_3\, -(\gradg \bs{v}+(\gradg \bs{v})^T)$.

\paragraph{Pullback, push-forward and transport functions.} 
Throughout this article, functions defined on the parametric domain $\Omega$ will be represented by capital letters, whereas lower case letters are reserved for functions defined on the surface $\Gamma$.
Given $f:\Gammat\rightarrow\mathbb{R}^m$, we define its \emph{pull-back} to the parametric domain as $F=f\circ\X\argu$. 
And given $F:\Omega\times[0,T]\rightarrow\mathbb{R}^m$, its \emph{push-forward} to the surface $\Gammat$ will be $f=F\circ\invXt$.  If $\Gammat$ is $C^{1,\alpha}$ for $0<\alpha\leq 1$, then we have the following chain rule \cite[Section 2.2]{BDN20201}
\begin{equation}\label{eq: chain rule Omega-Gamma}
    (\gradg f)\circ \X\argu=\nabla \X\argu G\argu^{-1}\nabla F,
\end{equation}
where $G_\Gamma(t):=(\nabla \X\argu)^T\nabla \X\argu$ is the time-dependent first fundamental form of $\Gammat$.

If the map $\X$ is sufficiently smooth in the sense that $\X\argu\in W^{r,\infty}(\Omega)$ and $\invXt\in W^{r,\infty}(\Gammat)$ for some $r\in\mathbb{N}$ and all $t\in[0,T]$, we have the following equivalence of norms between a function $f$ and its pullback $F$:
\begin{equation*}
    C^{-1}\norm{f\argu}_{W^{r,s}(\Gammat)}\leq\norm{F\argu}_{W^{r,s}(\Omega)}\leq C\norm{f\argu}_{W^{r,s}(\Gammat)},
\end{equation*}
for some constant $C=C(\X)$ and all $1\leq s\leq\infty$.

For a time independent function $F$ defined in $\Omega$, its push-forward to the surface is called \textbf{transport} and satisfies the following property
\begin{equation}\label{eq: transport property}
   \dermat f = \partial_t (f\circ \X)\circ\invXt= (\partial_t F)\circ\invXt\equiv 0.
\end{equation}

\subsection{Discrete setting: B-spline based Isogeometric Analysis}
\label{sec: splines}

Without loss of generality we assume here that the reference domain $\Omega$ is the unit square in $\R^2$, which we partition uniformly into $N^2$ squares of side length $h=1/N$, for $N\in\N$. We call this bidimensional mesh $\mathcal{M}_h$.

We fix $p\in\N$, $p\ge 2$ and given a partition of $[0,1]$ into $N$ subintervals $\{I_j\}^N_{j=1}$ of length $h=1/N$, we define the univariate spline $\Sparamunid$ of degree $p$ and smoothness $\ell\in\N_0$ by
$$\Sparamunid=\{ s \in C^{\ell}[0,1]:\, s|_{I_j}\in \mathbb{P}^p,\,\forall j=1,\dots,N\},$$
 where $\mathbb{P}^p$ is the space of polynomials of degree at most $p$. The corresponding tensor-product spline space on the parametric domain $\Omega$ of degree $ p$ and smoothness $\ell$ will be $\Sparam=\Sparamunid\otimes\Sparamunid$,
 which is spanned by tensor product B-splines $\basisfo(x,y)=B_{j_1,p}(x)B_{j_2,p}(y)$, where $\{B_{j,p}\}_{j=1}^{N_p}$ is the canonical B-spline basis of the univariate spline space.
 We present our results for two-dimensional surfaces in $\R^3$, but they naturally extend to the simpler case of curves in $\R^2$.

\medskip

 \textbf{Polynomial degree and spline global regularity.}
 From now on, we omit the subscripts $p$ and $\ell$ in the spaces because we consider them fixed. Our only assumption is $p \ge 2$ and $\ell \ge 0$. 
 The assumption $p\ge 2$ is necessary to cope with some inverse estimates in the proof of stability in Section \ref{sec: stability}.
 Furthermore, the canonical B-spline basis of $\Sparam$ will be denoted by $\{\basisfo\}_{j \in J}$, with $J$ the set of indices.

In \cite[Section 12.3]{Sch2007} a projection operator 
onto $\Sparam$ is defined as follows:
\begin{equation}\label{def: projector}
\Q: L^1 (\Omega)\rightarrow \Sparam \quad \text{given by}\quad
\Q (W)=\sum_{j\in J}\lambda_{j}(W)\basisfo,
\end{equation}
where $\lambda_{j}$ are linear functionals that define a dual basis for the B-splines basis, i.e., $\lambda_{j}(\basisfo[i])=\delta_{ij}$~\cite[Theorem 4.41]{Sch2007}. This operator is called \textit{quasi-interpolant} and has the following approximation properties.

\begin{lemma}\label{lem: globalestimatesprojector}
Let $\Q: L^1 (\Omega)\rightarrow \Sparam$ be the projector operator defined as in~\eqref{def: projector}. 
Then, if $1\le q\le \infty$, $T>0$ and $U:\Omega\times[0,T]\to \R$, satisfies $U\argu,\partial_t U\argu \in W^{l,q}(\Omega)$ for all $0\le t \le T$, we have that, for  $0\leq k \leq l \leq p+1$, if $k\le \ell+1$, 
\begin{align}\label{eq: globalapproximation}
\|U\argu-\Q U\argu\|_{W^{k,q}(\Omega)}
	&\leq C h^{l-k}\|U\argu\|_{W^{l,q}(\Omega)},
\\
\label{eq: globalapproximation dt}
\left\|\partial_t U\argu-\partial_t( \Q U\argu)\right\|_{W^{k,q}(\Omega)}
&\leq C h^{l-k}\left\|\partial_t U\argu\right\|_{W^{l,q}(\Omega)}
\end{align}
 for a constant $C$, which may depend on $k$, $l$, $q$ and $p$, but is otherwise independent of $h$ and $U$.
\end{lemma}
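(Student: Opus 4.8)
The plan is to reduce the estimate on the surface to the corresponding classical estimate for the quasi-interpolant on the parametric domain, for which \cite[Section 12.3]{Sch2007} provides exactly what we need. The point is that $\Q$ acts on functions defined on $\Omega$, so there is really nothing geometric to prove; the only work is in invoking the right version of the Schumaker estimates and tracking the hypothesis $k \le \ell+1$.

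First I would recall the local estimate from \cite[Theorem 12.8]{Sch2007} (or its analogue): for a single element $Q$ of the mesh $\mathcal{M}_h$, one has $\|W - \Q W\|_{W^{k,q}(Q)} \le C h_Q^{l-k} |W|_{W^{l,q}(\widetilde{Q})}$, where $\widetilde Q$ is the support extension of $Q$ (the union of supports of the B-splines that are nonzero on $Q$), valid for $0 \le k \le l \le p+1$ with $k \le \ell+1$ so that $W^{k,q} \supseteq \Sparam$ makes the left-hand side meaningful. The condition $k \le \ell+1$ is precisely what guarantees that $\Sparam \subset W^{k,q}(\Omega)$, so that differentiating $\Q W$ up to order $k$ is legitimate across element interfaces. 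Since the mesh is uniform, $h_Q = h$ for every $Q$, and each point of $\Omega$ lies in the support extension of only a bounded number (depending on $p$) of elements, so summing the $q$-th powers over all $Q \in \mathcal{M}_h$ and taking the $q$-th root yields the global estimate $\|W - \Q W\|_{W^{k,q}(\Omega)} \le C h^{l-k} |W|_{W^{l,q}(\Omega)} \le C h^{l-k}\|W\|_{W^{l,q}(\Omega)}$ (with the obvious modification, taking a maximum instead of a sum, when $q = \infty$). Applying this at each fixed $t \in [0,T]$ with $W = U(t)$ gives~\eqref{eq: globalapproximation}.

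For~\eqref{eq: globalapproximation dt} the key observation is that $\Q$ is a linear operator acting only in the spatial variables and is independent of $t$: the functionals $\lambda_j$ and the B-splines $\basisfo$ do not depend on $t$. Hence $\partial_t(\Q U(t)) = \sum_{j \in J} \lambda_j(\partial_t U(t)) \basisfo = \Q(\partial_t U(t))$, i.e. $\partial_t$ commutes with $\Q$. (This uses that $U$ and $\partial_t U$ are regular enough for $\partial_t$ to pass through the bounded functionals $\lambda_j$, which is ensured by the hypothesis $U(t), \partial_t U(t) \in W^{l,q}(\Omega)$ together with the continuity of $t \mapsto U(t)$ in $W^{l,q}$.) Therefore $\partial_t U(t) - \partial_t(\Q U(t)) = \partial_t U(t) - \Q(\partial_t U(t))$, and applying~\eqref{eq: globalapproximation} with $\partial_t U(t)$ in place of $U(t)$ gives the claimed bound.

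I do not expect a genuine obstacle here: the result is essentially a restatement of the standard spline approximation theory of \cite[Section 12.3]{Sch2007} in a form uniform in the parameter $t$. The only point requiring a little care is the commutation $\partial_t \Q = \Q \partial_t$ and the justification that the constant $C$ can be taken independent of $t$ — both follow from the $t$-independence of the construction of $\Q$ and the uniformity of the mesh, so the constant depends only on $k$, $l$, $q$, $p$. If one wanted to be fully rigorous about the commutation one would write $\Q U(t)$ as a finite sum and differentiate term by term, using that each $\lambda_j$ is a bounded linear functional on $W^{l,q}(\Omega) \hookrightarrow L^1(\Omega)$.
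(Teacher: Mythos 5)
Your proposal is correct and follows essentially the same route as the paper: the spatial estimate~\eqref{eq: globalapproximation} is the standard (quasi-)interpolation bound (the paper simply cites it, while you re-derive it by summing local Schumaker estimates), and~\eqref{eq: globalapproximation dt} is obtained in both cases from the commutation $\partial_t \Q U = \Q(\partial_t U)$, which the paper justifies via the explicit local-integral form of the dual functionals $\lambda_j$ and you justify via their $t$-independence and boundedness — equivalent arguments.
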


\begin{proof}
Estimate~\eqref{eq: globalapproximation} is well known~\cite{VBSB2014}, and~\eqref{eq: globalapproximation dt} is a consequence of the fact that the functionals $\lambda_j$ are defined in \cite{Sch2007} as local integrals $\lambda_j(W)=\int_{I_j}W \Phi_j$, for certain $I_j\subset \Omega$ and $\Phi_j\in L^\infty(I_{j})$, Whence $\partial_t \lambda_j(U\argu)=\int_{I_j} \partial_t U\argu \Phi_j$, which thereby implies that 
$\partial_t \Q U\argu
=\Q \left(\partial_t U\argu\right)$.
\end{proof}

\paragraph{Vanishing boundary values.}
We define $\Sparamcero$ as the subspace of $\Sparam$ of functions that vanish on the boundary $\partial \Omega$.
Results analogous to~\eqref{eq: globalapproximation} and~\eqref{eq: globalapproximation dt} hold if 
$U\argu,\partial_t U\argu \in W^{l,q}(\Omega)\cap W^{1,q}_0(\Omega)$, respectively.

\section{Mean curvature flow with Dirichlet boundary conditions}
\label{sec: MCF}
\label{sec: MCF with boundary}

Mean curvature flow is the evolution of a surface $\Gammat$ by the normal velocity $v$ equal to $-\Hm$, with $\Hm$ the mean curvature of $\Gamma$. If we consider the points on the surface to move along the normal direction with vector-valued velocity $\vel$, this reads as:
\begin{equation}
\label{eq:mcf with dirichlet}
		\vel =  - \Hm \n , \quad \text{on} \ \Gammat \times [0,T).
\end{equation}

In our approach we consider the mean curvature flow of the surface $\Gammat$ parametrized by the unknown function $\X(\cdot,t)$ over $\Omega\subset \mathbb{R}^2$, with a Dirichlet boundary condition. On the reference domain $\Omega$ this leads to the following equation:
\begin{equation}\label{eq:mcf on reference domain}
\begin{aligned}
    \partial_t \X &= \vel\circ \X \quad & &\text{on} \ \Omega \times [0,T) , \\
    \X\argu\rvert_{\partial \Omega}&= \X_0\rvert_{\partial \Omega}, \quad & &\text{for }t>0,
\end{aligned}
\end{equation}
which can also be written on $\Gammat$ as follows:
\begin{equation}\label{eq:mcf on surface}
\begin{split}
    \dermat \idg &= \vel \quad \text{on} \ \Gammat \times [0,T) ,\\
    \vel \argu\rvert_{\boundaryGt} &= \bs{0}, \qquad \ \text{for all}\ t>0.
\end{split}
\end{equation}

A further implication of this boundary condition is that $\dermat f=\partial_t f$ on $\boundaryGt$, for any function $f$ defined on $\partial\Gammat$.
Denoting the tangent vector by $\bs\tau:=\partial_s \idg$, where $s$ is the arc length of $\boundaryGt$, we can also conclude that
\begin{eqnarray}\label{eq: dermat tau}
    \dermat \bs \tau =0,&\ \quad \ \text{on} \ \boundaryGt &\ \text{and for all} \quad\ t>0,
\end{eqnarray}
because the $\bs \tau$ does not change during the evolution.
\subsection{Evolution equations for geometric quantities and boundary conditions}

In the interior of the surface $\Gammat$, the surface normal $\n$ and the mean curvature $\Hm$ satisfy the following evolution equations~\cite{Huisken1984}:
\begin{equation}
\label{eq: evolution equations for H and nu}
	\begin{aligned}
		\dermat\n&=\laplg\n+\abs{\gradg\n}^2\n,\\
	\dermat \Hm&=\laplg \Hm+\abs{\gradg\n}^2\Hm.\\
	\end{aligned}
	 \qquad \text{on }\;\Gammat, \;\;t\in[0,T),
\end{equation}
In \cite{Sto1996} it was shown that the mean curvature must vanish on the boundary of $\Gammat$, for all $t>0$:
\begin{equation}\label{eq: bc for k}
\begin{aligned}
    \Hm\argu\equiv 0,\ &\qquad \ \text{on} \ \partial\Gammat \times (0,T).
\end{aligned}
\end{equation}
This result can be seen as a compatibility condition at the boundary that must be satisfied between its zero velocity and its prescribed one $\vel=-\Hm\n$. 

To the best of our knowledge, we have found no references in the literature to the behaviour of the normal vector at the boundary, except the obvious one:
\begin{equation}\label{eq: bc for nu}
    \begin{aligned}
         \n\argu \cdot \bs\tau\argu  = 0 ,\ &\qquad \ \text{on} \ \partial\Gammat \times [0,T).
    \end{aligned}
\end{equation}
This condition is not sufficient for the uniqueness of solutions to a system comprised of equations~\eqref{eq:mcf with dirichlet} and~\eqref{eq: evolution equations for H and nu}.
We have developed the following result for the Weingarten mapping acting on the boundary, which is instrumental for our computational algorithm.

\begin{lemma}\label{Lem: BC for nu}
For a surface $\Gammat$ moving under mean curvature flow, with a fixed boundary, i.e., satisfying~\eqref{eq:mcf with dirichlet} and $\vel \argu\rvert_{\boundaryGt} = \bs{0}$ for all $t\in[0,T]$, we have
\begin{equation}\label{eq: bc for Dgamma nu without test}
    \partial_{\conor}\n = (\curvbdry\cdot \bs{\nu})\conor+ (\partial_{\conor}\n\cdot \btau)\btau\ \quad \text{on } \partial \Gammat,
\end{equation}
where $\bs\tau$ is the unit tangent vector to $\partial\Gammat$ at $x$ and 
$\curvbdry:=\partial_{s}\bs\tau$ is the curvature vector of $\partial\Gammat$.
As a consequence we also have
    \begin{equation}\label{eq: bc for Dgamma nu}
		\partial_{\conor}\n\cdot\testnu = (\curvbdry\cdot \bs{\nu})(\testnu\cdot\conor)\ \quad \text{for} \ \testnu\in N_x\partial \Gammat
\end{equation}
if $N_x(\partial \Gammat) = \bs\tau^\perp$ is the space of vectors which are orthogonal to $\bs\tau$ at $x$.
\end{lemma}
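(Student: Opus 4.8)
The plan is to argue pointwise at a fixed $x\in\boundaryGt$, working in the orthonormal frame $\{\btau,\conor,\n\}$ of $\mathbb{R}^3$, where $\conor$ is the unit conormal of $\Gammat$ along its boundary (tangent to $\Gammat$, orthogonal to $\btau$). Since $\conor$ is tangent to $\Gammat$, the directional derivative $\partial_{\conor}\n$ is well defined and equals $A\conor$ with $A=\gradg\n$ the Weingarten map. I would then simply expand
$$\partial_{\conor}\n=(\partial_{\conor}\n\cdot\btau)\,\btau+(\partial_{\conor}\n\cdot\conor)\,\conor+(\partial_{\conor}\n\cdot\n)\,\n .$$
The $\n$-component vanishes because $|\n|\equiv 1$ forces $\partial_{\conor}\n\cdot\n=\tfrac12\partial_{\conor}|\n|^2=0$. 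Hence the whole statement reduces to identifying the single coefficient $\partial_{\conor}\n\cdot\conor=A\conor\cdot\conor$.

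To pin down that coefficient I would combine the two pieces of available information. First, since $\{\btau,\conor,\n\}$ is orthonormal and $A\n=0$, the trace of $A$ computed in this basis is $\tr{A}=A\btau\cdot\btau+A\conor\cdot\conor$; as $\Hm=\tr{A}$ vanishes on $\boundaryGt$ by the compatibility condition~\eqref{eq: bc for k}, this gives $A\conor\cdot\conor=-\,A\btau\cdot\btau$. Second, parametrizing $\boundaryGt$ by arclength $s\mapsto\gamma(s)$ with $\gamma'=\btau$, the boundary curve lies in $\Gammat$, so $\btau\cdot\n=0$ along it; differentiating in $s$, using $\partial_s\btau=\curvbdry$ and $\partial_s(\n\circ\gamma)=\gradg\n\,\btau=A\btau$ (the derivative of $\n$ along the tangent direction $\btau$, which is tangent to $\Gammat$), yields $\curvbdry\cdot\n+A\btau\cdot\btau=0$, i.e.\ $A\btau\cdot\btau=-\,\curvbdry\cdot\n$. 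Together these give $\partial_{\conor}\n\cdot\conor=A\conor\cdot\conor=\curvbdry\cdot\n$, and substituting back into the frame expansion produces exactly~\eqref{eq: bc for Dgamma nu without test}.

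The consequence~\eqref{eq: bc for Dgamma nu} then follows immediately: for $\testnu\in N_x\boundaryGt=\btau^{\perp}$ we have $\testnu\cdot\btau=0$, so taking the inner product of~\eqref{eq: bc for Dgamma nu without test} with $\testnu$ kills the tangential term and leaves $\partial_{\conor}\n\cdot\testnu=(\curvbdry\cdot\n)(\testnu\cdot\conor)$.

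I do not expect any genuine analytic obstacle: the statement is a short frame computation once one notices the right orthonormal basis. The only point requiring care is bookkeeping of sign/orientation conventions — a good sanity check is that~\eqref{eq: bc for Dgamma nu without test} is invariant under each of the independent orientation flips $\btau\mapsto-\btau$, $\conor\mapsto-\conor$, $\n\mapsto-\n$ (the last one also flipping $A$ and $\Hm$) — and the genuinely substantive ingredient is that it is precisely the vanishing of the mean curvature on the boundary, \eqref{eq: bc for k}, that makes the conormal component of $\partial_{\conor}\n$ coincide with the normal curvature $\curvbdry\cdot\n$ of the boundary curve. It is also worth remarking that, apart from invoking~\eqref{eq: bc for k}, the argument is purely geometric at each fixed time $t$ and does not use the evolution equations themselves.
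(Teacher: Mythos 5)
Your proposal is correct and follows essentially the same route as the paper's proof: decompose $\partial_{\conor}\n$ in the orthonormal frame $\{\btau,\conor,\n\}$, kill the $\n$-component, and identify $\partial_{\conor}\n\cdot\conor=\curvbdry\cdot\n$ by combining the vanishing of $\Hm=\tr{A}$ on $\boundaryGt$ with differentiation of $\n\cdot\btau=0$ along the boundary. The only cosmetic difference is that you justify $\partial_{\conor}\n\cdot\n=0$ via $|\n|\equiv 1$ while the paper uses the symmetry of the Weingarten map together with $A\n=0$; both are fine.
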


\begin{proof}
    At each point $x\in \boundaryGt$, we consider the 
    positively oriented orthonormal basis of $\mathbb{R}^3$ formed by 
    with $\conor=\n\times\bs\tau$ the conormal vector to $\Gammat$ on $\boundaryGt$. Then,
    \begin{equation}\label{eq: descomposition of Dnu conor}
        \partial_{\conor}\n
        =(\partial_{\conor}\n\cdot\conor)\conor +(\partial_{\conor}\n\cdot \n)\n+ (\partial_{\conor}\n\cdot \btau)\btau    = (\partial_{\conor}\n\cdot\conor)\conor+ (\partial_{\conor}\n\cdot \btau)\btau   ,
    \end{equation}    
    because $\partial_{\conor}\n\cdot\n=\gradg \n\conor\cdot\n=\conor\cdot\gradg\n\n=0$.

From~\eqref{eq: bc for k} we have
    \[
    \gradg\n\btau\cdot\btau+\gradg\n\conor\cdot\conor = \tr{\gradg\n} = \Hm  = 0.
    \]
    From~\eqref{eq: bc for nu},
    \[
    0 = \partial_s(\bs\nu\cdot\bs\tau) 
    = \partial_s\bs\nu \cdot \bs\tau + \bs\nu \cdot \partial_s\bs\tau,
    = \partial_s\bs\nu \cdot \bs\tau + \bs\nu \cdot \bs\Hm_\partial,
    \]
    which readily implies, using that $\gradg\n\btau = \partial_s\n$, that
    \begin{equation*} 
    0=\gradg\n\bs\tau\cdot\bs\tau+\gradg \n\conor\cdot\conor=\partial_s\n\cdot\bs\tau+\partial_{\conor}\n\cdot\conor=-\n\cdot\bs\Hm_\partial+ \partial_{\conor}\n\cdot\conor.
    \end{equation*}
    Therefore
    \begin{equation}\label{eq: A mu mu}
        \partial_{\conor}\n\cdot\conor=\n\cdot\curvbdry,
    \end{equation}
which together with~\eqref{eq: descomposition of Dnu conor} yields the first assertion of this lemma.   

Given $x \in \partial\Gammat$, we denote as $T_x\partial \Gammat$ and $N_x\partial \Gammat=(T_x\partial \Gammat)^{\perp}$ 
the tangent space to $\partial \Gammat$ and the normal space of $\partial \Gamma$ at $x$. 
If $\testnu\in N_x(\partial \Gammat)$ we can write $\testnu=(\testnu\cdot \conor)\conor+(\testnu\cdot \n)\n$ and then using~\eqref{eq: bc for Dgamma nu without test}, we have
\begin{equation*}
    \partial_{\conor}\n\cdot\testnu
    =(\n\cdot\curvbdry)(\conor\cdot\testnu ) +\partial_{\conor}\n\cdot \n(\n\cdot \testnu)
\end{equation*}
which immediately leads to the last assertion of this lemma.
\end{proof}

\subsection{Weak formulation}\label{sec: weak formulation}
For the weak formulation we consider~\eqref{eq:mcf with dirichlet}, the system of semilinear parabolic equations~\eqref{eq: evolution equations for H and nu} and the ODE ~\eqref{eq:mcf on reference domain} together with the boundary conditions from~\eqref{eq:mcf on surface}, \eqref{eq: bc for k}, \eqref{eq: bc for nu} and~\eqref{eq: bc for Dgamma nu}. 

The weak formulation employs standard Sobolev spaces on surfaces for the position, velocity, and mean curvature. For a Lipschitz surface $\Gammat$ in $\mathbb{R}^{3}$, we define 
$$\Hk(\Gammat)=\{ \eta \in \Lpgammat\mid \gradg \eta \in \Lpgammat^{3}\},$$
as defined in \cite[Section 2.1]{DE2007}. For sufficiently smooth surfaces, we similarly define $\Hk[k](\Gammat)$ and $W^{k,p}(\Gammat)$ for $k\in\mathbb{N}$ and $p\in [1,\infty]$. Additionally, we define  $\Hk[k]_0(\Gammat)$ and $W_0^{k,p}(\Gammat)$ in the obvious manner. For the normal vector, we define the following space,
 \begin{equation}
 \begin{split}
      \Otau:=&\left\{ \bs \phi \in H^1(\Gammat)^3: \bs \phi \cdot \bs \tau =0,\ \text{on} \ \boundaryGt \right\}\\
      =&\left\{ \bs \phi \in H^1(\Gammat)^3: \Ptau\bs \phi=\bs \phi,\ \text{on} \ \boundaryGt \right\},
 \end{split}
    \end{equation}
where $\bs\tau$ is the tangent vector of $\boundaryGt$ and $\Ptau=\bs I-\btau\tensor\btau$. 
\begin{remark}
 Note that if $\bs \phi\in \Otau$ then, $\dermat\bs\phi \in \Otau$ due to~\eqref{eq: dermat tau}.
\end{remark}

In order to obtain the weak form we multiply~\eqref{eq: evolution equations for H and nu} by appropriate test functions $\eta$, $\bs\psi$ and use Green's formula for  compact orientable $C^2$-surfaces~\cite[Remark 22]{BGN2020275}
\begin{subequations}
    \begin{alignat}{2}
    \label{eq: green H}
     \intgamma \dermat \Hm \,\testH+\intgamma \gradg \Hm \cdot \gradg \testH  &= \intgamma \abs{A}^2 \Hm \,\testH+ \intbgamma \testH \partial_{\conor}\Hm,\\
     \label{eq: green nu}
    \intgamma \dermat\n \cdot \testnu  +\intgamma \gradg\n:\gradg\bs{\psi }&= \intgamma \abs{A}^2\n \cdot \testnu+\intbgamma \testnu\cdot \partial_{\conor}\n,
    \end{alignat} 
\end{subequations}

Considering $\testH\in \Hk_0(\Gammat)$, the boundary term in~\eqref{eq: green H} is zero. For the boundary term in~\eqref{eq: green nu}, we use~\eqref{eq: bc for Dgamma nu} and test functions $\testnu\in\Otau$.
Then, by using  Lemma~\ref{Lem: BC for nu}, we get
\begin{equation*}
    \partial_{\conor}\n  \cdot\testnu=(\bs{\kappa}_{\partial \Gamma}\cdot \bs{\nu})(\testnu\cdot\conor),\quad \forall\testnu\in\Otau\quad \ \text{on} \ \boundaryGt.
\end{equation*}

We thus arrive at the following weak formulation of the problem, which is the basis for our numerical discretization.

\begin{problem}\label{weak problem MCF}
Find $\Gammat$, a sufficiently smooth surface defined by a motion $\X\argu \in H^1(\Omega)^3$, 
 $\vel\argu\in \Hk_0(\Gammat)^3$, $ \Hm \argu \in H_0^1(\Gammat)$ with $\dermat \Hm\argu\in  \Lp(\Gammat)$ and $\n\argu\in \Otau$ with $\dermat \n\argu\in \Lp[2](\Gammat)
$ such that:  
\begin{align}
 \partial_t \X &= \vel\circ \X \quad \text{on} \ \Omega,
\\
\vel&= -\Hm\n, \quad\text{on }\Gammat,\label{eq: weak velocity}\\
     \intgammat \dermat \Hm \,\testH+\intgammat \gradg \Hm \cdot \gradg \testH  &= \intgammat \abs{A}^2 \Hm \,\testH,\label{eq: weak curvature}\\
    \intgammat \dermat\n \cdot \testnu  +\intgammat \gradg\n:\gradg\bs{\psi }&= \intgammat \abs{A}^2\n \cdot \testnu+\int_{\boundaryG} (\curvbdry \cdot \n )(\conor\cdot \testnu),\label{eq: weak normal}
    \end{align} 
 $\forall\,\testH\,\in \Hk_0(\Gammat)$, $\forall\,\testnu\,\in\Otau$,  and for almost all $t\in[0,T]$.
 The initial conditions are 
 \[
 \bs X(0) = \bs X_0, \text{ on $\Omega$}, 
 \quad
 \Hm(0) = \Hm_0, \text{ on $\Gamma_0$}, 
 \quad
 \n(0) = \n_0, \text{ on $\Gamma_0$}, 
 \]
 where $\X_0, \,\Hm_0$ and $\n_0$ are, respectively, the initial mapping defining $\Gamma_0$, the mean curvature and the normal of $\Gamma_0$.
 Since we ask $\vel\argu\in \Hk_0(\Gammat)^3$, the position $\X\argu$ will satisfy the boundary condition $\X\argu|_{\partial\Omega} = \X_0|_{\partial\Omega}$.
\end{problem}

\begin{remark}
    Thanks to~\eqref{eq: bc for k} and Lemma~\eqref{Lem: BC for nu}, if the original problem has a sufficiently smooth classical solution, then it will be a weak solution according to Definition~\eqref{weak problem MCF}.
\end{remark}

\textbf{Standing regularity assumption.}
From now on we assume that, for all $t\in[0,T]$, the weak solution satisfies $\X , \partial_t X \in W^{p+1,\infty}(\Omega)$, where $p\in\N$ is the polynomial degree of the spline space defined in Section~\ref{sec: splines}. Moreover, whenever we write $A\lesssim B$ we mean $A\le C \, B$, with a constant $C$ that depends on $\max_{t\in[0,T]} \| \X\argu \|_{W^{p+1,\infty}(\Omega)} + \| \partial_t \X\argu\|_{W^{p+1,\infty}(\Omega)}$, and possibly the polynomial degree $p$ of the spline space.
Also $A\cong B$ means that $A\lesssim B$ and $B \lesssim A$.

\section{Surface approximation} \label{sec: surface approximation}

\subsection{Spline evolving surface}\label{sec:spline evolving surface}
The solution to the semidiscrete problem will determine, for each $t>0$, a mapping $\Xh\argu\in \Sparam^3$, which is an approximation to the solution $\X\argu$ and its image $\Gammaht$ approximates the surface $\Gammat$. The mapping $\Xh$ will be given by
\begin{equation*}
    \Xh(P,t)=\sum_{j\in J} \bs x_{{j}}(t)\basisfo(P), \qquad \forall P\in \Omega,  \ t>0,
\end{equation*}
where $ \bs x_{{j}}(t)\in \mathbb{R}^3$ and $\Sparam:=\text{span}\{\basisfo\}_{j\in J}$. Then, $\Gammaht=\Xh(\Omega,t)$ is the discrete surface, globally $C^{\ell}$ for all $t\in[0,T]$. Note that, for the boundary we have $\boundaryGhcero=\Xh(\partial \Omega,0)$ and $\boundaryGht=\boundaryGhcero$ for all $t$.

For each $t\in[0,T]$, we define a physical mesh $\mathcal{M}_h(t)$ on the discrete surface 
$\Gammaht$ as the image of $\mathcal{M}_h$ via the geometric mapping $\Xh\argu$, i.e., $\mathcal{M}_{h}(t):=\{K\subset \Gammaht: K=\Xh(Q,t), Q\in\mathcal{M}_h\}.$ 
Following~\cite{BBCHS2006}, we define the global mesh size for the time-dependent surface mesh $\mathcal{M}_h$ as 
$$
\hat h:=\underset{t\in[0,T]}{\max}\underset{K\in\mathcal{M}_h(t)}{\max}h_{K},
\qquad\text{with}\quad h_{K}=\norm{\nabla \Xh\argu}_{L^{\infty}(Q)}h_Q,
$$
where $h_Q=\text{diam}(Q)$, with $Q\in \mathcal{M}_h$ such that $K = \Xh(Q,t)$.
If $h_Q\simeq h_{K(t)}$, the global parametric mesh size $h:=\text{max}\{h_Q: Q\in\mathcal{M}_h\}$ and the global physical mesh size $\hat h$ satisfy $h\simeq \hat{h}$. In order to have this, we will always work under the assumption that the following assumption is always true.
\begin{assumption}[Regularity of $\Xh$]\label{ass: regularity Xh}
    There exists $h_0>0$ such that for all $h\leq h_0$ and $t\in[0,T]$, $\Xh\argu$ is a bi-Lipschitz homeomorphism. Moreover, $\Xh\argu|_{\bar{Q}}\in C^\infty(\bar{Q})$ for all $Q\in\mathcal{M}_{h}$, and $\invXh\argu|_{\bar{K}(t)}\in C^\infty(\bar{K}(t))$, for all $ K(t)\in\mathcal{M}_{h}(t)$.
\end{assumption}
Assumption~\ref{ass: regularity Xh} guarantees that there are no singularities and self-intersections in the parameterization $\Xh$ on $[0,T]$. We will show in Section~\ref{sec: relating discrete surfaces} that the above assumption holds true for $h$ small enough.

We define the discrete approximation spaces following the isogeometric approach, with the transport of the basis functions defined in the parametric domain. Thereby, for $\Gammaht$ we have
\begin{equation}
\begin{split}
   \Sh :=&{} \{f = F\circ \Xh^{-1}\argu,\;\;F\in \Sparam\} \\
   =&{} \Span \{\basisfg\argu:= \basisfo\circ \Xh^{-1}\argu,\;\;j \in J\},
\end{split}
\end{equation}
and also $\Shcero = \{f = F\circ \Xh^{-1}\argu,\;\;F\in \Sparamcero \}$.
Since we are working with evolving spline surfaces, we need a notion of discrete material derivative. We first let $\Ght=\underset{t\in [0,T]}{\bigcup} \Gammaht\times \{t\}$ and for a function $\eta_h:\Ght\to \R$ we define  
\begin{equation}
\dermat \eta_h (\Xh(P,t),t)=\partial_t \eta_h (\Xh(P,t),t), \label{def: discrete dermat}
\end{equation}
for all $\Xh(P,t)\in \Gammaht$ and $0\leq t\leq T$. 

Furthermore, since the basis functions on $\Gammaht$ are defined by the transport of the basis functions of the spline space in the parametric domain (see~\eqref{eq: transport property}), they satisfy the transport property
\begin{equation*}
    \dermat \basisfg =0.
\end{equation*}
Therefore, if $\eta_h\argu=\sum_{j \in J}{\eta}_{j}(t)\basisfg \in\Sh$ we have 
\begin{equation*}
    \dermat \eta_h(q,t)=\sum_{j \in J}\dot{\eta}_{j}(t)\basisfg(q), \qquad \forall q\in \Gammaht.
\end{equation*}
Analogously to~\eqref{def: vel}, the discrete velocity will be $\vh=\partial_t\Xh\circ\invXh$, and then
\begin{equation*}
 \vh(q,t)=\sum_{j \in J}\dot{\bs{x}}_{j}(t)\basisfg(q), \qquad  q\in \Gammaht.
\end{equation*}

\subsection{Auxiliary discrete surface $\Gammas$}

For the error analysis we need to introduce another discrete surface $\Gammas(t)$, that is obtained as the image of 
\begin{equation}\label{eq: quasi-int surf}
\Xs := \Q \X,
\qquad\text{i.e.,}\quad
\Gammas(t) = \Xs(\Omega,t),
\end{equation}
so that $\Xscero := \Xs(\cdot,0) = \Q\Xcero$ and by Lemma~\ref{lem: globalestimatesprojector} satisfies the following estimates.

\begin{corollary} 
If $\Xs\argu \in \Sparam$ is defined as in~\eqref{eq: quasi-int surf}, then, for $1\le q\le \infty$, and $t\in[0,T]$,
    \begin{align}
        \label{eq: error estimate X*}
        \norm{\X\argu-\Xs\argu}_{L^{q}(\Omega)}
        +h\norm{\X\argu-\Xs\argu}_{W^{1,q}(\Omega)}
        &\lesssim h^{p+1},\\
        \label{eq: error estimate X*0}
        \norm{\Xcero-\Xscero}_{L^q(\partial \Omega)}
        +h\norm{\Xcero-\Xscero}_{W ^{1,q}(\partial \Omega)}
        &\lesssim h^{p+1},
    \end{align}
    where the involved constants depend on the regularity of $X$.
\end{corollary}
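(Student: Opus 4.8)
The plan is to apply Lemma~\ref{lem: globalestimatesprojector} directly to $U = \X$, exploiting the hypotheses that the quasi-interpolant $\Q$ reproduces polynomials up to degree $p$ and that the standing regularity assumption gives $\X\argu \in W^{p+1,\infty}(\Omega)$ for all $t\in[0,T]$.

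For estimate~\eqref{eq: error estimate X*}, I would invoke~\eqref{eq: globalapproximation} componentwise: taking $q\in[1,\infty]$, $l = p+1$, and first $k=0$, then $k=1$, together with the assumption $p\ge 2 \ge \ell+1$ possibly — actually one only needs $k\le\ell+1$, which for $k=0,1$ requires $\ell\ge 0$, true by our standing assumption. This yields
\begin{equation*}
\norm{\X\argu-\Xs\argu}_{L^{q}(\Omega)} \le C h^{p+1}\norm{\X\argu}_{W^{p+1,q}(\Omega)}, \qquad
\norm{\X\argu-\Xs\argu}_{W^{1,q}(\Omega)} \le C h^{p}\norm{\X\argu}_{W^{p+1,q}(\Omega)}.
\end{equation*}
Multiplying the second inequality by $h$ and adding, and then bounding $\norm{\X\argu}_{W^{p+1,q}(\Omega)} \le C\norm{\X\argu}_{W^{p+1,\infty}(\Omega)} \le C$ using the standing regularity assumption (the domain $\Omega$ being bounded so that $L^\infty \hookrightarrow L^q$), we obtain~\eqref{eq: error estimate X*} with a constant depending only on the regularity of $\X$, as claimed. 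Note that Lemma~\ref{lem: globalestimatesprojector} is stated for scalar $U$, so one applies it to each of the three components of $\X$ and sums; this is routine.

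For estimate~\eqref{eq: error estimate X*0} the plan is to combine the interior estimate~\eqref{eq: error estimate X*} at $t=0$ with a trace inequality. The key observation is that $\X_0 - \Xs_0 = \X_0 - \Q\X_0$ restricted to $\partial\Omega$ is the trace of a function on $\Omega$, so by the standard trace theorem $W^{1,q}(\Omega)\to L^q(\partial\Omega)$ and $W^{2,q}(\Omega)\to W^{1,q}(\partial\Omega)$ (or, more precisely, a scaled trace estimate that tracks the $h$-dependence correctly), one bounds the boundary norms by the interior $W^{1,q}$ and $W^{2,q}$ norms of $\X_0 - \Q\X_0$. Then one applies~\eqref{eq: globalapproximation} again with $k=1,2$ and $l=p+1$ to get $h^{p}$ and $h^{p-1}$ respectively; the scaled trace estimate contributes a factor $h^{1/q}$ or similar that, together with the $h$-weighting in~\eqref{eq: error estimate X*0}, reconciles the powers to give the stated $h^{p+1}$ bound. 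Alternatively — and more cleanly — one notes that $\Q$ is constructed from \emph{local} functionals supported near each parametric element, so on boundary elements the restriction of $\Q\X_0$ to $\partial\Omega$ depends only on $\X_0|_{\partial\Omega}$-neighborhood data, and a one-dimensional version of Lemma~\ref{lem: globalestimatesprojector} applied along $\partial\Omega$ gives the estimate directly with $l = p+1$, $k=0,1$.

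The main obstacle is the second estimate~\eqref{eq: error estimate X*0}: one must be careful that the trace/restriction argument produces exactly the power $h^{p+1}$ and not a suboptimal power, and that the regularity of $\X_0$ on $\partial\Omega$ (inherited from $\X_0\in W^{p+1,\infty}(\Omega)$ via the trace theorem) suffices. If one uses the one-dimensional local argument along $\partial\Omega$, the subtlety shifts to verifying that the tensor-product B-spline quasi-interpolant, when restricted to a boundary edge of the parametric square, behaves like a univariate spline quasi-interpolant of the same degree $p$ — which is true because the restriction of a tensor-product B-spline to an axis-parallel edge is a univariate B-spline (or zero). The interior estimate~\eqref{eq: error estimate X*} itself is immediate from the cited lemma and requires no new ideas.
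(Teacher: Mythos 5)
Your treatment of the interior estimate~\eqref{eq: error estimate X*} is exactly the paper's: the corollary is stated as an immediate consequence of Lemma~\ref{lem: globalestimatesprojector}, applied componentwise with $l=p+1$ and $k=0,1$, combined with the standing $W^{p+1,\infty}(\Omega)$ regularity of $\X$; nothing more is needed there.

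For the boundary estimate~\eqref{eq: error estimate X*0}, however, your primary (trace-theorem) route has a genuine gap. Starting from the \emph{global} interior bounds $\norm{\Xcero-\Q\Xcero}_{L^q(\Omega)}\lesssim h^{p+1}$, $\norm{\Xcero-\Q\Xcero}_{W^{1,q}(\Omega)}\lesssim h^{p}$, $\norm{\Xcero-\Q\Xcero}_{W^{2,q}(\Omega)}\lesssim h^{p-1}$, the unscaled trace theorem loses a full power of $h$ (yielding only $h^{p}$ and $h^{p-1}$ on $\partial\Omega$), and an element-wise scaled trace inequality still loses a factor $h^{-1/q}$, since it trades the boundary integral for interior integrals over a strip of width $h$ while your bounds are not localized to that strip. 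The powers do not ``reconcile'' as you claim; to repair a trace argument you would need $L^\infty$ control of the error near the boundary. Your alternative route is closer to the mark, but carries a caveat: the dual functionals of the Schumaker quasi-interpolant are integrals over two-dimensional sets, so the restriction of $\Q\Xcero$ to an edge of $\partial\Omega$ is a univariate spline but is \emph{not} literally the univariate quasi-interpolant of the boundary trace, and a one-dimensional version of Lemma~\ref{lem: globalestimatesprojector} cannot be applied ``directly''. The clean argument, and the one consistent with how the paper uses the estimate, is simpler: since the constants are allowed to depend on $\norm{\X}_{W^{p+1,\infty}(\Omega)}$, take $q=\infty$ in~\eqref{eq: globalapproximation} to get $\norm{\Xcero-\Xscero}_{L^\infty(\Omega)}\lesssim h^{p+1}$ and $\norm{\Xcero-\Xscero}_{W^{1,\infty}(\Omega)}\lesssim h^{p}$; on the compact curve $\partial\Omega$ the $L^q$ norm is dominated by the $L^\infty$ norm and the tangential derivative by the full gradient, so~\eqref{eq: error estimate X*0} follows for all $1\le q\le\infty$ without any trace theorem.
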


We also define, similarly to~\eqref{def: vel},
$\vs(\Xs(P,t),t)=\partial_t \Xs(P,t)$, $\forall (P,t)\in \Omega\times[0,T]$.

In what follows, we assume that $\Xhcero=\Xscero$, so that, in particular, $\boundaryGscero=\boundaryGhcero$. In the same way that we defined $\Sh$ ($\Shcero$), we now define the discrete space $\Shs$ ($\Shscero$), which consists of discrete functions defined on this new surface (with vanishing trace). The basis functions $b_j^*$ are again defined as the transport to $\Gammast$ of the basis functions $\basisfo$ of the spline space $\Sparam$.  

\subsection{Lifts between different surfaces}

We will need to compare quantities and differential operators defined on different surfaces $\Gammat$, $\Gammast$ and $\Gammaht$. 
To do this, we proceed to define \emph{lifts} as follows: given a function defined on a surface, we first pass through its pullback to the parametric domain $\Omega$ and then define its push-forward to the new surface. Thus, for each $t\in [0,T]$, any $\funs{\phi}\argu$ defined on $\Gammast$ will be associated with the function
\begin{equation*}
    (\phi^*)^{\Gammat}\argu:=\phi^*\argu\circ \Xs\argu\circ \invX\argu,
\end{equation*}
defined in $\Gammat$. 

In the same way we can lift functions  on $\Gammaht$ to $\Gammast$, or between any two different surfaces, following the pullback and push-forward strategy. Note that $\Shs=\lifts{\Sh}$ and conversely,
$\Sh = \liftgamma{\Shs}$.

Using Lemma 14 in \cite{BDN20201}, we have the following relation between the tangential gradients of functions on $\Gammast$ and their lifts to $\Gammat$, 
\begin{align}\label{eq: chain rule gradients}
    \lifts{\gradg\liftgamma{\funs{\eta}}}=(\nabla\X G_\Gamma^{-1}(\nabla\Xs)^T)\circ \invXs\gradghs\funs{\eta},
\end{align}
where $ G_\Gamma = (\nabla \X)^T \nabla \X$ is the time-dependent metric tensor corresponding to the parametrization $\X\argu$. 

\paragraph{Lifts and norms.} 
Since $\Gammast$ is an approximation of order $p>0$ of $\Gammat$, by~\cite[Lemma 17]{BDN20201}, the Sobolev norms of a function and its lift are equivalent, independently of the mesh size $h$, if $h$ is sufficiently small. More precisely, given $k=0,1$ and $1\le s\le \infty$, there exists 
$h_0>0$ such that, for $0<h\leq h_0$, $0\le t \le T$, and any $\funss{\eta}\in W^{k,s}(\Gammat)$,
\begin{equation}\label{eq: equivalence Gs-G norms}
    \norm{\funss{\eta}}_{W^{k,s}(\Gammast)}\lesssim\norm{(\funss{\eta})^{\Gammat}}_{W^{k,s}(\Gammat)}
    \lesssim \norm{\funss{\eta}}_{W^{k,s}(\Gammast)},
\end{equation}
for all $t\in[0,T]$, where the involved constants and $h_0$ depend only on the regularity of $X$.

\subsection{Geometric perturbation errors}\label{sec: geometric error}

In this subsection we present estimates for the geometric consistency error when we approximate $\Gammat$ with its approximation $\Gammast$. To do this, we will study the error that results when the bilinear forms in Problem~\ref{weak problem MCF} are replaced by bilinear forms defined in $\Gammast$. This will produce errors between the corresponding area elements, as well as the differential operators that appear when comparing, for example, 
\begin{equation*}
    \intgammas \gradghs \funss{\eta}\cdot \gradghs \funss{\phi}
    \qquad\text{with}\qquad
    \intgamma \gradg \liftgamma{\funss{\eta}}\cdot \gradghs \liftgamma{\funss{\phi}}.
\end{equation*}
We denote the time-dependent area elements with $q_{\Gamma}$ and $q_{\Gammas}$, which are defined by $q_{\Gamma}\argu:=\sqrt{\text{det}(G_{\Gamma}\argu)}$ and $q_{\Gammas}\argu:=\sqrt{\text{det}(G_{\Gammas}\argu)}$, where $G_{\Gamma}\argu:=(\nabla \X\argu)^T\nabla \X\argu$ and $G_{\Gammas}\argu:=(\nabla \Xs\argu)^T\nabla \Xs\argu$ denote the first fundamental forms of $\Gammat$ and $\Gammast$ respectively. 

\begin{lemma}\label{Lem: Error estimates for G and q}
    There exists $h_0>0$, such that for $0<h\leq h_0$ we have the following geometric approximation estimates, for $0\le t \le T$,
    \begin{align}
        \max_{t \in [0,T]} \normLinfomega{1-q_{\Gammas}^{-1}\argu q_{\Gamma}\argu}
        &\lesssim  h^p,\label{eq: q bound}\\  
        \max_{t \in [0,T]} \normLinfomega{I-G_{\Gammas}\argu G_{\Gamma}^{-1}\argu}
        &\lesssim  h^p,\label{eq: G bound}\\
         \norm{1-q_{\boundaryGs}^{-1} q_{\boundaryG}}_{L^\infty(\partial \Omega)}
         &\lesssim h^p\label{eq: q partialG bound}.
    \end{align}
\end{lemma}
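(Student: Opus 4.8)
The plan is to reduce everything to approximation estimates for $\X\argu - \Xs\argu$ in the parametric domain, which are supplied by the Corollary following~\eqref{eq: quasi-int surf}, and then transfer these to estimates on the metric tensors, their determinants, and the boundary line element by routine multilinear-algebra manipulations. First I would note that $\nabla\Xs\argu \in W^{1,\infty}(\Omega)$ uniformly in $h$ (since $\X\in W^{p+1,\infty}(\Omega)$ with $p\ge 2$, so $\|\nabla\Xs\|_{W^{1,\infty}} \lesssim \|\nabla\X\|_{W^{1,\infty}}$ by the stability of $\Q$ in Lemma~\ref{lem: globalestimatesprojector} with $l=k=1$), and that for $h\le h_0$ the matrix $G_{\Gammas}\argu$ is uniformly positive definite, so $G_{\Gammas}^{-1}$, $q_{\Gammas}^{-1}$ are uniformly bounded. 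The key elementary computation is
\[
G_{\Gamma} - G_{\Gammas} = (\nabla\X)^T\nabla\X - (\nabla\Xs)^T\nabla\Xs
= (\nabla\X - \nabla\Xs)^T\nabla\X + (\nabla\Xs)^T(\nabla\X - \nabla\Xs),
\]
whence, taking $L^\infty(\Omega)$ norms and using $\|\nabla\X-\nabla\Xs\|_{L^\infty(\Omega)}\lesssim h^p$ from~\eqref{eq: error estimate X*} with $q=\infty$, $l=p+1$, $k=1$, together with the uniform bounds on $\nabla\X$ and $\nabla\Xs$, we get $\|G_{\Gamma}-G_{\Gammas}\|_{L^\infty(\Omega)}\lesssim h^p$. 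Then $I - G_{\Gammas}G_{\Gamma}^{-1} = (G_{\Gamma}-G_{\Gammas})G_{\Gamma}^{-1}$ and the uniform boundedness of $G_{\Gamma}^{-1}$ give~\eqref{eq: G bound}.

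For~\eqref{eq: q bound} I would use that $q_{\Gamma}^2 = \det G_{\Gamma}$, $q_{\Gammas}^2 = \det G_{\Gammas}$, so
\[
1 - q_{\Gammas}^{-2}q_{\Gamma}^2 = 1 - \det(G_{\Gammas}^{-1}G_{\Gamma}) = 1 - \det\big(I + G_{\Gammas}^{-1}(G_{\Gamma}-G_{\Gammas})\big).
\]
Since $G_{\Gammas}^{-1}(G_{\Gamma}-G_{\Gammas})$ is a $2\times 2$ matrix of $L^\infty$-norm $\lesssim h^p$, the determinant expansion $\det(I+E) = 1 + \operatorname{tr}E + \det E$ yields $|1 - q_{\Gammas}^{-2}q_{\Gamma}^2|\lesssim h^p$ (for $h$ small the higher-order term is absorbed). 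Finally $1 - q_{\Gammas}^{-1}q_{\Gamma} = \dfrac{1 - q_{\Gammas}^{-2}q_{\Gamma}^2}{1 + q_{\Gammas}^{-1}q_{\Gamma}}$, and since $q_{\Gammas}^{-1}q_{\Gamma}$ is bounded above and below away from zero for $h\le h_0$, the denominator is harmless, giving~\eqref{eq: q bound}.

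For~\eqref{eq: q partialG bound}, the boundary line element of $\partial\Gamma_0 = \X_0(\partial\Omega)$ is $q_{\partial\Gamma} = |\partial_s \X_0|$ where $s$ parametrizes $\partial\Omega$ by arclength, and similarly $q_{\partial\Gamma^*} = |\partial_s\Xscero|$; here one uses that by assumption $\Xhcero = \Xscero$ so the discrete boundary is fixed in time, which is why this estimate is stated only at the initial time. Then
\[
1 - q_{\partial\Gamma^*}^{-1}q_{\partial\Gamma}
= q_{\partial\Gamma^*}^{-1}\big(|\partial_s\Xscero| - |\partial_s\X_0|\big),
\]
and $\big||\partial_s\Xscero| - |\partial_s\X_0|\big| \le |\partial_s(\Xscero - \X_0)| \lesssim \|\Xcero-\Xscero\|_{W^{1,\infty}(\partial\Omega)} \lesssim h^p$ by~\eqref{eq: error estimate X*0} (actually $h^{p+1}$ there, so $h^p$ is more than enough), while $q_{\partial\Gamma^*}^{-1}$ is uniformly bounded for $h\le h_0$ because $q_{\partial\Gamma} = |\partial_s\X_0|$ is bounded below and $q_{\partial\Gamma^*}$ converges to it. I do not anticipate a genuine obstacle here; the only point requiring care is establishing the \emph{uniform-in-$h$} lower bounds on $G_{\Gammas}$ and on $q_{\partial\Gamma^*}$ (so that all the inverses appearing above are controlled), which follows by combining the $h^p$ convergence just proved with the known nondegeneracy of the exact quantities $G_{\Gamma}$ and $q_{\partial\Gamma}$ for $h\le h_0$; this is also precisely the mechanism behind Assumption~\ref{ass: regularity Xh} being verifiable for small $h$.
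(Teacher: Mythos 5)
Your proposal is correct, and at bottom it runs on the same mechanism as the paper's proof: everything is reduced to the single estimate $\norm{\nabla(\X-\Xs)}_{L^\infty(\Omega)}\lesssim h^p$ coming from the quasi-interpolation error, plus uniform nondegeneracy of the exact quantities (and hence of the discrete ones for $h\le h_0$), and the boundary bound \eqref{eq: q partialG bound} is obtained exactly as in the paper via the reverse triangle inequality for $|\partial_\tau\Xscero|-|\partial_\tau\Xcero|$ and \eqref{eq: error estimate X*0}. The difference is that for \eqref{eq: q bound} and \eqref{eq: G bound} the paper simply invokes the stationary perturbation theory of \cite{BDN20201} (Lemma 16 there) to pass from $\nabla(\X-\Xs)$ to the area element and metric, with a stability constant $S_{\X}$ independent of $h$, whereas you carry out that algebra explicitly (the splitting of $G_\Gamma-G_{\Gammas}$, the identity $I-G_{\Gammas}G_\Gamma^{-1}=(G_\Gamma-G_{\Gammas})G_\Gamma^{-1}$, and the $2\times2$ determinant expansion together with $1-x=(1-x^2)/(1+x)$). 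What your route buys is a self-contained proof of the statement exactly as written (note the paper's cited bound is for $I-G_{\Gammas}^{-1}G_\Gamma$ while the lemma states $I-G_{\Gammas}G_\Gamma^{-1}$; your argument covers the stated form directly); what the paper's route buys is brevity and reuse of a constant already tracked in \cite{BDN20201}. Two cosmetic points: you do not actually need $\nabla\Xs\in W^{1,\infty}(\Omega)$ (only $\normLinfomega{\nabla\Xs}$ bounded, which follows from \eqref{eq: error estimate X*} with $k=1$ and the triangle inequality, without any requirement on the smoothness parameter $\ell$), and your parenthetical that \eqref{eq: error estimate X*0} gives $h^{p+1}$ for the $W^{1,\infty}(\partial\Omega)$ error is a misreading — the factor $h$ multiplying the $W^{1,q}$ norm means that estimate yields exactly $h^p$, which is precisely what is needed, so the conclusion stands.
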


\begin{proof} 
The first two bounds are derived from the error estimates for $q$ and $G$ given for the stationary case in the perturbation theory for $C^{1,\alpha}$ surfaces presented in \cite{BDN20201}, as follows. Using \cite[Lemma 16]{BDN20201} for each $t\in[0,T]$ we have
\begin{equation*}
\begin{aligned}
    \normLinfomega{1-q_{\Gammas}^{-1}\argu q_{\Gamma}\argu}
    &\lesssim \normLinfomega{\nabla(\X\argu-\Xs\argu)},
    \\
    \normLinfomega{I-G_{\Gammas}^{-1}\argu G_{\Gamma}\argu}&\lesssim \normLinfomega{\nabla(\X\argu-\Xs\argu)},
\end{aligned}
\end{equation*}
where the hidden constant is a time-dependent stability constant $S_{\X}$, defined as in \cite{BDN20201}, which is independent of $h$ for all $h\leq h_0$ due to the properties of $\X$ and $\Xs$. Using~\eqref{eq: error estimate X*} and taking the maximum for $t$ we get the estimates~\eqref{eq: q bound} and~\eqref{eq: G bound}.

For the estimate~\eqref{eq: q partialG bound} we consider $\Xcero$ as a parametrization of the curve $\boundaryG$ and $\Xscero:=\Q \Xcero|_{\partial\Omega}$ as a parametrization of $\boundaryGs$, with $\Q$ as defined in~\eqref{def: projector}. Thus, the first fundamental forms are $|\partial_\tau\Xcero|^2$ and $|\partial_\tau\Xscero|^2$ respectively, and for the length elements we have
\begin{align*}
    \norm{1-q_{\partial\Gammas}^{-1}q_{\partial\Gamma}}_{L^\infty(\partial \Omega)}
    &= \norm{1-|\partial_\tau \Xscero|^{-1}|\partial_\tau \Xcero|}_{L^\infty(\partial \Omega)}
    \\
    &\lesssim \norm{\partial_\tau \X^0-\partial_\tau \Xscero}_{L^\infty(\partial \Omega)}
    \lesssim h^p,
\end{align*}
where we use the estimate~\eqref{eq: error estimate X*0} for $\Xscero$ on the boundary.
\end{proof}

In this paper, we use these estimates for time-dependent surfaces. Therefore, in addition to the estimates given above, we also need approximation estimates for $\partial_t G$ and for $\partial_t q$. 

\begin{lemma}
    \label{Lem: Error estimates dt G y dt q} There exists $h_0>0$ such that, if $0<h\le h_0$, we have
    \begin{align}
        \label{eq: dt G  bound}
       \max_{t \in [0,T]} \normLinfomega{\partial_t G_{\Gamma}(t)-\partial_t G_{\Gammas}(t)} &\lesssim  h^p,\\
          \label{eq: dt q  bound}
       \max_{t \in [0,T]} \normLinfomega{\partial_t q_{\Gamma}(t)-\partial_t q_{\Gammas}(t)} &\lesssim  h^p.
\end{align}
\end{lemma}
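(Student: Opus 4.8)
The plan is to differentiate in time the closed-form expressions $G_{\Gamma} = (\nabla\X)^T\nabla\X$ and $G_{\Gammas} = (\nabla\Xs)^T\nabla\Xs$ (and the analogous formulas for the area elements) and to reduce both bounds to the spatial estimates already established in \eqref{eq: error estimate X*}, \eqref{eq: globalapproximation dt} and Lemma~\ref{Lem: Error estimates for G and q}, together with the commutation $\partial_t\Xs=\Q\,\partial_t\X$ that was proved inside Lemma~\ref{lem: globalestimatesprojector}.

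For \eqref{eq: dt G  bound}: by the product rule $\partial_t G_{\Gamma} = (\nabla\partial_t\X)^T\nabla\X + (\nabla\X)^T\nabla\partial_t\X$, and likewise for $\partial_t G_{\Gammas}$ with $\Xs$ in place of $\X$. I would then write $\partial_t G_{\Gamma}-\partial_t G_{\Gammas}$ as a telescoping sum of four terms, each a product of a uniformly bounded factor with a difference of order $h^p$: the differences $\nabla(\X-\Xs)$ and $\nabla(\partial_t\X-\partial_t\Xs)=\nabla(\partial_t\X-\Q\,\partial_t\X)$ are $\lesssim h^p$ in $L^\infty(\Omega)$, the former by \eqref{eq: error estimate X*} with $q=\infty$ and the latter by \eqref{eq: globalapproximation dt} with $k=1$, $l=p+1$, $q=\infty$, using the standing assumption $\X,\partial_t\X\in W^{p+1,\infty}(\Omega)$; the factors $\nabla\X$, $\nabla\partial_t\X$, $\nabla\Xs$, $\nabla\partial_t\Xs$ are bounded in $L^\infty(\Omega)$ uniformly in $h$ (the last two via the triangle inequality with the $O(h^p)$ estimates just mentioned, or directly from $W^{1,\infty}$-stability of $\Q$). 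Taking $\max_{t\in[0,T]}$ yields \eqref{eq: dt G  bound}.

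For \eqref{eq: dt q  bound}: by Jacobi's formula $\partial_t\det M = \det M\,\tr{M^{-1}\partial_t M}$, hence $\partial_t q_{\Gamma} = \tfrac12\, q_{\Gamma}\,\tr{G_{\Gamma}^{-1}\partial_t G_{\Gamma}}$ and similarly for $\partial_t q_{\Gammas}$. Subtracting and telescoping once more, it suffices to have $L^\infty(\Omega)$ bounds of order $h^p$ for the three differences $q_{\Gamma}-q_{\Gammas}$, $G_{\Gamma}^{-1}-G_{\Gammas}^{-1}$ and $\partial_t G_{\Gamma}-\partial_t G_{\Gammas}$, plus uniform boundedness of the remaining factors. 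The first follows from \eqref{eq: q bound} together with the uniform two-sided boundedness of $q_{\Gammas}$; writing $G_{\Gamma}-G_{\Gammas}=(I-G_{\Gammas}G_{\Gamma}^{-1})G_{\Gamma}$ and $G_{\Gamma}^{-1}-G_{\Gammas}^{-1}=G_{\Gammas}^{-1}(G_{\Gamma}-G_{\Gammas})G_{\Gamma}^{-1}$, the second follows from \eqref{eq: G bound} and the uniform invertibility of $G_{\Gamma}$ and $G_{\Gammas}$; the third is precisely the bound \eqref{eq: dt G  bound} just obtained. The uniform bounds needed here (for $h$ small: lower bounds on $q_{\Gammas}$ and $\det G_{\Gammas}$, and an upper bound on $\|G_{\Gammas}^{-1}\|$) hold because $q_{\Gammas}$ and $G_{\Gammas}$ are $O(h^p)$ close to $q_{\Gamma}$ and $G_{\Gamma}$, which are controlled by the regularity of $\X$. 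Taking $\max_{t\in[0,T]}$ gives \eqref{eq: dt q  bound}.

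Apart from this essentially routine algebra, the only delicate points are the commutation $\partial_t\Xs=\Q\,\partial_t\X$ — needed so that $\nabla(\partial_t\X-\partial_t\Xs)$ is genuinely an interpolation error to which \eqref{eq: globalapproximation dt} applies — and the careful bookkeeping of the uniform-in-$h$ a priori bounds on $\nabla\Xs$, $\nabla\partial_t\Xs$, $q_{\Gammas}^{-1}$ and $G_{\Gammas}^{-1}$; I expect this constant-tracking, rather than any genuine obstacle, to be the main thing to get right.
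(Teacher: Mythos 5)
Your proposal is correct and follows essentially the same route as the paper: differentiate the closed-form expressions for $G$ and $q$, use the commutation $\partial_t\Xs=\Q\,\partial_t\X$ together with the quasi-interpolation estimates \eqref{eq: error estimate X*} and \eqref{eq: globalapproximation dt} for the $\partial_t G$ bound, and then Jacobi's formula $\partial_t q=\tfrac12 q\,\mathrm{tr}(G^{-1}\partial_t G)$ with a telescoping decomposition, Lemma~\ref{Lem: Error estimates for G and q}, and uniform bounds on $G_{\Gammas}^{-1}$, $q_{\Gammas}$ for the $\partial_t q$ bound. The only cosmetic difference is that you telescope into four terms where the paper uses symmetry to reduce to two, which changes nothing of substance.
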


\begin{proof} 
    By the definition of $G_{\Gammas}$ and $G_{\Gamma}$.
    \begin{align*}
     \big\|\partial_t &G_{\Gamma}-\partial_t G_{\Gammas}\big\|_{\Linfomega}
     \\
     &\le 2\,\|\nabla\partial_t \X^T\nabla\X -  \nabla\partial_t\Xs^T \nabla\Xs\|_{\Linfomega}
     \\
        &\leq{}2\Big(\big\|\nabla\partial_t(\X-\Xs)^T\nabla\X\big\|_{\Linfomega}
        + \big\| \nabla\partial_t \Xs^T \nabla(\X-\Xs)\big\|_{\Linfomega} \Big)\\
         &\lesssim \left\| \nabla{\partial_t}( \X- \Xs)\right\|_{\Linfomega}+ \left\| \nabla(\X-\Xs)\right\|_{\Linfomega}
         \lesssim{} h^p,
         \end{align*}
         where we have used estimates of the interpolation error from Lemma~\ref{lem: globalestimatesprojector} and that 
         \[\max\{\normLinfomega{ \nabla\left(\partial_t \X\right)},\normLinfomega{ \nabla\left(\partial_t \Xs\right)}\}\] is uniformly bounded by interpolation properties.

         By Lemma~\ref{Lem: Error estimates for G and q} and the smoothness of $\X$, we have that, for sufficiently small $h$,
         \begin{equation}\label{eq: dt G inverse  bound} 
                     \max_{t\in[0,T]}\max\left\{\normLinfomega{G_{\Gamma}^{-1}},\normLinfomega{G_{\Gammas}^{-1}},
         \normLinfomega{\partial_t G_{\Gamma}},\normLinfomega{\partial_t G_{\Gammas}}\right\}\leq C.
        \end{equation}
         To obtain~\eqref{eq: dt q bound} we use that $
         \partial_t q_{\Gamma}=\frac{1}{2}q_{\Gamma}\,\mathrm{tr}\left( G_{\Gamma}^{-1}\partial_t G_{\Gamma}\right)$ 
 to rewrite
     \begin{align*}
         \partial_t q_{\Gamma}-\partial_t q_{\Gammas}={}&\frac{1}{2}q_{\Gamma}\,\mathrm{tr}\left( G_{\Gamma}^{-1}\partial_t G_{\Gamma}\right)-\frac{1}{2}q_{\Gammas}\,\mathrm{tr}\left( G_{\Gammas}^{-1}\partial_t G_{\Gammas}\right)\\
         ={}&\frac{1}{2}q_{\Gamma}\,\mathrm{tr}\left( G_{\Gamma}^{-1}\partial_t G_{\Gamma}\right)-\frac{1}{2}q_{\Gamma}\,\mathrm{tr}\left( G_{\Gammas}^{-1}\partial_t G_{\Gammas}\right)\\
         &+\frac{1}{2}q_{\Gamma}\,\mathrm{tr}\left( G_{\Gammas}^{-1}\partial_t G_{\Gammas}\right)-\frac{1}{2}q_{\Gammas}\,\mathrm{tr}\left( G_{\Gammas}^{-1}\partial_t G_{\Gammas}\right).
     \end{align*}
     Adding and subtracting the corresponding cross terms within the traces of the matrices, and using~\eqref{eq: dt G  bound}, \eqref{eq: dt G inverse  bound} as well as Lemma~\ref{Lem: Error estimates for G and q} one obtains the desired estimate~\eqref{eq: dt G  bound}.
 \end{proof}

\begin{lemma}\label{Lem: Error estimates matrices Eg and EBg}
There exists $h_0>0$ such that, if $0<h\le h_0$, we have
\begin{align}
    \max_{t \in [0,T]} \normLinfomega{E_{\Gammas}\argu} &\lesssim h^p,\label{eq: Egamma bound}\\   
    \max_{t \in [0,T]} \normLinfomega{E_{\Gammas}^{\mathcal{B}}\argu} &\lesssim h^p,\label{eq: EgammaB bound}
\end{align}
where $E_{\Gammas}$ and $E_{\Gammas}^{\mathcal{B}}$  are matrices defined in $\Omega$, for each $t\in[0,T]$, as 
\begin{align}
  \label{def: Egamma}
  E_{\Gammas}&= \Pi_{\Gammas}-\nabla \Xs G_{\Gamma}^{-1}(\nabla \Xs)^Tq_{\Gammas}^{-1}q_{\Gamma},\\
  \label{def: EgammaB}   E^{\mathcal{B}}_{\Gammas}&=\Pi_{\Gammas} B_{\Gammas}\Pi_{\Gammas}-\nabla \Xs G_{\Gamma}^{-1}\nabla \X^T {B_{\Gamma}}\nabla \X G_{\Gamma}^{-1}(\nabla\Xs)^T q_{\Gammas}^{-1}q_{\Gamma},
\end{align}
with $\Pi_{\Gammas}:=\nabla \Xs G_{\Gammas}^{-1}(\nabla \Xs)^T$ the projection matrix on the tangent plane to $\Gammas$,
and 
$B_\Gamma = \mathcal{B}_{\Gamma}\circ \X = \left[\mathcal{B}(\bs{v})=\textnormal{div}_{\Gammat}\bs{v}\,I_3\, -(\gradg \bs{v}+(\gradg \bs{v})^T)\right] \circ \X$,  
$B_\Gammas = \mathcal{B}_{\Gammas} = \mathcal{B}(\vs)\circ \Xs =\left[\textnormal{div}_{\Gammast}\vs\,I_3\, -(\nabla_{\Gammast} \vs+(\nabla_{\Gammast} \vs)^T)\right] \circ \Xs$,  
as defined immediately after~\eqref{eq: teo transp-grad-grad}.
\end{lemma}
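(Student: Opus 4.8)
\textbf{Proof plan for Lemma~\ref{Lem: Error estimates matrices Eg and EBg}.}
The plan is to treat each matrix separately and reduce everything to the already-established geometric estimates of Lemmas~\ref{Lem: Error estimates for G and q} and~\ref{Lem: Error estimates dt G y dt q}, together with the approximation estimate~\eqref{eq: error estimate X*}. For~\eqref{eq: Egamma bound}, the key observation is that $E_{\Gammas}$ vanishes identically when $\Xs$ is replaced by $\X$: indeed $\Pi_{\Gamma}=\nabla\X G_\Gamma^{-1}(\nabla\X)^T$ and $q_\Gamma^{-1}q_\Gamma = 1$, so $\Pi_\Gamma-\nabla\X G_\Gamma^{-1}(\nabla\X)^T q_\Gamma^{-1}q_\Gamma = 0$. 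Hence I would write
\[
E_{\Gammas}=\bigl(\Pi_{\Gammas}-\Pi_\Gamma\bigr)
+\bigl(\nabla\X G_\Gamma^{-1}(\nabla\X)^T - \nabla\Xs G_\Gamma^{-1}(\nabla\Xs)^T q_{\Gammas}^{-1}q_\Gamma\bigr),
\]
and estimate each group. The first difference $\Pi_{\Gammas}-\Pi_\Gamma$ is handled exactly as in~\cite[Lemma 16]{BDN20201} (or reproving it): expand $\Pi_{\Gammas}-\Pi_\Gamma = \nabla\Xs G_{\Gammas}^{-1}(\nabla\Xs)^T - \nabla\X G_\Gamma^{-1}(\nabla\X)^T$, insert and remove cross terms so that each summand contains a factor $\nabla(\X-\Xs)$ or $G_\Gamma^{-1}-G_{\Gammas}^{-1}$, bound $\|G_\Gamma^{-1}-G_{\Gammas}^{-1}\|_{\Linfomega}\lesssim\|\nabla(\X-\Xs)\|_{\Linfomega}$ using~\eqref{eq: G bound} and the uniform bound~\eqref{eq: dt G inverse  bound} on the inverses, and finally invoke~\eqref{eq: error estimate X*} to get $h^p$. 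For the second group I would further split by adding and subtracting $\nabla\Xs G_\Gamma^{-1}(\nabla\Xs)^T$ (no $q$ factor), using~\eqref{eq: q bound} for the term carrying $1-q_{\Gammas}^{-1}q_\Gamma$ and the same telescoping trick for the remaining $\nabla(\X-\Xs)$ contributions. Taking the maximum over $t\in[0,T]$ at the end gives~\eqref{eq: Egamma bound}.

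For~\eqref{eq: EgammaB bound} I would follow the same philosophy but with one more layer of factors. The guiding identity is again that $E^{\mathcal{B}}_{\Gammas}$ vanishes when $\Xs$ is replaced by $\X$ throughout: $\Pi_\Gamma \mathcal{B}_\Gamma\Pi_\Gamma$ pulls back (via~\eqref{eq: chain rule gradients} applied to $\vel$, or directly from the definition of $\mathcal{B}$) to exactly $\nabla\X G_\Gamma^{-1}\nabla\X^T B_\Gamma \nabla\X G_\Gamma^{-1}(\nabla\X)^T$, and the $q$ factor is again $1$. So I would write $E^{\mathcal{B}}_{\Gammas} = \bigl(\Pi_{\Gammas}B_{\Gammas}\Pi_{\Gammas} - \Pi_\Gamma B_\Gamma\Pi_\Gamma\bigr) + \bigl(\text{the pulled-back difference of the two product expressions}\bigr)$. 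The second group is a long product of the matrices $\nabla\Xs$ (or $\nabla\X$), $G_\Gamma^{-1}$, $B_\Gamma$ and $q_{\Gammas}^{-1}q_\Gamma$; I would telescope it term by term, each time isolating one factor $\nabla(\X-\Xs)$ or $1-q_{\Gammas}^{-1}q_\Gamma$ and bounding all the remaining factors uniformly in $L^\infty$. The only genuinely new ingredient here is the difference $B_{\Gammas}-B_\Gamma$ appearing when I compare $\Pi_{\Gammas}B_{\Gammas}\Pi_{\Gammas}$ with $\Pi_\Gamma B_\Gamma\Pi_\Gamma$: since $\mathcal{B}(\vel)$ is built from $\Divg\vel$ and $\gradg\vel+(\gradg\vel)^T$, and $\gradg\vel$ pulls back via the chain rule~\eqref{eq: chain rule Omega-Gamma} to an expression involving $\nabla\X$, $G_\Gamma^{-1}$ and $\nabla(\partial_t\X)$ (recall $\vel\circ\X=\partial_t\X$), the difference $B_{\Gammas}-B_\Gamma$ is controlled by $\|\nabla(\X-\Xs)\|_{\Linfomega}+\|\nabla\partial_t(\X-\Xs)\|_{\Linfomega}$, exactly the quantities shown to be $\lesssim h^p$ in the proof of Lemma~\ref{Lem: Error estimates dt G y dt q}. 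Once $\|B_{\Gammas}-B_\Gamma\|_{\Linfomega}\lesssim h^p$ is in hand and $\|B_\Gamma\|_{\Linfomega}$, $\|B_{\Gammas}\|_{\Linfomega}$ are seen to be uniformly bounded (by the regularity of $\X$ and the interpolation bounds), the telescoping estimate closes.

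The main obstacle I anticipate is purely bookkeeping rather than conceptual: the product expressions in~\eqref{def: Egamma} and especially~\eqref{def: EgammaB} are long, and one must be careful to (i) keep track of which surface each differential operator and metric lives on when pulling $\Pi_{\Gamma}\mathcal{B}_\Gamma\Pi_\Gamma$ back to $\Omega$ via the chain rule~\eqref{eq: chain rule gradients}, so as to verify cleanly that the ``$\Xs\to\X$'' substitution really does annihilate both $E_{\Gammas}$ and $E^{\mathcal{B}}_{\Gammas}$, and (ii) ensure every ``remaining factor'' in each telescoped term is uniformly bounded in $L^\infty(\Omega)$ independently of $h$ — this uses~\eqref{eq: dt G inverse  bound}, the analogous uniform bounds on $\nabla\X$, $\nabla\Xs$, $\nabla\partial_t\X$, $\nabla\partial_t\Xs$ coming from the standing regularity assumption and the stability of $\Q$, and the uniform bound on $q_{\Gammas}^{-1}q_\Gamma$ from~\eqref{eq: q bound}. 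No new estimate beyond those already proved is required; the work is in organizing the algebraic expansion so that each error term exhibits a single factor that is $O(h^p)$ while all others are $O(1)$, and then taking $\max_{t\in[0,T]}$.
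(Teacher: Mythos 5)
Your plan is correct, and it reaches both bounds by a route that differs from the paper's in two respects. For~\eqref{eq: Egamma bound} the paper never introduces $\Pi_\Gamma$: it keeps $\nabla\Xs$ factored on both sides, writes $E_{\Gammas}=\nabla \Xs\bigl(G_{\Gammas}^{-1}(I-G_{\Gammas}G_{\Gamma}^{-1})+G_{\Gamma}^{-1}(1-q_{\Gammas}^{-1}q_{\Gamma})\bigr)(\nabla \Xs)^T$, and invokes \eqref{eq: G bound}, \eqref{eq: q bound} and the uniform bound on $\normLinfomega{G_{\Gamma}^{-1}}\normLinfomega{G_{\Gammas}^{-1}}\normLinfomega{\nabla\Xs}^2$; your telescoping through $\Pi_\Gamma-\Pi_{\Gammas}$ and $\nabla(\X-\Xs)$ uses exactly the same ingredients (the difference of inverses being $G_\Gamma^{-1}-G_{\Gammas}^{-1}=-G_{\Gammas}^{-1}(I-G_{\Gammas}G_\Gamma^{-1})$), so it is only a bookkeeping variant. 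For~\eqref{eq: EgammaB bound} the difference is more substantive: the paper rewrites $B_\Gamma$ and $B_{\Gammas}$ through the transport identities of \cite{BGN2020275} in terms of $\partial_t q$ and $\partial_t G$, and then feeds the estimates of Lemma~\ref{Lem: Error estimates dt G y dt q} into a cross-term expansion together with \eqref{eq: Egamma bound}; you instead bound $\normLinfomega{B_{\Gammas}-B_\Gamma}\lesssim h^p$ directly, by pulling $\gradg\vel$ and $\gradghs\vs$ back via the chain rule to expressions in $\nabla\partial_t\X$, $\nabla\partial_t\Xs$, $G_\Gamma^{-1}$, $G_{\Gammas}^{-1}$, and using $\normLinfomega{\nabla(\X-\Xs)}+\normLinfomega{\nabla\partial_t(\X-\Xs)}\lesssim h^p$ from Lemma~\ref{lem: globalestimatesprojector}. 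That is a legitimate and in fact more self-contained argument — it bypasses the identities $B=\partial_t q\,I_3-\tfrac12\partial_t G$ and does not need Lemma~\ref{Lem: Error estimates dt G y dt q} as a black box, only the interpolation estimates that underlie it — at the cost of carrying out the chain-rule expansion of $\mathcal{B}$ explicitly; the uniform $L^\infty(\Omega)$ bounds you need for the cofactors ($\nabla\X$, $\nabla\Xs$, $\nabla\partial_t\X$, $\nabla\partial_t\Xs$, $G_\Gamma^{-1}$, $G_{\Gammas}^{-1}$, $q_{\Gammas}^{-1}q_\Gamma$, and hence $B_\Gamma$, $B_{\Gammas}$) are all available from the standing regularity assumption, the stability of $\Q$, and \eqref{eq: dt G inverse  bound}, so the argument closes.
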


\begin{proof}
    In order to prove~\eqref{eq: Egamma bound}, we rewrite $  E_{\Gammas}$ as follows
\begin{align*}
     E_{\Gammas}&=\nabla \Xs G_{\Gammas}^{-1}(\nabla \Xs)^T-\nabla \Xs G_{\Gamma}^{-1}(\nabla \Xs)^Tq_{\Gammas}^{-1}q_{\Gamma}\\
     &=\nabla \Xs (G_{\Gammas}^{-1}-G_{\Gamma}^{-1}q_{\Gammas}^{-1}q_{\Gamma})(\nabla \Xs)^T\\
     &=\nabla \Xs(G_{\Gammas}^{-1}-G_{\Gamma}^{-1}+G_{\Gamma}^{-1}-G_{\Gamma}^{-1}q_{\Gammas}^{-1}q_{\Gamma})(\nabla \Xs)^T\\
     &=\nabla \Xs(G_{\Gammas}^{-1}(I-G_{\Gammas}G_{\Gamma}^{-1})+G_{\Gamma}^{-1}(1-q_{\Gammas}^{-1}q_{\Gamma}))(\nabla \Xs)^T.
\end{align*}
To obtain the estimate, we resort to~\eqref{eq: q bound} and~\eqref{eq: G bound}, along with the fact that the following product is bounded using the same stability constant $S_{\X}$ defined in the proof Lemma~\ref{Lem: Error estimates for G and q}, which is independent of $h$:
\begin{equation}
\normLinfomega{G_{\Gamma}^{-1}}\normLinfomega{G_{\Gammas}^{-1}}\normLinfomega{\nabla \Xs}^2.
\end{equation}

In order to prove~\eqref{eq: EgammaB bound} we recall the definition of $\Pi_{\Gamma} = \nabla \X G_{\Gamma}^{-1}(\nabla \X)^T$
and using Lemmas 30 and 31 in~\cite{BGN2020275}, we have:
 \begin{align*}
    B_\Gammas
     &= \Divgs\vs\circ\Xs  I_3 - \left(\gradghs\vs + (\gradghs\vs)^T\right)\circ\Xs
     = \partial_t q_{\Gammas} I_3 - \frac{1}{2}\partial_t G_{\Gammas}, \\
     B_\Gamma
      &= \Divg\vel\circ\X  I_3 - \left(\gradg\vel + (\gradg\vel)^T\right)\circ\X
      = \partial_t q_{\Gamma} I_3 - \frac{1}{2}\partial_t G_{\Gamma}, 
    \end{align*}
This leads to the following decomposition:
 \begin{align*}
     E^{\mathcal{B}}_{\Gammas} 
     ={}& 
     \Pi_\Gammas\partial_t q_{\Gammas}\Pi_{\Gammas} 
     - \Pi_{\Gammas}\frac{1}{2}\partial_t G_{\Gammas}\Pi_{\Gammas} 
     \\
     &
     - \partial_t q_{\Gamma}\nabla \Xs G_{\Gamma}^{-1}(\nabla\Xs)^T q_{\Gammas}^{-1}q_{\Gamma} 
     \\
     & - \frac12\nabla \Xs G_{\Gamma}^{-1}\nabla \X^T 
     \partial_t G_{\Gamma}
     \nabla \X G_{\Gamma}^{-1}(\nabla\Xs)^T q_{\Gammas}^{-1}q_{\Gamma},
 \end{align*}
 after utilizing the identity $\nabla \X^T \nabla \X G_{\Gamma}^{-1} = \mathbf{I}$ and the projection matrix property of $\Pi_{\Gammas}$. By adding and subtracting the corresponding cross-terms, and using~\eqref{eq: Egamma bound} together with the error estimates from Lemmas~\ref{Lem: Error estimates for G and q} and~\ref{Lem: Error estimates dt G y dt q}, the desired estimate is established.
\end{proof}

\begin{lemma}[Geometric perturbation errors]\label{Lem: geometric perturbation} 
There exists a sufficiently small $h_0>0$, such that for $0<h\leq h_0$ we have the following bounds 
\begin{subequations}
    \begin{alignat}{1}
      \label{eq: bound m*-m}
        \left|\intgammas \funss{\eta}\funss{\phi} -\intgamma  \liftgamma{\funss{\eta}\funss{\phi}} \right|
        &\lesssim h^p\norm{\funss{\eta}}_{\Ldosgammas}\norm{\funss{\phi}}_{\Ldosgammas},
        \\
         \label{eq: bound m*-m boundary}
        \left|\intbgammas \funss{\eta}\funss{\phi}  -\intbgamma  \liftgamma{\funss{\phi}\funss{\eta}} \right|
        &\lesssim h^p\norm{\funss{\eta}}_{\Lp (\partial\Gammas)}\norm{\funss{\phi}}_{\Lp (\partial\Gammas)},
        \\
\label{eq: bound a*-a}
\bigg|\intgammas \gradghs \funss{\eta}\cdot \gradghs \funss{\phi}-\intgamma \gradg \liftgamma{\funss{\eta}}\cdot &\gradghs \liftgamma{\funss{\phi}}\bigg|
\\
\notag
&\lesssim h^p\norm{\gradghs\funss{\eta}}_{\Ldosgammas}\norm{\gradghs\funs{\phi}}_{\Ldosgammas},
\\
\label{eq: bound c*-c}
\bigg|\!\intgammas \!\!\Div _{\Gammas}\!\vs \funss{\eta}\funss{\phi}-\!\intgamma \!\Div _{\Gamma}\!\vel\, \liftgamma{\funss{\eta}\funss{\phi}}\bigg|
&\lesssim h^p\norm{\funss{\eta}}_{\Ldosgammas}\norm{\funss{\phi}}_{\Ldosgammas},
\\
\label{eq: bound b*-b}
\bigg|\intgammas\gradghs \funss{\eta} \cdot \mathcal{B}(\vs)\gradghs \funss{\phi}- 
 \intgamma\gradg &\liftgamma{\funss{\eta}}\cdot \mathcal{B}(\vel)\gradg \liftgamma{\funss{\phi}}\bigg|\\
\notag
&\lesssim h^p\norm{\gradghs\funss{\eta}}_{\Ldosgammas}\norm{\gradghs\funss{\phi}}_{\Ldosgammas}.
    \end{alignat}
\end{subequations}
for all sufficiently smooth functions $\funss{\eta}$ and $\funss{\phi}$ defined in $\Gammas$
and all $t\in[0,T]$.
The hidden constants and $h_0$ depend on the regularity of $X$.
Notice that we do not assume that $\funss{\phi}$ and $\funss{\eta}$ are necessarily in $\Shs$.
\end{lemma}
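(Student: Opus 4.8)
The plan is to transfer each integral over $\Gammas$ or $\boundaryGs$ to the parametric domain $\Omega$ (or $\partial\Omega$) using the chain rule~\eqref{eq: chain rule Omega-Gamma} and the area element $q_{\Gammas}$, do the same for the corresponding integral over $\Gammat$, and then compare the two resulting integrands over the \emph{same} domain, where the only differences are the algebraic factors estimated in Lemmas~\ref{Lem: Error estimates for G and q}--\ref{Lem: Error estimates matrices Eg and EBg}. Concretely, writing $E=\funss{\eta}\circ\Xs$ and $F=\funss{\phi}\circ\Xs$ for the pull-backs (note $\liftgamma{\funss{\eta}}\circ\X = E$ by definition of the lift), the mass-term difference in~\eqref{eq: bound m*-m} becomes $\int_\Omega EF\,(q_{\Gammas}-q_{\Gamma})$, which by~\eqref{eq: q bound} (after factoring $q_{\Gammas}(1-q_{\Gammas}^{-1}q_\Gamma)$ and using that $q_{\Gammas}\cong 1$) is bounded by $h^p\|EF\|_{L^1(\Omega)}\lesssim h^p\|E\|_{L^2(\Omega)}\|F\|_{L^2(\Omega)}$; the norm equivalence between $\Omega$ and $\Gammas$ (from the smoothness of $\Xs$, as in the paragraph around~\eqref{eq: equivalence Gs-G norms}) then converts this into the claimed bound in the $\Ldosgammas$ norms. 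The boundary estimate~\eqref{eq: bound m*-m boundary} is identical in structure, using the length element and~\eqref{eq: q partialG bound} in place of~\eqref{eq: q bound}.

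For the gradient term~\eqref{eq: bound a*-a}, I would use~\eqref{eq: chain rule Omega-Gamma} to write $\int_{\Gammas}\gradghs\funss{\eta}\cdot\gradghs\funss{\phi} = \int_\Omega \nabla E^T G_{\Gammas}^{-1}\nabla F\,q_{\Gammas}$, and on the $\Gammat$ side $\int_\Gamma \gradg\liftgamma{\funss{\eta}}\cdot\gradg\liftgamma{\funss{\phi}} = \int_\Omega \nabla E^T G_\Gamma^{-1}\nabla F\,q_\Gamma$ (again because the lift's pull-back is $E$). The difference is $\int_\Omega \nabla E^T\bigl(G_{\Gammas}^{-1}q_{\Gammas}-G_\Gamma^{-1}q_\Gamma\bigr)\nabla F$; pulling out $\nabla\Xs$-factors and $q_{\Gammas}$ this is exactly governed by the matrix $E_{\Gammas}$ of~\eqref{def: Egamma}, so~\eqref{eq: Egamma bound} gives the $h^p$ factor, and once more the $\Omega$/$\Gammas$ norm equivalence for gradients finishes it. The divergence term~\eqref{eq: bound c*-c} uses the identities $\Divgs\vs\circ\Xs = q_{\Gammas}^{-1}\partial_t q_{\Gammas}$ and $\Divg\vel\circ\X = q_\Gamma^{-1}\partial_t q_\Gamma$ (Lemmas 30--31 of~\cite{BGN2020275}, already invoked above), reducing the difference to $\int_\Omega EF\,(\partial_t q_{\Gammas}-\partial_t q_\Gamma)$ plus a lower-order term from the mismatch of the $q^{-1}$ factors, all controlled by~\eqref{eq: dt q  bound} and~\eqref{eq: q bound}. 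Finally~\eqref{eq: bound b*-b} is the analogue of~\eqref{eq: bound a*-a} with $\mathcal B$ inserted, and the relevant algebraic object is precisely $E^{\mathcal B}_{\Gammas}$ from~\eqref{def: EgammaB}, so~\eqref{eq: EgammaB bound} delivers the estimate.

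I expect the main obstacle to be purely bookkeeping rather than conceptual: carefully verifying that the pull-back of the lift $\liftgamma{\funss{\eta}}$ from $\Gammat$ equals the pull-back $E$ of $\funss{\eta}$ from $\Gammas$ (this is the whole point of defining lifts through the parametric domain, and it is what makes the two integrals land on the same $\Omega$-integrand up to the metric factors), and then matching the nested products of $\nabla\X$, $\nabla\Xs$, $G_\Gamma^{-1}$, $G_{\Gammas}^{-1}$, $q_\Gamma$, $q_{\Gammas}$ exactly against the definitions~\eqref{def: Egamma} and~\eqref{def: EgammaB} so that Lemma~\ref{Lem: Error estimates matrices Eg and EBg} applies verbatim. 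The uniform boundedness (independent of $h$) of all the auxiliary factors $\|\nabla\Xs\|_{\Linfomega}$, $\|G_{\Gammas}^{-1}\|_{\Linfomega}$, $q_{\Gammas}^{\pm1}$, which is needed to absorb them into the hidden constants, follows from~\eqref{eq: error estimate X*}, Lemma~\ref{Lem: Error estimates for G and q} and the standing regularity assumption on $\X$, exactly as in the proofs of the preceding lemmas.
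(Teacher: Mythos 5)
Your proposal is correct and is essentially the paper's own argument: the paper compares the bilinear forms after transporting the $\Gamma$-integrals to $\Gammas$ rather than to $\Omega$ (a difference only of a change of variables through $\Xs$), and relies on exactly the ingredients you invoke — the chain rule~\eqref{eq: chain rule gradients}, the error matrices $E_{\Gammas}$ and $E^{\mathcal B}_{\Gammas}$ from Lemma~\ref{Lem: Error estimates matrices Eg and EBg}, the identity $\partial_t q_\Gamma=(\Divg\vel\circ\X)\,q_\Gamma$ together with Lemma~\ref{Lem: Error estimates dt G y dt q}, and the uniform bounds plus norm equivalences. One small simplification: in~\eqref{eq: bound c*-c} the area-element factors cancel exactly on $\Omega$, so the extra mismatch term you anticipate does not actually appear.
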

\begin{proof}
We will always consider surfaces and functions to be time-dependent, but we will omit the argument $t$ so as not to overload the notation.  
For~\eqref{eq: bound m*-m}, changing variables, using equivalence of norms between norms on $\Gammas$ and the parametric domain, and the geometric bound~\eqref{eq: q bound} we have
\begin{align*}
   \left|\intgammas \funss{\eta}\funss{\phi} -\intgamma  \liftgamma{\funss{\eta}} \liftgamma{\funss{\phi}}\right|
   &=\left|\intgammas \funss{\eta}\funss{\phi} -\intgammas \funss{\eta}\funss{\phi} \, q_{\Gamma}q_{\Gammas}^{-1}\circ \invXs\right|
   \\
   &=\left|\intgammas\funss{\eta}\funss{\phi}\left(1- q_{\Gamma}q_{\Gammas}^{-1}\right)\circ \invXs\right|\\
&\lesssim\normLdosgs{\funss{\eta}}\normLdosgs{\funss{\phi}}\normLinfomega{1- q_{\Gamma}q_{\Gammas}^{-1}}\\
   &\lesssim h^p\normLdosgs{\funss{\eta}}\normLdosgs{\funss{\phi}}.
\end{align*}
We proceed analogously for~\eqref{eq: bound m*-m boundary}, but using the estimate~\eqref{eq: q partialG bound} and Hölder's inequality with $L^\infty$-$L^1$ on the boundary.

To prove~\eqref{eq: bound a*-a} we proceed similarly, using a chain rule for the gradients as in~\eqref{eq: chain rule gradients} 
\begin{align*}
    \bigg|\intgammas \gradghs &\funss{\eta}\cdot \gradghs \funss{\phi}
    -\intgamma \gradg \liftgamma{\funss{\eta}}\cdot \gradghs \liftgamma{\funss{\phi}}\bigg|
    \\
    &=\bigg|\intgammas \gradghs \funss{\eta}\cdot \gradghs \funss{\phi}-\intgammas \lifts{\gradg \liftgamma{\funss{\eta}}}\cdot \lifts{\gradghs \liftgamma{\funss{\phi}}}q_{\Gamma}q_{\Gammas}^{-1}\bigg|\\
    &=\bigg|\intgammas \gradghs \funss{\eta}\cdot E_{\Gammas}\circ\invXs \gradghs \funss{\phi}\bigg|.
\end{align*}
The error matrix $E_{\Gammas}$, is defined as in~\eqref{def: Egamma} and is obtained using that 
\begin{equation*}
     \gradghs \funss{\eta}\cdot \gradghs \funss{\phi}=  \gradghs \funss{\eta}\cdot\Pi_{\Gammas}\gradghs \funss{\phi},
\end{equation*}
and
\begin{align*}
    &\lifts{\gradg\liftgamma{\funss{\eta}}}\cdot  \lifts{\gradg\liftgamma{\funss{\phi}}} \\
    &=(\nabla\X G_{\Gamma}^{-1}(\nabla\Xs)^T)\circ \invXs\gradghs\funs{\eta}\cdot(\nabla\X G_{\Gamma}^{-1}(\nabla\Xs)^T)\circ \invXs\gradghs\funss{\eta} 
    \\
    &=\gradghs\funss{\eta}\cdot(\nabla\Xs G_{\Gamma}^{-1}\nabla\X^T\nabla\X G_{\Gamma}^{-1}(\nabla\Xs)^T)\circ \invXs\gradghs\funss{\phi}
    \\
    &=\gradghs\funss{\eta}\cdot(\nabla\Xs G_{\Gamma}^{-1}(\nabla\Xs)^T)\circ \invXs\gradghs\funss{\phi}.
\end{align*}
We obtain the final estimate by using the equivalence of the norm of $E_{\Gammas}$ in $\Omega$ and its push-forward to $\Gammas$, together with the estimate~\eqref{eq: Egamma bound}.

For~\eqref{eq: bound c*-c}, we use a result about how the time-dependent area element evolves in time, which was proved in Lemma 31 in \cite{BGN2020275} for $C^2$-evolving surfaces. Then, we have $\partial_t q_{\Gamma}=\Div_{\Gamma}\vel\circ \X\,q_{\Gamma}$ in $\Omega$. Furthermore, by application of the same result to each element of $\Gammas$, we have $\partial_t q_{\Gammas}\circ \invXs=\Div _{\Gammas}\vs q_{\Gammas}\circ \invXs$ in each $K^*\subset \Gammas$. Then, we have
\begin{align*}
      \bigg|\intgammas \Div _{\Gammas}\vs\, &\funss{\eta}\;\funss{\phi}-\intgamma \Div _{\Gamma}\vel\, \liftgamma{\funss{\eta}} \liftgamma{\funss{\phi}}\bigg|
      \\
      &= \bigg|\intgammas \Div _{\Gammas}\vs\, \funss{\eta}\;\funss{\phi}-\intgammas \lifts{\Div _{\Gamma}\vel}\, \funss{\eta} \funss{\phi}(q_{\Gamma}q_{\Gammas}^{-1})\circ\invXs\bigg|
      \\
      & =\bigg|\intgammas \Big(\Div _{\Gammas}\vs\, - \lifts{\Div _{\Gamma}\vel}\Big)(q_{\Gamma} q_{\Gammas}^{-1})\circ\invXs\funss{\eta} \funss{\phi}\bigg|\\
    & =\bigg|\intgammas (\partial_t q_{\Gammas}- \partial_t q_\Gamma) q_{\Gammas}^{-1}\circ\invXs\funss{\eta} \funss{\phi}\bigg|,
\end{align*}

Thus, by using that $\norm{q_{\Gammas}^{-1}}$ is bounded independently of $h$,~\eqref{eq: dt q  bound} and the equivalence of norms between $\partial_t q_{\Gammas}- \partial_t q_\Gamma$ and its push-forward to $\Gammas$ we obtain the desired estimate~\eqref{eq: bound c*-c}. 

Finally, in order to show~\eqref{eq: bound b*-b}, we use again~\eqref{eq: chain rule gradients}. We will simplify the notation by referring to $\mathcal{B}(\vel)$ as $\mathcal{B}_{\Gamma}$ and $\mathcal{B}(\vs)$ as $\mathcal{B}_{\Gammas}$,
\begin{align*}
    \intgammas\gradghs \funss{\eta} &\cdot \mathcal{B}(\vs)\gradghs \funss{\phi}-\intgamma\gradg \liftgamma{\funss{\eta}}\cdot \mathcal{B}(\vel)\gradg \liftgamma{\funss{\phi}}\\
    &=\intgammas\gradghs \funss{\eta} \cdot \mathcal{B}_{\Gammas}\gradghs \funss{\phi}-\intgamma\lifts{\gradg \liftgamma{\funss{\eta}}}\cdot \lifts{\mathcal{B}_{\Gamma}}\lifts{\gradg\liftgamma{\funss{\phi}}}\\
    &=\intgammas\gradghs \funss{\eta} \cdot E^{\mathcal{B}}_{\Gammas}\circ\invXs\gradghs \funss{\phi},
\end{align*}
 where the error matrix $E^{\mathcal{B}}_{\Gammas}$ is defined as in~\eqref{def: EgammaB} and is obtained using that
 \begin{equation*}
     \gradghs \funss{\eta} \cdot \mathcal{B}_{\Gammas}\gradghs \funss{\phi}=\gradghs \funss{\eta} \cdot \Pi_{\Gammas}\mathcal{B}_{\Gammas}\Pi_{\Gammas}\gradghs \funss{\phi},
 \end{equation*}
 and~\eqref{eq: chain rule gradients} to rewrite $\lifts{\gradg \liftgamma{\funss{\eta}}}\cdot \lifts{\mathcal{B}_{\Gamma}}\lifts{\gradg\liftgamma{\funss{\phi}}}$ as
 \begin{align*}
    \gradghs \funss{\eta} \cdot(\nabla \Xs G_{\Gamma}^{-1}\nabla \X^T \lifts{\mathcal{B}_{\Gamma}}\nabla \X G_{\Gamma}^{-1}(\nabla\Xs)^T q_{\Gamma} q_{\Gammas}^{-1})\circ\invXs \gradghs \funss{\phi}.
  \end{align*}
We obtain the final estimate~\eqref{eq: bound b*-b} using again the equivalence of norms between $    E^{\mathcal{B}}_{\Gammas}$ and its push-forward to $\Gammas$ and~\eqref{eq: EgammaB bound}.
\end{proof}

\begin{remark}\label{remark: power of p}
     If we compare our geometric error estimates with those presented for ESFEM (\cite[Lemma 5.6]{K2018} or~\cite[Lemma 9.24]{ER2020}) we obtain an error of order $p$ instead of one of order $p+1$, even though we use splines of degree $p$. This is because in this work we use a generic lift $\mathcal{L}:=\X\circ\invXs$, from $\Gammas$ to $\Gamma$ instead of the lift given by the distance function. This behaviour is to be expected and is analogous to what happens, for example, with the Laplace-Beltrami operator when a generic lift is used (see~\cite[Section 4.3]{BDN20201}). However, this has no effect on the a priori $H^1$-error analysis as is the case for the Laplace-Beltrami operator. Using the distance function lift requires $\Gamma$ being $C^{p+2}$ for the optimal order of the geometric error (see~\cite[Remark 9.1]{ER2020}), i.e., at least $C^4$, if $p\geq 2$, whereas in this paper we see that using the generic lift with $\Gamma$ a $C^{p+1}$ surface gives the same order of approximation.
\end{remark}

\begin{remark}\label{remark: material velocity}
    Another difference worth noting compared to finite element methods is that, since we use the lift given by the global parameterizations of the surfaces $\mathcal{L}=\X\circ\invXs$, the velocity of material points is the same as the continuous velocity $\vel$. Therefore, we do not have two versions of the Transport Theorem on $\Gammat$, as is common in the literature; see, for example,~\cite[Section 5.3]{K2018} or~\cite[Section 8.6]{ER2020}. Thus, if we compare, for instance, the last two estimates in Lemma~\ref{Lem: geometric perturbation} with the last two estimates in Lemma 5.6 of~\cite{K2018}, we observe that here, instead of having the discrete velocity $\vel_h$ in the continuous bilinear forms (which is not the interpolation of $\vel$), we simply have $\vel$.
    
\end{remark}

\section{Approximation estimates}
\label{sec: spline approximation estimates}

\subsection{Spline approximation on the surface}
\begin{definition} \label{def: quasi-int on surface}
Recall the definition of $\Gammast$ from~\eqref{eq: quasi-int surf} with its corresponding spline space $\Shs$. For each $t>0$ we define a projector $\Qs[\Gammas]: L^2(\Gammat)\rightarrow \Shs$ as follows
\begin{equation*}\label{quasi-int on surface}
    \Qs[\Gammas](u):=\Q\big(\lifts{ u}\circ \Xs\argu\big)\circ(\Xs\argu)^{-1}.
\end{equation*}   
Where $\Q$ is the quasi-interpolant operator defined in Section~\ref{sec: splines}{sec: spline approximation estimates}. Moreover, $\Qs[\Gammat]$ is defined as the lift of $\Qs$, that is,  
$\Qs[\Gamma] u:=\liftgamma{ \Qs[\Gammast]\lifts{ u}}$.

\end{definition}

Under the assumption that $\X\argu$ and $\Xs\argu$ are sufficiently smooth for all $t \in [0,T]$, we can extend the approximation properties of the classical spline spaces $\Sparam$ to the discrete space $\Shs$. Throughout this chapter, we will assume that the spline spaces used are globally $C^\ell$ and of polynomial degree $p$.  

For the following lemma, we will require that $\X\argu$ and $\Xs\argu$ belong to $W^{k,\infty}(\Omega)$ with inverses in $W^{1,\infty}(\Omega)$, with uniform bounds.

\begin{lemma}\label{Lem: global error estimate quasi int on gamma}
    Let $0\leq |\bs \alpha|\leq k\leq p+1$ and $1\leq q\leq \infty$, and let us assume that
    \begin{multline}\label{assumption on X Xs}
        \max_{t\in[0,T]} \big( \| X\argu \|_{W^{k,\infty}(\Omega)} + \| \Xs\argu \|_{W^{k,\infty}(\Omega)}\\
        +  \| X^{-1}\argu \|_{W^{1,\infty}(\Gammat)} + \| (\Xs)^{-1}\argu \|_{W^{1,\infty}(\Gammast)} \big) \le C.
    \end{multline}
    If $u\in W^{k,q}(\Gammat)$ and $|\bs \alpha|\leq\ell$ then, for each $t\in[0, T]$,
    \begin{align}
        \label{eq: global approx on gamma}
    \|u-\Qs[\Gammat] u\|_{W^{|\bs \alpha|,q}(\Gammat)}
    &\lesssim h^{k-|\bs \alpha|}\|u \|_{W^{k,q}(\Gammat)}\\
    \label{uniform bound Q on surface}
    \text{and} \qquad
    \|\Qs[\Gammat] u\|_{W^{1,\infty}(\Gammat)}
    &\lesssim \|u \|_{W^{k,q}(\Gammat)}.
    \end{align}
    with the hidden constant depending only on $C$ from~\eqref{assumption on X Xs}. Also, as an immediate consequence,
    \end{lemma}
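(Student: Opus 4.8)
The plan is to reduce both bounds to the parametric estimates of Lemma~\ref{lem: globalestimatesprojector} by passing through the pullback to $\Omega$. First I would unwind Definition~\ref{def: quasi-int on surface}. Writing $U:=u\circ\X$ for the pullback of $u$, one has $\lifts{u}\circ\Xs=u\circ\X\circ\invXs\circ\Xs=U$, hence $\Qs[\Gammas]\lifts{u}=\Q\bigl(\lifts{u}\circ\Xs\bigr)\circ\invXs=(\Q U)\circ\invXs$, and therefore
\[
\Qs[\Gammat]u=\liftgamma{\Qs[\Gammas]\lifts{u}}=(\Q U)\circ\invXs\circ\Xs\circ\invX=(\Q U)\circ\invX .
\]
In particular $u-\Qs[\Gammat]u=(U-\Q U)\circ\invX$: the surface approximation error is precisely the push-forward to $\Gammat$ of the parametric error $U-\Q U$, and the auxiliary surface $\Gammast$ drops out of the computation.

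The second step is to transfer Sobolev norms between $\Gammat$ and $\Omega$. Using the chain rule~\eqref{eq: chain rule Omega-Gamma} together with the equivalence of norms between a function and its pullback recorded just after~\eqref{eq: chain rule Omega-Gamma}, the uniform bounds~\eqref{assumption on X Xs} give, with constants depending only on $C$ from~\eqref{assumption on X Xs},
\[
\norm{u-\Qs[\Gammat]u}_{W^{m,q}(\Gammat)}\lesssim\norm{U-\Q U}_{W^{m,q}(\Omega)}
\qquad\text{and}\qquad
\norm{U}_{W^{k,q}(\Omega)}\lesssim\norm{u}_{W^{k,q}(\Gammat)}
\]
for the relevant orders $m$. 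Combining the first inequality (with $m=|\bs\alpha|$), estimate~\eqref{eq: globalapproximation} applied with differentiation order $|\bs\alpha|$ and smoothness index $k$ (admissible since $|\bs\alpha|\le k\le p+1$ and $|\bs\alpha|\le\ell$), and the second inequality, one gets
\[
\norm{u-\Qs[\Gammat]u}_{W^{|\bs\alpha|,q}(\Gammat)}\lesssim\norm{U-\Q U}_{W^{|\bs\alpha|,q}(\Omega)}\lesssim h^{k-|\bs\alpha|}\norm{U}_{W^{k,q}(\Omega)}\lesssim h^{k-|\bs\alpha|}\norm{u}_{W^{k,q}(\Gammat)},
\]
which is~\eqref{eq: global approx on gamma}.

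For~\eqref{uniform bound Q on surface} I would use the $W^{1,\infty}$-stability of $\Q$ instead of its approximation property: because each $\lambda_j$ is a local, $L^1$-bounded functional and $\sum_{j\in J}\basisfo\equiv1$ on $\Omega$ (so $\sum_{j\in J}\nabla\basisfo\equiv0$), subtracting from $U$ a suitable elementwise constant shows $\norm{\Q U}_{W^{1,\infty}(\Omega)}\lesssim\norm{U}_{W^{1,\infty}(\Omega)}$. Transferring this back to $\Gammat$ and invoking the Sobolev embedding $W^{k,q}(\Gammat)\hookrightarrow W^{1,\infty}(\Gammat)$ in the admissible range of exponents yields~\eqref{uniform bound Q on surface}. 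Alternatively, one may write $\Qs[\Gammat]u=u+(\Qs[\Gammat]u-u)$ and combine~\eqref{eq: global approx on gamma} for $|\bs\alpha|=1$ with an inverse inequality on $\Shs$.

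I expect the only genuinely delicate point to be the norm transfer when the differentiation order exceeds $1$. The hypotheses only provide $W^{1,\infty}$ control of the inverse maps, so the passage from parametric to surface norms is immediate for $m\le1$ but for higher $m$ requires the extra regularity that is in fact available here ($\X\argu$ is a smooth diffeomorphism, $\Xs=\Q\X$ is elementwise $C^\infty$, and $\invXs$ is elementwise smooth by the smoothness part of Assumption~\ref{ass: regularity Xh}), to be used element by element. Since only $|\bs\alpha|\le1$ occurs in later applications of this lemma, this causes no real difficulty, and everything else is a routine combination of the chain rule, the triangle inequality, and Lemma~\ref{lem: globalestimatesprojector}.
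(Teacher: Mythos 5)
Your argument is correct and is essentially the paper's: the paper transfers the bound to $\Gammas$ by equivalence of norms and invokes the quasi-interpolation estimate on that spline surface (the techniques of Corollary~4.21 of the cited spline reference), which is precisely the reduction to the parametric estimate of Lemma~\ref{lem: globalestimatesprojector} that you make explicit through the identity $\Qs[\Gamma]u=(\Q U)\circ\invX$ with $U=u\circ\X$. Your treatment of~\eqref{uniform bound Q on surface} (stability of $\Q$ plus an embedding, or the triangle inequality with the case $|\bs\alpha|=1$ and an inverse estimate) is consistent with the paper, which simply states that the stability bound is an immediate consequence of the approximation estimate.
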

    \begin{proof}
        Using equivalence of norms, similar techniques to those used in Corollary 4.21 in~\cite{VBSB2014}, for each fixed $t\in[0,T]$, we have that 
        \begin{align*}
            \|u-\Qs[\Gamma] u\|_{W^{|\bs \alpha|,q}(\Gammat)}
            &\lesssim 
            \|\lifts{u}-\Qs u\|_{W^{|\bs \alpha|,q}(\Gammast)}
            \\
            &\lesssim 
            h^{k-|\bs \alpha|}\|\lifts{u}\|_{W^{k,q}(\Gammast)}
            \lesssim 
            h^{k-|\bs \alpha|}\|u\|_{W^{k,q}(\Gammat)}.
        \end{align*}
    The stability bound~\eqref{uniform bound Q on surface} is an immediate consequence of this.
    \end{proof}

    As in this article we will use spline spaces with boundary conditions, we consider the quasi-interpolant defined for functions belonging to spaces with boundary conditions.
    
    Just as we defined a quasi-interpolant for functions defined on surfaces, in this article we also work with geometric quantities defined only on the boundary of the surface. Therefore, it will be necessary to consider quasi-interpolants defined for functions that are only meaningful on the boundary. We will do this analogously to Definition~\ref{def: quasi-int on surface}.

    \begin{definition}
        Let $\boundaryGscero$ be given, with its corresponding spline space $\mathcal{S}_h^\partial$. For each $t>0$, we define a projector $\Qs[\boundaryGscero]: L^2(\boundaryG)\rightarrow \mathcal{S}_h^\partial$ as follows, for
        \begin{equation*}\label{quasi-int on boundary}
            \Qs[\boundaryGscero](u):=\Q_{\partial\Omega}\big({ u}^{\boundaryG}\circ \Xscero\argu\big)\circ(\Xscero\argu)^{-1}.
        \end{equation*}   
        where $\Q_{\partial\Omega}$ is the univariate quasi-interpolation operator defined analogously to the operator $\Q$ from~\eqref{def: projector}, but on $\partial\Omega$ instead of on $\Omega$. Additionally, $\Qs[\boundaryG]$ is defined as the lift of $\Qs[\boundaryGscero]$, that is, $\Qs[\boundaryG] u:={ \Qs[\boundaryGscero]u}^{\boundaryG}$.
    \end{definition}
    
    It is worth noting that we obtain analogous approximation estimates as for the bivariate quasi-interpolant when the function $u\in W^{k,s}(\partial\Gammat)$.

\subsection{Ritz map with vanishing traces}\label{sec: Linear Ritz projection}
We define a Ritz-like projection as in \cite{KLL2019}, but with boundary values and using splines spaces instead of finite elements.
\begin{definition}\label{def: ritz map zero trace linear Ru}
We define the modified Ritz projection $\Rs[\Gammas]^0:\Hk_0(\Gammat)\rightarrow \Shscero$ as follows: 
Given $u\in \Hk_0(\Gammat)$, let $\Rs[\Gammas]^0 u = w$, with $w$ the unique function satisfying $w \in \Shscero$ and 
	\begin{equation}\label{eq: ritz map zero trace linear Ru}
		\intgammas\gradghs w \cdot \gradghs  \funs{\eta} + w \; \funs{\eta}=\intgamma \gradg u\cdot \gradg\liftgamma{\funs{\eta}}+u\;\liftgamma{\funs{\eta}}, 
	\end{equation}
for all $\funs{\eta} \in \Shscero$. 
Moreover, we define $\Rscero[\Gamma] u$ as the lift of $\Rscero u$, that means $\Rscero[\Gamma] u:=\liftgamma{\Rscero u}$.
\end{definition}

This operator $\Rs[\Gammas]^0$ is well defined for $h$ sufficiently small, as is stated in the following lemma.

\begin{lemma}\label{Lem: stability - galerkin orthog linear Ru}
There exists $h_0 > 0$ such that, for all $0<h\le h_0$, all $t\in[0,T]$, and all $u\in \Hk_0(\Gammat)$, there exists a unique solution $\Rscero u$ of~\eqref{eq: ritz map zero trace linear Ru} satisfying
    \begin{align}
    \label{eq: stability linear Ru}
        \normHunogst{\Rscero u}&\lesssim\normHunogt{u},\\
        \label{eq: quasi-orth-galerkin linear Ru}
        \left| (\eR,  \liftg{\funs{\eta}})_{\Hunogammat}\right|&\lesssim h^p\norm{ u}_{\Hunogammat}\norm{\liftg{\funs{\eta}}}_{\Hunogammat}, 
    \end{align}
    for all $\funs{\eta} \in \Shscero$, with $\eR=u-\liftgamma{\Rscero u}=u-\Rscero[\Gamma] u$. 
\end{lemma}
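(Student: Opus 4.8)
The statement is essentially a Lax--Milgram plus Galerkin-orthogonality argument, complicated only by the fact that the bilinear form on the left of~\eqref{eq: ritz map zero trace linear Ru} lives on $\Gammast$ while the data on the right lives on $\Gammat$. I would first fix $t$ and work with the $\Gammast$-inner product
\[
  a^*_h(w,\eta):=\intgammas\gradghs w\cdot\gradghs\eta + w\,\eta,
\]
which is clearly bounded and coercive on $\Shscero\subset\Hunogammas$ with constants independent of $h$ (and of $t$, by the standing regularity assumption on $\X,\Xs$). Existence and uniqueness of $w=\Rscero u$ then follow from Lax--Milgram applied on the finite-dimensional space $\Shscero$; the only point to check is that the right-hand side $\eta\mapsto \intgamma \gradg u\cdot\gradg\liftgamma{\eta}+u\,\liftgamma{\eta}$ is a bounded linear functional on $\Shscero$, which holds because lifting $\Gammast\to\Gammat$ preserves $\Hk$-norms up to $h$-independent constants by~\eqref{eq: equivalence Gs-G norms}. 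This gives a well-defined operator for \emph{all} $h\le h_0$, with $h_0$ the threshold from the norm-equivalence and the geometric-perturbation lemmas.

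For the stability bound~\eqref{eq: stability linear Ru}, I would test~\eqref{eq: ritz map zero trace linear Ru} with $\funs{\eta}=w=\Rscero u$ itself: coercivity gives $\normHunogs{w}^2\lesssim a^*_h(w,w)=\intgamma\gradg u\cdot\gradg\liftgamma{w}+u\,\liftgamma{w}\le \normHunog{u}\,\normHunog{\liftgamma{w}}\lesssim\normHunogt{u}\,\normHunogst{w}$, where the last step again uses~\eqref{eq: equivalence Gs-G norms}; dividing by $\normHunogst{w}$ yields the claim.

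The quasi-orthogonality estimate~\eqref{eq: quasi-orth-galerkin linear Ru} is the main point, and the one place the geometric error enters. Write, for $\funs{\eta}\in\Shscero$ and its lift $\liftg{\funs{\eta}}$,
\[
  (\eR,\liftg{\funs{\eta}})_{\Hunogammat}
  =\intgamma\gradg u\cdot\gradg\liftg{\funs{\eta}}+u\,\liftg{\funs{\eta}}
   -\intgamma\gradg\Rscero[\Gamma]u\cdot\gradg\liftg{\funs{\eta}}+\Rscero[\Gamma]u\,\liftg{\funs{\eta}}.
\]
By the defining equation~\eqref{eq: ritz map zero trace linear Ru}, the first integral equals $a^*_h(\Rscero u,\funs{\eta})=\intgammas\gradghs\Rscero u\cdot\gradghs\funs{\eta}+\Rscero u\,\funs{\eta}$. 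Hence the whole expression is exactly the difference between the bilinear form evaluated on $\Gammast$ (at $\Rscero u$ and $\funs{\eta}$) and the bilinear form evaluated on $\Gammat$ (at their lifts $\Rscero[\Gamma]u$ and $\liftg{\funs{\eta}}$). This is precisely the object controlled by the geometric perturbation estimates~\eqref{eq: bound m*-m} and~\eqref{eq: bound a*-a} in Lemma~\ref{Lem: geometric perturbation}, applied with $\funss{\eta}=\Rscero u$ and $\funss{\phi}=\funs{\eta}$, giving a bound $\lesssim h^p\,\normHunogs{\Rscero u}\,\normHunogs{\funs{\eta}}$. Finishing then only requires $\normHunogs{\Rscero u}\lesssim\normHunogt{u}$, which is~\eqref{eq: stability linear Ru} just proved, and $\normHunogs{\funs{\eta}}\lesssim\norm{\liftg{\funs{\eta}}}_{\Hunogammat}$ from norm equivalence~\eqref{eq: equivalence Gs-G norms}, producing the stated $h^p\,\norm{u}_{\Hunogammat}\,\norm{\liftg{\funs{\eta}}}_{\Hunogammat}$.

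\textbf{Main obstacle.} There is no deep difficulty; the one thing to be careful about is bookkeeping with the lift operators and making sure the geometric-perturbation lemma is applied to the correct pair of functions on $\Gammast$ (namely $\Rscero u$ and $\funs{\eta}$ themselves, not their lifts), and that all the $h_0$'s from norm equivalence, the well-posedness, and Lemma~\ref{Lem: geometric perturbation} are taken as a common minimum so the conclusion holds uniformly in $t\in[0,T]$ and $0<h\le h_0$.
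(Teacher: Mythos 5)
Your proposal is correct and follows essentially the same route as the paper: Lax--Milgram together with the norm equivalence~\eqref{eq: equivalence Gs-G norms} for existence, uniqueness and the stability bound~\eqref{eq: stability linear Ru}, and then the identity $(\eR,\liftg{\funs{\eta}})_{\Hunogammat}=(u,\liftg{\funs{\eta}})_{\Hunogammat}-(\Rscero[\Gamma]u,\liftg{\funs{\eta}})_{\Hunogammat}$ with the first term rewritten via~\eqref{eq: ritz map zero trace linear Ru} as the $\Gammast$-bilinear form at $(\Rscero u,\funs{\eta})$, so that the difference is controlled by the geometric perturbation bounds~\eqref{eq: bound m*-m} and~\eqref{eq: bound a*-a} plus~\eqref{eq: stability linear Ru}. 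Your bookkeeping of which functions the perturbation lemma is applied to (namely $\Rscero u$ and $\funs{\eta}$ on $\Gammast$, not their lifts) matches the paper's argument exactly.
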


The proof of this lemma is similar to those of Theorems 6.3 and 6.4 in~\cite{K2018}, where also an $L^2$-error estimate is obtained, which we do not need in this context.

\begin{proof}
The existence of a unique solution and the stability estimate~\eqref{eq: stability linear Ru} are direct consequences of Lax-Milgram theory and~\eqref{eq: equivalence Gs-G norms}. 
For the quasi-Galerkin orthogonality~\eqref{eq: quasi-orth-galerkin linear Ru}, using~\eqref{eq: ritz map zero trace linear Ru}, we have
    \begin{align*}
        (\eR,  \liftgamma{\funs{\eta}})_{\Hunogamma}
        ={}& ( u, \liftgamma{\funs{\eta}})_{\Hunogamma} - \left(\Rscero[\Gamma]  u ,  \liftgamma{\funs{\eta}}\right)_{\Hunogamma}\\
 = {}&\intgammas\gradghs\Rs[\Gammas]^0 u \cdot \gradghs  \funs{\eta} + \Rs[\Gammas]^0 u\; \funs{\eta}
 \\
 &- \intgamma \gradg\Rscero[\Gamma]  u \cdot \gradg  \liftgamma{\funs{\eta}} + \Rscero[\Gamma] u\; \liftgamma{\funs{\eta}}.
    \end{align*}
By regrouping terms and using ~\eqref{eq: bound a*-a} and~\eqref{eq: bound m*-m} together with~\eqref{eq: stability linear Ru} and~\eqref{eq: equivalence Gs-G norms} we deduce~\eqref{eq: quasi-orth-galerkin linear Ru}.
\end{proof}

In the next proposition we present the higher-order error estimates for the Ritz map from Definition~\ref{def: ritz map zero trace linear Ru}.

\begin{proposition}\label{prop: error estimate linear Ru}
Let $u$ be defined in $\Gt$ such that $u\in \Hk_0(\Gammat)\cap\Hk[p+1](\Gammat)$ for all $t\in[0,T]$. Then, there exists $h_0>0$ suth that the error in the Ritz map satisfies, for all $t\in[0,T]$ and $0<h\leq h_0$,
	\begin{equation}\label{eq: error estimate linear Ru}
		\norm{u-\Rscero[\Gamma] u}_{\Hunogammat}\lesssim h^p\norm{u}_{H^{p+1}(\Gammat)}.
	\end{equation}
\end{proposition}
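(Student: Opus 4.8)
The plan is to combine the quasi-Galerkin orthogonality from Lemma~\ref{Lem: stability - galerkin orthog linear Ru} with the approximation properties of the spline quasi-interpolant $\Qs[\Gammast]$ on the surface (Lemma~\ref{Lem: global error estimate quasi int on gamma}), via a classical C\'ea-type argument. Write $\eR = u - \Rscero[\Gamma] u$ and split it as
\begin{equation*}
  u - \Rscero[\Gamma] u = \bigl(u - \Qs[\Gammat] u\bigr) + \bigl(\Qs[\Gammat] u - \Rscero[\Gamma] u\bigr) =: \rho + \theta,
\end{equation*}
where $\rho$ is an interpolation error and $\theta$ is a discrete function whose lift lies in $\Shscero$ (note $\Qs[\Gammat] u$ has vanishing trace because $u\in \Hk_0$, and $\Rscero[\Gamma] u$ likewise, so $\theta^{\Gammas}\in\Shscero$). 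The interpolation term is handled directly by~\eqref{eq: global approx on gamma} with $|\bs\alpha|=1$, $k=p+1$, $q=2$, giving $\norm{\rho}_{\Hunogammat}\lesssim h^p\norm{u}_{H^{p+1}(\Gammat)}$, so everything reduces to estimating $\norm{\theta}_{\Hunogammat}$.

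To bound $\theta$, test~\eqref{eq: ritz map zero trace linear Ru} with $\funs\eta = \theta^{\Gammas}\in\Shscero$ and use norm equivalence~\eqref{eq: equivalence Gs-G norms} to move between $\Gammas$ and $\Gamma$. First I would write $\norm{\theta}_{\Hunogammat}^2 \cong (\theta^{\Gammas},\theta^{\Gammas})_{\Hunogammast}$ and insert $\pm\, (u - \Rscero[\Gamma]u)$ inside the $\Hk(\Gamma)$ inner product, i.e.
\begin{align*}
  (\theta,\theta)_{\Hunogamma}
  &= (\Qs[\Gammat]u - u,\theta)_{\Hunogamma} + (u - \Rscero[\Gamma]u,\theta)_{\Hunogamma}\\
  &= (\Qs[\Gammat]u - u,\theta)_{\Hunogamma} + (\eR,\theta)_{\Hunogamma}.
\end{align*}
The first term is $\le \norm{\rho}_{\Hunogammat}\norm{\theta}_{\Hunogammat}\lesssim h^p\norm{u}_{H^{p+1}(\Gammat)}\norm{\theta}_{\Hunogammat}$ by the interpolation estimate above. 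For the second term I would observe $\theta^{\Gammas}=(\Qs[\Gammat]u)^{\Gammas} - \Rscero u\in\Shscero$, so I can invoke the quasi-Galerkin orthogonality~\eqref{eq: quasi-orth-galerkin linear Ru} with $\funs\eta = \theta^{\Gammas}$, obtaining $\lvert (\eR,\theta)_{\Hunogamma}\rvert \lesssim h^p\norm{u}_{\Hunogammat}\norm{\theta}_{\Hunogammat}$. Note that~\eqref{eq: quasi-orth-galerkin linear Ru} as stated replaces the $\Rscero[\Gamma]u$-part of $\theta$ with the full $\Rscero$-equation, whereas the $\Qs[\Gammat]u$-part is a genuinely different function; one should be slightly careful here, but since $\Qs[\Gammat]u$ is bounded in $\Hk(\Gammat)$ by $\norm{u}_{H^{p+1}(\Gammat)}$ (by~\eqref{uniform bound Q on surface} or the approximation estimate plus triangle inequality), the extra term it contributes is again controlled by $h^p\norm{u}_{H^{p+1}(\Gammat)}\norm{\theta}_{\Hunogammat}$ after using~\eqref{eq: bound a*-a},~\eqref{eq: bound m*-m}. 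Dividing through by $\norm{\theta}_{\Hunogammat}$ gives $\norm{\theta}_{\Hunogammat}\lesssim h^p(\norm{u}_{H^{p+1}(\Gammat)} + \norm{u}_{\Hunogammat})\lesssim h^p\norm{u}_{H^{p+1}(\Gammat)}$, and the triangle inequality $\norm{\eR}_{\Hunogammat}\le\norm{\rho}_{\Hunogammat}+\norm{\theta}_{\Hunogammat}$ finishes the proof.

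The main obstacle I anticipate is bookkeeping rather than conceptual: making sure the quasi-Galerkin orthogonality~\eqref{eq: quasi-orth-galerkin linear Ru} is applied to an admissible test function (its lift must genuinely sit in $\Shscero$, which requires the vanishing-trace property of $\Qs[\Gammat]u$ — this is where the hypothesis $u\in\Hk_0(\Gammat)$ is essential), and tracking constants uniformly in $t$, which is guaranteed by the standing regularity assumption on $\X$ and the uniform bounds~\eqref{assumption on X Xs} on $\X,\Xs$ underlying~\eqref{eq: equivalence Gs-G norms},~\eqref{eq: global approx on gamma}. No single estimate is hard; the care lies in threading the surface-lift identities consistently so that every inner product is taken on the surface where the quoted lemma applies.
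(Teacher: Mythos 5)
Your argument is essentially the paper's proof: the same splitting through the quasi-interpolant, the same use of the quasi-Galerkin orthogonality~\eqref{eq: quasi-orth-galerkin linear Ru} on the discrete difference $\Qs u-\Rscero u$, and the same interpolation estimate, the only cosmetic difference being that you bound $\theta=\Qs[\Gamma]u-\Rscero[\Gamma]u$ and finish with the triangle inequality while the paper bounds $\eR$ directly and closes with Young's inequality. One small correction: the plain quasi-interpolant does not automatically vanish on $\partial\Gammat$ just because $u\in\Hk_0(\Gammat)$, so to have $\theta^{\Gammas}\in\Shscero$ you must use the zero-trace variant of the quasi-interpolant (the projector onto $\Sparamcero$ mentioned in the paragraph ``Vanishing boundary values''), exactly as the paper does when it writes $\Qs[\Gamma]u-\Rscero[\Gamma]u$ as the lift of a function in $\Shscero$.
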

\begin{proof} 
As before, we write $\eR=u-\Rscero[\Gamma] u$ and do not explicitely write the time $t$ arguments. 
From the equivalence of norms~\eqref{eq: equivalence Gs-G norms} and~\eqref{eq: quasi-orth-galerkin linear Ru}, there exists $h_0>0$ such that if $0<h<h_0$, we have
\begin{align*}
    \normcuad{\eR}_{\Hunogamma}&=(\eR, \eR)_{\Hunogamma}
    =(\eR, u-\Qs[\Gamma] u)_{\Hunogamma}+(\eR, \Qs[\Gamma] u-\Rscero[\Gamma] u)_{\Hunogamma}
    \\
    &\lesssim \norm{\eR}_{\Hunogamma}\norm{u-\Qs[\Gamma] u}_{\Hunogamma}+h^p \norm{u}_{\Hunogamma}\norm{\Qs[\Gamma] u-\Rscero[\Gamma] u}_{\Hunogamma}
    \\
    &\lesssim h^p \norm{\eR}_{\Hunogamma}\norm{u}_{H^{p+1}(\Gamma)}+h^p \norm{u}_{\Hunogamma}\norm{\Qs[\Gamma] u-\Rs[\Gamma] ^0u}_{\Hunogamma}.
\end{align*}
We have used that $\Qs[\Gamma] u-\Rscero[\Gamma] u=\liftgamma{\Qs^0 u-\Rscero u}$, and also the error estimate~\eqref{eq: global approx on gamma}.
Furthermore,
\begin{align*}
\norm{\Qs[\Gamma] u-\Rs[\Gamma] ^0u}_{\Hunogamma}
&= \norm{\Qs[\Gamma] u-u+u-\Rscero[\Gamma] u}\\
  &\leq  \norm{\Qs[\Gamma] u-u}_{\Hunogamma}+\norm{\eR}_{\Hunogamma}\\
  &\lesssim h^p\norm{u}_{H^{p+1}(\Gamma)}+\norm{\eR}_{\Hunogamma},
\end{align*}
whence
\begin{align*}
    \normcuad{\eR}_{\Hunogamma}
  \lesssim {}&h^p \norm{\eR}_{\Hunogamma}\norm{u}_{H^{p+1}(\Gamma)}+ h^{2p}\normcuad{u}_{H^{p+1}(\Gamma)}
  \\
  &+h^p\norm{u}_{\Hunogamma}\norm{\eR}_{\Hunogamma}.
\end{align*}
Finally, 
we arrive at the desired estimate using Young's inequality.
\end{proof}

\begin{lemma}\label{Lem: uniform bound linear Ru}
    If $u\in \Hk_0(\Gammat)\cap W^{p+1,\infty}(\Gammat)$ for all $t\in[0,T]$. Then,
    \begin{equation}\label{eq: uniform bound linear Ru}
        \norm{\Rs[\Gamma]^0u}_{\Wunoinf(\Gammat)}\lesssim \norm{{u}}_{{W^{p+1,\infty}(\Gammat)}}.
    \end{equation}
\end{lemma}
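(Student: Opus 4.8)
The plan is to compare $\Rs[\Gamma]^0 u$ with the quasi-interpolant $\Qs[\Gamma] u$, whose $\Wunoinf$-stability is already at our disposal through~\eqref{uniform bound Q on surface} (with $k=p+1$, $q=\infty$). By the triangle inequality,
\begin{equation*}
  \norm{\Rs[\Gamma]^0u}_{\Wunoinf(\Gammat)}
  \le \norm{\Qs[\Gamma] u}_{\Wunoinf(\Gammat)} + \norm{\Rs[\Gamma]^0u - \Qs[\Gamma] u}_{\Wunoinf(\Gammat)}
  \lesssim \norm{u}_{W^{p+1,\infty}(\Gammat)} + \norm{\Rs[\Gamma]^0u - \Qs[\Gamma] u}_{\Wunoinf(\Gammat)},
\end{equation*}
so everything reduces to estimating the last term. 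The point is that $\Rs[\Gamma]^0u - \Qs[\Gamma] u$ is the lift to $\Gammat$ of an element of $\Shs$, hence its pull-back to $\Omega$ via $\X\argu$ is a genuine spline in $\Sparam$ on the quasi-uniform mesh $\mathcal{M}_h$.

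The key ingredient is then an inverse estimate. For splines in $\Sparam$ on a quasi-uniform mesh in dimension two one has the standard inequality $\|W\|_{W^{1,\infty}(\Omega)}\lesssim h^{-1}\|W\|_{H^1(\Omega)}$, the exponent $-1$ coming from $d/2$ with $d=2$. Transferring this bound to $\Gammat$ through the chain rule~\eqref{eq: chain rule Omega-Gamma} and the norm equivalences between a function and its pull-back (available, with $h$-independent constants, because $\X\argu$ and $\Xs\argu$ together with their inverses are uniformly bounded in $W^{1,\infty}$ by the standing regularity assumption and~\eqref{assumption on X Xs}) yields
\begin{equation*}
  \norm{\Rs[\Gamma]^0u - \Qs[\Gamma] u}_{\Wunoinf(\Gammat)}
  \lesssim h^{-1}\,\norm{\Rs[\Gamma]^0u - \Qs[\Gamma] u}_{\Hunogammat}.
\end{equation*}

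It remains to control the $H^1$-norm of the difference, which we split once more through $u$: by the Ritz error estimate~\eqref{eq: error estimate linear Ru} of Proposition~\ref{prop: error estimate linear Ru} and the interpolation estimate~\eqref{eq: global approx on gamma} of Lemma~\ref{Lem: global error estimate quasi int on gamma} (again with $k=p+1$),
\begin{equation*}
  \norm{\Rs[\Gamma]^0u - \Qs[\Gamma] u}_{\Hunogammat}
  \le \norm{u-\Rs[\Gamma]^0u}_{\Hunogammat} + \norm{u-\Qs[\Gamma] u}_{\Hunogammat}
  \lesssim h^{p}\norm{u}_{H^{p+1}(\Gammat)}.
\end{equation*}
Since $\Gammat$ is bounded, $\norm{u}_{H^{p+1}(\Gammat)}\lesssim\norm{u}_{W^{p+1,\infty}(\Gammat)}$, so combining the three displays gives $\norm{\Rs[\Gamma]^0u - \Qs[\Gamma] u}_{\Wunoinf(\Gammat)}\lesssim h^{p-1}\norm{u}_{W^{p+1,\infty}(\Gammat)}$, which is bounded (indeed $\to 0$ as $h\to 0$) because $p\ge 2$ forces $h^{p-1}\le 1$ for $h\le h_0\le 1$. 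This proves~\eqref{eq: uniform bound linear Ru}.

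The only delicate point is the second step: one must be sure that the spline inverse inequality legitimately passes from the reference domain to the curved surface $\Gammat$ without the constant degenerating as $h\to 0$. This is exactly what the uniform $W^{1,\infty}$-bounds on $\X\argu$, $\Xs\argu$ and their inverses provide, since they make the push-forward/pull-back preserve the relevant $W^{1,\infty}$- and $H^1$-norm equivalences with $h$-independent constants; the rest is a routine assembly of estimates proved earlier.
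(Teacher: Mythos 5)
Your proposal is correct and follows essentially the same route as the paper's proof: split through the quasi-interpolant $\Qs[\Gamma]u$, apply the spline inverse inequality to trade $\Wunoinf(\Gammat)$ for $h^{-1}$ times the $\Hk(\Gammat)$-norm, and then invoke the Ritz error estimate~\eqref{eq: error estimate linear Ru} together with the interpolation estimate~\eqref{eq: global approx on gamma}, concluding via $h^{p-1}\lesssim 1$. The only cosmetic difference is that you bound $\norm{\Qs[\Gamma]u}_{\Wunoinf(\Gammat)}$ directly by the stability bound~\eqref{uniform bound Q on surface}, while the paper inserts $u$ once more and uses the $\Wunoinf$ interpolation error; both are fine.
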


\begin{proof}
    As a consequence of the regularity of $u$, the inverse estimates~\cite{BBCHS2006} and the above error estimate $\Hk$, we bound $\norm{\Rs[\Gamma] ^0u}_{\Wunoinf(\Gamma)}$ as follows:
    \begin{align*}
        \norm{\Rs[\Gamma] ^0u}_{\Wunoinf(\Gamma)}\leq {}& 
        \norm{\Rs[\Gamma]^0u-\Qs[\Gamma]{u}}_{\Wunoinf(\Gamma)}+\norm{\Qs[\Gamma]{u}-{u}}_{\Wunoinf(\Gamma)}
        +\norm{{u}}_{\Wunoinf(\Gamma)}\\
        \lesssim{} & h^{-1}\norm{\Rs[\Gamma] ^0u-\Qs[\Gamma]{u}}_{\Hk(\Gamma)}+(1+h^{p+1})\norm{{u}}_{W^{p+1,\infty}(\Gamma)}\\
        \lesssim{} &h^{-1}\norm{\Rs[\Gamma] ^0u-{u}}_{\Hk(\Gamma)}+h^{-1}\norm{{u}-\Qs[\Gamma]{u}}_{\Hk(\Gamma)}+\norm{{u}}_{W^{p+1,\infty}(\Gamma)}\\
        \lesssim{} &(1+h^{p-1})\norm{{u}}_{W^{p+1,\infty}(\Gamma)} .
    \end{align*}
    Since $p\geq 1$, we have~\eqref{eq: uniform bound linear Ru}.
\end{proof}

In general, it is not true that $\dermat \Rscero u=\Rscero\dermat u$, but we have the following result, which is in the spirit of~\cite[Theorem 6.4, part (a)]{K2018} and~\cite[Lemmas 3.9, 3.10]{ER2020}, except that, as we pointed out in Remark~\ref{remark: material velocity}, in those works the discrete lift velocity is present, which is not necessary here.

\begin{lemma}\label{Lem: stability - galerkin orthog dermat linear Ru}
    Let $u$ be defined in $\Gt$ with $u\in \Hk_0(\Gammat)$ and $\dermat u\in \Hk_0(\Gammat)$ for all $t\in[0,T]$. We have that $\dermat \Rscero u\in \Shscero$ satisfies, for all $\testHhs\in \Shscero$,
    \begin{equation}\label{eq: def dermat linear Ru}
          (\dermat \Rscero u,\testHhs)_{\Hunogammas}=(\dermat u,\liftgamma{\testHhs})_{\Hunogamma}+ d_{\Gamma}(\vel;u,\liftgamma{\testHhs})- d_{\Gammas}(\vs;\Rscero u,\testHhs),
    \end{equation}
     where
\begin{align*}
    d_{\Gamma}(\vel;u,\liftgamma{\testHhs})&:=\intgamma \Div _{\Gamma}\vel\, u\;\liftgamma{\testHhs}+\intgamma\gradg  u \cdot \mathcal{B}(\vel)\gradg \liftgamma{\testHhs},\\
    d_{\Gammas}(\vs;\Rscero u,\testHhs)&:=\intgammas \Div _{\Gammas}\vs\, \Rscero u\;\testHhs+\intgammas\gradghs \Rscero u \cdot \mathcal{B}(\vs)\gradghs \testHhs.
\end{align*}

Also, 
for all $h$ sufficiently small, and $t\in [0,T]$ we have
\begin{equation}\label{eq: stability dermat ritz kappa}
     \normHunogst{\dermat \Rscero u} \lesssim \normHunogt{u}+\normHunogt{\dermat u},
\end{equation}
and if $\eR=u-\Rscero[\Gamma] u$, for all $\testHhs\in \Shscero$,
\begin{equation}\label{eq: quasi-orth-galerkin dermat ritz kappa}
        \left|(\dermat \eR,\liftg{\testHhs})_{\Hunogammat}\right|
        \lesssim h^p\left(\normHunogt{u}+\normHunogt{\dermat u}\right)\normHunogt{\liftg{\testHhs}}.
\end{equation}
\end{lemma}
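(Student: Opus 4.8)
The plan is to differentiate the defining relation~\eqref{eq: ritz map zero trace linear Ru} of $\Rscero u$ in time using the transport formulae~\eqref{eq: teo transp} and~\eqref{eq: teo transp-grad-grad}. First I would observe that, since the spline basis functions on $\Gammast$ satisfy the transport property $\dermat \basisfg[j]^* = 0$, a test function $\testHhs \in \Shscero$ can be taken time-independent in the parametric domain, so that $\dermat \testHhs = 0$ and $\dermat\Rscero u \in \Shscero$. Applying the transport formulae to both sides of~\eqref{eq: ritz map zero trace linear Ru} (the left side over $\Gammast$ with velocity $\vs$, the right side over $\Gammat$ with velocity $\vel$), and using $\dermat \testHhs = 0$ on the left and $\dermat\liftgamma{\testHhs}=0$ on the right — the latter because the lift of a transported function is again transported, by~\eqref{eq: transport property} — yields exactly~\eqref{eq: def dermat linear Ru}, with the terms $d_{\Gammas}(\vs;\Rscero u,\testHhs)$ and $d_{\Gamma}(\vel;u,\liftgamma{\testHhs})$ arising from the $f\operatorname{div}_\Gamma\vel$ and $\mathcal{B}(\vel)$ contributions in the transport formulae. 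Here it is essential, as noted in Remark~\ref{remark: material velocity}, that the material velocity of points on $\Gammat$ equals $\vel$ (and on $\Gammast$ equals $\vs$), so that no discrete lift velocity appears.

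For the stability estimate~\eqref{eq: stability dermat ritz kappa}, I would test~\eqref{eq: def dermat linear Ru} with $\testHhs = \dermat\Rscero u \in \Shscero$. The left-hand side is $\normHunogst{\dermat\Rscero u}^2$. On the right-hand side, the first term is bounded by $\normHunogt{\dermat u}\,\normHunogst{\dermat\Rscero u}$ using the norm equivalence~\eqref{eq: equivalence Gs-G norms} (applied to the lift $\liftgamma{\dermat\Rscero u}$); the bilinear forms $d_\Gamma$ and $d_\Gammas$ are bounded using the $W^{1,\infty}$ bounds on $\vel$ and $\vs$ (which follow from the standing regularity assumption and the interpolation properties of $\Xs$) together with the stability bound~\eqref{eq: stability linear Ru} to control $\normHunogst{\Rscero u}\lesssim\normHunogt{u}$. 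Dividing through by $\normHunogst{\dermat\Rscero u}$ gives~\eqref{eq: stability dermat ritz kappa}.

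For the quasi-orthogonality~\eqref{eq: quasi-orth-galerkin dermat ritz kappa}, I would write $(\dermat\eR,\liftg{\testHhs})_{\Hunogamma} = (\dermat u,\liftg{\testHhs})_{\Hunogamma} - (\dermat\Rscero[\Gamma]u,\liftg{\testHhs})_{\Hunogamma}$ and use~\eqref{eq: def dermat linear Ru} to replace $(\dermat u,\liftg{\testHhs})_{\Hunogamma}$ by $(\dermat\Rscero u,\testHhs)_{\Hunogammas} + d_\Gammas(\vs;\Rscero u,\testHhs) - d_\Gamma(\vel;u,\liftg{\testHhs})$. The difference $(\dermat\Rscero u,\testHhs)_{\Hunogammas} - (\dermat\Rscero[\Gamma]u,\liftg{\testHhs})_{\Hunogamma}$ is a geometric perturbation and is controlled by $h^p\,\normHunogst{\dermat\Rscero u}\,\normHunogst{\testHhs}$ via~\eqref{eq: bound m*-m} and~\eqref{eq: bound a*-a}; similarly the difference $d_\Gammas(\vs;\Rscero u,\testHhs) - d_\Gamma(\vel;\liftg{\Rscero u},\liftg{\testHhs})$ is bounded by $h^p\,\normHunogst{\Rscero u}\,\normHunogst{\testHhs}$ via~\eqref{eq: bound c*-c}, \eqref{eq: bound b*-b}. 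Finally, the remaining term $d_\Gamma(\vel;\liftg{\Rscero u}-u,\liftg{\testHhs}) = -d_\Gamma(\vel;\eR,\liftg{\testHhs})$ is bounded by $\norm{\eR}_{\Hunogamma}\,\norm{\liftg{\testHhs}}_{\Hunogamma} \lesssim h^p\norm{u}_{H^{p+1}(\Gamma)}\,\norm{\liftg{\testHhs}}_{\Hunogamma}$ using Proposition~\ref{prop: error estimate linear Ru} — but here one needs $u\in\Hk[p+1]$; to stay within the stated hypotheses (only $u,\dermat u\in\Hk_0$) one instead uses~\eqref{eq: quasi-orth-galerkin linear Ru}: note $d_\Gamma(\vel;\eR,\liftg{\testHhs})$ is not literally the $\Hk$-inner product, so one rewrites it, integrating the $\mathcal{B}(\vel)$-term so as to expose $(\eR,\cdot)_{\Hunogamma}$ against a fixed spline-type test function, and applies~\eqref{eq: quasi-orth-galerkin linear Ru}. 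Combining all contributions and using~\eqref{eq: stability dermat ritz kappa}, \eqref{eq: stability linear Ru}, and the norm equivalence~\eqref{eq: equivalence Gs-G norms} gives~\eqref{eq: quasi-orth-galerkin dermat ritz kappa}. I expect the main obstacle to be precisely this last step: handling the term involving $d_\Gamma(\vel;\eR,\cdot)$ without invoking the higher regularity of $u$, which requires recasting the bilinear form $d_\Gamma$ so that the linear Ritz quasi-orthogonality~\eqref{eq: quasi-orth-galerkin linear Ru} can be applied, at the cost of moving a $\vel$-dependent factor onto the test function and tracking that it remains controlled in $\Hk(\Gammat)$.
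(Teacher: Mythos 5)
Your treatment of the identity~\eqref{eq: def dermat linear Ru} and of the stability bound~\eqref{eq: stability dermat ritz kappa} coincides with the paper's: the paper also differentiates~\eqref{eq: ritz map zero trace linear Ru} with the transport formulae~\eqref{eq: teo transp}--\eqref{eq: teo transp-grad-grad} (it cancels the terms containing $\dermat\testHhs$ by invoking the Ritz relation with $\dermat\testHhs$ as test function, whereas you choose a transported extension so that $\dermat\testHhs=0$; both are legitimate and equivalent), and it proves stability by testing with $\testHhs=\dermat\Rscero u$ exactly as you propose.

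The gap is in your last step for~\eqref{eq: quasi-orth-galerkin dermat ritz kappa}. After the geometric perturbations are handled, one is left with a term of the form $d_{\Gamma}(\vel;\eR,\liftg{\testHhs})$ (in the paper's symmetric variant, $d_{\Gammas}(\vs;\lifts{\eR},\testHhs)$, obtained by adding and subtracting $\lifts{u}$ inside $d_{\Gammas}$). The paper bounds this simply by $\normHunog{\eR}\,\normHunog{\liftg{\testHhs}}$ and then invokes Proposition~\ref{prop: error estimate linear Ru}, i.e.\ it \emph{does} use $\norm{u}_{\Hk[p+1](\Gammat)}$ at this point (this norm appears explicitly in the proof's final display and is absorbed into the hidden constant, consistent with the later application to $u=\Hm$, which is smooth by the standing regularity assumption). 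Your proposed workaround --- recasting $d_{\Gamma}(\vel;\eR,\cdot)$ so as to expose the $\Hk(\Gammat)$ inner product and then applying~\eqref{eq: quasi-orth-galerkin linear Ru} --- does not go through: \eqref{eq: quasi-orth-galerkin linear Ru} holds only when the second argument is the lift of a function in $\Shscero$, and neither $\Div_{\Gamma}\vel\,\liftg{\testHhs}$ nor a function whose surface gradient equals $\mathcal{B}(\vel)\gradg\liftg{\testHhs}$ is of that form; moreover, with only $u\in\Hk_0(\Gammat)$ one cannot expect $\normHunog{\eR}\lesssim h^p\normHunog{u}$, so no rearrangement can produce the factor $h^p$ from $H^1$ data alone. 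The correct reading is that the $O(h^p)$ bound for this term genuinely requires the $H^{p+1}$-regularity of $u$ (as in Proposition~\ref{prop: error estimate linear Ru}); you should either add that hypothesis (keeping the corresponding norm inside the hidden constant, as the paper implicitly does) or accept the $\norm{u}_{\Hk[p+1](\Gammat)}$ factor in the estimate, rather than attempt to derive the bound from~\eqref{eq: quasi-orth-galerkin linear Ru}.
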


\begin{proof}
Applying the Transport formulae~\eqref{eq: teo transp}--\eqref{eq: teo transp-grad-grad} on both sides of~\eqref{def: ritz map zero trace linear Ru}, we obtain 
\begin{multline*}
(\Rscero u,\dermat\testHhs)_{\Hunogammas}+(\dermat \Rscero u,\testHhs)_{\Hunogammas}+ d_{\Gammas}(\vs;\Rscero u,\testHhs)\\
=(u,\dermat\liftgamma{\testHhs})_{\Hunogamma}+(\dermat u,\liftgamma{\testHhs})_{\Hunogamma}+ d_{\Gamma}(\vel;u,\liftgamma{\testHhs}).
\end{multline*}

The definition of the Ritz map~\eqref{eq: ritz map zero trace linear Ru} implies that $(\Rs u,\dermat\testHhs)_{\Hunogammas} = (u,\dermat\liftgamma{\testHhs})_{\Hunogamma}$, for all $\testHhs\in \Shscero$, i.e., the first terms on both sides of the previous equality coincide and cancel each other, leading directly to~\eqref{eq: def dermat linear Ru}.

In order to prove the stability bound, we recall that from the definition of $\vs$ as the quasi-interpolant of the exact velocity $\vel$, for sufficiently small $h>0$ and $t\in[0,T]$, we have
\begin{equation}\label{eq: bound for d_Gammas}
\begin{split}
       \left| d_{\Gammas}(\vs;\Rscero u,\testHhs)\right|&\leq \normWunost{\vs}\normHunogst{ \Rscero u}\normHunogst{\testHhs}\\
       &\lesssim \normHunogt{ u}\normHunogst{\testHhs},
       \end{split}
       \end{equation}
       and
       \begin{align*}
    \left| d_{\Gamma}(\vel;u,\liftgamma{\testHhs})\right|&\leq \normWunogt{\vel}\normHunogt{\, u}\;\normHunogt{\liftgamma{\testHhs}}\\
    &\lesssim \normHunogt{u}\;\normHunogst{\testHhs},
\end{align*}
where we have also used the bound~\eqref{eq: stability linear Ru} of the Ritz map. 
Testing with $\testHhs=\dermat \Rscero u$ in~\eqref{eq: def dermat linear Ru} we obtain the desired stability bound~\eqref{eq: stability dermat ritz kappa}.

Finally, to prove~\eqref{eq: quasi-orth-galerkin dermat ritz kappa} we use~\eqref{eq: def dermat linear Ru} to write
\begin{align*}
      (\dermat \eR,\liftgamma{\testHhs})_{\Hunogamma}={}&
      (\dermat u,\liftgamma{\testHhs})_{\Hunogamma}-
      (\dermat \Rscero[\Gamma] u ,\liftgamma{\testHhs})_{\Hunogamma}
      \\
      ={}&(\dermat \Rscero u,\testHhs)_{\Hunogammas}- (\dermat \Rscero[\Gamma] u ,\liftgamma{\testHhs})_{\Hunogamma}
      \\
      &+ d_{\Gammas}(\vs;\Rscero u,\testHhs)- d_{\Gamma}(\vel;u,\liftgamma{\testHhs}).
\end{align*}
We now recall that $\liftgamma{\dermat \Rscero u}=\dermat \Rscero[\Gamma] u $, 
and use~\eqref{eq: bound a*-a},~\eqref{eq: bound m*-m} and~\eqref{eq: stability dermat ritz kappa} to obtain
\begin{align*}
        \Big|(\dermat \eR,\liftgamma{\testHhs})_{\Hunogamma}\Big|
        \lesssim {}& h^p\normHunogs{\dermat \Rscero u}\normHunogs{\testHhs}
        \\
        &+| d_{\Gammas}(\vs;\Rscero u,\testHhs)- d_{\Gamma}(\vel;u,\liftgamma{\testHhs})|\\
        \lesssim{} & h^p\left(\normHunogt{u}+\normHunogt{\dermat u}\right)\normHunog{\liftgamma{\testHhs}}\\
        &+| d_{\Gammas}(\vs;\Rscero u,\testHhs)- d_{\Gamma}(\vel;u,\liftgamma{\testHhs})|.
\end{align*}
In order to bound the last term, we recall the bound~\eqref{eq: bound for d_Gammas} of $d_{\Gammas}$, and the error estimate for $\eR$ from Proposition~\ref{prop: error estimate linear Ru}:
\begin{align*}
    \big|d_{\Gamma}(\vs;&\Rscero u,\testHhs)-d_{\Gammas}(\vel;u,\liftgamma{\testHhs})\big|\\
    &=  | d_{\Gammas}(\vs;\Rscero u- \lifts{u} +\lifts{u},\testHhs)- d_{\Gamma}(\vel;u,\liftgamma{\testHhs})|\\
    &\leq  | d_{\Gammas}(\vs;\lifts{\eR},\testHhs)|+| d_{\Gammas}(\vs;\lifts{u},\testHhs)- d_{\Gamma}(\vel;u,\liftgamma{\testHhs})|\\
    & \lesssim \normHunog{\eR}\normHunog{\liftgamma{\testHhs}}+| d_{\Gammas}(\vs;\lifts{u},\testHhs)- d_{\Gamma}(\vel;u,\liftgamma{\testHhs})|\\
    & \lesssim h^p\norm{u}_{\Hk[p+1](\Gamma)}\normHunog{\liftgamma{\testHhs}}+h^p\normHunog{u}\normHunog{\liftgamma{\testHhs}},
\end{align*}
where in the last step we have used together~\eqref{eq: bound b*-b} and~\eqref{eq: bound c*-c}. 
\end{proof}

Now, we present an $\Hk$-error estimate for the material derivative of the Ritz projection.
\begin{proposition}\label{prop: error estimate dermat linear Ru}
 Let $u$ be defined in $\Gt$ with $u\in  \Hk_0(\Gammat)\cap \Hk[p+1](\Gammat)$ and $\dermat u\in H^{p+1}(\Gammat)$ for all $t\in[0,T]$. Then, there exists $h_0>0$, such that the error in the Ritz map satisfies
		\begin{equation}\label{eq: error estimate dermat linear Ru}
		\norm{\dermat u-\dermat \Rscero[\Gamma]u}_{\Hunogammat}\lesssim h^p\left(\norm{u}_{H^{p+1}(\Gammat)}+\norm{\dermat u}_{H^{p+1}(\Gammat)}\right),
		\end{equation}
   for all $t\in[0,T]$ and $0<h\leq h_0$.
\end{proposition}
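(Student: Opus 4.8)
The plan is to mimic the argument of Proposition~\ref{prop: error estimate linear Ru}, but now applied to the material derivative and using the characterization~\eqref{eq: def dermat linear Ru} of $\dermat\Rscero u$ together with the quasi-orthogonality~\eqref{eq: quasi-orth-galerkin dermat ritz kappa} established in Lemma~\ref{Lem: stability - galerkin orthog dermat linear Ru}. Write $\ederx$-style error terms as usual; set $f:=\dermat u$ and $\eR=\dermat u-\dermat\Rscero[\Gamma]u$, and suppress the time variable. The key point is that $\dermat\Rscero[\Gamma]u$ is the $\Gamma$-lift of $\dermat\Rscero u\in\Shscero$, so we may insert the surface quasi-interpolant $\Qs[\Gamma]\dermat u$ of $f$ (which also has vanishing trace, since $\dermat u\in\Hk_0(\Gammat)$) as an intermediate discrete function.

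\textbf{Main steps.} First I would split
\begin{equation*}
\normcuad{\eR}_{\Hunogamma}=(\eR,\,\dermat u-\Qs[\Gamma]\dermat u)_{\Hunogamma}+(\eR,\,\Qs[\Gamma]\dermat u-\dermat\Rscero[\Gamma]u)_{\Hunogamma}.
\end{equation*}
The first term is bounded by $\norm{\eR}_{\Hunogamma}\norm{\dermat u-\Qs[\Gamma]\dermat u}_{\Hunogamma}\lesssim h^p\norm{\eR}_{\Hunogamma}\norm{\dermat u}_{H^{p+1}(\Gamma)}$ using the approximation estimate~\eqref{eq: global approx on gamma}. For the second term, note that $\Qs[\Gamma]\dermat u-\dermat\Rscero[\Gamma]u=\liftgamma{\Qs[\Gammas]\dermat u-\dermat\Rscero u}$ with $\Qs[\Gammas]\dermat u-\dermat\Rscero u\in\Shscero$, so~\eqref{eq: quasi-orth-galerkin dermat ritz kappa} applies and gives
\begin{equation*}
\left|(\eR,\,\Qs[\Gamma]\dermat u-\dermat\Rscero[\Gamma]u)_{\Hunogamma}\right|\lesssim h^p\big(\norm{u}_{H^{p+1}(\Gamma)}+\norm{\dermat u}_{H^{p+1}(\Gamma)}\big)\,\norm{\Qs[\Gamma]\dermat u-\dermat\Rscero[\Gamma]u}_{\Hunogamma}.
\end{equation*}
Then bound $\norm{\Qs[\Gamma]\dermat u-\dermat\Rscero[\Gamma]u}_{\Hunogamma}\le\norm{\Qs[\Gamma]\dermat u-\dermat u}_{\Hunogamma}+\norm{\eR}_{\Hunogamma}\lesssim h^p\norm{\dermat u}_{H^{p+1}(\Gamma)}+\norm{\eR}_{\Hunogamma}$, substitute back, and absorb the $h^p\norm{\eR}_{\Hunogamma}(\cdots)$ terms into $\normcuad{\eR}_{\Hunogamma}$ via Young's inequality, leaving $\normcuad{\eR}_{\Hunogamma}\lesssim h^{2p}(\norm{u}_{H^{p+1}(\Gamma)}+\norm{\dermat u}_{H^{p+1}(\Gamma)})^2$. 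Taking square roots and using the norm equivalence~\eqref{eq: equivalence Gs-G norms} between $\Gamma$ and $\Gammas$ (to pass freely between $\norm{\cdot}_{\Hunogamma}$ and $\norm{\cdot}_{\Hunogammas}$ where needed) yields~\eqref{eq: error estimate dermat linear Ru}.

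\textbf{Expected obstacle.} The routine-but-delicate part is checking that the intermediate quantity $\Qs[\Gamma]\dermat u-\dermat\Rscero[\Gamma]u$ is genuinely the $\Gamma$-lift of an element of the discrete space $\Shscero$ with vanishing trace, so that the quasi-orthogonality estimate~\eqref{eq: quasi-orth-galerkin dermat ritz kappa} is legitimately applicable — this requires that $\dermat u\in\Hk_0(\Gammat)$ (hence its quasi-interpolant vanishes on the boundary) and that $\dermat\Rscero u\in\Shscero$, which is exactly the content of the first assertion of Lemma~\ref{Lem: stability - galerkin orthog dermat linear Ru}. The genuinely non-trivial input has already been done in that Lemma (the transport-formula manipulation producing the bilinear forms $d_\Gamma,d_{\Gammas}$ and the consistency bounds~\eqref{eq: bound b*-b}, \eqref{eq: bound c*-c} for those forms); here the only care needed is tracking which norms live on $\Gamma$ versus $\Gammas$ and invoking~\eqref{eq: equivalence Gs-G norms} at each switch. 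No new estimate is required beyond those already available.
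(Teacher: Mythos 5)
Your proposal is correct and follows essentially the same route as the paper's proof: the same splitting of $\normcuad{\dermat u-\dermat \Rscero[\Gamma]u}_{\Hunogammat}$ via the intermediate quasi-interpolant $\Qs[\Gamma]\dermat u$, the same use of the quasi-orthogonality estimate~\eqref{eq: quasi-orth-galerkin dermat ritz kappa} with $\testHhs=\Qs\dermat u-\dermat\Rscero u$, the same triangle-inequality bound on $\norm{\Qs[\Gamma]\dermat u-\dermat\Rscero[\Gamma]u}_{\Hunogammat}$, and the same absorption by Young's inequality. The only difference is cosmetic: you carry $H^{p+1}$ norms where the paper's lemma states $\Hk$ norms of $u$ and $\dermat u$, which is harmless since it only weakens the intermediate bound.
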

\begin{proof}
We start by adding and subtracting $ \Qs[\Gamma]\dermat u$ in the definition of $\Hunogammat$-norm of $\dermat\eR=\dermat u-\dermat \Rscero[\Gamma]u$
\begin{align*}
     \normcuad{\dermat\eR}_{\Hunogamma}
     ={}&(\dermat\eR,\dermat u-\Qs[\Gamma]\dermat u)_{\Hunogamma}+(\dermat\eR,\Qs[\Gamma]\dermat u-\dermat\Rscero[\Gamma]u)_{\Hunogamma}
     \\
     \leq{}&  \normHunog{\dermat\eR} \normHunog{\dermat u-\Qs[\Gamma]\dermat u}
     \\
     &+ (\dermat\eR,\Qs[\Gamma]\dermat u-\dermat\Rscero[\Gamma]u)_{\Hunogamma}\\
     \lesssim{}&  \normHunog{\dermat\eR} h^p\norm{\dermat u}_{\Hk[p+1](\Gamma)}
     \\
     &+(\dermat\eR,\Qs[\Gamma]\dermat u-\dermat\Rscero[\Gamma]u)_{\Hunogamma},
\end{align*}
where we have used the error estimate~\eqref{eq: global approx on gamma} for $ \Qs[\Gamma]\dermat u$. 
We recall that $\Qs[\Gamma]\dermat u-\dermat\Rscero[\Gamma]u=\liftgamma{\Qs \dermat u-\dermat \Rscero u}$, 
and take $\testHhs=\Qs \dermat u-\dermat \Rscero u$ in~\eqref{eq: quasi-orth-galerkin dermat ritz kappa}, 
to arrive at
\begin{multline*}
        (\dermat \eR,\Qs[\Gamma]\dermat u-\dermat\Rscero[\Gamma]u)_{\Hunogamma}
        \\
        \lesssim h^p\left(\normHunogt{u}+\normHunogt{\dermat u}\right)\normHunog{\Qs[\Gamma]\dermat u-\dermat\Rscero[\Gamma]u}.
\end{multline*} 
We now bound 
\begin{align*}
\normHunog{\Qs[\Gamma]\dermat u-\dermat\Rscero[\Gamma]u} 
&\le \normHunog{\Qs[\Gamma]\dermat u-\dermat u} + \normHunog{\dermat u-\dermat\Rscero[\Gamma]u} 
\\
&\lesssim h^p \normHunogt{\dermat u} + \norm{\dermat\eR}_{\Hunogamma}.
\end{align*}
Summing up and using Young's inequality, 
we arrive at the desired estimate.
\end{proof}

We conclude this section with the following uniform bound for $\dermat\Rscero[\Gamma]u$, 
which can be proved analogously to Lemma~\ref{Lem: uniform bound linear Ru}.

\begin{lemma}\label{Lem: uniform bound linear dermat Ru}
    If
    $\dermat {u}\in \Hk_0(\Gammat)\cap W^{p+1,\infty}(\Gammat)^3$
    for all $t\in[0,T]$. Then, for all $\ell\geq 1$ we have
    \begin{equation}\label{eq: uniform bound linear dermat Ru}
        \norm{\dermat\Rscero[\Gamma]u}_{\Wunoinf(\Gammat)}
        \lesssim \norm{\dermat{u}}_{{W^{p+1,\infty}(\Gammat)}}.
    \end{equation}
\end{lemma}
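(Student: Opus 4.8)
The plan is to follow the proof of Lemma~\ref{Lem: uniform bound linear Ru} almost verbatim, with $u$ replaced by $\dermat u$ and the linear Ritz error estimate replaced by the material-derivative estimate of Proposition~\ref{prop: error estimate dermat linear Ru}. The structural fact that makes this work is that $\dermat\Rscero u\in\Shscero$ by Lemma~\ref{Lem: stability - galerkin orthog dermat linear Ru}, so that $\dermat\Rscero[\Gamma]u-\Qs[\Gamma]\dermat u$ is (the lift of) a fixed spline, and an inverse estimate is available for it.

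First I would split, using the triangle inequality through $\Qs[\Gamma]\dermat u$,
\begin{equation*}
\norm{\dermat\Rscero[\Gamma]u}_{\Wunoinf(\Gamma)}\le\norm{\dermat\Rscero[\Gamma]u-\Qs[\Gamma]\dermat u}_{\Wunoinf(\Gamma)}+\norm{\Qs[\Gamma]\dermat u-\dermat u}_{\Wunoinf(\Gamma)}+\norm{\dermat u}_{\Wunoinf(\Gamma)}.
\end{equation*}
To the first term I apply the inverse estimate~\cite{BBCHS2006}, which costs a factor $h^{-1}$ since $\Gamma$ is two-dimensional, and to the second the quasi-interpolation bound~\eqref{eq: global approx on gamma} with $q=\infty$, $k=p+1$, $|\bs\alpha|=1$, obtaining
\begin{equation*}
\norm{\dermat\Rscero[\Gamma]u}_{\Wunoinf(\Gamma)}\lesssim h^{-1}\norm{\dermat\Rscero[\Gamma]u-\Qs[\Gamma]\dermat u}_{\Hk(\Gamma)}+\norm{\dermat u}_{W^{p+1,\infty}(\Gamma)}.
\end{equation*}
Then I bound the $\Hk$-norm of the discrete difference by inserting $\dermat u$, namely $\norm{\dermat\Rscero[\Gamma]u-\Qs[\Gamma]\dermat u}_{\Hk(\Gamma)}\le\norm{\dermat u-\dermat\Rscero[\Gamma]u}_{\Hk(\Gamma)}+\norm{\dermat u-\Qs[\Gamma]\dermat u}_{\Hk(\Gamma)}$, estimating the first summand by Proposition~\ref{prop: error estimate dermat linear Ru} and the second by~\eqref{eq: global approx on gamma}; both are $\lesssim h^p\bigl(\norm{u}_{H^{p+1}(\Gamma)}+\norm{\dermat u}_{H^{p+1}(\Gamma)}\bigr)$. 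Substituting and using $p\ge1$, so that $h^{-1}\cdot h^p=h^{p-1}\lesssim1$, together with the embedding $W^{p+1,\infty}(\Gamma)\hookrightarrow H^{p+1}(\Gamma)$ on the bounded surface $\Gamma$ (and recalling that $\norm{u}_{W^{p+1,\infty}(\Gamma)}$ is controlled by a fixed constant under the standing regularity assumption, hence absorbed by $\lesssim$), I arrive at~\eqref{eq: uniform bound linear dermat Ru}.

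The only step deserving real care, and the one I expect to be the main obstacle, is the legitimacy of the inverse estimate: one must know that $\dermat\Rscero[\Gamma]u-\Qs[\Gamma]\dermat u$, pulled back to the parametric domain, lies in the fixed spline space $\Sparam$ with Sobolev norms that are $h$-uniformly equivalent to those of its push-forwards. This is precisely Lemma~\ref{Lem: stability - galerkin orthog dermat linear Ru} (giving $\dermat\Rscero u\in\Shscero$) combined with the norm equivalences~\eqref{eq: equivalence Gs-G norms} and the regularity of $\X$ and $\Xs$; once this is in place the remainder is exactly the bookkeeping of Lemma~\ref{Lem: uniform bound linear Ru}.
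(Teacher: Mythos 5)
Your argument is correct and is exactly the route the paper intends: it states that this lemma "can be proved analogously to Lemma~\ref{Lem: uniform bound linear Ru}", i.e.\ triangle inequality through $\Qs[\Gamma]\dermat u$, an inverse estimate on the spline difference $\dermat\Rscero[\Gamma]u-\Qs[\Gamma]\dermat u$ (legitimate since $\dermat\Rscero u\in\Shscero$ by Lemma~\ref{Lem: stability - galerkin orthog dermat linear Ru}), and the $\Hk$ error bounds from Proposition~\ref{prop: error estimate dermat linear Ru} and~\eqref{eq: global approx on gamma}, with $h^{-1}h^{p}\lesssim 1$. Your handling of the extra $\norm{u}_{H^{p+1}(\Gammat)}$ term (absorbing it into the hidden constant via the standing regularity assumption) matches the imprecision already present in the paper's own statement, so there is nothing to flag.
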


\subsection{Nonlinear Ritz map with traces orthogonal to $\btau$}\label{sec: Nonlinear Ritz projection}

Recall the definition of $\Otau$ from Section~\ref{sec: weak formulation} which appeared in the weak formulation:
 \begin{equation}\label{eq: Otau}
 \begin{split}
      \Otau:=&\left\{ \bs \phi \in H^1(\Gammat)^3: \bs \phi \cdot \bs \tau =0,\ \text{on} \ \boundaryGt \right\}\\
      =&\left\{ \bs \phi \in H^1(\Gammat)^3: \Ptau\bs \phi=\bs \phi,\ \text{on} \ \boundaryGt \right\},
 \end{split}
    \end{equation}
where $\bs\tau$ is the tangent vector of $\boundaryGt$ and $\Ptau=\bs I-\btau\tensor\btau$. 

We need to define a suitable discrete space to approximate this space $\Otau$. 
Due to the Dirichlet boundary condition that we assume, we can use for all $t>0$ the same approximation $\partial\Gamma_h^0$ to $\partial \Gamma^0$ and thus, to its tangent vector. Let $\btauh\argu:=\btau_{h,0}$ be an approximation of the initial tangent vector $\btau$, and let $\btauhunit:=\dfrac{\btauh}{|\btauh|}$. We propose the following discretely orthogonal space
\begin{equation}\label{def:Otauh}
    \Otauh:= \left\{ \bs\phi_h \in \Sh^3: \int_{\partial\Gammah^0}(\bs\phi_h\cdot \btauhunit)w_h=0, \,\text{for all}\ w_h\in \mathcal{S}_{h}^\partial  \right\}
\end{equation}
where we enforce, weakly, the orthogonality constraint on the spline space, as is usually done for certain saddle-point problems. 
Details about the implementation will be presented later in Section~\ref{sec: num exp}.
Analogously, we define
\begin{equation}\label{eq:otauhs}
    \Otauhs:= \left\{ \funs{\bs\phi} \in \Shs^3: \int_{\boundaryGhcero}( \funs{\bs\phi}\cdot \btauhunit) w_h=0, \,\text{for all}\,  w_h\in \mathcal{S}_{h}^\partial \right\}.
\end{equation}
We have used here, in this definition, that $\boundaryGhcero=\boundaryGscero$, which also implies that $\{ v|_{\boundaryGhcero} : v \in \Sh \}=\{v|_{\boundaryGscero} : v\in\Shs\}$. 

We also define the space of traces of functions in $\Otauh$, $\Otauhs$, as follows:
\begin{equation}\label{eq: Otauhb}
\begin{split}
    \Otauhb &= \{ \bs\phi_h\big|_{\boundaryGhcero} : \bs\phi_h \in \Otauh \},\\
    \Otauhsb &= \{ \funs{\bs\phi}\big|_{\boundaryGhcero} : \funs{\bs\phi} \in \Otauhs \},
\end{split}
\end{equation}
and note that $\Otauhb=\Otauhsb$.

The discrete analog of $\Ptau$ is defined as an $\Lp(\boundaryGhcero)$-orthogonal projection onto the discrete space $\Otauhsb$, as follows: $\Ps(\btauhunit): \Hk(\boundaryGhcero)^3\rightarrow \Otauhsb$, is defined, for $\bs u\in \Hk(\boundaryGhcero)$ as $\Ps(\btauhunit)\bs{u}=\bs w$, with $\bs w$ the unique solution to
\begin{equation}\label{eq: def L2 boundry orthog proj}
\bs w\in\Otauhsb:\quad 
    \int_{\boundaryGhcero}\bs w\cdot \funs{\bs{z}}=\int_{\boundaryGhcero} \bs{u}\cdot \funs{\bs{z}},
\quad \forall \funs{\bs{z}}\in \Otauhsb.
\end{equation}
Clearly, we have the following stability bounds for $\Ps$:
\[
\norm{\Ps(\btauhunit)\bs{u}}_{\Lp (\boundaryGhcero)} 
\lesssim 
\norm{\bs{u}}_{\Lp (\boundaryGhcero)},
\quad 
\norm{\Ps(\btauhunit)\bs{u}}_{\Hk (\boundaryGhcero)} 
\lesssim 
\norm{\bs{u}}_{\Hk (\boundaryGhcero)},
\]
where for the second one an inverse inequality must be used.

This operator will be key to obtaining error estimates both for the nonlinear Ritz projection, presented in Definition~\ref{def: non linear Ru trace in Otauh}, and for its material derivative. This is because, in both Proposition~\ref{prop: error estimate non-linear Ru} and Proposition~\ref{prop: error estimate dermat Ritz proj on boundary}, obtaining error estimates requires introducing another discrete function that is ``close'' to the function we aim to approximate while simultaneously belonging to the discrete space $\Otauh$.  

In Section~\ref{sec: Linear Ritz projection}, we were able to use the quasi-interpolant because we worked with discrete function spaces that vanish on the boundary. However, this is not the case in this section, where we cannot proceed in the same manner since we do not have a guarantee that $\Qs \bs{u}$ belongs to the space $\Otauh$, even if $\bs{u} \in \Otau$.  

In the proofs of Proposition~\ref{prop: error estimate non-linear Ru} and Proposition~\ref{prop: error estimate dermat Ritz proj on boundary} below, we will follow the following strategy: given $\bs{u} \in \Otau$, we first project its trace onto a suitable discrete space. In our case, we choose the space $\Otauhsb$, and then obtain the discrete projection of the trace of $\bs{u}$ using the operator $\Ps(\btauhunit)$. In the next step, we define the harmonic extension of $\Ps(\btauhunit)\lifts{\bs{u}}$ in $\Hk(\Gamma)^3$ and denote it as $\tilde{\bs{u}}$. Finally, we consider the quasi-interpolant of this extension as the theoretical tool that allows us to obtain a discrete function with good approximation properties that also belongs to $\Otauh$. 

In addition to its definition, the following approximation estimates will be very important, the proof of which follows by using techniques similar to those presented in~\cite[Lemma 5.1 and Lemma 5.2]{AFKL2021}, and we thus omit here.

\begin{lemma}\label{Lem: L2 orth proj on boundary}
    If $\btau\in W^{p+1,\infty} (\boundaryGs)^3$, for all $\bs u\in \Hk[p+1](\boundaryG)^3\cap W^{1,\infty}(\boundaryG)^3$,
        \begin{multline*}
            \norm{\Ps(\btauhunit)\lifts{\bs{u}}-\mathcal{P}(\btauhunit)\lifts{\bs{u}}}_{\Lp (\boundaryGs)}
            +h\norm{\Ps(\btauhunit)\lifts{\bs{u}}-\mathcal{P}(\btauhunit)\lifts{\bs{u}}}_{\Hk(\boundaryGs)}
            \\
            \lesssim h^{p+1}(\norm{\bs{u}}_{\Hk[p+1](\boundaryG)}+\norm{\bs{u}}_{W^{1,\infty}(\boundaryG)}),
        \end{multline*}
     where the involved constant depends on $\norm{\btau}_{W^{p+1,\infty}(\boundaryGs)}$.
\end{lemma}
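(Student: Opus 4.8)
The plan is to recast both operators as $\Lp$-orthogonal projections and estimate the error by a Strang-type argument. Note first that $\mathcal{P}(\btauhunit)\lifts{\bs u}=\lifts{\bs u}-(\lifts{\bs u}\cdot\btauhunit)\btauhunit$ is exactly the $\Lp(\boundaryGscero)$-orthogonal projection of $\lifts{\bs u}$ onto $\{\bs\phi\in\Lp(\boundaryGscero)^3:\bs\phi\cdot\btauhunit=0\}$, while, by~\eqref{eq: def L2 boundry orthog proj}, $\Ps(\btauhunit)\lifts{\bs u}$ is the $\Lp(\boundaryGscero)$-orthogonal projection of $\lifts{\bs u}$ onto $\Otauhsb$ (using $\boundaryGscero=\boundaryGs$ since the boundary is fixed, and that $\Otauhsb=\{\psi_h\in(\mathcal{S}_h^\partial)^3:P_h^\partial(\psi_h\cdot\btauhunit)=0\}$, with $P_h^\partial$ the scalar $\Lp(\boundaryGscero)$-projection onto $\mathcal{S}_h^\partial$). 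The space $\Otauhsb$ is non-conforming, since $\btauhunit$ is not a spline; this is the source of all the work. I write $\bs w:=\Ps(\btauhunit)\lifts{\bs u}$, $\bs\pi:=\mathcal{P}(\btauhunit)\lifts{\bs u}$, and use the equivalence of norms between a function on $\boundaryG$ and its lift to $\boundaryGscero$, the uniform $W^{p+1,\infty}(\boundaryGscero)$-bound of $\btauhunit$ (for $h$ small, with a bound depending on $\norm{\btau}_{W^{p+1,\infty}(\boundaryGs)}$), and the Leibniz rule, to get $\norm{\bs\pi}_{H^{p+1}(\boundaryGscero)}\lesssim\norm{\bs u}_{H^{p+1}(\boundaryG)}+\norm{\bs u}_{W^{1,\infty}(\boundaryG)}$.

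First I would prove a consistency (quasi-orthogonality) estimate. For any $\funss{\bs z}\in\Otauhsb$, splitting $\bs w-\bs\pi=(\bs w-\lifts{\bs u})+(\lifts{\bs u}-\bs\pi)$, using that $\bs w-\funss{\bs z}\in\Otauhsb$ together with $(\lifts{\bs u}-\bs w,\bs w-\funss{\bs z})_{\Lp(\boundaryGscero)}=0$ and $\lifts{\bs u}-\bs\pi=(\lifts{\bs u}\cdot\btauhunit)\btauhunit$, one finds
\[
(\bs w-\bs\pi,\bs w-\funss{\bs z})_{\Lp(\boundaryGscero)}=\int_{\boundaryGscero}(\lifts{\bs u}\cdot\btauhunit)\,\btauhunit\cdot(\bs w-\funss{\bs z}).
\]
Since $\bs w-\funss{\bs z}\in\Otauhsb$, the scalar $\btauhunit\cdot(\bs w-\funss{\bs z})$ is $\Lp(\boundaryGscero)$-orthogonal to $\mathcal{S}_h^\partial$, so one may subtract from $\lifts{\bs u}\cdot\btauhunit$ any element of $\mathcal{S}_h^\partial$, e.g. its boundary quasi-interpolant; with $|\btauhunit|=1$, Cauchy–Schwarz, and the boundary analogue of Lemma~\ref{lem: globalestimatesprojector} this gives $\bigl|(\bs w-\bs\pi,\bs w-\funss{\bs z})_{\Lp(\boundaryGscero)}\bigr|\lesssim h^{p+1}\norm{\lifts{\bs u}\cdot\btauhunit}_{H^{p+1}(\boundaryGscero)}\norm{\bs w-\funss{\bs z}}_{\Lp(\boundaryGscero)}$. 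Writing $\norm{\bs w-\bs\pi}_{\Lp}^2=(\bs w-\bs\pi,\bs w-\funss{\bs z})_{\Lp}+(\bs w-\bs\pi,\funss{\bs z}-\bs\pi)_{\Lp}$, using $\norm{\bs w-\funss{\bs z}}_{\Lp}\le\norm{\bs w-\bs\pi}_{\Lp}+\norm{\bs\pi-\funss{\bs z}}_{\Lp}$ and Young's inequality, I obtain
\[
\norm{\bs w-\bs\pi}_{\Lp(\boundaryGscero)}\lesssim h^{p+1}\bigl(\norm{\bs u}_{H^{p+1}(\boundaryG)}+\norm{\bs u}_{W^{1,\infty}(\boundaryG)}\bigr)+\inf_{\funss{\bs z}\in\Otauhsb}\norm{\bs\pi-\funss{\bs z}}_{\Lp(\boundaryGscero)}.
\]

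The main obstacle is the best-approximation term: one must construct $\funss{\bs z}\in\Otauhsb$ close to $\bs\pi$. Let $\bs q:=\Qs[\boundaryGscero](\bs\pi)$ be the componentwise boundary quasi-interpolant, so $\norm{\bs\pi-\bs q}_{\Lp(\boundaryGscero)}+h\norm{\bs\pi-\bs q}_{\Hk(\boundaryGscero)}\lesssim h^{p+1}\norm{\bs\pi}_{H^{p+1}(\boundaryGscero)}$; in general $\bs q\notin\Otauhsb$. Since $\bs\pi\cdot\btauhunit\equiv 0$, $g_h:=P_h^\partial(\bs q\cdot\btauhunit)=P_h^\partial\bigl((\bs q-\bs\pi)\cdot\btauhunit\bigr)$ satisfies $\norm{g_h}_{\Lp(\boundaryGscero)}\lesssim h^{p+1}\norm{\bs\pi}_{H^{p+1}(\boundaryGscero)}$. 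The key ingredient is a super-approximation estimate: for $\phi_h\in\mathcal{S}_h^\partial$,
\[
\norm{\Qs[\boundaryGscero](\phi_h\btauhunit)-\phi_h\btauhunit}_{\Lp(\boundaryGscero)}\lesssim h\,\norm{\phi_h}_{\Lp(\boundaryGscero)},
\]
with constant depending on $\norm{\btau}_{W^{p+1,\infty}(\boundaryGs)}$, proved elementwise by combining the $\Lp$ approximation order $h^{p+1}$ of $\Qs[\boundaryGscero]$ with inverse estimates on the spline factor $\phi_h$, exactly as in~\cite[Lemmas 5.1 and 5.2]{AFKL2021}. Hence the operator $T_h:\mathcal{S}_h^\partial\to\mathcal{S}_h^\partial$, $T_h\phi_h:=P_h^\partial\bigl(\Qs[\boundaryGscero](\phi_h\btauhunit)\cdot\btauhunit\bigr)$, satisfies $\norm{(T_h-\mathrm{Id})\phi_h}_{\Lp}\lesssim h\norm{\phi_h}_{\Lp}$, so for $h$ small $T_h$ is invertible with $\norm{T_h^{-1}}\le 2$; then $\bs r_h:=\Qs[\boundaryGscero]\bigl((T_h^{-1}g_h)\,\btauhunit\bigr)\in(\mathcal{S}_h^\partial)^3$ obeys $P_h^\partial(\bs r_h\cdot\btauhunit)=g_h$ and $\norm{\bs r_h}_{\Lp(\boundaryGscero)}\lesssim\norm{g_h}_{\Lp(\boundaryGscero)}$. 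Setting $\funss{\bs z}:=\bs q-\bs r_h\in\Otauhsb$ gives $\norm{\bs\pi-\funss{\bs z}}_{\Lp(\boundaryGscero)}\lesssim h^{p+1}\norm{\bs\pi}_{H^{p+1}(\boundaryGscero)}$ and, by an inverse inequality on $\bs r_h$, $\norm{\bs\pi-\funss{\bs z}}_{\Hk(\boundaryGscero)}\lesssim h^{p}\norm{\bs\pi}_{H^{p+1}(\boundaryGscero)}$.

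To conclude, inserting this $\funss{\bs z}$ into the last display of the consistency step and using $\norm{\bs\pi}_{H^{p+1}(\boundaryGscero)}\lesssim\norm{\bs u}_{H^{p+1}(\boundaryG)}+\norm{\bs u}_{W^{1,\infty}(\boundaryG)}$ yields the $\Lp(\boundaryGs)$ part of the claim. For the $\Hk(\boundaryGs)$ part, write $\bs w-\bs\pi=(\bs w-\funss{\bs z})+(\funss{\bs z}-\bs\pi)$ with the same $\funss{\bs z}$: the second summand is already bounded by $h^{p}\norm{\bs\pi}_{H^{p+1}(\boundaryGscero)}$ in $\Hk$, and the first is a spline, so by the inverse inequality $\norm{\bs w-\funss{\bs z}}_{\Hk(\boundaryGscero)}\lesssim h^{-1}\bigl(\norm{\bs w-\bs\pi}_{\Lp}+\norm{\bs\pi-\funss{\bs z}}_{\Lp}\bigr)\lesssim h^{p}\bigl(\norm{\bs u}_{H^{p+1}(\boundaryG)}+\norm{\bs u}_{W^{1,\infty}(\boundaryG)}\bigr)$, using the $\Lp$ bound just proved; multiplying by $h$ completes the argument. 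The genuinely delicate point is the super-approximation estimate for $\Qs[\boundaryGscero](\phi_h\btauhunit)$ together with the resulting stable correction $\bs r_h$ (equivalently, a uniform discrete inf-sup for $(\bs\phi_h,w_h)\mapsto\int_{\boundaryGscero}(\bs\phi_h\cdot\btauhunit)w_h$), and the careful tracking of which $W^{k,\infty}$-norm of $\btauhunit$ enters; everything else is Strang-lemma bookkeeping with the spline approximation and inverse estimates already available.
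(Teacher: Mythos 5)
The paper does not actually contain a proof of this lemma: the authors omit it, pointing only to the techniques of \cite[Lemmas 5.1 and 5.2]{AFKL2021}. So there is no argument to compare against line by line; what you have written is, in effect, the missing proof, and it is consistent in spirit with the reference the authors invoke (which you also cite for the super-approximation step). Your structure — (i) view both $\Ps(\btauhunit)$ and $\mathcal{P}(\btauhunit)$ as $\Lp(\boundaryGhcero)$-orthogonal projections, (ii) a quasi-orthogonality step where the weak constraint defining $\Otauhsb$ lets you subtract a boundary quasi-interpolant from $\lifts{\bs u}\cdot\btauhunit$ and gain $h^{p+1}$, and (iii) a best-approximation step where the competitor $\funss{\bs z}=\bs q-\bs r_h$ is built from the componentwise quasi-interpolant $\bs q$ of $\mathcal{P}(\btauhunit)\lifts{\bs u}$ plus a corrector $\bs r_h$ obtained through the perturbed operator $T_h$ and the super-approximation bound $\norm{\Qs[\boundaryGhcero](\phi_h\btauhunit)-\phi_h\btauhunit}_{\Lp(\boundaryGhcero)}\lesssim h\norm{\phi_h}_{\Lp(\boundaryGhcero)}$ — is sound, and the final $\Lp$/$\Hk$ bookkeeping (inverse estimate on the spline part, multiplication by $h$) checks out. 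Your identification of $\Otauhsb$ as the boundary splines annihilated by your $P_h^\partial(\,\cdot\,\btauhunit)$ is legitimate, since the constraint only involves traces and every boundary spline extends to an element of $\Shs^3$.

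Two points deserve to be made explicit if this is to stand as a complete proof. First, all the $O(h^{p+1})$ quasi-interpolation bounds you use are applied to functions that are only \emph{piecewise} smooth: $\btauhunit=\btauh/|\btauh|$ is globally just $C^\ell$, so $\lifts{\bs u}\cdot\btauhunit$, $\bs\pi=\mathcal{P}(\btauhunit)\lifts{\bs u}$ and $\phi_h\btauhunit$ are not in $\Hk[p+1]$ across knots. The estimates survive because the breakpoints of $\btauh$ coincide with the knots of $\mathcal{S}_{h}^\partial$, the dual functionals of $\Q$ are local, and one can work with broken norms (add and subtract local polynomial/spline approximations on the support extensions, as you implicitly do in the super-approximation argument via a patchwise frozen coefficient); this is also precisely where the constant $\norm{\btau}_{W^{p+1,\infty}(\boundaryGs)}$ enters, through elementwise stability of $\Qs[\boundaryGhcero]$ and the uniform lower bound $|\btauh|\geq \tfrac12$ for $h$ small. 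Second, the $k=0,1$ Leibniz terms in $\norm{\lifts{\bs u}\cdot\btauhunit}_{\Hk[p+1]}$ and $\norm{\bs\pi}_{\Hk[p+1]}$ are what force the presence of $\norm{\bs u}_{W^{1,\infty}(\boundaryG)}$ on the right-hand side, so the final constant tracking matches the statement; it would be worth spelling that out rather than leaving it inside ``Leibniz rule''. With these clarifications your argument is a valid replacement for the omitted proof.
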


We now define a \emph{nonlinear} Ritz projection, necessary to bound the \emph{defect} of the normal $\normLdosgs{\dn}$, which is defined later in~\eqref{eq: weakh*-normal}. If we use just a \emph{linear} Ritz projection like in Section~\ref{sec: Linear Ritz projection} we can only obtain an estimate for $\norm{\dn}_{H_h^{-1}(\Gammas)}$, due to the boundary terms in the weak formulation. The latter is not enough to get an estimate for $\norm{\dermat\dn}_{L^2(\Gammas)}$. And this last bound is crucial to obtain the stability of the semi-discrete scheme.

\begin{definition}\label{def: non linear Ru trace in Otauh}
  For each $\bs u\in \Otau$, and for $\lambda>0$ sufficiently large (see Lemma~\ref{Lem: stability non linear Ru} below) we define $\Rs[\Gammas]\bs u = \bs w$ as the unique element in $\bs w \in \Otauhs$ that satisfies, for all $\testnuhs\in \Otauhs$,
  \begin{equation}\label{eq: non linear Ru trace in Otauh}
    \begin{split}
    \intgammas&\gradghs\bs w : \gradghs  \testnuhs 
    + \lambda\, \bs w\cdot \testnuhs 
    - \int_{\boundaryGhcero}\bs w\cdot \curvbdryh\,(\bs w\times \btauh)\cdot \testnuhs
    \\
      ={}&\intgamma \gradg \bs u: \gradg\liftgamma{\testnuhs}
      +\lambda\,\bs u\cdot\liftgamma{\testnuhs} 
      -\int_{\boundaryG}\bs u\cdot \curvbdry\,( \bs u\times \btau)\cdot \liftgamma{\testnuhs} .
	\end{split}
\end{equation}
\end{definition}

Nonlinear Ritz maps for parabolic equations with nonlinear boundary conditions in stationary domains were investigated by Douglas and Dupont in \cite{DD1973}; following some ideas from this paper, we extend this approach to the case of parabolic equations with nonlinear boundary conditions on evolving surfaces.

In order to prove that this nonlinear projection is well defined, we first need the following Lemma,
which follows from the properties of the quasi-interpolant and the regularity of the functions $\btau$ and $\curvbdry$.

\begin{lemma}\label{Lem: def and estimate tauh normalbh}
    Let $\btau$ and $\curvbdry$ in $W^{p+1,\infty}(\boundaryG)^3$, $\btauh:=\Qs[\boundaryGhcero]\btau$ and $\curvbdryh:=\Qs[\boundaryGhcero] \curvbdry$, then there exists $h_0>0$ such that
    \begin{align}
        \label{eq: tangent normalb quasi-int error}
        \norm{(\btau)^{\boundaryGhcero}-\btauh}_{\Lp(\boundaryGhcero)}&\lesssim h^{p+1},
        &\norm{\lifts{\curvbdry}-\curvbdryh}_{\Lp(\boundaryGhcero)}&\lesssim h^{p+1},
    \end{align}
    for all $0<h\le h_0$. Furthermore, from these estimates we get, for $0 < h \le h_0$,
    \begin{align}
        \label{eq: uniform bound tangent normalb}
        \norm{\btauh}_{\Lp[\infty](\boundaryGhcero)}&\lesssim C,
        &\norm{\curvbdryh}_{\Lp[\infty](\boundaryGhcero)}&\lesssim C.   
    \end{align}
\end{lemma}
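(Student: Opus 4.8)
The plan is to treat the two estimates in~\eqref{eq: tangent normalb quasi-int error} identically, since $\btau$ and $\curvbdry$ play symmetric roles, and then derive~\eqref{eq: uniform bound tangent normalb} from a triangle inequality together with a crude inverse estimate on the one-dimensional curve $\boundaryGhcero$. For the first estimate, I would invoke the approximation property of the boundary quasi-interpolant $\Qs[\boundaryGhcero]$: by the remark immediately following the definition of $\Qs[\boundaryGscero]$ (``analogous approximation estimates as for the bivariate quasi-interpolant when the function $u\in W^{k,s}(\partial\Gammat)$''), applied componentwise to the vector field $\btau$ with $k=p+1$, $s=2$, and noting $\boundaryGhcero = \boundaryGscero$, one gets
\[
\norm{(\btau)^{\boundaryGhcero}-\Qs[\boundaryGhcero]\btau}_{\Lp(\boundaryGhcero)}
\lesssim h^{p+1}\,\norm{\btau}_{\Hk[p+1](\boundaryG)}
\lesssim h^{p+1},
\]
where the last step uses the standing regularity assumption that $\btau\in W^{p+1,\infty}(\boundaryG)^3$ (so in particular in $H^{p+1}$, the boundary curve being compact), and the hidden constant absorbs $\norm{\btau}_{W^{p+1,\infty}(\boundaryG)}$. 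The same argument with $\curvbdry$ in place of $\btau$ gives the second estimate. Here one must be slightly careful about the lift: $(\btau)^{\boundaryGhcero}$ and $\lifts{\curvbdry}$ denote the lifts to $\boundaryGhcero$ of the exact geometric quantities, and the quasi-interpolant is by definition built by pulling back to $\partial\Omega$, applying the univariate $\Q_{\partial\Omega}$, and pushing forward — so the comparison is exactly the one the boundary approximation estimate is stated for.

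For~\eqref{eq: uniform bound tangent normalb}, I would write $\btauh = (\btau)^{\boundaryGhcero} + \big(\btauh - (\btau)^{\boundaryGhcero}\big)$ and bound in $L^\infty$:
\[
\norm{\btauh}_{\Lp[\infty](\boundaryGhcero)}
\le \norm{(\btau)^{\boundaryGhcero}}_{\Lp[\infty](\boundaryGhcero)}
+ \norm{\btauh - (\btau)^{\boundaryGhcero}}_{\Lp[\infty](\boundaryGhcero)}.
\]
The first term is $\norm{\btau}_{\Lp[\infty](\boundaryG)}\le \norm{\btau}_{W^{p+1,\infty}(\boundaryG)} \lesssim C$ (again using equivalence of norms between a function and its lift, as in~\eqref{eq: equivalence Gs-G norms} specialized to the boundary curve, or simply $|\btau|\equiv 1$ since $\btau$ is a unit tangent). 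For the second term, since $\btauh - (\btau)^{\boundaryGhcero}$ is not a spline (only $\btauh$ is), I would instead route through $\btauh - (\btau)^{\boundaryGhcero} = \big(\btauh - \Qs[\boundaryGhcero]((\btau)^{\boundaryGhcero})\big) + \big(\Qs[\boundaryGhcero]((\btau)^{\boundaryGhcero}) - (\btau)^{\boundaryGhcero}\big)$; the first bracket vanishes by definition of $\btauh$, and the second is bounded in $L^\infty$ directly by the quasi-interpolant's $W^{0,\infty}$ approximation estimate (the boundary analogue of~\eqref{eq: globalapproximation} with $k=0$, $q=\infty$, $l=1$), giving $\lesssim h\,\norm{\btau}_{W^{1,\infty}(\boundaryG)} \le C h_0$. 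Hence for $h\le h_0$ the sum is bounded by a constant. The same reasoning applies verbatim to $\curvbdryh$, using $\norm{\curvbdry}_{\Lp[\infty](\boundaryG)} \le \norm{\curvbdry}_{W^{p+1,\infty}(\boundaryG)}$.

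The only mildly delicate point — and the one I would write out with care — is making sure the boundary quasi-interpolant approximation estimate is genuinely available in the $L^\infty$ norm and with the lift taken in the sense just described; everything else is a direct application of already-stated facts. I do not anticipate a real obstacle: the lemma is a bookkeeping step assembling the univariate spline approximation theory of Lemma~\ref{lem: globalestimatesprojector} (transported to $\boundaryGhcero$) with the norm-equivalence between a boundary function and its lift. If one prefers to avoid the $L^\infty$ quasi-interpolation estimate, an alternative for~\eqref{eq: uniform bound tangent normalb} is to apply a one-dimensional inverse inequality $\norm{\btauh - (\btau)^{\boundaryGhcero}}_{\Lp[\infty]} \lesssim h^{-1/2}\norm{\btauh - (\btau)^{\boundaryGhcero}}_{\Lp} \lesssim h^{-1/2} h^{p+1} = h^{p+1/2}$ (valid since $\btauh$ is a spline and $(\btau)^{\boundaryGhcero}$ is smooth, bounding each piece separately), which again tends to $0$; I would mention this only as a remark.
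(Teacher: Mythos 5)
Your proposal is correct and follows exactly the route the paper intends: the paper offers no written proof of this lemma beyond the remark that it ``follows from the properties of the quasi-interpolant and the regularity of $\btau$ and $\curvbdry$'', and your combination of the boundary quasi-interpolation estimates (the univariate analogue of Lemma~\ref{lem: globalestimatesprojector} transported to $\boundaryGhcero$, applied with $l=p+1$, $k=0$) with a triangle inequality and an $L^\infty$ approximation (or stability) bound for the uniform estimates is precisely that bookkeeping. The only cosmetic caveat is your parenthetical alternative via the inverse inequality, which as phrased would need an intermediate spline interpolant rather than ``bounding each piece separately''; your main argument does not rely on it.
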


We are now in a position to state and prove that the nonlinear projection $\Rs\bs u$ is well defined for a sufficiently large $\lambda>0$.

\begin{lemma}\label{Lem: stability non linear Ru}
    There exist $\lambda>0$, $h_0>0$ and $C>0$ such that, for each $\bs u\in \Otau$, there exists a unique solution $\Rs\bs u$ of~\eqref{eq: non linear Ru trace in Otauh} which satisfies, for all $0<h\le h_0$ and all $t\in[0,T]$, the following stability bound:
    \begin{align}
    \label{eq: stability ritz nu}
        \normHunogstvec{\Rs \bs u}
        &\le C \normHunogtvec{\bs u}. 
    \end{align}
\end{lemma}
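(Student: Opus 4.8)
The plan is to treat~\eqref{eq: non linear Ru trace in Otauh}, for each fixed $t$, as a nonlinear equation on the finite-dimensional space $\Otauhs$: I would get existence from a standard corollary of Brouwer's fixed-point theorem (if a continuous map $P$ on a finite-dimensional inner-product space satisfies $(P(\bs w),\bs w)>0$ on the sphere of radius $\rho$, then $P$ vanishes somewhere in the corresponding ball), read the stability bound~\eqref{eq: stability ritz nu} off the same argument, and then prove uniqueness for $\lambda$ large. Suppressing the $t$-dependence, write the left-hand side of~\eqref{eq: non linear Ru trace in Otauh} as $a^*(\bs w,\testnuhs)-b^*(\bs w;\bs w,\testnuhs)$, with $a^*(\bs w,\testnuhs):=\intgammas\gradghs\bs w:\gradghs\testnuhs+\lambda\,\bs w\cdot\testnuhs$ and $b^*(\bs z;\bs w,\testnuhs):=\int_{\boundaryGhcero}(\bs z\cdot\curvbdryh)\,(\bs w\times\btauh)\cdot\testnuhs$, and call the right-hand side $L(\testnuhs)$. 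Define $P:\Otauhs\to\Otauhs$ by $(P(\bs w),\testnuhs)_{\Hk(\Gammas)}=a^*(\bs w,\testnuhs)-b^*(\bs w;\bs w,\testnuhs)-L(\testnuhs)$ for all $\testnuhs\in\Otauhs$; $P$ is continuous, being polynomial in the finitely many coefficients of $\bs w$. Throughout, $h\le h_0$ is taken small enough that the norm equivalences~\eqref{eq: equivalence Gs-G norms} hold and that $\norm{\curvbdryh}_{L^\infty(\boundaryGhcero)}$, $\norm{\btauh}_{L^\infty(\boundaryGhcero)}$ are uniformly bounded by Lemma~\ref{Lem: def and estimate tauh normalbh}.

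The key structural observation is that $(\bs w\times\btauh)\cdot\bs w=0$ pointwise, so the nonlinear term vanishes on the diagonal, $b^*(\bs w;\bs w,\bs w)=0$. Hence $(P(\bs w),\bs w)_{\Hk(\Gammas)}=\normcuad{\gradghs\bs w}_{\Ldosgammas}+\lambda\normcuad{\bs w}_{\Ldosgammas}-L(\bs w)\ge\min(1,\lambda)\normcuad{\bs w}_{\Hk(\Gammas)}-|L(\bs w)|$. To estimate $L(\bs w)$ I would use Cauchy--Schwarz, the componentwise norm equivalence~\eqref{eq: equivalence Gs-G norms} between functions on $\Gammas$ and their lifts to $\Gamma$, the trace theorem, and the Sobolev embedding $\Hk(\Gamma)\hookrightarrow L^4(\boundaryG)$ on the one-dimensional boundary curve (transferred to $\Gammas$ with $h$-uniform constants via~\eqref{eq: equivalence Gs-G norms}), together with the boundedness of the geometric data $\btau$, $\curvbdry$ on $\Gamma$; this yields $|L(\bs w)|\lesssim\big((1+\lambda)\norm{\bs u}_{\Hk(\Gamma)}+\norm{\bs u}_{\Hk(\Gamma)}^2\big)\norm{\bs w}_{\Hk(\Gammas)}$. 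Therefore $(P(\bs w),\bs w)_{\Hk(\Gammas)}>0$ whenever $\norm{\bs w}_{\Hk(\Gammas)}=\rho$ for a suitable fixed multiple $\rho$ of $\min(1,\lambda)^{-1}\big((1+\lambda)\norm{\bs u}_{\Hk(\Gamma)}+\norm{\bs u}_{\Hk(\Gamma)}^2\big)$, and the Brouwer-type lemma produces a zero $\bs w=\Rs\bs u\in\Otauhs$ of $P$ with $\norm{\bs w}_{\Hk(\Gammas)}\le\rho$. Since $\lambda$ is fixed and, in every application of this lemma, $\norm{\bs u}_{\Hk(\Gamma)}$ is controlled by a constant depending only on the regularity of the solution (which absorbs the quadratic term), this is exactly~\eqref{eq: stability ritz nu}.

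For uniqueness, let $\bs w_1,\bs w_2$ both solve~\eqref{eq: non linear Ru trace in Otauh} and set $\bs e:=\bs w_1-\bs w_2$. Expanding the nonlinear difference as $(\bs w_1\cdot\curvbdryh)(\bs w_1\times\btauh)-(\bs w_2\cdot\curvbdryh)(\bs w_2\times\btauh)=(\bs w_1\cdot\curvbdryh)(\bs e\times\btauh)+(\bs e\cdot\curvbdryh)(\bs w_2\times\btauh)$ and testing the subtracted equation with $\testnuhs=\bs e$, the first contribution vanishes again because $(\bs e\times\btauh)\cdot\bs e=0$, leaving $\normcuad{\gradghs\bs e}_{\Ldosgammas}+\lambda\normcuad{\bs e}_{\Ldosgammas}=\int_{\boundaryGhcero}(\bs e\cdot\curvbdryh)\,(\bs w_2\times\btauh)\cdot\bs e\lesssim\norm{\bs w_2}_{L^\infty(\boundaryGhcero)}\norm{\bs e}_{\Lp(\boundaryGhcero)}^2$. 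Using the multiplicative trace inequality $\norm{\bs e}_{\Lp(\boundaryGhcero)}^2\lesssim\norm{\bs e}_{\Hk(\Gammas)}\norm{\bs e}_{\Ldosgammas}$ together with $\sqrt{\lambda}\,\norm{\bs e}_{\Ldosgammas}\le(\normcuad{\gradghs\bs e}_{\Ldosgammas}+\lambda\normcuad{\bs e}_{\Ldosgammas})^{1/2}$ and, for $\lambda\ge1$, $\norm{\bs e}_{\Hk(\Gammas)}\le(\normcuad{\gradghs\bs e}_{\Ldosgammas}+\lambda\normcuad{\bs e}_{\Ldosgammas})^{1/2}$, the right-hand side is $\lesssim\lambda^{-1/2}\norm{\bs w_2}_{L^\infty(\boundaryGhcero)}\big(\normcuad{\gradghs\bs e}_{\Ldosgammas}+\lambda\normcuad{\bs e}_{\Ldosgammas}\big)$, so choosing $\lambda$ large enough that the prefactor is $<1$ forces $\bs e=0$. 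The main obstacle is precisely this step: to let a \emph{single fixed} $\lambda$ do the job one needs an $h$-independent bound for $\norm{\Rs\bs u}_{L^\infty(\boundaryGhcero)}$, which I would obtain from $\bs u\in W^{1,\infty}$ via the already-established $\Hk$-stability and an inverse estimate (an $L^\infty$ bound in the spirit of Lemma~\ref{Lem: uniform bound linear Ru}); coercivity and existence, by contrast, are immediate once the identity $(\bs w\times\btauh)\cdot\bs w=0$ is noticed.
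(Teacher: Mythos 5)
Your existence-plus-stability argument is correct but follows a genuinely different route from the paper: you exploit the pointwise cancellation $(\bs w\times\btauh)\cdot\bs w=0$, so that testing the nonlinear equation with the solution itself kills the cubic boundary term, and a Brouwer-type zero-of-a-vector-field lemma on the finite-dimensional space $\Otauhs$ gives existence together with the a priori bound in one stroke. The paper instead freezes the boundary nonlinearity, defines a linearized solution map $T:\Otauhs\to\Otauhs$, and shows $T$ is a contraction in the weighted norm $\frac34\normcuad{\gradghs\cdot}_{\Lp(\Gammas)}+(\lambda-1)\normcuad{\cdot}_{\Lp(\Gammas)}$ for $\lambda$ large, getting existence, uniqueness and stability from Banach's fixed point theorem at once. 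Your route buys a cleaner existence proof (no smallness or contraction constant needed) at the price of having to prove uniqueness separately; note also that your stability bound comes out as $\lesssim \normHunogtvec{\bs u}+\normHunogtvec{\bs u}^2$ rather than the stated linear bound, because the right-hand side of~\eqref{eq: non linear Ru trace in Otauh} is quadratic in $\bs u$ on the boundary — you flag this and absorb it using the boundedness of $\bs u$ in the application, which is acceptable but should be stated as a hypothesis or reflected in the constant.

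The genuine gap is in your uniqueness step. You need an $h$-uniform bound on $\norm{\Rs\bs u}_{\Lp[\infty](\boundaryGhcero)}$ (or on one of the two competing solutions) to fix a single $\lambda$ independent of $h$, and you propose to get it ``from the already-established $\Hk$-stability and an inverse estimate, in the spirit of Lemma~\ref{Lem: uniform bound linear Ru}.'' That route does not work at this stage: stability plus an inverse inequality only gives $\norm{\Rs\bs u}_{\Lp[\infty](\boundaryGhcero)}\lesssim h^{-1/2}\normHunogtvec{\bs u}$ (or $h^{-1}$ through the two-dimensional inverse estimate), which forces $\lambda$ to grow as $h\to 0$. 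The uniform $W^{1,\infty}$ bound of Lemma~\ref{Lem: uniform bound non-linear Ru} is not a consequence of stability alone; it hinges on the optimal error estimate $\norm{\bs u-\Rs[\Gamma]\bs u}_{\Hk(\Gammat)}\lesssim h^p$ of Proposition~\ref{prop: error estimate non-linear Ru}, which compensates the $h^{-1}$ from the inverse inequality — and that proposition is proved after, and using, the present lemma, so invoking it here is circular. The gap is repairable but requires restructuring: since your cancellation shows that \emph{every} solution of~\eqref{eq: non linear Ru trace in Otauh} satisfies the same a priori bound, one can first run the error-estimate and $L^\infty$-bound arguments for an arbitrary solution, and only then close your energy argument for uniqueness with a fixed large $\lambda$ (now depending on the higher regularity of $\bs u$, $\btau$ and $\curvbdry$, not just $\normHunogtvec{\bs u}$). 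As written, however, the uniqueness proof does not go through, whereas the paper's contraction argument is designed precisely to avoid needing any bound on the solution itself.
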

\begin{proof}
    Let $h_0>0$ be as in Lemma~\ref{Lem: def and estimate tauh normalbh} and let $0<h<h_0$.
We will first prove that there exists $\lambda>0$ independent of $h$ such that the nonlinear projection exists and is unique.
Given $\bs u\in \Otau$, we define the operator $T: \Otauhs \rightarrow \Otauhs$ as follows: for $\bs v\in \Otauhs$, let $T(\bs v)$ be the unique function in $\Otauhs$ which satisfies, for all $\testnuhs\in \Otauhs$,
\begin{equation*}
    \begin{split}
        \intgammas\gradghs&T (\bs v) : \gradghs  \testnuhs + \lambda\, T (\bs v)\cdot \testnuhs \\
          ={}&\intgamma \gradg \bs u: \gradg\liftgamma{\testnuhs}+\lambda\,\bs u\cdot\liftgamma{\testnuhs} \\
          &-\int_{\boundaryG}\bs u\cdot \curvbdry( \bs u\times \btau)\cdot \liftgamma{\testnuhs}
          +\int_{\boundaryGhcero}\bs v\cdot \curvbdryh(\bs v\times \btauh)\cdot \testnuhs .
    \end{split}
        \end{equation*}

We now observe that due to~\eqref{eq: uniform bound tangent normalb}, if
$T(\bs v_1)-T(\bs v_2)$ for 
$\bs v_1, \bs v_2 \in \Otauhs$, we have
\begin{equation*}
    \begin{split}
        \intgammas\gradghs&(T(\bs v_1)-T(\bs v_2)): \gradghs  \testnuhs + \lambda\, (T(\bs v_1)-T(\bs v_2))\cdot \testnuhs \\
          ={}&\int_{\boundaryGhcero}\bs v_1\cdot \curvbdryh(\bs v_1\times \btauh)\cdot \testnuhs 
          - \int_{\boundaryGhcero}\bs v_2\cdot \curvbdryh(\bs v_2\times \btauh)\cdot \testnuhs\\
          ={}&\int_{\boundaryGhcero}\bs v_1\cdot \curvbdryh(\bs v_1\times \btauh)\cdot \testnuhs
          -\int_{\boundaryGhcero}\bs v_1\cdot \curvbdryh(\bs v_2\times \btauh)\cdot \testnuhs \\
          &+\int_{\boundaryGhcero}\bs v_1\cdot \curvbdryh(\bs v_2\times \btauh)\cdot \testnuhs
          - \int_{\boundaryGhcero}\bs v_2\cdot \curvbdryh(\bs v_2\times \btauh)\cdot \testnuhs\\
          &\leq{}c{}\norm{\bs v_1 -\bs v_2}_{\Lp(\boundaryG)}\norm{\testnuhs}_{\Lp(\boundaryGhcero)}\\
          &\leq{}c{}\frac{1}{2}\normcuad{\bs v_1 -\bs v_2}_{\Lp(\boundaryGhcero)}+\frac{1}{2}\normcuad{\testnuhs}_{\Lp(\boundaryGhcero)},
    \end{split}
        \end{equation*}
with $c$ solely depending on the $\Lp[\infty]$-norm of $\curvbdry$ and $\btau$.

We will now use the following trace inequality:
\begin{equation}\label{eq: trace ineq}
    \norm{\bs u}_{\Lp(\boundaryG)}\leq c_T\left(\epsilon\norm{\gradg\bs u}_{\Lp(\Gamma)}+\epsilon^{-1}\norm{\bs u}_{\Lp(\Gamma)}\right),
\end{equation}
which holds for all $0< \epsilon\leq 1$ and $\bs u\in\Hk(\Gamma)^3$, with a constant $c_T$ independent of $\epsilon$ and $u$.

Together with the equivalence of norms~\eqref{eq: equivalence Gs-G norms}, this trace inequality yields:
\begin{equation*}
    \begin{split}
        \intgammas\gradghs(T(\bs v_1)-T(\bs v_2)): &\gradghs  \testnuhs + \lambda\, (T(\bs v_1)-T(\bs v_2))\cdot \testnuhs \\
          \leq{}&\frac{c\epsilon}{2}\normcuad{\gradg(\bs v_1 -\bs v_2)}_{\Lp(\Gammas)}+\frac{c}{2\epsilon}\normcuad{\bs v_1 -\bs v_2}_{\Lp(\Gammas)}\\
          &+\frac{1}{4}\normcuad{\gradg\testnuhs}_{\Lp(\Gammas)}+\normcuad{\testnuhs}_{\Lp(\Gammas)}.
    \end{split}
        \end{equation*}
Taking $\testnuhs=T(\bs v_1)-T(\bs v_2)$ and $\epsilon=\frac{1}{2c}$ we get
    \begin{multline*}
        \frac{3}{4}\normcuad{\gradghs(T(\bs v_1)-T(\bs v_2))}_{\Lp(\Gammas)}
        +(\lambda-1) \normcuad{T(\bs v_1)-T(\bs v_2)}_{\Lp(\Gammas)}\\
          \leq{}\frac{1}{4}\normcuad{\gradg(\bs v_1 -\bs v_2)}_{\Lp(\Gammas)}+c^2\normcuad{\bs v_1 -\bs v_2}_{\Lp(\Gammas)}.
    \end{multline*}
We now choose $\lambda > 3c^2+1$ so that
$c^2\leq \frac{1}{3}(\lambda-1)$, and thus
\begin{multline*}
        \frac{3}{4}\normcuad{\gradghs(T(\bs v_1)-T(\bs v_2))}+(\lambda-1) \normcuad{T(\bs v_1)-T(\bs v_2)}\\
          \leq\frac{1}{3}\left(\frac{3}{4}\normcuad{\gradg(\bs v_1 -\bs v_2)}_{\Lp(\Gammas)}+(\lambda-1)\normcuad{\bs v_1 -\bs v_2}_{\Lp(\Gammas)}\right).
        \end{multline*}
Therefore, considering the norm $\tnorm \bs v\tnorm ^2=\frac34\normcuad{\gradghs\bs v}_{\Lp(\Gammas)}+(\lambda-1)\normcuad{\bs v}_{\Lp(\Gammas)}$ in $H^1(\Gammas)$, we have that $T$ is a contraction, i.e, 
\begin{equation*}
    \tnorm T(\bs v_1)-T(\bs v_2)\tnorm \leq \sqrt{\frac{3}{4}} \tnorm \bs v_1-\bs v_2\tnorm .
\end{equation*}
Therefore, for any $\lambda > 3c^2+1$, there exists a unique fixed point of $T$ in $\Otauhs$, from which we conclude the existence and uniqueness of $\Rs \bs u$.

Stability follows from with similar arguments, maybe for a larger $\lambda$, testing with $\funs{\bs\psi}=\Rs[\Gammas] \bs u$ in the definition~\eqref{eq: non linear Ru trace in Otauh} of $\Rs$ and using again the trace inequality~\eqref{eq: trace ineq}.
\end{proof}

We now prove higher-order error estimates for $\Rs \bs u$.

\paragraph{Notation.} In what follows, we will use $\Hk[p+1]_{\partial}(\Gammat)$ to denote the set of functions $u \in \Hk[p+1](\Gammat)$ such that $u_{|\partial\Gammat} \in \Hk[p+1](\partial\Gammat)$, and also $W_{\partial}^{p+1,s}(\Gammat)$ will denote $u \in W^{p+1,s}(\Gammat)$ such that $u_{|\partial\Gammat} \in W^{p+1,s}(\partial\Gammat)$, for $1\le s \le \infty$.

\begin{proposition}\label{prop: error estimate non-linear Ru} 
Let $\bs u$ be defined in $\Gt$ such that $\bs u\in \Otau\cap\Hk[p+1]_{\partial}(\Gammat)^3$ for all $t\in[0,T]$. Then, there exists $h_0>0$, such that 
\begin{align}
\label{eq: error estimate non-linear Ru}
\norm{\bs u-\Rs[\Gamma] \bs u}_{\Hk(\Gammat)}
&\lesssim h^{p}\Big(\norm{\bs u}_{H^{p+1}(\Gammat)}+\norm{\bs u}_{H^{p+1}(\boundaryG)}\Big),
\end{align}
for all $t\in[0,T]$ and $0<h\leq h_0$, where $\Rs[\Gamma]\bs u$ is the lift of $\Rs\bs u$ to $\Gammat$.
\end{proposition}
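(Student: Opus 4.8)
The plan is the standard error‑splitting with an auxiliary discrete function, as announced before Lemma~\ref{Lem: L2 orth proj on boundary}. First I would build a comparison function $\bs z_h\in\Otauhs$ whose lift is $\Hk$‑close to $\bs u$, and then estimate the purely discrete remainder $\bs\theta_h:=\bs z_h-\Rs\bs u\in\Otauhs$ by testing~\eqref{eq: non linear Ru trace in Otauh} with $\bs\theta_h$. Concretely, set $\bs g:=\Ps(\btauhunit)\lifts{\bs u}\in\Otauhsb$, let $\widetilde{\bs u}$ be $\bs u$ modified in a boundary layer so that its trace on $\boundaryGhcero$ equals $\bs g$ (e.g.\ $\widetilde{\bs u}=\bs u+\mathcal{E}({\bs g}^{\boundaryG}-\bs u|_{\boundaryG})$ with $\mathcal{E}$ a bounded extension), and put $\bs z_h:=\Qs[\Gammas]\lifts{\widetilde{\bs u}}$. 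Since the (tensor‑product) quasi‑interpolant commutes with restriction to a boundary edge and reproduces the univariate boundary spline $\bs g$ there, one gets $\bs z_h|_{\boundaryGhcero}=\bs g\in\Otauhsb$, hence $\bs z_h\in\Otauhs$: meeting the non‑conforming constraint is exactly what this detour buys, and it is the reason we cannot simply use the quasi‑interpolant of $\bs u$ as in Section~\ref{sec: Linear Ritz projection}.

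Writing $\bs u-\Rs[\Gamma]\bs u=(\bs u-\liftgamma{\bs z_h})+(\liftgamma{\bs z_h}-\Rs[\Gamma]\bs u)=:\bs\rho+\liftgamma{\bs\theta_h}$, the consistency part is routine: $\|\bs u-\Qs[\Gammas]\lifts{\bs u}{}^{\Gamma}\|_{\Hk(\Gammat)}\lesssim h^p\|\bs u\|_{\Hk[p+1](\Gammat)}$ by~\eqref{eq: global approx on gamma}, while the boundary‑layer correction is bounded by $\|{\bs g}^{\boundaryG}-\bs u|_{\boundaryG}\|_{\Ldosgammas}+h\|{\bs g}^{\boundaryG}-\bs u|_{\boundaryG}\|_{\Hk(\boundaryG)}\lesssim h^{p+1}\bigl(\|\bs u\|_{\Hk[p+1](\boundaryG)}+\|\bs u\|_{\Wunoinf(\boundaryG)}\bigr)$, which follows from Lemma~\ref{Lem: L2 orth proj on boundary} together with the almost‑orthogonality $\bs u\cdot\btauhunit=\bs u\cdot(\btauhunit-\btau)=O(h^{p+1})$ on $\boundaryG$ (because $\bs u\cdot\btau=0$ and $\btauh=\Qs[\boundaryGhcero]\btau$, by Lemma~\ref{Lem: def and estimate tauh normalbh}) and the $\Hk$‑stability of $\mathcal{E}$ and of the quasi‑interpolant. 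This yields $\|\bs\rho\|_{\Hk(\Gammat)}\lesssim h^p(\|\bs u\|_{\Hk[p+1](\Gammat)}+\|\bs u\|_{\Hk[p+1](\boundaryG)})$ and, moreover, $\|\bs\rho\|_{\Ldosgammas}\lesssim h^{p+1}(\cdots)$ since $\bs\rho|_{\boundaryG}=\bs u|_{\boundaryG}-{\bs g}^{\boundaryG}$.

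For the remainder, testing~\eqref{eq: non linear Ru trace in Otauh} with $\testnuhs=\bs\theta_h$ and subtracting the same identity written with $\bs z_h$ in place of $\Rs\bs u$ gives, with $a_{\Gammas}(\bs v,\bs\psi):=\intgammas\gradghs\bs v:\gradghs\bs\psi+\lambda\,\bs v\cdot\bs\psi$, $n_{\Gammas}(\bs v;\bs\psi):=\int_{\boundaryGhcero}(\bs v\cdot\curvbdryh)\bigl((\bs v\times\btauh)\cdot\bs\psi\bigr)$, and $a_\Gamma,n_\Gamma$ their continuous analogues,
\[
a_{\Gammas}(\bs\theta_h,\bs\theta_h)=\bigl[a_{\Gammas}(\bs z_h,\bs\theta_h)-a_{\Gamma}(\liftgamma{\bs z_h},\liftgamma{\bs\theta_h})\bigr]-a_{\Gamma}(\bs\rho,\liftgamma{\bs\theta_h})+\bigl[n_{\Gamma}(\bs u;\liftgamma{\bs\theta_h})-n_{\Gammas}(\Rs\bs u;\bs\theta_h)\bigr].
\]
The first bracket is $\lesssim h^p\|\bs z_h\|_{\Hk(\Gammas)}\|\bs\theta_h\|_{\Hk(\Gammas)}$ by the geometric perturbation bounds~\eqref{eq: bound a*-a},~\eqref{eq: bound m*-m}; the second is $\lesssim\|\bs\rho\|_{\Hk(\Gammat)}\|\bs\theta_h\|_{\Hk(\Gammas)}$. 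In the third bracket I would first pass from $\boundaryG$ to $\boundaryGhcero$ and replace $\curvbdry,\btau$ by $\curvbdryh,\btauh$ via~\eqref{eq: bound m*-m boundary} and Lemma~\ref{Lem: def and estimate tauh normalbh} — costing only $h^p\|\bs\theta_h\|_{\Ldosgammas}$, since $\bs u$ is bounded on the one‑dimensional $\boundaryG$ — and then insert $\lifts{\bs u}=\bs z_h-\lifts{\bs\rho}$ and $\Rs\bs u=\bs z_h-\bs\theta_h$ and expand the quadratic form $n_{\Gammas}$: the terms carrying a factor $\lifts{\bs\rho}$ are $\lesssim h^p\|\bs\theta_h\|_{\Ldosgammas}$ (using $\|\bs\rho\|_{\Ldosgammas}\lesssim h^{p+1}$), the bilinear‑in‑$\bs\theta_h$ cross terms $\int(\bs z_h\cdot\curvbdryh)((\bs\theta_h\times\btauh)\cdot\bs\theta_h)$ and $\int(\bs\theta_h\cdot\curvbdryh)((\bs z_h\times\btauh)\cdot\bs\theta_h)$ are $\lesssim\|\bs z_h\|_{\Lp[\infty](\boundaryGhcero)}\|\bs\theta_h\|_{\Ldosgammas}^2$ with $\|\bs z_h\|_{\Lp[\infty](\boundaryGhcero)}\lesssim\|\bs u\|_{\Wunoinf(\boundaryG)}$ uniformly bounded, and there remains the genuinely cubic term $n_{\Gammas}(\bs\theta_h;\bs\theta_h)$.

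The crux — and the main obstacle — is handling $n_{\Gammas}(\bs\theta_h;\bs\theta_h)$ and the quadratic boundary terms so that the estimate is uniform in $h$ and $\lambda$ stays a fixed constant. The key observation, which avoids any circular use of a $\Wunoinf$‑bound on $\Rs\bs u$ (such a bound would require the very estimate we are proving, cf.\ Lemma~\ref{Lem: uniform bound linear Ru}), is that $\bs\theta_h$ is \emph{a priori} bounded in $\Hk(\Gammas)$ uniformly in $h$: indeed $\|\bs z_h\|_{\Hk(\Gammas)}\lesssim\|\bs u\|_{\Hk[p+1](\Gammat)}+\|\bs u\|_{\Hk[p+1](\boundaryG)}$ and $\|\Rs\bs u\|_{\Hk(\Gammas)}\lesssim\|\bs u\|_{\Hk(\Gammat)}$ by the stability bound~\eqref{eq: stability ritz nu}. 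Combining this with the trace inequality~\eqref{eq: trace ineq} and one‑dimensional Sobolev/interpolation inequalities on the curve $\boundaryG$ (e.g.\ $\|w\|_{\Lp[4](\boundaryGhcero)}^2\lesssim\|w\|_{\Ldosgammas}\|w\|_{\Hk(\Gammas)}$ for $w$ a trace on $\boundaryGhcero$), every nonlinear boundary contribution reduces, modulo the $O(h^p)$ terms already collected, to a uniformly bounded constant times $\|\bs\theta_h\|_{\Ldosgammas}^2$; the trace inequality then turns this into $\tfrac14\|\gradghs\bs\theta_h\|_{\Ldosgammas}^2$ plus a large multiple of $\|\bs\theta_h\|_{\Ldosgammas}^2$, both absorbed by $a_{\Gammas}(\bs\theta_h,\bs\theta_h)$ provided $\lambda$ is large enough (as in the proof of Lemma~\ref{Lem: stability non linear Ru}, possibly after enlarging $\lambda$). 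What survives is $\|\bs\theta_h\|_{\Hk(\Gammas)}^2\lesssim h^p\bigl(\|\bs u\|_{\Hk[p+1](\Gammat)}+\|\bs u\|_{\Hk[p+1](\boundaryG)}\bigr)\|\bs\theta_h\|_{\Hk(\Gammas)}+\|\bs\rho\|_{\Hk(\Gammat)}\|\bs\theta_h\|_{\Hk(\Gammas)}$, hence the bound on $\|\bs\theta_h\|_{\Hk(\Gammas)}$, and the claim follows from $\|\bs u-\Rs[\Gamma]\bs u\|_{\Hk(\Gammat)}\le\|\bs\rho\|_{\Hk(\Gammat)}+\|\liftgamma{\bs\theta_h}\|_{\Hk(\Gammat)}$ and the norm equivalence~\eqref{eq: equivalence Gs-G norms}.
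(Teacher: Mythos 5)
Your proposal follows essentially the same route as the paper's proof: the comparison function is, as in the paper, the quasi-interpolant of an extension of the boundary projection $\Ps(\btauhunit)\lifts{\bs u}$, the consistency part rests on Lemma~\ref{Lem: L2 orth proj on boundary} together with the geometric perturbation bounds, and the nonlinear boundary terms are absorbed through the trace inequality~\eqref{eq: trace ineq} and a sufficiently large $\lambda$, exactly as in the paper's treatment of the terms $(I)$--$(III)$. The differences are only organizational (you run an energy argument on the discrete remainder $\bs\theta_h$ instead of expanding $\normcuad{\eR}_{\Hunogammalambda}$ directly, and you use an a priori $\Hk$ bound plus boundary interpolation inequalities where the paper pairs the inverse estimate $\norm{\Rs\bs u}_{\Lp[\infty](\boundaryGhcero)}\lesssim h^{-1}$ with the $h^{p+1}$ boundary $\Lp$ errors); note, incidentally, that the cubic term you labor over vanishes identically, since $(\bs\theta_h\times\btauh)\cdot\bs\theta_h=0$.
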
 

\begin{proof}
To prove~\eqref{eq: error estimate non-linear Ru}, we denote $\eR = \bs u-\Rs[\Gamma] \bs u$ and 
\begin{equation*}
    \normcuad{\eR}_{\Hunogammalambda}:=\normcuad{\gradg\eR}_{\Ldosgamma}+\lambda\normcuad{\eR}_{\Ldosgamma}.
\end{equation*}
We now define $\tilde{\bs{u}}\in \Hk(\Gamma)^3$ as the only function which satisfies 
$\tilde{\bs{u}}\big|_{\boundaryG}=\big(\Ps(\btauhunit)\lifts{\bs{u}}\big)^{\boundaryG}$
and $( \tilde{\bs{u}},\bs \eta)_{\Hk(\Gamma)}=(\bs u,\bs \eta)_{\Hk(\Gamma)}$ for all $\bs \eta\in \Hk_0(\Gamma)^3$,
whence 
    \begin{equation}\label{eq: H1/2 norm u-tilde u}
        \normHunog{\bs u-\tilde{\bs{u}}}
        \cong \norm{ \bs u-(\Ps(\btauhunit)\lifts{\bs{u}})^{\boundaryG}}_{\Hk[\frac{1}{2}](\boundaryG)}.
    \end{equation}
Then
\begin{align*}
\normcuad{\eR}_{\Hunogammalambda}
={}&\left(\intgamma\gradg\eR:\gradg( \bs{u}-\Qs[\Gamma]\tilde{\bs{u}})+\lambda\intgamma\eR\cdot( \bs{u}-\Qs[\Gamma]\tilde{\bs{u}})\right)\\
&+\left(\intgamma\gradg\eR:\gradg(\Qs[\Gamma]\tilde{\bs{u}}-\Rs[\Gamma]\bs{u})+\lambda\intgamma\eR\cdot( \Qs[\Gamma]\tilde{\bs{u}}-\Rs[\Gamma]\bs{u})\right)\\
={}&(I)+(II).
\end{align*} 
As we explained earlier, the idea to estimate $\eR$ is to introduce an intermediate discrete function with good approximation properties. 
We have already defined this intermediate discrete function as $\Qs[\Gamma]\tilde{\bs{u}}$; now, let us verify that it possesses good approximation properties. To this end, it is necessary to first study how close $\tilde{\bs{u}}$ is to $\bs{u}$.  

Since $\bs u\in \Otau$, $\bs u\big|_{\boundaryG} = \Ptau\bs u\big|_{\boundaryG}$, so that~\eqref{eq: H1/2 norm u-tilde u}, Lemma~\ref{Lem: L2 orth proj on boundary} and the fact that $\btauh:=\Qs[\boundaryGhcero]\btau$, imply
    \begin{align*}
        \normHunog{\bs u-\tilde{\bs{u}}}
        \leq{}& \norm{\Ptau\bs u-\big(\Ps(\btauhunit)\lifts{\bs u}\big)^{\boundaryG}}_{\Hk(\boundaryG)}
        \\
        \lesssim{}& \norm{ (\Ptau\bs u
)^{\boundaryGhcero}-\Ps(\btauhunit)\lifts{\bs u}}_{\Hk(\boundaryGhcero)}
        \\
    \le{}& \norm{ (\Ptau\bs u
    )^{\boundaryGhcero}- \mathcal{P}(\btauhunit)\lifts{\bs u}}_{\Hk(\boundaryGhcero)}
    \\
    &+\norm{ \mathcal{P}(\btauhunit)\lifts{\bs u}-\Ps(\btauhunit)\lifts{\bs{u}}}_{\Hk(\boundaryGhcero)}
    \\
    \lesssim{}& \norm{ (\btau\tensor\btau)^{\boundaryGhcero}- \btauhunit\tensor \btauhunit}_{\Wunoinf(\boundaryGhcero)}\norm{\bs u}_{\Hk(\boundaryG)}
    \\
    &+h^p\norm{\bs{u}}_{{\Hk[p+1](\boundaryG)}}\\
    \lesssim{}& h^{p}\norm{ \btau}_{{W^{p+1,\infty}(\boundaryG)}}\norm{\bs u}_{\Hk(\boundaryG)}+Ch^p\norm{\bs{u}}_{\Hk[p+1](\boundaryG)},
    \end{align*}
    Therefore, 
    \begin{equation}\label{eq: bound u minus tilde u Huno}
        \normHunog{\bs u-\tilde{\bs{u}}} \lesssim{} h^p \norm{\bs{u}}_{{\Hk[p+1](\boundaryG)}},
    \end{equation}   
    Furthermore, using similar arguments and Lemma~\ref{Lem: L2 orth proj on boundary} we deduce that
\begin{align}
    &\label{eq: bound u minus tilde u Ldosb}\norm{\bs u- \tilde{\bs{u}}}_{{\Lp(\boundaryG)}}\lesssim h^{p+1} \norm{\bs{u}}_{\Hk[p+1](\boundaryG)}.
\end{align}
From those estimates, by using the interpolation error estimate~\eqref{eq: global approx on gamma} and the stability of $\Qs[\Gamma]$, we have 
\begin{align}
    \label{eq: bound u minus Q tilde u Huno}\normHunog{\bs u- \Qs[\Gamma]\tilde{\bs{u}}}&\lesssim h^p \left(\norm{\bs{u}}_{{\Hk[p+1](\Gamma)}}+\norm{\bs{u}}_{{\Hk[p+1](\boundaryG)}}\right),\\
    \label{eq: bound u minus Q tilde u Ldosb}\norm{\bs u- \Qs[\Gamma]\tilde{\bs{u}}}_{{\Lp(\boundaryG)}}&\lesssim h^{p+1} \norm{\bs{u}}_{{\Hk[p+1](\boundaryG)}}.
\end{align}
Then, using~\eqref{eq: bound u minus Q tilde u Huno} and Young's inequality, with $\delta_1>0$ conveniently chosen,
\begin{align*}
   |(I)| ={}&\left|\intgamma\gradg\eR:\gradg( \bs{u}-\Qs[\Gamma]\tilde{\bs{u}})+\lambda\intgamma\eR\cdot( \bs{u}-\Qs[\Gamma]\tilde{\bs{u}})\right|\\
   \lesssim{}&  \norm{\eR}_{\Hunogammalambda} \norm{\bs{u}-\Qs[\Gamma]\tilde{\bs{u}}}_{\Hunogammalambda}\\
 \lesssim{} & \norm{\eR}_{\Hunogammalambda} h^p\left(\norm{\bs{u}}_{\Hk[p+1](\Gamma)}+\norm{\bs{u}}_{\Hk[p+1](\boundaryG)}\right)\\
 \leq{} & \delta_1\normcuad{\eR}_{\Hunogammalambda}+ c_{\lambda}h^{2p}\left(\normcuad{\bs{u}}_{\Hk[p+1](\Gamma)}+\normcuad{\bs{u}}_{\Hk[p+1](\boundaryG)}\right).
\end{align*}

To bound $(II)$, we first use Definition \ref{def: non linear Ru trace in Otauh} to deduce that
\begin{align*}
    (II) ={}&\intgamma\gradg\eR:\gradg(\Qs[\Gamma]\tilde{\bs{u}}-\Rs[\Gamma]\bs{u})+\lambda\intgamma\eR\cdot( \Qs[\Gamma]\tilde{\bs{u}}-\Rs[\Gamma]\bs{u})\\
    ={}&\intgamma\gradg\bs u:\gradg(\Qs[\Gamma]\tilde{\bs{u}}-\Rs[\Gamma]\bs{u})+\lambda\intgamma\bs u\cdot( \Qs[\Gamma]\tilde{\bs{u}}-\Rs[\Gamma]\bs{u})\\ 
&-\intgamma\gradg\Rs[\Gamma]\bs u:\gradg(\Qs[\Gamma]\tilde{\bs{u}}-\Rs[\Gamma]\bs{u})+\lambda\intgamma\Rs[\Gamma]\bs u\cdot( \Qs[\Gamma]\tilde{\bs{u}}-\Rs[\Gamma]\bs{u})\\ 
    ={}&\intgammas\gradghs\Rs\bs u:\gradghs(\Qs\tilde{\bs{u}}-\Rs\bs{u})+\lambda\intgammas\Rs\bs u\cdot( \Qs\tilde{\bs{u}}-\Rs\bs{u})\\ 
    &-(III)-\intgamma\gradg\Rs[\Gamma]\bs u:\gradg(\Qs[\Gamma]\tilde{\bs{u}}-\Rs[\Gamma]\bs{u})+\lambda\intgamma\Rs[\Gamma]\bs u\cdot( \Qs[\Gamma]\tilde{\bs{u}}-\Rs[\Gamma]\bs{u}),
\end{align*}
where 
\begin{align*}
(III)={}&\int_{\boundaryG}\bs u\cdot \curvbdry( \bs u\times \btau)\cdot (\Qs[\Gamma]\tilde{\bs{u}}-\Rs[\Gamma]\bs{u})\\
    &- \int_{\boundaryGhcero}\Rs[\Gammas] \bs u\cdot \curvbdryh(\Rs[\Gammas] \bs u\times \btauh)\cdot (\Qs\tilde{\bs{u}}-\Rs\bs{u}).
\end{align*}
Thus, by Lemma~\ref{Lem: geometric perturbation},~\eqref{eq: stability ritz nu} and Young’s inequality with $\delta_2>0$ conveniently chosen and~\eqref{eq: bound u minus Q tilde u Huno}
\begin{align*}
    |(II)| \lesssim{}& ch^p\norm{\Rs[\Gamma]\bs u}_{\Hunogamma}\norm{\Qs[\Gamma]\tilde{\bs{u}}-\Rs[\Gamma]\bs{u}}_{\Hunogammalambda} + |(III)|\\
    \leq{}& c h^p\normHunog{\bs u}\left(\norm{\Qs[\Gamma]\tilde{\bs{u}}-\bs u}_{\Hunogammalambda} +\norm{\bs u-\Rs[\Gamma]\bs{u}}_{\Hunogammalambda} \right) + |(III)|\\
    \leq{}& c_{\lambda}h^{2p}\left(\normcuad{\bs{u}}_{{\Hk[p+1](\Gamma)}}+\normcuad{\bs{u}}_{\Hk[p+1](\boundaryG)}\right)+\delta_2\normcuad{\eR}_{\Hunogammalambda}+ |(III)|.
\end{align*}
Now, we need to bound $|(III)|$. First, adding and subtracting $\Rs[\Gamma]\bs u$ in the first integral, and $\lifts{\bs u}$ in the second integral, we obtain 
\begin{align*}
    (III)={}&\int_{\boundaryG} \eR \cdot \curvbdry( \bs u\times \btau)\cdot (\Qs[\Gamma]\tilde{\bs{u}}-\Rs[\Gamma]\bs{u})\\
    &+\int_{\boundaryG}\Rs[\Gamma] \bs u\cdot \curvbdry(\bs u\times \btau)\cdot (\Qs[\Gamma]\tilde{\bs{u}}-\Rs[\Gamma]\bs{u})\\
    & + \int_{\boundaryGhcero}\Rs \bs u\cdot \curvbdryh(\lifts{\eR}\times \btauh)\cdot (\Qs\tilde{\bs{u}}-\Rs\bs{u})\\
    &- \int_{\boundaryGhcero}\Rs \bs u\cdot \curvbdryh(\lifts{\bs u}\times \btauh)\cdot (\Qs\tilde{\bs{u}}-\Rs\bs{u})\\
    ={}& (J_1)+(J_2)+(J_3)+(J_4).
\end{align*}
Where, using that $\Qs[\Gamma]\tilde{\bs{u}}-\Rs[\Gamma]\bs{u}=\Qs[\Gamma]\tilde{\bs{u}}-\bs{u}+\eR$, we have
\begin{align*}
    (J_1)+(J_3)
    ={}&\int_{\boundaryG} {\eR}\cdot \curvbdry( \bs u\times \btau)\cdot (\Qs[\Gamma]\tilde{\bs{u}}-\bs{u})+\int_{\boundaryG}{\eR}\cdot \curvbdry( \bs u\times \btau)\cdot \eR\\
    &+ \int_{\boundaryGhcero}\Rs[\Gammas] \bs u\cdot \curvbdryh({\lifts{\eR}}\times \btauh)\cdot (\Qs\tilde{\bs{u}}-\bs{u}).
\end{align*}
Thus, using~\eqref{eq: bound u minus tilde u Ldosb}, the regularity of $\bs{u}$, $\btau$ and $\curvbdry$ at the boundary, and that $\norm{\Rs[\Gammas] \bs u}_{\Lp[\infty](\boundaryGhcero)}\lesssim{} h^{-1}\norm{\Rs[\Gammas] \bs u}_{\Ldosgammas}\lesssim{} h^{-1}\norm{ \bs u}_{\Hunogamma}\lesssim{} h^{-1}$, Young's inequality leads to
\begin{align*}
    |(J_1)+(J_3)|\leq{}&\norm{\eR}_{\Lp(\boundaryG)}\norm{\curvbdry( \bs u\times \btau)}_{\Lp[\infty](\boundaryG)}\norm{\Qs[\Gamma]\tilde{\bs{u}}-\bs{u}}_{\Lp(\boundaryG)}\\
    &+\norm{\eR}_{\Lp(\boundaryG)} {\norm{\curvbdry( \bs u\times \btau)}_{\Lp[\infty](\boundaryG)}}\norm{\eR}_{\Lp(\boundaryG)}\\
    &+ {\norm{\Rs[\Gammas] \bs u}_{\Lp[\infty](\boundaryGhcero)}} \norm{\curvbdryh}_{\Lp[\infty](\boundaryGhcero)}\norm{\lifts{\eR}}_{\Lp(\boundaryGhcero)} 
    \\
    &\quad\times \norm{\btauh}_{\Lp[\infty](\boundaryGhcero)} \norm{\Qs\tilde{\bs{u}}-\bs{u}
    }_{\Lp(\boundaryGhcero)}\\
    \leq{}&c\norm{\eR}_{\Hunogamma}h^{p+1}\norm{\bs u}_{\Hk[p+1](\boundaryG)}+{c_1}\normcuad{\eR}_{\Lp(\boundaryG)}\\
    &+c\,{h^{-1}}\norm{\eR}_{\Hunogamma}{h^{p+1}}\norm{\bs u}_{\Hk[p+1](\boundaryG)}\\
    \leq{}&\delta_3\normcuad{\eR}_{\Hunogammalambda}+ch^{2p}\normcuad{\bs u}_{\Hk[p+1](\boundaryG)}+c_1\normcuad{\eR}_{\Lp(\boundaryG)}.
\end{align*}
Finally, adding and subtracting $\btauh^{\boundaryG}$ in the integral $(J_2)$ and $\curvbdry^{\boundaryGscero}$ in $(J_4)$, we have
\begin{align*}
    (J_2)+(J_4)
    ={}&\int_{\boundaryG}\Rs[\Gamma] \bs u\cdot \curvbdry(\bs u\times (\btau-\btauh^{\boundaryG}))\cdot (\Qs[\Gamma]\tilde{\bs{u}}-\Rs[\Gamma]\bs{u})\\
    &+\int_{\boundaryG}\Rs[\Gamma] \bs u\cdot \curvbdry(\bs u\times \btauh^{\boundaryG})\cdot (\Qs[\Gamma]\tilde{\bs{u}}-\Rs[\Gamma]\bs{u})\\
    & + \int_{\boundaryGhcero}\Rs \bs u\cdot (\curvbdry^{\boundaryGhcero}-\curvbdryh)(\lifts{\bs u}\times \btauh)\cdot (\Qs\tilde{\bs{u}}-\Rs\bs{u})\\
    & - \int_{\boundaryGhcero}\Rs \bs u\cdot (\curvbdry^{\boundaryGhcero})(\lifts{\bs u}\times \btauh)\cdot (\Qs\tilde{\bs{u}}-\Rs\bs{u}).
\end{align*}
By~\eqref{eq: bound u minus Q tilde u Ldosb} we deduce that
\begin{align*}
    \norm{\Qs[\Gamma]\tilde{\bs{u}}-\Rs[\Gamma]\bs{u}}_{\Lp(\boundaryG)}\leq h^{p+1} \norm{\bs{u}}_{{\Hk[p+1](\boundaryG)}}+\norm{\eR}_{\boundaryG}.
\end{align*}
Using arguments similar to those we have used for the bounds on $|(J_1)+(J_3)|$ and interpolation error estimates for $\btauh$ and $\curvbdryh$, 
we deduce that
\begin{align*}
    |(J_2)+(J_4)|
    &\leq{}ch^p\normHunog{\bs u}\left(h^{p+1} \norm{\bs{u}}_{{\Hk[p+1](\boundaryG)}}+\normHunog{\eR}\right) \\
    &\leq{}ch^{2p}\normHunog{\bs u}^2 + ch^{2p}\normcuad{\bs u}_{\Hk[p+1](\boundaryG)}+\delta_4\normcuad{\eR}_{\Hunogammalambda},
\end{align*}
Finally, using the trace inequality~\eqref{eq: trace ineq},
\begin{align*}
 |(III)|\leq{} & ch^{2p}\normHunog{\bs u}^2 + ch^{2p}\normcuad{\bs u}_{\Hk[p+1](\boundaryG)}+(\delta_3+\delta_4)\norm{\eR}_{\Hunogammalambda}\\
  &+c_1\epsilon\normcuad{\gradg\eR}_{{\Lp(\Gamma)}}+\dfrac{c_1}{\epsilon}\normcuad{\eR}_{{\Lp(\Gamma)}},
\end{align*}
whence, choosing $\epsilon=\frac{1}{4c_1}$,
\begin{align*}
|(II)|\leq{} & ch^{2p}\left(\normcuad{\bs{u}}_{{\Hk[p+1](\Gamma)}}+\normcuad{\bs{u}}_{\Hk[p+1](\boundaryG)}\right)\\
&+(\delta_2+\delta_3+\delta_4)\normcuad{\eR}_{\Hunogammalambda}+\frac{1}{4}\normcuad{\gradg\eR}_{{\Lp(\Gamma)}}+4c_1^2\normcuad{\eR}_{{\Lp(\Gamma)}}.
\end{align*}
Sumarizing, we have
\begin{align*}
    \normcuad{\eR}_{\Hunogammalambda}\leq{}&c_{\lambda}h^{2p}\left(\normcuad{\bs{u}}_{{\Hk[p+1](\Gamma)}}+\normcuad{\bs{u}}_{\Hk[p+1](\boundaryG)}\right)\\
    &+(\delta_1+\delta_2+\delta_3+\delta_4)\normcuad{\eR}_{\Hunogammalambda}\\
    &+\frac{1}{4}\normcuad{\gradg\eR}_{{\Lp(\Gamma)}}+4c_1^2\normcuad{\eR}_{{\Lp(\Gamma)}}.
\end{align*}
Then, choosing $\delta_1= \delta_2=\delta_3=\delta_4=\frac{1}{8}$ and $\lambda>0$ sufficiently large, we ensure that we get the desired estimate~\eqref{eq: error estimate non-linear Ru}.
\end{proof}

As a consequence of the regularity of $\bs{u}$, inverse estimates \cite{BBCHS2006} and the previous $\Hk$ error estimate, we have the following uniform bound for the non-linear projection, the proof is straightforward and thus omitted.
\begin{lemma}\label{Lem: uniform bound non-linear Ru}
    If $\bs{u}\in {W^{p+1,\infty}_\partial(\Gammat)^3}$ for all $t\in[0,T]$. Then,
    \begin{equation}\label{eq: uniform bound nonlinear Ru}
        \norm{\Rs \bs{u}}_{\Wunoinf(\Gammat)}\lesssim\left(\norm{\bs{u}}_{{W^{p+1,\infty}(\Gammat)}}+\norm{\bs{u}}_{{W^{p+1,\infty}(\boundaryG)}}\right).
    \end{equation}
\end{lemma}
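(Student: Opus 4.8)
The plan is to follow the scheme of the proof of Lemma~\ref{Lem: uniform bound linear Ru}: play an inverse estimate against the $\Hk$ error bound of Proposition~\ref{prop: error estimate non-linear Ru}, using as intermediate discrete function the quasi-interpolant $\Qs[\Gamma]\tilde{\bs u}$ that already appears in that proof, where $\tilde{\bs u}\in\Hk(\Gamma)^3$ denotes the harmonic-type extension of $\big(\Ps(\btauhunit)\lifts{\bs u}\big)^{\boundaryG}$ satisfying the closeness estimate~\eqref{eq: bound u minus tilde u Huno}. Since $\Rs\bs u$ is only defined for $\bs u\in\Otau$, and, by the norm equivalence~\eqref{eq: equivalence Gs-G norms}, the claimed bound for $\norm{\Rs\bs u}_{\Wunoinf(\Gammat)}$ is equivalent to the same bound for the lift $\Rs[\Gamma]\bs u$, I would work with $\Rs[\Gamma]\bs u$ throughout, assuming of course $\bs u\in\Otau$ so that Proposition~\ref{prop: error estimate non-linear Ru} applies.

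First I would split, by the triangle inequality,
\begin{multline*}
\norm{\Rs[\Gamma]\bs u}_{\Wunoinf(\Gamma)}
\le
\norm{\Rs[\Gamma]\bs u-\Qs[\Gamma]\tilde{\bs u}}_{\Wunoinf(\Gamma)}\\
+\norm{\Qs[\Gamma]\tilde{\bs u}-\bs u}_{\Wunoinf(\Gamma)}
+\norm{\bs u}_{\Wunoinf(\Gamma)},
\end{multline*}
the last term being trivially bounded by $\norm{\bs u}_{W^{p+1,\infty}(\Gamma)}$. For the first term, $\Rs\bs u$ and $\Qs[\Gammas]\lifts{\tilde{\bs u}}$ are discrete, so after passing to $\Gammas$ via~\eqref{eq: equivalence Gs-G norms} the spline inverse estimate of~\cite{BBCHS2006} produces a factor $h^{-1}$; splitting $\Rs[\Gamma]\bs u-\Qs[\Gamma]\tilde{\bs u}=(\Rs[\Gamma]\bs u-\bs u)+(\bs u-\Qs[\Gamma]\tilde{\bs u})$ and using Proposition~\ref{prop: error estimate non-linear Ru} together with~\eqref{eq: bound u minus Q tilde u Huno}, this term is $\lesssim h^{p-1}\big(\norm{\bs u}_{H^{p+1}(\Gamma)}+\norm{\bs u}_{H^{p+1}(\boundaryG)}\big)$.

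The step I expect to be the main obstacle — and the only one genuinely different from the linear case — is the second term $\norm{\Qs[\Gamma]\tilde{\bs u}-\bs u}_{\Wunoinf(\Gamma)}$, because here the Ritz map is built around $\tilde{\bs u}$, not around $\bs u$, so it cannot be absorbed by a single interpolation estimate. I would insert $\Qs[\Gamma]\bs u$ (well defined since $\bs u\in H^1(\Gamma)^3$) and write $\Qs[\Gamma]\tilde{\bs u}-\bs u=\Qs[\Gamma](\tilde{\bs u}-\bs u)+(\Qs[\Gamma]\bs u-\bs u)$: the second summand is $\lesssim h^p\norm{\bs u}_{W^{p+1,\infty}(\Gamma)}$ by the interpolation estimate~\eqref{eq: global approx on gamma}, while the first, being discrete, is controlled by the inverse estimate and the $\Hk$-stability of $\Qs[\Gamma]$ (which is~\eqref{eq: global approx on gamma} with $k=|\bs\alpha|=1$), giving $\lesssim h^{-1}\norm{\tilde{\bs u}-\bs u}_{\Hk(\Gamma)}\lesssim h^{p-1}\norm{\bs u}_{H^{p+1}(\boundaryG)}$ by~\eqref{eq: bound u minus tilde u Huno}. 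Collecting the contributions yields $\norm{\Rs[\Gamma]\bs u}_{\Wunoinf(\Gamma)}\lesssim(1+h^{p-1})\big(\norm{\bs u}_{H^{p+1}(\Gamma)}+\norm{\bs u}_{H^{p+1}(\boundaryG)}\big)$; since $\Gamma$ and $\boundaryG$ are bounded, $W^{p+1,\infty}\hookrightarrow H^{p+1}$ on both, and since $p\ge 2$ the factor $h^{p-1}$ is uniformly bounded for $h\le h_0$, which gives~\eqref{eq: uniform bound nonlinear Ru}.
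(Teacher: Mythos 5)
Your argument is correct, and its skeleton is exactly what the paper has in mind when it omits the proof as ``straightforward'': triangle inequality with a discrete intermediate function, an inverse estimate on the discrete difference, and the $\Hk$ bound of Proposition~\ref{prop: error estimate non-linear Ru} plus interpolation, with $p\ge 2$ absorbing the $h^{-1}$. The only divergence is your choice of intermediate function: you route through $\Qs[\Gamma]\tilde{\bs u}$, which forces the extra step of controlling $\norm{\Qs[\Gamma]\tilde{\bs u}-\bs u}_{\Wunoinf(\Gammat)}$ via $\Qs[\Gamma](\tilde{\bs u}-\bs u)$, another inverse estimate, the $\Hk$-stability of $\Qs[\Gamma]$ and~\eqref{eq: bound u minus tilde u Huno}. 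That step is sound, but the detour is unnecessary: the obstacle you anticipate (``the Ritz map is built around $\tilde{\bs u}$, not around $\bs u$'') does not arise here, because Proposition~\ref{prop: error estimate non-linear Ru} already bounds $\bs u-\Rs[\Gamma]\bs u$ directly, and the inverse estimate only requires that the two functions being compared be discrete --- membership in $\Otauh$ plays no role in this lemma. So the proof of Lemma~\ref{Lem: uniform bound linear Ru} transfers verbatim with $\Qs[\Gamma]\bs u$ as the intermediate function, replacing~\eqref{eq: error estimate linear Ru} by~\eqref{eq: error estimate non-linear Ru} (which is where the boundary norm $\norm{\bs u}_{W^{p+1,\infty}(\boundaryG)}$ enters); $\tilde{\bs u}$ is only needed in the proofs of Propositions~\ref{prop: error estimate non-linear Ru} and~\ref{prop: error estimate dermat Ritz proj on boundary}, where the test/comparison function must lie in $\Otauhs$.
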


We now present stability estimates for the material derivative of $\Rs[\Gammas] \bs u$, which are necessary to obtain estimates for the error $(\dermat)^{(\ell)} (\bs u- \Rs[\Gamma]\bs u)$.

\begin{lemma}\label{Lem: def stability non linear dermat Ru} 
    Let $\bs u$ be defined in $\Gt$ with $\bs u\in \Otau$ and $\dermat \bs u\in \Otau$ for all $t\in[0,T]$. Then, $\dermat \Rs \bs u\in \Otauhs$ satisfies 
    \begin{equation}\label{eq: def dermat non linear Ru}
        \begin{split}
            \intgammas&\gradghs(\dermat \Rs[\Gammas] \bs u) : \gradghs  \testnuhs + \lambda \intgammas\dermat \Rs[\Gammas] \bs u\cdot \testnuhs\\
              ={}&\intgamma \gradg \dermat\bs u: \gradg\liftgamma{\testnuhs}+\lambda \intgamma\dermat \bs u\cdot\liftgamma{\testnuhs}\\
             &+\int_{\boundaryG}\left(\dermat \bs u\cdot \curvbdry( \bs u\times \btau)+\bs u\cdot \curvbdry( \dermat \bs u\times \btau)\right)\cdot \liftgamma{\testnuhs} \\
             &-\int_{\boundaryGhcero}\left(\dermat\Rs[\Gammas] \bs u\cdot \curvbdryh(\Rs[\Gammas] \bs u\times \btauh)+\Rs[\Gammas] \bs u\cdot \curvbdryh(\dermat \Rs[\Gammas] \bs u\times \btauh)\right)\cdot \testnuhs\\
             &+ d^{\lambda}_{\Gamma}(\vel;\bs u,\liftgamma{\testnuhs})-d^{\lambda}_{\Gammas}(\vs;\Rs \bs u,\testnuhs),
        \end{split}
            \end{equation}
     where
            \begin{align*}
                &d^{\lambda}_{\Gamma}(\vel;\bs u,\liftgamma{\testnuhs})
                :=\lambda\intgamma \Div _{\Gamma}\vel\, \bs u\cdot\liftgamma{\testnuhs}+\intgamma\gradg  \bs u : \mathcal{B}\gradg \liftgamma{\testnuhs},\\
                &d^{\lambda}_{\Gammas}(\vs;\Rs \bs u,\testnuhs)
                :=\lambda\intgammas \Div _{\Gammas}\vs\, \Rs \bs u\cdot\testnuhs+\intgammas\gradghs \Rs \bs u : \mathcal{B}^*\gradghs \testnuhs,
            \end{align*}
    for all $ \testnuhs\in \Otauhs$.
Moreover, there exists $\lambda>0$ sufficiently large and $h_0 > 0$ such that, if $0<h\leq h_0$, and $t\in [0,T]$ we have
\begin{equation}\label{eq: stability dermat non linear Ru}
\begin{split}
     \normHunogst{\dermat \Rs \bs u}
     \lesssim{}&\normHunogtvec{\bs u}+\normHunogtvec{\dermat \bs u}
     \\
     &+\norm{\bs u}_{\Hk(\boundaryG)}+\norm{\dermat \bs u}_{\Hk(\boundaryG)}.
\end{split}
\end{equation}

\end{lemma}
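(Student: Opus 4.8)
The plan is to follow the scheme of Lemma~\ref{Lem: stability - galerkin orthog dermat linear Ru}, replacing the linear Ritz map by the nonlinear one and treating the boundary nonlinearity by hand. First I would record that $t\mapsto\Rs\bs u(t)$ is differentiable and that $\dermat\Rs\bs u\in\Otauhs$: for $h\le h_0$ the fixed point map $T$ from Lemma~\ref{Lem: stability non linear Ru} is a uniform contraction, so the nonlinear system~\eqref{eq: non linear Ru trace in Otauh}, whose data depend smoothly on $t$, has a solution depending smoothly on $t$ by the implicit function theorem; and since the constraint defining $\Otauhs$ involves only the time-independent curve $\boundaryGhcero$ and vector $\btauh$, differentiating it shows $\dermat\Rs\bs u\in\Otauhs$. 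The same argument gives $\dermat\testnuhs\in\Otauhs$ for any admissible family $\testnuhs$.

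Next, to obtain~\eqref{eq: def dermat non linear Ru} I would differentiate the defining identity~\eqref{eq: non linear Ru trace in Otauh} with respect to $t$. The volume integrals over $\Gammat$ and $\Gammast$ are differentiated componentwise with the transport formulae~\eqref{eq: teo transp} and~\eqref{eq: teo transp-grad-grad}, which produces the material derivatives of $\Rs\bs u$, $\bs u$ and $\testnuhs$, together with the terms carrying $\Div_{\Gammast}\vs$, $\Div_{\Gammat}\vel$, $\mathcal{B}(\vs)$ and $\mathcal{B}(\vel)$. The two boundary integrals are over the \emph{fixed} curves $\boundaryGhcero$ and $\boundaryG$, so they are differentiated by the plain product rule, using that $\btauh,\curvbdryh$ and $\btau,\curvbdry$ are independent of $t$ and that $\liftgamma{\dermat\testnuhs}=\dermat\liftgamma{\testnuhs}$. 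The terms containing $\dermat\testnuhs$ then assemble exactly into the two sides of~\eqref{eq: non linear Ru trace in Otauh} tested against the admissible function $\dermat\testnuhs$, and hence cancel; regrouping the remaining $\Div$- and $\mathcal{B}$-terms into $d^{\lambda}_{\Gamma}(\vel;\bs u,\cdot)$ and $d^{\lambda}_{\Gammas}(\vs;\Rs\bs u,\cdot)$ yields~\eqref{eq: def dermat non linear Ru}.

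For the stability bound~\eqref{eq: stability dermat non linear Ru} I would test~\eqref{eq: def dermat non linear Ru} with $\testnuhs=\dermat\Rs\bs u$. The first two left-hand terms become $\normcuad{\gradghs\dermat\Rs\bs u}_{\Lp(\Gammast)}+\lambda\normcuad{\dermat\Rs\bs u}_{\Lp(\Gammast)}$, while the left-hand boundary term is controlled by $\norm{\curvbdryh}_{\Lp[\infty](\boundaryGhcero)}\norm{\btauh}_{\Lp[\infty](\boundaryGhcero)}\norm{\Rs\bs u}_{\Lp[\infty](\boundaryGhcero)}\normcuad{\dermat\Rs\bs u}_{\Lp(\boundaryGhcero)}$, where the first two factors are uniformly bounded by~\eqref{eq: uniform bound tangent normalb} and $\norm{\Rs\bs u}_{\Lp[\infty](\boundaryGhcero)}$ is uniformly bounded by Lemma~\ref{Lem: uniform bound non-linear Ru} (this is where the $W^{p+1,\infty}$-regularity of $\bs u$ at the boundary enters; for the intended application $\bs u=\n$ it is supplied by the standing regularity assumption). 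Applying the trace inequality~\eqref{eq: trace ineq} on $\Gammas$ together with~\eqref{eq: equivalence Gs-G norms}, this contribution is bounded by $\epsilon\normcuad{\gradghs\dermat\Rs\bs u}_{\Lp(\Gammast)}+C\epsilon^{-1}\normcuad{\dermat\Rs\bs u}_{\Lp(\Gammast)}$, which for $\epsilon$ small and $\lambda$ large is absorbed into the left-hand side. On the right-hand side, the two volume integrals are bounded via Cauchy--Schwarz and~\eqref{eq: equivalence Gs-G norms} by $(\norm{\bs u}_{\Hk(\Gammat)}+\norm{\dermat\bs u}_{\Hk(\Gammat)})\norm{\dermat\Rs\bs u}_{\Hk(\Gammast)}$; the two boundary integrals are bounded, using $\Hk(\boundaryG)\hookrightarrow\Lp[\infty](\boundaryG)$ on the factors built from $\bs u$ and the trace inequality on $\liftgamma{\dermat\Rs\bs u}$, by $C(\norm{\bs u}_{\Hk(\boundaryG)}+\norm{\dermat\bs u}_{\Hk(\boundaryG)})(\epsilon\norm{\gradghs\dermat\Rs\bs u}_{\Lp(\Gammast)}+\epsilon^{-1}\norm{\dermat\Rs\bs u}_{\Lp(\Gammast)})$; and finally $|d^{\lambda}_{\Gamma}(\vel;\bs u,\liftgamma{\dermat\Rs\bs u})|\lesssim\norm{\vel}_{W^{1,\infty}(\Gammat)}\norm{\bs u}_{\Hk(\Gammat)}\norm{\dermat\Rs\bs u}_{\Hk(\Gammast)}$ and, by~\eqref{eq: stability ritz nu}, $|d^{\lambda}_{\Gammas}(\vs;\Rs\bs u,\dermat\Rs\bs u)|\lesssim\norm{\vs}_{W^{1,\infty}(\Gammast)}\norm{\bs u}_{\Hk(\Gammat)}\norm{\dermat\Rs\bs u}_{\Hk(\Gammast)}$. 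Absorbing the gradient contributions on the left, enlarging $\lambda$ once more to absorb the remaining $\normcuad{\dermat\Rs\bs u}_{\Lp(\Gammast)}$ contributions, and applying Young's inequality to the products gives $\normcuad{\dermat\Rs\bs u}_{\Hk(\Gammast)}\lesssim\normcuad{\bs u}_{\Hk(\Gammat)}+\normcuad{\dermat\bs u}_{\Hk(\Gammat)}+\normcuad{\bs u}_{\Hk(\boundaryG)}+\normcuad{\dermat\bs u}_{\Hk(\boundaryG)}$, and taking square roots yields~\eqref{eq: stability dermat non linear Ru}.

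I expect the main obstacle to be the quadratic boundary term $\int_{\boundaryGhcero}\dermat\Rs\bs u\cdot\curvbdryh\,(\Rs\bs u\times\btauh)\cdot\dermat\Rs\bs u$ appearing on the left-hand side after testing: being of second order in the unknown $\dermat\Rs\bs u$ and supported on the boundary, it forces both the uniform $\Lp[\infty](\boundaryGhcero)$-bound on $\Rs\bs u$ and a careful use of the trace inequality to convert the boundary $\Lp$-norm into an interior $\Hk$-seminorm that can be absorbed --- which is precisely why $\lambda$ has to be taken large, just as in Lemma~\ref{Lem: stability non linear Ru}. The analogous boundary terms on the right-hand side are only linear in $\dermat\Rs\bs u$ and hence easier, but they are what forces the extra boundary norms $\norm{\bs u}_{\Hk(\boundaryG)}$ and $\norm{\dermat\bs u}_{\Hk(\boundaryG)}$ into~\eqref{eq: stability dermat non linear Ru}.
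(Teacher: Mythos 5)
Your proposal is correct and follows essentially the same route as the paper: the identity is obtained by differentiating the defining relation with the transport formulae and cancelling the $\dermat\testnuhs$-terms via the definition of $\Rs$, membership $\dermat\Rs\bs u\in\Otauhs$ comes from differentiating the boundary constraint on the fixed curve $\boundaryGhcero$, and stability follows by testing with $\dermat\Rs\bs u$, invoking the uniform bound~\eqref{eq: uniform bound nonlinear Ru}, the trace inequality, and taking $\lambda$ large. The paper's proof is terse (it only spells out the $\Otauhs$-membership and defers the rest to analogy with the linear case and Lemma~\ref{Lem: stability non linear Ru}), and your filled-in details, including the remark that the quadratic boundary term is where the $W^{p+1,\infty}$-regularity of $\bs u$ implicitly enters, are consistent with it.
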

\begin{proof}
    The proof of the identity~\eqref{eq: def dermat non linear Ru} is analogous to the proof of~\eqref{eq: def dermat linear Ru}. 
    The stability bound~\eqref{eq: stability dermat non linear Ru} follows using~\eqref{eq: def dermat non linear Ru} with $\testnuhs=\dermat\Rs \bs u$,~\eqref{eq: uniform bound nonlinear Ru} and arguments similar to those used in Lemma~\ref{Lem: stability non linear Ru}, for some $\lambda>0$ 
    sufficiently large. Thus, we only focus on the proof of the fact that $\dermat \Rs \bs u$ belongs to $\Otauhs$. 
    Given that $\Rs \bs u \in \Otauhs$, we know that for all $w_h\in \Otauhb$
    \begin{equation*}
        \int_{\boundaryGhcero} (\Rs \bs u\cdot \btauh)w_h = 0.
    \end{equation*}
    Then, deriving with respect to $t$ in~\eqref{eq: def L2 boundry orthog proj} and considering that $\boundaryGhcero$ is fixed and independent of $t$, we have, for all $w_h\in \Otauhb$,
    \begin{equation*}
    \begin{split}
        \partdt\int_{\boundaryGhcero} (\Rs \bs u\cdot \btauh)w_h = 0.
    \end{split}
    \end{equation*}
Now, since $\Rs \bs u\in \Otauhs$, and $\dermat w_h=0$, the previous equality reads
     \begin{equation*}
     \begin{split}
        0=\int_{\boundaryGhcero} (\dermat(\Rs \bs u)\cdot \btauh)w_h+(\Rs \bs u\cdot \btauh)\dermat w_h&= \int_{\boundaryGhcero} (\dermat(\Rs \bs u)\cdot \btauh)w_h.
     \end{split}
 \end{equation*}
 Therefore, $\dermat \Rs \bs u $ belongs to $\Otauhs$.

\end{proof}

\begin{proposition}\label{prop: error estimate dermat Ritz proj on boundary}
 Let $\bs{u}$ be a smooth function defined in $\Gt$, such that $\bs{u}\in \Otau$ with $\bs u\in {H^{p+1}_{\partial}(\Gammat)}^3$, and $(\dermat)^{(\ell)} \bs u\in {H^{p+1}_{\partial}(\Gammat)}^3$ for all $t\in[0,T]$. Then, there exists $h_0>0$ such that the material derivative of the Ritz map satisfies, for all $t\in[0,T]$ and $0<h\leq h_0$,
\begin{multline}\label{eq: error dermat Ru}
      \norm{(\dermat)^{(\ell)} (\bs u- \Rs[\Gamma]\bs u)}_{\Hunogammat}\\
      \lesssim h^p\Big(\sum_{j=0}^{\ell}\norm{(\dermat)^{(\ell)}\bs u}_{{H^{p+1}(\Gammat)}}+\norm{(\dermat)^{(\ell)} \bs u}_{H^{p+1}(\boundaryG)}\Big).
\end{multline}
\end{proposition}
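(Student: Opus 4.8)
The plan is to imitate the proof of Proposition~\ref{prop: error estimate non-linear Ru}, replacing the defining relation~\eqref{eq: non linear Ru trace in Otauh} by the material-derivative identity~\eqref{eq: def dermat non linear Ru} of Lemma~\ref{Lem: def stability non linear dermat Ru}, and to argue by induction on $\ell$; I spell out the base case $\ell=1$. Since the material derivative commutes with the lift (Remark~\ref{remark: material velocity}), $\dermat\eR=\dermat\bs u-\dermat\Rs[\Gamma]\bs u$ and $\liftgamma{\dermat\Rs\bs u}=\dermat\Rs[\Gamma]\bs u$; moreover $\bs u\in\Otau$ forces $\dermat\bs u\in\Otau$, so I can carry out the construction of the intermediate function of Proposition~\ref{prop: error estimate non-linear Ru} applied to $\dermat\bs u$. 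Namely, let $\widetilde{\dermat\bs u}\in\Hk(\Gammat)^3$ have trace $\big(\Ps(\btauhunit)\lifts{\dermat\bs u}\big)^{\boundaryG}$ and be $H^1$-harmonic relative to $\dermat\bs u$ on $\Hk_0(\Gammat)^3$. Since $\Ps(\btauhunit)\lifts{\dermat\bs u}\in\Otauhsb$ is already a spline, the quasi-interpolant reproduces it on $\boundaryGhcero$, so $\Qs\widetilde{\dermat\bs u}\in\Otauhs$ (this function need not be a material derivative) and is an admissible test direction, with lift $\Qs[\Gamma]\widetilde{\dermat\bs u}$. Exactly as in~\eqref{eq: bound u minus tilde u Huno}--\eqref{eq: bound u minus Q tilde u Ldosb}, Lemma~\ref{Lem: L2 orth proj on boundary} gives $\normHunog{\dermat\bs u-\Qs[\Gamma]\widetilde{\dermat\bs u}}\lesssim h^p\big(\norm{\dermat\bs u}_{H^{p+1}(\Gammat)}+\norm{\dermat\bs u}_{H^{p+1}(\boundaryG)}\big)$ and $\norm{\dermat\bs u-\Qs[\Gamma]\widetilde{\dermat\bs u}}_{\Lp(\boundaryG)}\lesssim h^{p+1}\norm{\dermat\bs u}_{H^{p+1}(\boundaryG)}$.

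Next I set $\bs\chi:=\Qs[\Gamma]\widetilde{\dermat\bs u}-\dermat\Rs[\Gamma]\bs u$, $\bs\chi=\liftgamma{\bs\chi^*}$ with $\bs\chi^*\in\Otauhs$, and, with $\normcuad{\bs z}_{\Hunogammalambda}:=\normcuad{\gradg\bs z}_{\Ldosgamma}+\lambda\normcuad{\bs z}_{\Ldosgamma}$, split
\[
\normcuad{\dermat\eR}_{\Hunogammalambda}
=\big(\dermat\eR,\ \dermat\bs u-\Qs[\Gamma]\widetilde{\dermat\bs u}\big)_{\Hunogammalambda}
+\big(\dermat\eR,\ \bs\chi\big)_{\Hunogammalambda}
=:(I)+(II).
\]
Here $(I)$ is bounded by Cauchy--Schwarz and the first interpolation estimate, so $|(I)|\le\delta_1\normcuad{\dermat\eR}_{\Hunogammalambda}+c_\lambda h^{2p}\big(\norm{\dermat\bs u}_{H^{p+1}(\Gammat)}^2+\norm{\dermat\bs u}_{H^{p+1}(\boundaryG)}^2\big)$. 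For $(II)=\big(\dermat\bs u,\bs\chi\big)_{\Hunogammalambda}-\big(\dermat\Rs[\Gamma]\bs u,\bs\chi\big)_{\Hunogammalambda}$ I transport the second inner product to $\Gammast$ with a geometric perturbation error of order $h^p$ (Lemma~\ref{Lem: geometric perturbation}, using~\eqref{eq: stability dermat non linear Ru} and the bound $\norm{\bs\chi^*}_{\Hk(\Gammast)}\lesssim h^p(\dots)+\normHunog{\dermat\eR}$ that follows from norm equivalence and the interpolation estimate), and then insert~\eqref{eq: def dermat non linear Ru} with $\testnuhs=\bs\chi^*$. The $(\dermat\bs u,\cdot)$ contributions cancel, and there remain: (a) geometric perturbation errors; (b) the trilinear boundary integral $\int_{\boundaryG}\big(\dermat\bs u\cdot\curvbdry(\bs u\times\btau)+\bs u\cdot\curvbdry(\dermat\bs u\times\btau)\big)\cdot\bs\chi$ minus the analogous integral over $\boundaryGhcero$ with $\Rs\bs u,\dermat\Rs\bs u,\btauh,\curvbdryh$; and (c) $d^{\lambda}_{\Gamma}(\vel;\bs u,\bs\chi)-d^{\lambda}_{\Gammas}(\vs;\Rs\bs u,\bs\chi^*)$. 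Group (c) is disposed of exactly as in the proof of Lemma~\ref{Lem: def stability non linear dermat Ru}: split off $\Rs\bs u-\lifts{\bs u}=\lifts{\eR}$ and use Proposition~\ref{prop: error estimate non-linear Ru} together with~\eqref{eq: bound b*-b} and~\eqref{eq: bound c*-c}.

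I expect group (b) to be the hard part. After passing from $\boundaryG$ to $\boundaryGhcero$ via~\eqref{eq: bound m*-m boundary}, I compare integrands pointwise, replacing $\bs u$ by $\Rs\bs u$ (difference $\lifts{\eR}$), $\dermat\bs u$ by $\dermat\Rs\bs u$ (difference $\lifts{\dermat\eR}$), and $\btau,\curvbdry$ by $\btauh,\curvbdryh$ (differences $\mathcal O(h^{p+1})$, Lemma~\ref{Lem: def and estimate tauh normalbh}); all the remaining factors are bounded in $L^\infty(\boundaryGhcero)$ uniformly in $h$ via the standing regularity, the embedding $H^{p+1}(\boundaryG)\hookrightarrow W^{1,\infty}(\boundaryG)$, the uniform bound of Lemma~\ref{Lem: uniform bound non-linear Ru} for $\Rs\bs u$, and~\eqref{eq: stability dermat non linear Ru} combined with an inverse estimate for $\dermat\Rs\bs u$. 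The contributions not carrying $\dermat\eR$ on the boundary are of the form $h^p(\dots)\,\norm{\bs\chi^*}_{\Hk(\Gammast)}$ and are absorbed by Young's inequality once $\norm{\bs\chi^*}_{\Hk(\Gammast)}\lesssim h^p(\dots)+\normHunog{\dermat\eR}$ is used. The \emph{critical} contributions, proportional to $\norm{\dermat\eR}_{\Lp(\boundaryG)}\,\norm{\bs\chi^*}_{\Lp(\boundaryGhcero)}$, are treated exactly as in Proposition~\ref{prop: error estimate non-linear Ru}: apply the trace inequality~\eqref{eq: trace ineq} to both $\dermat\eR$ and $\bs\chi^*$, use Young's inequality, absorb the resulting $\epsilon\normcuad{\gradg\dermat\eR}_{\Ldosgamma}$ into the left-hand side for $\epsilon$ small, and absorb the leftover $c\normcuad{\dermat\eR}_{\Ldosgamma}$ through the $\lambda$-weight by taking $\lambda$ large. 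Adding (a)--(c) and choosing $\delta_1$ and the other Young parameters small and $\lambda$ sufficiently large gives~\eqref{eq: error dermat Ru} for $\ell=1$.

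Finally, for $\ell\ge2$ I differentiate~\eqref{eq: def dermat non linear Ru} another $\ell-1$ times in $t$. Because the boundary is fixed, $\dermat\btau=0$ and $\dermat\curvbdry=0$ on $\boundaryGt$ and $\btauh,\curvbdryh$ are time-independent, so the differentiated identity keeps the same structure: the top-order unknown $(\dermat)^{(\ell)}\Rs\bs u$ tested against $\testnuhs$, plus lower-order material derivatives of $\bs u$ and $\Rs\bs u$ (controlled by the induction hypothesis and Proposition~\ref{prop: error estimate non-linear Ru}), plus $\mathcal B$-type terms with material derivatives of $\vel,\vs$, plus geometric errors; the intermediate function becomes $\Qs[\Gamma]\widetilde{(\dermat)^{(\ell)}\bs u}$ and the argument runs verbatim. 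The main obstacle throughout is the bookkeeping of the trilinear boundary terms and, inside it, keeping $\norm{(\dermat)^{(\ell)}\eR}_{\Lp(\boundaryG)}$ under control: the only device available to absorb it is the $\lambda$-weight in $\normcuad{\cdot}_{\Hunogammalambda}$, so one must first check that every such boundary contribution carries a constant independent of $h$ — which is precisely where the uniform $W^{1,\infty}$ bounds of the Ritz maps and the quasi-interpolation estimates for $\btauh,\curvbdryh$ are needed — and only then fix $\lambda$.
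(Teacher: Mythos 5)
Your proposal follows essentially the same route as the paper: the same intermediate function (harmonic-type extension of the $\Ps(\btauhunit)$-projected trace of $\dermat\bs u$, then its quasi-interpolant, taken as admissible test direction in $\Otauhs$), the same splitting of $\normcuad{\dermat\eR}_{\Hunogammalambda}$ into an interpolation part and a part treated via the identity~\eqref{eq: def dermat non linear Ru}, the same handling of the trilinear boundary terms by comparison with the errors $\eR$, $\dermat\eR$ and the quasi-interpolation errors of $\btauh,\curvbdryh$, and the same final absorption through the trace inequality~\eqref{eq: trace ineq} with small $\epsilon$ and $\lambda$ sufficiently large, with the case $\ell\ge 2$ argued as a repetition of the $\ell=1$ argument. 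This matches the paper's proof, so no further comparison is needed.
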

\begin{proof} 
    We prove~\eqref{eq: error dermat Ru} for $\ell=1$, the case when $\ell>1$ is analogous.
    
    We consider an extension $\tilde{\bs{w}}\in\Hunogamma^3$
    of ${(\Ps(\btauhunit)\lifts{\dermat\bs{u}})}^{\boundaryG}\in H^{\frac{1}{2}}(\boundaryG)^3$,
    such that $\tilde{\bs{w}}|_{\boundaryG}=(\Ps(\btauhunit)\lifts{\dermat\bs{u}})^{\boundaryG}$
    satisfies ${(\tilde{\bs{w}},\bs \eta)}_{\Hk(\Gamma)}={(\dermat\bs u,\bs \eta)}_{\Hk(\Gamma)}$ for all $\bs \eta\in \Hk_0(\Gamma)^3$, whence 
\begin{equation}\label{eq: H1/2 norm dermat u-tilde w}
    \normHunog{\dermat\bs u-\tilde{\bs{w}}}=\norm{ \dermat\bs u-(\Ps(\btauhunit)\dermat\bs{u})^{\boundaryG}}_{\Hk[\frac{1}{2}](\boundaryG)}.
\end{equation}

We start by denoting $\dermat\eR = \dermat \bs u-\dermat \Rs[\Gamma]\bs u$ and adding and subtracting $ \Qs[\Gamma ]\tilde{\bs{w}}$ in the definition of the $\Hunogammalambda$-norm of $\dermat\eR$
\begin{align*}
     \normcuad{\dermat\eR}_{\Hunogammalambda}
    ={}&(\dermat\eR,\dermat u-\Qs[\Gamma ]\tilde{\bs{w}})_{\Hunogammalambda}+(\dermat\eR,\Qs[\Gamma ]\tilde{\bs{w}}-\dermat\Rs[\Gamma]u)_{\Hunogammalambda} 
     \\
    \leq{}&  \norm{\dermat\eR}_{\Hunogammalambda} \norm{\dermat u-\Qs[\Gamma ]\tilde{\bs{w}}}_{\Hunogammalambda} 
    \\
    &+(\dermat\eR,\Qs[\Gamma ]\tilde{\bs{w}}-\dermat\Rs[\Gamma]u)_{\Hunogammalambda}
    \end{align*}
Since $\dermat\bs{u}\in \Otau$, using~\eqref{eq: H1/2 norm dermat u-tilde w} and proceeding as in Proposition~\ref{prop: error estimate non-linear Ru} we can bound
\begin{align*}
    \normHunog{\dermat \bs u-\tilde{\bs{w}}}&\lesssim h^p \norm{\dermat \bs u}_{{\Hk[p+1](\boundaryG)}},\\
    \norm{\dermat \bs u-\tilde{\bs{w}}}_{{\Lp(\boundaryG)}}&\lesssim h^{p+1} \norm{\dermat \bs u}_{{\Hk[p+1](\boundaryG)}}.
\end{align*}
These estimates allow us to conclude
\begin{align}\label{eq: bound dermat u minus Q tilde w Huno}
    \normHunog{\dermat \bs u-\Qs[\Gamma ]\tilde{\bs{w}}}&\lesssim h^p \left(\norm{\dermat \bs u}_{{\Hk[p+1](\Gamma)}}+\norm{\dermat \bs u}_{{\Hk[p+1](\boundaryG)}}\right)\\\label{eq: bound dermat u minus Q tilde w Ldosb}
    \norm{\dermat \bs u-\Qs[\Gamma ]\tilde{\bs{w}}}_{\Lp(\boundaryG)}&\lesssim h^{p+1} \norm{\dermat \bs u}_{{\Hk[p+1](\boundaryG)}}.
\end{align}
Then, 
\begin{align*}
     \normcuad{\dermat\eR}_{\Hunogammalambda}   \leq  &\norm{\dermat\eR}_{\Hunogammalambda} c_\lambda h^p\left(\norm{\dermat \bs u}_{{\Hk[p+1](\Gamma)}}+\norm{\dermat \bs u}_{\Hk[p+1](\boundaryG)}\right)\\
     &+{(\dermat\eR,\Qs[\Gamma]\tilde{\bs{w}}-\dermat\Rs[\Gamma]u)}_{\Hunogammalambda}.
\end{align*}
To bound the last term we first analize ${(\dermat\eR,\liftg{\testnuhs})}_{\Hunogammalambda}$ for any $\testnuhs\in\Otauhs$, using the Definition~\ref{eq: def dermat non linear Ru} and similar arguments to those we have used in Lemma~\ref{Lem: stability - galerkin orthog dermat linear Ru} we have
\begin{align*}
    {(\dermat\eR,\testnuhs)}_{\Hunogammalambda}&\lesssim h^p\left(\normHunog{\bs u}+\normHunog{\dermat \bs u}\right)\norm{\liftg{\testnuhs}}_{\Hunogammalambda}+|(I)|\\
  &\leq ch^{2p}\left(\normcuad{\bs u}_{\Hunogamma}+\normcuad{\dermat \bs u}_{\Hunogamma}\right)+\frac{1}{2}\normcuad{\liftg{\testnuhs}}_{\Hunogammalambda}+|(I)|,
\end{align*}
where 
\begin{align*}
    (I)=&\int_{\boundaryGhcero}\left(\dermat\Rs[\Gammas] \bs u\cdot \curvbdryh(\Rs[\Gammas] \bs u\times \btauh)+\Rs[\Gammas] \bs u\cdot \curvbdryh(\dermat \Rs[\Gammas] \bs u\times \btauh)\right)\cdot \testnuhs\\
    &-\int_{\boundaryG}\left(\dermat \bs u\cdot \curvbdry( \bs u\times \btau)+\bs u\cdot \curvbdry( \dermat \bs u\times \btau)\right)\cdot \liftgamma{\testnuhs}.
\end{align*}
This last group of terms can be bounded applying similar strategies as we have used to bound the group of terms $(III)$ in Lemma~\ref{prop: error estimate non-linear Ru}. Thus, using the regularity of $\bs u$ and $\Rs \bs u$, given by Lemma~\ref{Lem: uniform bound non-linear Ru}, we deduce that 
\begin{align*}
|(I)|\lesssim & \normcuad{\dermat\eR}_{\Lp(\boundaryG)}+c h^{2p}\left(\normcuad{\bs u}_{{\Hk[p+1](\Gamma)}^3}+\normcuad{\dermat \bs u}_{\Hk[p+1](\boundaryG)^3}\right)\\
&+\normcuad{\eR}_{\Lp(\boundaryG)}+\normcuad{\liftg{\testnuhs}}_{\Lp(\boundaryG)}.
\end{align*}
Whence, if $\testnuhs=\Qs[\Gamma]\tilde{\bs{w}}-\dermat\Rs[\Gamma]u$ we have
\begin{align*}
    \normcuad{\dermat\eR}_{\Hunogammalambda}   \leq{}  &\frac{1}{4}\normcuad{\dermat\eR}_{\Hunogammalambda}+c\norm{\dermat\eR}_{\Lp(\boundaryG)}+\frac{1}{4}\normcuad{\liftg{\testnuhs}}_{\Hunogammalambda}\\
    &+c_\lambda h^{2p}\left(\normcuad{\bs u}_{{\Hk[p+1](\Gamma)}^3}+c\normcuad{\dermat \bs u}_{{\Hk[p+1](\boundaryG)}^3}\right)\\
    &+c\normcuad{\eR}_{\Lp(\boundaryG)}+c\normcuad{\liftg{\testnuhs}}_{\Lp(\boundaryG)}.
\end{align*}
We now take $\testnuhs=\Qs\lifts{\tilde{\bs{w}}}-\dermat\Rs \bs u\in\Otauhs$ in this last estimate and obtain
\begin{align*}
    \normcuad{\dermat\eR}_{\Hunogammalambda} 
    \leq{}& \frac{1}{4}\normcuad{\dermat\eR}_{\Hunogammalambda}+c \norm{\dermat\eR}_{\Lp(\boundaryG)}+\frac{1}{4}\normcuad{\Qs[\Gamma]\tilde{\bs{w}}-\dermat\Rs[\Gamma]u}_{\Hunogammalambda}
    \\
    &+c_\lambda h^{2p}\left(\normcuad{\bs u}_{{\Hk[p+1](\Gamma)}}+\normcuad{\dermat \bs u}_{{\Hk[p+1](\boundaryG)}}\right)\\
    &+c\normcuad{\eR}_{\Lp(\boundaryG)}+c\normcuad{\Qs[\Gamma]\tilde{\bs{w}}-\dermat\Rs[\Gamma]u}_{\Lp(\boundaryG)}
    \\
    \leq{}& \frac{1}{4}\normcuad{\dermat\eR}_{\Hunogammalambda}+c \norm{\dermat\eR}_{\Lp(\boundaryG)}
    \\
    &+\frac{1}{4}\normcuad{\Qs[\Gamma]\tilde{\bs{w}}-\dermat \bs u+\dermat \bs u-\dermat\Rs[\Gamma]u}_{\Hunogammalambda}\\
    &+c_\lambda h^{2p}\left(\normcuad{\bs u}_{{\Hk[p+1](\Gamma)}}+\normcuad{\dermat \bs u}_{{\Hk[p+1](\boundaryG)}}\right)\\
    &+c\normcuad{\eR}_{\Lp(\boundaryG)}+c\normcuad{\Qs[\Gamma]\tilde{\bs{w}}-\dermat \bs u+\dermat \bs u-\dermat\Rs[\Gamma]u}_{\Lp(\boundaryG)}
    \\
    \leq{}& \frac{1}{2}\normcuad{\dermat\eR}_{\Hunogammalambda}+c \norm{\dermat\eR}_{\Lp(\boundaryG)} 
    \\
    &+c_\lambda h^{2p}\left(\normcuad{\bs u}_{{\Hk[p+1](\Gamma)}}+\normcuad{\dermat \bs u}_{{\Hk[p+1](\boundaryG)}}\right)\\
    &+c\normcuad{\eR}_{\Lp(\boundaryG)},
\end{align*}
where in the last inequality we have used~\eqref{eq: bound dermat u minus Q tilde w Huno} and~\eqref{eq: bound dermat u minus Q tilde w Ldosb}. 

Finally, by using trace inequality~\eqref{eq: trace ineq} with an appropiate $\epsilon>0$, we deduce the $\Hk$-error estimate~\eqref{eq: error dermat Ru} for $\lambda>0$ sufficiently large.
\end{proof}

Following the steps of the proof of Lemma~\ref{Lem: uniform bound linear Ru}, and using previous proposition we obtain the following regularity result for the nonlinear Ritz projection.

\begin{lemma}\label{Lem: uniform bound non-linear dermat Ru}
    If 
    $(\dermat)^{(\ell)}\bs{u}\in W^{p+1,\infty}(\Gammat)^3$
    for all $t\in[0,T]$ and some $\ell \ge 1$, then there exists $c>0$ such that
    \begin{equation}\label{eq: uniform bound nonlinear dermat Ru}
        \norm{(\dermat)^{(\ell)}\Rs \bs{u}}_{\Wunoinf(\Gammat)}
        \le c \norm{(\dermat)^{(\ell)}\bs{u}}_{{W^{p+1,\infty}(\Gammat)}}.
    \end{equation}
\end{lemma}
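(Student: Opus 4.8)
The plan is to mimic, almost verbatim, the proof of Lemma~\ref{Lem: uniform bound linear Ru}, now using the nonlinear Ritz map together with its material–derivative error estimate from Proposition~\ref{prop: error estimate dermat Ritz proj on boundary} instead of the linear counterparts, and combining it with inverse estimates. First I would record that, by Lemma~\ref{Lem: def stability non linear dermat Ru} applied iteratively (the case $\ell>1$ being analogous to $\ell=1$, exactly as in Proposition~\ref{prop: error estimate dermat Ritz proj on boundary}), $(\dermat)^{(\ell)}\Rs\bs u\in\Otauhs\subset\Shs^3$; hence $(\dermat)^{(\ell)}\Rs[\Gamma]\bs u$ is the lift to $\Gammat$ of a piecewise polynomial spline function. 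I would then insert the quasi-interpolant $\Qs[\Gamma](\dermat)^{(\ell)}\bs u$ and split, suppressing the time argument $t$,
\begin{align*}
\norm{(\dermat)^{(\ell)}\Rs[\Gamma]\bs u}_{\Wunoinf(\Gamma)}
&\le \norm{(\dermat)^{(\ell)}\Rs[\Gamma]\bs u-\Qs[\Gamma](\dermat)^{(\ell)}\bs u}_{\Wunoinf(\Gamma)}\\
&\quad+\norm{\Qs[\Gamma](\dermat)^{(\ell)}\bs u-(\dermat)^{(\ell)}\bs u}_{\Wunoinf(\Gamma)}
+\norm{(\dermat)^{(\ell)}\bs u}_{\Wunoinf(\Gamma)}.
\end{align*}

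For the first term I would use that its argument is (the lift of) a function in $\Shs^3$, so that by the equivalence of norms~\eqref{eq: equivalence Gs-G norms} and the inverse estimate on the physical mesh $\mathcal{M}_h(t)$ — whose shape regularity is uniform in $t\in[0,T]$ by Assumption~\ref{ass: regularity Xh}, see~\cite{BBCHS2006} —
\[
\norm{(\dermat)^{(\ell)}\Rs[\Gamma]\bs u-\Qs[\Gamma](\dermat)^{(\ell)}\bs u}_{\Wunoinf(\Gamma)}\lesssim h^{-1}\norm{(\dermat)^{(\ell)}\Rs[\Gamma]\bs u-\Qs[\Gamma](\dermat)^{(\ell)}\bs u}_{\Hk(\Gamma)}.
\]
I would then add and subtract $(\dermat)^{(\ell)}\bs u$ inside the $\Hk$-norm, bounding $\norm{(\dermat)^{(\ell)}\bs u-(\dermat)^{(\ell)}\Rs[\Gamma]\bs u}_{\Hk(\Gamma)}$ by Proposition~\ref{prop: error estimate dermat Ritz proj on boundary} and $\norm{(\dermat)^{(\ell)}\bs u-\Qs[\Gamma](\dermat)^{(\ell)}\bs u}_{\Hk(\Gamma)}$ by the interpolation estimate~\eqref{eq: global approx on gamma} (with $|\bs\alpha|=1$, $k=p+1$); both are $O(h^p)$ times norms of $(\dermat)^{(j)}\bs u$, $0\le j\le\ell$, in $\Hk[p+1](\Gammat)$ and $\Hk[p+1](\boundaryG)$. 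Multiplying by $h^{-1}$ leaves $O(h^{p-1})$, which is bounded because $p\ge2$ and $h\le h_0$.

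For the second term I would invoke~\eqref{eq: global approx on gamma} once more with $q=\infty$, $|\bs\alpha|=1$, $k=p+1$, getting $\lesssim h^p\norm{(\dermat)^{(\ell)}\bs u}_{W^{p+1,\infty}(\Gamma)}$, hence bounded; the third term is trivially $\le\norm{(\dermat)^{(\ell)}\bs u}_{W^{p+1,\infty}(\Gamma)}$. Collecting the three contributions, and absorbing the lower-order material–derivative norms that appear through Proposition~\ref{prop: error estimate dermat Ritz proj on boundary} into the constant (for the geometric quantities we apply this to, these are controlled by the standing regularity assumption on $\X$), one arrives at~\eqref{eq: uniform bound nonlinear dermat Ru}. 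The only genuinely delicate point — and it is mild — is the same as in Lemma~\ref{Lem: uniform bound linear Ru}: the inverse estimate is applied to a function that need not lie in the non-conforming space $\Otauhs$ but only in the ambient conforming spline space $\Shs^3$, which is legitimate since the inverse estimate depends only on the spline space and the mesh, not on the boundary constraint, and one must use that its constant is uniform in $t$, which Assumption~\ref{ass: regularity Xh} guarantees. Everything else is routine.
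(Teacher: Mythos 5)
Your argument is exactly the paper's intended proof: the paper omits the details, saying only that one follows the steps of Lemma~\ref{Lem: uniform bound linear Ru} and uses Proposition~\ref{prop: error estimate dermat Ritz proj on boundary}, which is precisely your split via the quasi-interpolant, the inverse estimate after passing to the $\Hk$-norm, and the $O(h^p)$ error bounds, with $h^{-1}h^{p}$ bounded since $p\ge 2$. Your additional care in noting that $(\dermat)^{(\ell)}\Rs\bs u$ lies in $\Shs^3$ (via Lemma~\ref{Lem: def stability non linear dermat Ru}) so the inverse estimate applies, and that the lower-order and boundary norms from the proposition are absorbed into the constant, only makes explicit what the paper leaves implicit.
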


\section{Convergence of semi-discrete scheme}\label{sec: semidisc espacial}

Making use of the preliminary results developed in the previous sections, we now prove
a priori error estimates for a Galerkin approximation of Problem~\ref{weak problem MCF}, under $h$-refinements and using B-spline spaces of degree $p\ge 2$ and smoothness $C^\ell$, $0\le \ell \le p-1$ for all the variables.

\subsection{Spatial semi-discretization}\label{sec: spatial semi disc}

We start defining the semi-discrete problem as follows.

\begin{problem}\label{weak discrete problem MCF}
    Find $\Xh\argu\in \Sparam^3$ ($\Gammah(t)=$ image of $\Omega$ under $\Xh(t)$), $\vh\argu\in \Shcero^3$, $\Hh\argu\in \Shcero$ and $\nh \argu\in\Otauh$ such that:
		\begin{align}
   \bs v_h = &\ \Qs[\Gammah](-\Hh\nh),\label{eq: weakh-velocity}\\
     \int_{\Gammah} \dermat \Hh \testHh+ \int_{\Gammah} \gradgh \Hh \cdot \gradgh \testHh = &\ \int_{\Gammah} |A_h|^2 \Hh \testHh ,\label{eq: weakh-curvature}  \\
			\int_{\Gammah} \dermat   \nh \cdot \testnuh + \int_{\Gammah}\gradgh\nh \cdot \gradgh \testnuh =& \ \int_{\Gammah} |A_h|^2 \nh \cdot \testnuh+\int_{\boundaryGhcero} \alpha_{\partial,h}\conorh\cdot \testnuh,\label{eq: weakh-normal} \\
				\partial_t \Xh = &\ \vh\circ \Xh,\label{eq: discrt vel}
		\end{align}
 for all $t>0$ and for all $\testHh\in \Shcero$ and $  \testnuh\in\Otauh$.
 The discrete initial values are defined as $\X_{h,0}:=\Q\X_0$, $\kappa_{h,0}:=\Qs[\Gamma_{h,0}](\kappa_0)$, $\n_{h,0}:=\Rs[\Gamma_{h,0}](\n_0)$ and $\vel_{h,0}:=\Qs[\Gamma_{h,0}](-\kappa_{h,0}\n_{h,0})$. 

 We denote with $A_h=\gradgh\nh$ the Weingarten map and with $|A_h|$ its Frobenius norm. 
 Also, $\btauh$ and $\curvbdryh$ are defined as in Lemma~\ref{Lem: def and estimate tauh normalbh}, and we define $\alpha_{\partial,h}=(\curvbdryh \cdot \nh)$ and $\conorh=\nh\times\btauh$. Note that $\alpha_{\partial,h}$ and $\conorh$ are neither the normal curvature nor the conormal vector to $\boundaryGhcero$ because $\nh$ is not necessarily the normal vector to the discrete surface.
\end{problem}

The goal of this section is to estimate the error between each discrete function $u_h\in\{\idh,\vh,\Hh,\nh\}$ and its continuous counterpart $u\in\{\idg,\vel,\Hm,\n\}$, from the weak formulation in Problem~\ref{weak problem MCF}.
To do this, we will consider a theoretical discrete approximation $u^*$ defined on the quasi-interpolated surface $\Gammas$ for which we have an error estimate of optimal order, so that
\begin{align*}
    \norm{u - \liftgamma{u_h}}_{\Hunogamma}&\leq \norm{u-\liftgamma{u^*}}_{\Hunogamma}
    +\norm{\liftgamma{u^*}-\liftgamma{u_h}}_{\Hunogamma}\\
    &= O(h^p)+ \norm{\liftgamma{u^*}-\liftgamma{u_h}}_{\Hunogamma}
    = O(h^p) +\normHunog{e_{u}}.
\end{align*}
Therefore, for each unknown variable $u\in\{\idg,\vel,\Hm,\n\}$ from the weak formulation we now define a discrete approximation $u^*$ and prove bounds for $e_u := u^* - u_h$. This the so-called consistency error.

Recall the definition of the quasi-interpolated surface $\Gammast$ from~\eqref{eq: quasi-int surf}, this surface moves with velocity $\vs$, such that
\begin{equation}\label{eq: quasi-int vel}
    \partial_t \Xs = \ \vs\circ \Xs.
\end{equation}
That means $\vs:=\Qs[\Gammas](\vel)$. For $\Hm$ and $\n$ we will use the Ritz projections presented in Sections~\ref{sec: Linear Ritz projection} and~\ref{sec: Nonlinear Ritz projection} and define
\begin{equation}
\Hs:=\Rscero\Hm
\quad\text{and}\quad
\ns:=\Rs \n.
\end{equation}


We also need to define suitable approximations for the quantities defined only at the boundary. Since $\boundaryGhcero=\boundaryGscero$, we consider $\btaus:=\btauh$ and $\curvbdrys:=\curvbdryh$, which are thus independent of $t$ and of the discrete solution. For $\conor=\n\times \btau$ and $\alpha=\curvbdry\cdot \n$, we consider suitable approximations defined in the following Lemma.
\begin{lemma} 
    Let $\btaus=\Qs[\boundaryGhcero]\btau$, $\curvbdrys=\Qs[\boundaryGhcero]\curvbdry$, and let $\conors:=\ns\times \btaus$, $\alpha^*=\curvbdrys \cdot \ns$.
    Then, there exists $h_0>0$ such that for all $0< h\leq h_0$,
    \begin{align}
         \label{eq: conor alpha error}
       \norm{\liftb{\conor}-\conors}_{\Lp(\boundaryGhcero)}&\lesssim h^{p},
       &\norm{\liftb{\alpha}-\alpha^*}_{\Lp(\boundaryGhcero)}&\lesssim h^{p}.
    \end{align}
    Furthermore, from these estimates we get for all $0 < h\leq h_0$ 
    \begin{align}
        \label{eq: uniform bound conor alpha}
        \norm{\conors}_{\Lp[\infty](\boundaryGhcero)}\lesssim C,
        \qquad
        \norm{\alpha^*}_{\Lp[\infty](\boundaryGhcero)}\lesssim C.   
    \end{align}
\end{lemma}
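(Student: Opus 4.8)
The plan is to estimate the products $\conors = \ns\times\btaus$ and $\alpha^* = \curvbdrys\cdot\ns$ against the lifted continuous quantities $\liftb{\conor}$, $\liftb{\alpha}$ by exploiting bilinearity of the cross and dot products, so that everything reduces to (i) the boundary error of the nonlinear Ritz projection $\n - \Rs[\Gamma]\n$, controlled by combining Proposition~\ref{prop: error estimate non-linear Ru} with the trace inequality~\eqref{eq: trace ineq}, and (ii) the quasi-interpolation errors of $\btau$ and $\curvbdry$, given by Lemma~\ref{Lem: def and estimate tauh normalbh}.

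First I would note that lifts commute with pointwise algebraic operations, so $\liftb{\conor} = \n^{\boundaryGhcero}\times\btau^{\boundaryGhcero}$ and $\liftb{\alpha} = \curvbdry^{\boundaryGhcero}\cdot\n^{\boundaryGhcero}$, where the superscript $\boundaryGhcero$ denotes the lift to $\boundaryGhcero$ of the restriction to $\boundaryGt$. Recalling $\btaus = \btauh = \Qs[\boundaryGhcero]\btau$ and $\curvbdrys = \curvbdryh = \Qs[\boundaryGhcero]\curvbdry$, I split
\begin{align*}
\liftb{\conor} - \conors &= (\n^{\boundaryGhcero}-\ns)\times\btau^{\boundaryGhcero} + \ns\times(\btau^{\boundaryGhcero}-\btauh),\\
\liftb{\alpha} - \alpha^* &= (\curvbdry^{\boundaryGhcero}-\curvbdryh)\cdot\n^{\boundaryGhcero} + \curvbdryh\cdot(\n^{\boundaryGhcero}-\ns).
\end{align*}
The key term is $\n^{\boundaryGhcero}-\ns$, which is the lift to $\boundaryGhcero$ of $(\n - \Rs[\Gamma]\n)\big|_{\boundaryGt}$; hence, by the equivalence of $L^2$-norms under lifting along the fixed boundary curve, the trace inequality~\eqref{eq: trace ineq} with $\epsilon=1$, and Proposition~\ref{prop: error estimate non-linear Ru},
$$\norm{\n^{\boundaryGhcero}-\ns}_{\Lp(\boundaryGhcero)} \lesssim \norm{(\n - \Rs[\Gamma]\n)\big|_{\boundaryGt}}_{\Lp(\boundaryG)} \lesssim \norm{\n - \Rs[\Gamma]\n}_{\Hk(\Gammat)} \lesssim h^{p},$$
the last step using the standing regularity assumption (which makes $\norm{\n}_{\Hk[p+1](\Gammat)}$ and $\norm{\n}_{\Hk[p+1](\boundaryG)}$ finite).

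For the remaining terms I would use $\norm{\btau^{\boundaryGhcero}-\btauh}_{\Lp(\boundaryGhcero)} + \norm{\curvbdry^{\boundaryGhcero}-\curvbdryh}_{\Lp(\boundaryGhcero)} \lesssim h^{p+1}$ from~\eqref{eq: tangent normalb quasi-int error}, together with $\norm{\btau^{\boundaryGhcero}}_{\Lp[\infty](\boundaryGhcero)} = \norm{\n^{\boundaryGhcero}}_{\Lp[\infty](\boundaryGhcero)} = 1$, $\norm{\curvbdryh}_{\Lp[\infty](\boundaryGhcero)}\lesssim C$ from~\eqref{eq: uniform bound tangent normalb}, and $\norm{\ns}_{\Lp[\infty](\boundaryGhcero)}\lesssim C$. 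The last bound I would obtain from the mesh-independent estimate $\norm{\Rs[\Gamma]\n}_{\Wunoinf(\Gammat)}\lesssim C$ of Lemma~\ref{Lem: uniform bound non-linear Ru}, the $W^{1,\infty}$-norm equivalence between $\Gammat$ and $\Gammast$, and a trace estimate. Applying Hölder's inequality (with the $L^\infty$–$L^2$ pairing) termwise in each of the two splittings then yields~\eqref{eq: conor alpha error}. For the uniform bounds~\eqref{eq: uniform bound conor alpha} I would argue pointwise, $\abs{\conors} = \abs{\ns\times\btauh}\le\abs{\ns}\,\abs{\btauh}$ and $\abs{\alpha^*}=\abs{\curvbdryh\cdot\ns}\le\abs{\curvbdryh}\,\abs{\ns}$, and combine $\norm{\ns}_{\Lp[\infty](\boundaryGhcero)}\lesssim C$ with $\norm{\btauh}_{\Lp[\infty](\boundaryGhcero)}\lesssim C$ and $\norm{\curvbdryh}_{\Lp[\infty](\boundaryGhcero)}\lesssim C$ from~\eqref{eq: uniform bound tangent normalb}.

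The only point that needs care is the $L^\infty(\boundaryGhcero)$-bound on $\ns$: a direct inverse estimate from $L^2$ would degrade like $h^{-1/2}$, so it is essential to route it through the $h$-uniform $W^{1,\infty}(\Gammat)$-estimate of Lemma~\ref{Lem: uniform bound non-linear Ru} (which itself relies on the standing regularity of $\n$) rather than through an inverse inequality. Everything else is a routine combination of the triangle and Hölder inequalities with the already-established approximation results.
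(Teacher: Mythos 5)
Your proof is correct and follows essentially the same route the paper sketches: split the products, control $\n^{\boundaryGhcero}-\ns$ through the $H^1$-error estimate~\eqref{eq: error estimate non-linear Ru} for the nonlinear Ritz projection combined with the trace inequality~\eqref{eq: trace ineq}, and handle the remaining factors with the quasi-interpolation errors and uniform bounds of Lemma~\ref{Lem: def and estimate tauh normalbh} and Lemma~\ref{Lem: uniform bound non-linear Ru}. The only minor deviation is in~\eqref{eq: uniform bound conor alpha}, where the paper argues via inverse inequalities and interpolation estimates (as in Lemma~\ref{Lem: uniform bound non-linear Ru}) while you bound the products pointwise using the already available $L^\infty$ bounds of $\ns$, $\btauh$ and $\curvbdryh$; both arguments are valid.
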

\begin{proof}
The error estimates~\eqref{eq: conor alpha error} can be derived from Lemma~\ref{Lem: def and estimate tauh normalbh}, the error estimates~\eqref{eq: error estimate non-linear Ru} for $\Rs\n$ and Lemma~\ref{Lem: uniform bound non-linear Ru}. While~\eqref{eq: uniform bound conor alpha} follows from inverse inequalities, interpolation error estimates and regularity of $\conor$ and $\alpha$ (similar to the proof of Lemma~\ref{Lem: uniform bound non-linear Ru}).
\end{proof}

Note that $\conors$ and $\alpha^*$ are only defined on $\boundaryGhcero$.
Also, $\alpha^*$ is not necessarily the normal curvature, and $\conors$ is not the vector conormal to $\boundaryGhcero$.


\begin{lemma}\label{Lem: uniform bounds xs vs hs nus}
    If $\X(t)$, $\vel(t)$, $\Hm(t)$ and $\n(t)$ are sufficiently smooth, 
    there exists $h_0>0$ such that,  
      \begin{equation}\label{eq: uniform bound Xs}
      \begin{split}
             \norm{\nabla\Xs\argu}_{\Lp[\infty](\Omega)}&\lesssim  \norm{\nabla\X\argu}_{\Lp[\infty](\Omega)},
      \end{split}
      \end{equation}
      the quasi-interpolants $\ids$ and $\vs$ satisfy
     \begin{equation}\label{eq: uniform bound xs vs}
          \begin{gathered}
           \normWunost{\ids}+\normWunost{\vs}\leq C,\\
      \end{gathered}
     \end{equation}
     and the Ritz-type projections $\Hs$ and $\ns$ satisfy
     \begin{equation}\label{eq: uniform bound hs ns}
          \normWunost{\Hs}+\normWunost{\ns}\leq C,\\
     \end{equation}
     for all $t\in[0,T]$ and $0<h\leq h_0$.
     \end{lemma}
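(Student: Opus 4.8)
The plan is to reduce each bound to results already proven: the $W^{1,\infty}$-stability and error estimates of the quasi-interpolant (Lemmas~\ref{lem: globalestimatesprojector} and~\ref{Lem: global error estimate quasi int on gamma}), the uniform $W^{1,\infty}$-bounds for the linear and nonlinear Ritz maps (Lemmas~\ref{Lem: uniform bound linear Ru} and~\ref{Lem: uniform bound non-linear Ru}), and the equivalence of Sobolev norms between functions on $\Gammast$, their pullbacks to $\Omega$ via $\Xs\argu$, and their lifts to $\Gammat$. The only genuinely new ingredient is~\eqref{eq: uniform bound Xs}; it is needed precisely to certify that these norm-equivalence constants stay bounded as $h\to0$.

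First I would prove~\eqref{eq: uniform bound Xs}. Since $\Xs=\Q\X$, Lemma~\ref{lem: globalestimatesprojector} with $k=1$, $l=p+1$, $q=\infty$ gives $\norm{\nabla(\X\argu-\Xs\argu)}_{\Linfomega}\lesssim h^{p}\norm{\X\argu}_{W^{p+1,\infty}(\Omega)}$, which is uniformly small in $t\in[0,T]$ by the standing regularity assumption. Because $\X\argu$ is a diffeomorphism with $\invXt\in W^{1,\infty}(\Gammat)$ bounded uniformly in $t$, the quantity $\norm{\nabla\X\argu}_{\Linfomega}$ is bounded below away from $0$ uniformly in $t$; hence for $h$ small enough $\norm{\nabla(\X\argu-\Xs\argu)}_{\Linfomega}\le\norm{\nabla\X\argu}_{\Linfomega}$, and the triangle inequality yields~\eqref{eq: uniform bound Xs}. (In passing this re-derives, for small $h$, that $\Xs\argu$ is bi-Lipschitz, cf.\ Assumption~\ref{ass: regularity Xh}.) Combining~\eqref{eq: uniform bound Xs} with Lemma~\ref{Lem: Error estimates for G and q} and the smoothness of $\X$, the first fundamental forms $G_{\Gammas}\argu$ are uniformly bounded and uniformly positive definite; therefore $\Xs\argu\in W^{1,\infty}(\Omega)$ and $\invXs\argu\in W^{1,\infty}(\Gammast)$ with bounds independent of $h$ and $t$, so that $\norm{f}_{W^{1,\infty}(\Gammast)}\cong\norm{f\circ\Xs\argu}_{W^{1,\infty}(\Omega)}$ and, together with~\eqref{eq: equivalence Gs-G norms}, $\norm{f}_{W^{1,\infty}(\Gammast)}\cong\norm{\liftgamma{f}}_{W^{1,\infty}(\Gammat)}$ with $h$- and $t$-uniform constants.

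With these equivalences in hand, the remaining bounds are immediate. The pullback of $\ids$ through $\Xs\argu$ is $\Xs\argu$ itself, so $\normWunost{\ids}\cong\norm{\Xs\argu}_{W^{1,\infty}(\Omega)}\le\norm{\X\argu}_{W^{1,\infty}(\Omega)}+\norm{(\X-\Xs)\argu}_{W^{1,\infty}(\Omega)}\le C$. For $\vs=\Qs[\Gammas]\vel$, its lift to $\Gammat$ is $\Qs[\Gamma]\vel$, and~\eqref{uniform bound Q on surface} gives $\norm{\Qs[\Gamma]\vel}_{W^{1,\infty}(\Gammat)}\lesssim\norm{\vel}_{W^{p+1,\infty}(\Gammat)}\le C$ by the smoothness of $\vel$; the norm equivalence transfers this to $\normWunost{\vs}$. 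Since $\Hm\in\Hk_0(\Gammat)\cap W^{p+1,\infty}(\Gammat)$ (using~\eqref{eq: bc for k} and smoothness), Lemma~\ref{Lem: uniform bound linear Ru} gives $\norm{\Rs[\Gamma]^0\Hm}_{\Wunoinf(\Gammat)}\lesssim\norm{\Hm}_{W^{p+1,\infty}(\Gammat)}\le C$, hence $\normWunost{\Hs}\le C$. Finally, since $\n\in\Otau\cap W_{\partial}^{p+1,\infty}(\Gammat)^3$, Lemma~\ref{Lem: uniform bound non-linear Ru} gives $\norm{\Rs[\Gamma]\n}_{\Wunoinf(\Gammat)}\lesssim\norm{\n}_{W^{p+1,\infty}(\Gammat)}+\norm{\n}_{W^{p+1,\infty}(\boundaryG)}\le C$, so $\normWunost{\ns}\le C$.

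The main obstacle is the first step: one must control the quasi-interpolation error $\norm{\nabla(\X\argu-\Xs\argu)}_{\Linfomega}$ uniformly in time and, crucially, ensure it is dominated by $\norm{\nabla\X\argu}_{\Linfomega}$ for small $h$. This is where the non-degeneracy of the continuous parametrization — i.e., the uniform-in-$t$ lower bound on $\norm{\nabla\X\argu}_{\Linfomega}$, equivalently the uniform bound on $\invXt$ in $W^{1,\infty}$ — is essential, and it is what guarantees that none of the subsequent norm-equivalence constants between $\Gammast$, $\Omega$, and $\Gammat$ degenerates as $h\to0$. Once this is secured, the rest is a routine combination of the already-established approximation and stability estimates with these equivalences.
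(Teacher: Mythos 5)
Your argument is correct and follows essentially the same route as the paper's proof: the interpolation error~\eqref{eq: error estimate X*} plus a triangle inequality for~\eqref{eq: uniform bound Xs}, the push-forward identity $\ids=\Xs\circ\invXs$ with norm equivalence for $\ids$, the stability bound~\eqref{uniform bound Q on surface} for $\vs$, and Lemmas~\ref{Lem: uniform bound linear Ru} and~\ref{Lem: uniform bound non-linear Ru} for $\Hs$ and $\ns$. The extra care you take in verifying that $\Xs\argu$ and $\invXs\argu$ are uniformly bounded in $W^{1,\infty}$, so that the norm-equivalence constants do not degenerate as $h\to 0$, is left implicit in the paper but is a sound (and welcome) addition.
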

     \begin{proof}
  The uniform estimate~\eqref{eq: uniform bound Xs} follows directly from the definition of $\Xs$, the regularity of $\X$, and interpolation error given in~\eqref{eq: error estimate X*}. Moreover, using norm equivalence and the fact that $\ids=\Xs\circ\invXs$, that is, $\ids$ is the push-forward of $\Xs$, we obtain the uniform bound for $\ids$. The estimate for $\vs$ follows directly from its definition and the bound~\eqref{uniform bound Q on surface}. The last inequality in~\eqref{eq: uniform bound hs ns} is obtained from Lemmas~\ref{Lem: uniform bound linear Ru} and~\ref{Lem: uniform bound non-linear Ru}.
     \end{proof}

\begin{definition}\label{def: defects}
We define the \emph{defects} $\dv$, $\dH$ and $\dn$ as the quantities that appear on the right-hand side of the equations~\eqref{eq: weakh-velocity}--\eqref{eq: weakh-normal} when replacing $\Xh$, $\vh$, $\Hh$, and $\nh$ by the corresponding discrete quantities $\Xs$, $\vs$, $\Hs$ and $\ns$.
That is, for each $t>0$, $\dH\in \Shscero$, $\dv \in \Shscero^3$, $\dn\in \Otauhs$ satisfy
\begin{align}
    \vs &= \Qs(-\Hs\ns) + \dv \label{eq: weakh*-velocity},
    \\
    \intgammas \dermat \Hs \testHhs + \intgammas \gradghs \Hs \cdot \gradghs \testHhs &=  \intgammas |A_h^*|^2 \Hs \testHhs + \intgammas \dH  \testHhs \label{eq: weakh*-curvature},
\end{align}
\begin{equation}
    \begin{split}
        \intgammas\dermat\ns \cdot \testnuhs + \intgammas \gradghs\ns:&\gradghs\testnuhs =  \intgammas|A_h^*|^2\ns \cdot \testnuhs \\
        &+ \int_{\boundaryGscero} \alpha_{\partial}^*\conors\cdot \testnuhs + \intgammas \dn\cdot \testnuhs \label{eq: weakh*-normal},
    \end{split}
\end{equation}
for all $\testHhs\in \Shscero$ and all $\testnuhs \in \Otauhs$,
where $A_h^*=\gradghs\ns$.
\end{definition}

\subsection{Consistency}\label{sec: consistency}

\begin{proposition}\label{prop: consistency estimates}
Let $\dv$, $\dH$ and $\dn$ be the functions defined in Definition~\ref{def: defects}, and assume that the exact solution is sufficiently smooth.
Then, there exists $h_0>0$ such that for all $0<h\leq h_0$ and all $t\in[0,T]$,
\begin{gather*}
           \normHunogs{\dv\argu}\lesssim{}h^p,\quad
          \normLdosgs{\dH\argu}\lesssim{} h^p\\
             \normLdosgs{ \dn\argu}\lesssim{} h^p,\quad\text{and}\quad 
             \normLdosgs{ \dermat\dn\argu}\lesssim{} h^p.
\end{gather*}
\end{proposition}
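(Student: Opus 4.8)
The plan is to regard each defect as the residual produced when the \emph{exact discrete} quantities $\ids=\Xs\circ\invXs$, $\vs=\Qs\vel$, $\Hs=\Rscero\Hm$ and $\ns=\Rs\n$ are inserted into the semidiscrete equations written on $\Gammas$, and to bound it by testing the residual equation against itself, subtracting the corresponding continuous weak equation from Problem~\ref{weak problem MCF}, and collecting three sources of error: (i) the $\Hk$ approximation properties of the quasi-interpolant and of the two Ritz projections (Propositions~\ref{prop: error estimate linear Ru}, \ref{prop: error estimate dermat linear Ru}, \ref{prop: error estimate non-linear Ru}, \ref{prop: error estimate dermat Ritz proj on boundary}), (ii) the geometric perturbation errors between $\Gammas$ and $\Gammat$ (Lemma~\ref{Lem: geometric perturbation}, and Lemmas~\ref{Lem: Error estimates dt G y dt q}--\ref{Lem: Error estimates matrices Eg and EBg} for the time-differentiated versions), and (iii) the boundary approximation estimates for $\btauh,\curvbdryh,\conors,\alpha^*$ (Lemma~\ref{Lem: def and estimate tauh normalbh} and~\eqref{eq: conor alpha error}). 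Throughout, the uniform $\Wunoinf$-bounds of Lemmas~\ref{Lem: uniform bounds xs vs hs nus} and~\ref{Lem: uniform bound non-linear dermat Ru} are used to linearise the products $\Hs\ns$, $|A_h^*|^2\ns$ etc.

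For $\dv$ this is immediate: since $\vel=-\Hm\n$ and $\vs=\Qs\vel$, we get $\dv=\Qs\big(\Hs\ns-\lifts{(\Hm\n)}\big)$, and writing $\Hs\ns-\lifts{(\Hm\n)}=(\Hs-\lifts{\Hm})\,\ns+\lifts{\Hm}\,(\ns-\lifts{\n})$, the $\Hk(\Gammas)$ norm of the argument is $\lesssim h^p$ by Propositions~\ref{prop: error estimate linear Ru} and~\ref{prop: error estimate non-linear Ru} combined with the norm equivalence~\eqref{eq: equivalence Gs-G norms} and the $\Wunoinf$-bounds on $\ns,\lifts{\Hm}$; $\Hk$-stability of $\Qs$ then gives $\normHunogs{\dv}\lesssim h^p$. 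For $\dH$ we test~\eqref{eq: weakh*-curvature} with $\testHhs=\dH\in\Shscero$, use the defining relation~\eqref{eq: ritz map zero trace linear Ru} of $\Hs=\Rscero\Hm$ to replace $\intgammas\gradghs\Hs\cdot\gradghs\dH$ by $\intgamma\gradg\Hm\cdot\gradg\liftgamma{\dH}+\intgamma\Hm\,\liftgamma{\dH}-\intgammas\Hs\,\dH$, and then invoke~\eqref{eq: weak curvature} with the admissible test function $\testH=\liftgamma{\dH}\in\Hk_0(\Gammat)$ to eliminate $\intgamma\gradg\Hm\cdot\gradg\liftgamma{\dH}$. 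What survives is a sum of three $L^2$-discrepancies: $\intgammas\dermat\Hs\,\dH-\intgamma\dermat\Hm\,\liftgamma{\dH}$ (controlled by Proposition~\ref{prop: error estimate dermat linear Ru} and~\eqref{eq: bound m*-m}), $\intgamma|A|^2\Hm\,\liftgamma{\dH}-\intgammas|A_h^*|^2\Hs\,\dH$ (controlled by $\normHunogs{\ns-\lifts{\n}}\lesssim h^p$ from Proposition~\ref{prop: error estimate non-linear Ru}, the chain-rule/geometric matrices of Lemma~\ref{Lem: Error estimates matrices Eg and EBg}, the $\Wunoinf$-bounds, and~\eqref{eq: bound m*-m}), and $\intgamma\Hm\,\liftgamma{\dH}-\intgammas\Hs\,\dH$ (Proposition~\ref{prop: error estimate linear Ru} plus~\eqref{eq: bound m*-m}); each is $\lesssim h^p\normLdosgs{\dH}$, so $\normLdosgs{\dH}^2\lesssim h^p\normLdosgs{\dH}$ and hence $\normLdosgs{\dH}\lesssim h^p$.

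For $\dn$ we test~\eqref{eq: weakh*-normal} with $\testnuhs=\dn\in\Otauhs$ and use the nonlinear Ritz identity~\eqref{eq: non linear Ru trace in Otauh} with $\bs u=\n$ to trade $\intgammas\gradghs\ns:\gradghs\dn$ for $\intgamma\gradg\n:\gradg\liftgamma{\dn}+\lambda\,\n\cdot\liftgamma{\dn}-\int_{\boundaryG}(\curvbdry\cdot\n)(\conor\cdot\liftgamma{\dn})$ together with the $\Gammas$-correction terms $-\lambda\intgammas\ns\cdot\dn$ and $+\int_{\boundaryGscero}\ns\cdot\curvbdryh(\ns\times\btauh)\cdot\dn$. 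Next we apply Green's formula~\eqref{eq: green nu} together with the full boundary decomposition~\eqref{eq: bc for Dgamma nu without test} --- which is valid for the \emph{non-admissible} test function $\liftgamma{\dn}$ --- to rewrite $\intgamma\gradg\n:\gradg\liftgamma{\dn}$ in terms of $\intgamma|A|^2\n\cdot\liftgamma{\dn}$, $-\intgamma\dermat\n\cdot\liftgamma{\dn}$, $\int_{\boundaryG}(\curvbdry\cdot\n)(\conor\cdot\liftgamma{\dn})$ and $\int_{\boundaryG}(\partial_{\conor}\n\cdot\btau)(\btau\cdot\liftgamma{\dn})$. Three exact cancellations occur: the two copies of $\int_{\boundaryG}(\curvbdry\cdot\n)(\conor\cdot\liftgamma{\dn})$ (since $\n\cdot\curvbdry\,(\n\times\btau)\cdot\liftgamma{\dn}=(\curvbdry\cdot\n)(\conor\cdot\liftgamma{\dn})$), and the term $\int_{\boundaryGscero}\ns\cdot\curvbdryh(\ns\times\btauh)\cdot\dn$ against $\int_{\boundaryGscero}\alpha^*\conors\cdot\dn$ (because $\alpha^*=\curvbdryh\cdot\ns$ and $\conors=\ns\times\btauh$). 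What remains is, on one hand, the $L^2$-discrepancies $\intgammas\dermat\ns\cdot\dn-\intgamma\dermat\n\cdot\liftgamma{\dn}$, $\intgamma|A|^2\n\cdot\liftgamma{\dn}-\intgammas|A_h^*|^2\ns\cdot\dn$ and $\lambda\big(\intgamma\n\cdot\liftgamma{\dn}-\intgammas\ns\cdot\dn\big)$, each $\lesssim h^p\normLdosgs{\dn}$ exactly as for $\dH$ (using Proposition~\ref{prop: error estimate dermat Ritz proj on boundary} with $\ell=1$ for the first), and, on the other hand, the single surviving boundary contribution $\int_{\boundaryG}(\partial_{\conor}\n\cdot\btau)(\btau\cdot\liftgamma{\dn})$. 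This last term is the key point: transferring it to $\boundaryGscero$, one writes $\btau\cdot\liftgamma{\dn}$ through $\btauh\cdot\dn=|\btauh|\,(\btauhunit\cdot\dn)$ up to the $O(h^{p+1})$ error $\liftb{\btau}-\btauh$ (Lemma~\ref{Lem: def and estimate tauh normalbh}), then uses that $\dn\in\Otauhs$ is $\Lp(\boundaryGscero)$-orthogonal to $\mathcal{S}_h^\partial$ to subtract the degree-$p$ quasi-interpolant of the smooth coefficient, gaining a further factor $h^{p+1}$; a discrete trace--inverse inequality $\norm{\dn}_{\Lp(\boundaryGscero)}\lesssim h^{-1/2}\normLdosgs{\dn}$ yields a bound $\lesssim h^{p+1/2}\normLdosgs{\dn}$. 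Collecting everything, $\normLdosgs{\dn}^2\lesssim h^p\normLdosgs{\dn}$, hence $\normLdosgs{\dn}\lesssim h^p$.

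For $\dermat\dn$ we differentiate~\eqref{eq: weakh*-normal} in time, using the transport formulae~\eqref{eq: teo transp}--\eqref{eq: teo transp-grad-grad} on $\Gammas$ (which moves with velocity $\vs$), tested against a time-independent $\testnuhs\in\Otauhs$; this produces an identity for $\intgammas\dermat\dn\cdot\testnuhs$ whose right-hand side involves $(\dermat)^2\ns$, $\dermat\ns$, $\gradghs\dermat\ns$, the time-differentiated curvature and boundary terms, and the lower-order $\Divgs\vs$ and $\mathcal{B}(\vs)$ contributions. We subtract the once-differentiated continuous normal equation (i.e.\ $\tfrac{d}{dt}$ of the Green's-formula identity used above, obtained by applying~\eqref{eq: teo transp}--\eqref{eq: teo transp-grad-grad} on $\Gammat$, which contains $(\dermat)^2\n$, $\dermat\n$) and the differentiated nonlinear Ritz identity~\eqref{eq: def dermat non linear Ru} of Lemma~\ref{Lem: def stability non linear dermat Ru}, and test the result with $\testnuhs=\dermat\dn\in\Otauhs$. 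The highest-order boundary terms cancel exactly as before, the $(\dermat)^2$- and $\dermat$-discrepancies between $\ns$ and $\n$ are $\lesssim h^p$ by Proposition~\ref{prop: error estimate dermat Ritz proj on boundary} with $\ell=2$ and $\ell=1$, the geometric perturbation errors attached to $\partial_t q_{\Gamma}-\partial_t q_{\Gammas}$ and $\mathcal{B}(\vel)-\mathcal{B}(\vs)$ are $\lesssim h^p$ by Lemmas~\ref{Lem: Error estimates dt G y dt q} and~\ref{Lem: Error estimates matrices Eg and EBg}, and the surviving time-differentiated boundary term involving $(\partial_{\conor}\n\cdot\btau)(\btau\cdot\liftgamma{\dermat\dn})$ is treated by the same weak-orthogonality-plus-discrete-trace argument, now using that $\dermat\dn\in\Otauhs$ is again $\Lp(\boundaryGscero)$-orthogonal to $\mathcal{S}_h^\partial$; the uniform $\Wunoinf$-bounds of Lemma~\ref{Lem: uniform bound non-linear dermat Ru} linearise all products. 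This gives $\normLdosgs{\dermat\dn}^2\lesssim h^p\normLdosgs{\dermat\dn}$, hence the last estimate. I expect this final step --- the bookkeeping of the time-differentiated boundary terms, and the verification that the discrete trace inequality interacts with the time-differentiated orthogonality constraint so that no contribution of order worse than $h^p$ survives --- to be the main obstacle; the estimates for $\dv$, $\dH$ and $\dn$ itself are comparatively routine once the Ritz-projection machinery of Sections~\ref{sec: Linear Ritz projection} and~\ref{sec: Nonlinear Ritz projection} is available.
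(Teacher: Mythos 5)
Your proposal is correct and follows essentially the same route as the paper's proof: eliminate the gradient terms through the defining identities of the linear and nonlinear Ritz projections, reduce everything to $L^2$-type geometric and Ritz discrepancies, cancel the $\alpha\conor$-type boundary terms exactly, and kill the non-conformity term $\int_{\boundaryG}(\partial_{\conor}\n\cdot\btau)\btau\cdot\liftgamma{\testnuhs}$ by replacing $\btau$ with $\btauhunit$ and subtracting the boundary quasi-interpolant so that the weak orthogonality in $\Otauhs$ applies, followed by a trace/inverse inequality. The only (immaterial) differences are that you test directly with the defect instead of a generic $\testnuhs$, and for $\dermat\dn$ you differentiate the three equations separately and then combine, whereas the paper differentiates the already-combined identity for $\intgammas\dn\cdot\testnuhs$ and uses $\intgammas\dermat\dn\cdot\testnuhs=\dt\intgammas\dn\cdot\testnuhs-\intgammas\dn\cdot\testnuhs\Divgs\vs$; both yield the same terms and the same $O(h^p)$ bounds.
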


The following regularity conditions are be sufficient for this proposition:
\begin{align*}
    \X\argu\in W_{\partial}^{p+1,\infty}(\Omega)^3,&\quad  \vel\argu\in H^{p+1}(\Gammat)^3,& \Hm\argu\in W^{p+1,\infty}(\Gammat),\\
   \dermat\Hm\argu\in W^{p+1,\infty}(\Gammat),&\quad \n\argu\in W^{p+1,\infty}_{\partial}(\Gammat)^3,&  \dermat\n\argu\in W^{p+1,\infty}(\Gammat)^3,\\
  \partial_{\conor}\n\in \Hk[p+1](\boundaryG)^3,& \quad\dermat(\partial_{\conor}\n)\in \Hk[p+1](\boundaryG)^3,& \dermat\dermat\n\argu\in \Hk[p+1]_{\partial}(\Gammat)^3.
\end{align*}
\begin{proof}
    Subtracting~\eqref{eq: weak velocity}--\eqref{eq: weak curvature} from~\eqref{eq: weakh*-velocity}--\eqref{eq: weakh*-curvature} we have the following equations for the defects $\dv$ and $\dH$:
 \begin{align*}
	\dv ={}& (\vs -\lifts{\vel}) +(\lifts{-\Hm\n}- \Qs (-\Hs\ns)),\\
    \intgammas \dH  \testHhs={}&\intgammas \dermat \Hs \testHhs +\intgammas \gradghs \Hs\cdot \gradghs\testHhs-  \intgammas |A_h^*|^2 \Hs \testHhs\\
		&-\intgamma \dermat \Hm \liftgamma{\testHhs} -\intgamma \gradg \Hm \cdot \gradg \liftgamma{\testHhs}+ \intgamma |A|^2 \Hm\liftgamma{\testHhs}.
	\end{align*}
We start by obtaining the estimate for $\dv$. Recalling that $\vs=\Qs \vel$ and using the interpolation estimate for the velocity, we have
\begin{align*}
	\normHunogs{\dv} ={}& \normHunogs{\Qs \vel -\lifts{\vel}} 
+\normHunogs{\lifts{\Hm\n}- \Qs (\Hs\ns)}
 \\
 \lesssim{}& h^p\normHunog{\vel}
 +\normHunogs{\lifts{\Hm\n}-\Qs (\Hm\n)}
 \\
 &+\normHunogs{\Qs (\Hm\n)- \Qs (\Hs\ns)}\\
 \lesssim{}& h^p\normHunog{\vel}+ h^p\norm{\Hm\n}_{\Hk[p+1](\Gamma)}
+\normHunogs{\Qs (\Hm\n-\Hs\ns)},
\end{align*}
where we have used the linearity of $\Qs$.
To bound the last term, we take advantage of the linearity and stability of the operator $\Qs$, the error estimates for the Ritz projections of $\n$ and $\Hm$ and their uniform bounds~\eqref{eq: uniform bound linear Ru}, \eqref{eq: uniform bound nonlinear Ru}: 
\begin{align*}
  \normHunogs{\Qs (\Hm\n-\Hs\ns)}  
  \lesssim&\normHunogs{\lifts{\Hm\n} -(\Hs\ns)}\\
  \lesssim&\normHunogs{(\Hm-\Hs)\lifts{\n}} 
+ \normHunogs{\Hs(\lifts{\n} -\ns)}\\
  \lesssim{}& h^p.
\end{align*}
We thus have $\normHunogst{\dv} \lesssim h^p$, where the involved constant depends on the norms of $\vel$, $\Hm$ and $\n$ but is independent of $h$.

Regrouping the terms in the equation for $\dH$, recalling that $\Hs:=\Rscero\Hm$, and using the definition of $\Rscero$ from~\eqref{def: ritz map zero trace linear Ru}, we have
\begin{align*}
      \intgammas \dH \testHhs
      ={}&\left(\intgammas \dermat \Hs \testHhs -\intgamma \dermat \Hm \liftgamma{\testHhs}\right)
      \\
      &+\left(\intgammas \gradghs \Hs\cdot \gradghs\testHhs -\intgamma \gradg \Hm \cdot \gradg \liftgamma{\testHhs}\right)
      \\
      &- \left( \intgammas |A_h^*|^2 \Hs \testHhs- \intgamma |A|^2 \Hm\liftgamma{\testHhs}\right)\\
     ={}&\left(\intgammas \dermat \Hs \testHhs -\intgamma \dermat \Hm \liftgamma{\testHhs}\right)
      +\left(\intgammas  \Hs\testHhs -\intgamma \Hm  \liftgamma{\testHhs}\right)\\
      &- \left( \intgammas |A_h^*|^2 \Hs \testHhs- \intgamma |A|^2 \Hm\liftgamma{\testHhs}\right) \\
      ={}&(I) + (II) + (III).
	\end{align*}
This is a key point in the proof, where we have been able to get rid of the gradients, and thus obtain only $L^2$-type errors, thanks to the Ritz projection.

We now proceed to bound each of the three differences, using the geometric perturbation error  and the interpolation estimates. 
To bound $(I)$, we use~\eqref{eq: bound m*-m}, the uniform bound of $\dermat\Hs$, the regularity of $\dermat\Hm$ and the equivalence of norms
\begin{align*}
     |(I)|&=\left|\intgammas \dermat \Hs \testHhs -\intgamma (\dermat \Hm -\liftgamma{\dermat \Hs}+\liftgamma{\dermat \Hs})\liftgamma{\testHhs}\right|\\
     &\leq\left|\intgammas \dermat \Hs \testHhs-\intgamma\liftgamma{\dermat \Hs}\liftgamma{\testHhs}\right|+ \left|\intgamma (\dermat \Hm -\liftgamma{\dermat \Hs})\liftgamma{\testHhs}\right|\\
     &\lesssim h^p\normLdosgs{\dermat \Hs}\normLdosgs{\testHhs}+ h^p\normLdosgs{\lifts{\dermat \Hm}}\normLdosgs{\testHhs}
     \\
     &\lesssim h^p\normLdosgs{\testHhs}.
	\end{align*}
We can proceed similarly for $(II)$. Finally, we bound $(III)$ as follows:
\begin{align*}
      |(III)|={}&\left| \intgammas |A_h^*|^2 \Hs \testHhs- \intgamma |A|^2 \Hm\liftgamma{\testHhs}\right|\\
      \leq{}&\left| \intgammas |A_h^*|^2 \Hs \testHhs-\intgamma \liftgamma{|A_h^*|^2 \Hs}\liftgamma{\testHhs}\right|
      \\
      &+\left| \intgamma (|A|^2 \Hm-\liftgamma{|A_h^*|^2 \Hs})\liftgamma{\testHhs}\right|
      \\
      \lesssim{}& h^p\normLdosgs{\testHhs}+ \normLdosg{|A|^2 \Hm
      -\liftgamma{|A_h^*|^2 \Hs}}\normLdosg{\liftgamma{\testHhs}}
      \\
      \lesssim{}& h^p\normLdosgs{\testHhs}.
	\end{align*}
In the last bound we have used that
\[
\normLdosg{|A|^2 \Hm-\liftgamma{|A_h^*|^2 \Hs}}\lesssim  \normLdosg{A -\liftgamma{ A_h^*}}+ \normLdosg{\Hm -\liftgamma{ \Hs}},
\]
together with the estimates~\eqref{eq: error estimate non-linear Ru} and~\eqref{eq: error estimate linear Ru} and the equivalence of norms on $\Gamma$ and $\Gammas$. Altogether, we obtain $\intgammas \dH \testHhs\lesssim h^p \normLdosgs{\testHhs}$, 
and thereby the desired bound for $\normLdosgs{\dH}$.

The proof for $\dn$ is a bit more technical because $\liftg{\Otauh}\not\subset \Otau$,
which entails an additional non-conformity of the method.

We recall that, on $\partial \Gamma$, due to~\eqref{eq: A mu mu},
\[
    \partial_{\conor}\n
= (\partial_{\conor}\n \cdot \btau) \btau + (\partial_{\conor}\n\cdot \conor) \conor
= (\partial_{\conor}\n \cdot \btau) \btau + (\underbrace{\n \cdot \curvbdry}_\alpha) \conor
.
\]
Using this identity and~\eqref{eq: green nu} we obtain that,
for any $\testnuhs\in \Otauhs$,
    \begin{align*}
        \intgamma \dermat\n \cdot \liftgamma{\testnuhs} 
     =&-\intgamma \gradg \n:\gradg \liftgamma{\testnuhs}
     + \intgamma |A|^2\n \cdot \liftgamma{\testnuhs}+\int_{\partial \Gamma} \alpha\conor\cdot\liftgamma{\testnuhs}\\
     &+\int_{\partial \Gamma} (\partial_{\conor}\n\cdot\btau)\btau\cdot \liftgamma{\testnuhs}
    \end{align*}
using only that $\liftg{\testnuhs}\in \Hk(\Gamma)^3$.
    Adding this to~\eqref{eq: weakh*-normal} we deduce
    \begin{align*}
        \intgammas \dn \cdot \testnuhs
        ={}&\intgammas\dermat\ns \cdot \testnuhs
        + \intgammas \gradghs\ns:\gradghs\testnuhs
        - \intgammas|A_h^*|^2\ns \cdot \testnuhs
        \\
        &-\int_{\partial \Gammas} \alpha_h^*\conors\cdot \testnuhs
     +\int_{\partial \Gamma} \alpha\conor\cdot\liftgamma{\testnuhs}\\
           &-	\intgamma \dermat\n \cdot \liftgamma{\testnuhs} 
     -\intgamma \gradg \n:\gradg \liftgamma{\testnuhs}
     + \intgamma |A|^2\n \cdot \liftgamma{\testnuhs}\\
     &+\int_{\partial \Gamma} (\partial_{\conor}\n\cdot\btau)\btau\cdot \liftgamma{\testnuhs}.
    \end{align*}
We recall that $\ns:=\Rs \n$, with $\Rs$ the nonlinear Ritz projection from Definition~\ref{def: non linear Ru trace in Otauh}, and also that $\conor:=\n\times \btau$, $\alpha=\curvbdry \cdot \n$, $\conors:=\ns\times \btaus$, $\alpha^*=\curvbdrys \cdot \ns$, so that the first two boundary integrals cancel out, leading to
\begin{equation}\label{eq: defect for normal}
    \begin{split}
        \intgammas \dn \cdot \testnuhs
          ={}&\left(\intgammas\dermat\ns \cdot \testnuhs-\intgamma \dermat\n \cdot \liftgamma{\testnuhs}\right)\\
          &+\lambda \left(\intgammas\ns \cdot \testnuhs-\intgamma \n \cdot \liftgamma{\testnuhs}\right)
          \\
          &- \left( \intgammas|A_h^*|^2\ns \cdot \testnuhs- \intgamma |A|^2\n \cdot \liftgamma{\testnuhs}\right)\\
          &+\int_{\partial \Gamma} (\partial_{\conor}\n\cdot\btau)\btau\cdot \liftgamma{\testnuhs}.
    \end{split}
	\end{equation}
Here, again, we were able to get rid of the gradients, by resorting to the definition of the Ritz projection.
The first three terms can be bounded as we did for $\dH$, by $ch^p\normLdosgs{\testnuhs}$. 
For the last term, which involves a boundary integral, we get
\begin{equation}\label{eq: boundary integral Dn mu on defect}
    \begin{split}
       \int_{\partial \Gamma} (\partial_{\conor}\n\cdot\btau)\btau\cdot \liftgamma{\testnuhs}
        =&\int_{\partial \Gamma} (\partial_{\conor}\n\cdot\btau)(\btau-\btauhunit)\cdot \liftgamma{\testnuhs}\\
        &+\int_{\partial \Gamma} (\partial_{\conor}\n\cdot\btau)\btauhunit\cdot \liftgamma{\testnuhs}\\
        &-\int_{\boundaryGhcero} \Qs[\boundaryGhcero](\partial_{\conor}\n\cdot\btau)\btauhunit\cdot \liftgamma{\testnuhs},
    \end{split}
\end{equation}
due to the fact that the last term vanishes because $\testnuhs \in \Otauhs$ (recall the definition of $\Otauhs$ from~\eqref{eq:otauhs}).
Then, using a trace and an inverse inequality,
\begin{equation}\label{eq: estimate boundary integral Dn mu on defect}
    \begin{split}
        \Big|\int_{\partial \Gamma} (\partial_{\conor}\n\cdot\btau)\btau&\cdot \liftgamma{\testnuhs} \Big|\lesssim \norm{\btau-\btauhunit}_{\Lp(\boundaryGhcero)}\norm{\testnuhs}_{\Lp(\boundaryGhcero)}\\
        +&\norm{(\partial_{\conor}\n\cdot\btau)-\Qs[\boundaryGhcero](\partial_{\conor}\n\cdot\btau)}_{\Lp(\boundaryGhcero)}\norm{\testnuhs}_{\Lp(\boundaryGhcero)}\\
        \lesssim &\ h^{p+1}h^{-1}\normLdosgs{\testnuhs}+h^{p+1}h^{-1}\normLdosgs{\testnuhs}.
    \end{split}
\end{equation}
With this last estimate we obtain the desired bound for $\dn$.

To bound $\normLdosgs{ \dermat\dn\argu}$, we observe that, if 
$\dermat\testnuhs=0$,
\begin{equation*}
   \intgammas\dermat \dn\cdot \testnuhs= \dt \intgammas \dn\cdot\testnuhs-\intgammas\dn\cdot \testnuhs\Divgs\vs,
\end{equation*}
whence
\begin{equation}\label{eq: main dt dn}
    \left|\intgammas\dermat \dn\cdot \testnuhs\right| \leq \left|\dt \intgammas \dn\cdot\testnuhs\right|+\left|\intgammas\dn\cdot \testnuhs\Divgs\vs\right|.
 \end{equation}

 In order to bound the first term on the right-hand side, we differentiate~\eqref{eq: defect for normal} in time:
    \begin{equation}\label{eq: dt dn}
        \begin{split}
        \dt \intgammas \dn\cdot&\testnuhs=\left(\intgammas\dermat\dermat\ns\cdot\testnuhs-\intgamma\dermat\dermat\n\cdot\lifts{\testnuhs}\right)\\
        &+\left(\intgammas\dermat\ns\cdot\testnuhs\Divgs\vs-\intgamma\dermat\n\cdot\lifts{\testnuhs}\Divg\vel\right)\\
        &+\left(\intgammas\dermat\ns\cdot\testnuhs-\intgamma\dermat\n\cdot\lifts{\testnuhs}\right)\\
        &+\left(\intgammas\ns\cdot\testnuhs\Divgs\vs-\intgamma\n\cdot\lifts{\testnuhs}\Divg\vel\right)\\
        &-\left(\intgammas\dermat(|A^*|^2)\ns\cdot\testnuhs-\intgamma\dermat(|A|^2)\n\cdot\lifts{\testnuhs}\right)\\
        &-\left(\intgammas|A^*|^2\dermat\ns\cdot\testnuhs-\intgamma|A|^2\dermat\n\cdot\lifts{\testnuhs}\right)\\
        &-\left(\intgammas|A^*|^2\ns\cdot\testnuhs\Divgs\vs-\intgamma|A|^2\n\cdot\lifts{\testnuhs}\Divg\vel\right)\\
        &+\int_{\partial \Gamma} (\dermat(\partial_{\conor}\n)\cdot\btau)\btau\cdot \liftgamma{\testnuhs}.
    \end{split}
\end{equation}
    In addition to the transport formula we have that
\begin{align*}
    \dermat (|A|^2)&=2(\gradg (\dermat\n):A-AD:A),\\
    \dermat (|\funss{A}|^2)&=2(\gradghs (\dermat\ns):\funss{A}-\funss{A}\funss{D}:\funss{A}),
\end{align*}
with $D=((\gradg \vel)^T-\n\tensor \n\gradg \vel)$ y $\funss{D}=((\gradghs \vs)^T-\n_{\Gammas}\tensor \n_{\Gammas}\gradghs \vs)$, and $\n_{\Gammas}$ the normal vector of $\Gammas$.
Taking into account these identities, all the differences of integrals over $\Gammas$ and $\Gamma$ appearing on the right-hand side of~\eqref{eq: dt dn} can be bounded with Lemma~\ref{Lem: geometric perturbation} using geometric error estimates together with similar arguments to those used above when we bound $\dn$, the regularity of $\dermat\dermat\n$, and the estimate~\eqref{eq: uniform bound nonlinear dermat Ru}.

To bound the last term of~\eqref{eq: dt dn}, corresponding to a boundary integral, we can proceed as in~\eqref{eq: boundary integral Dn mu on defect}:
\begin{equation}\label{eq: boundary integral dermat Dn mu on defect}
    \begin{split}
       \int_{\partial \Gamma} (\dermat(\partial_{\conor}\n)\cdot\btau)\btau\cdot \liftgamma{\testnuhs}
        &=\int_{\partial \Gamma} (\dermat(\partial_{\conor}\n)\cdot\btau)(\btau-\btauhunit)\cdot \liftgamma{\testnuhs}\\
        &+\int_{\partial \Gamma} (\dermat(\partial_{\conor}\n)\cdot\btau)\btauhunit\cdot \liftgamma{\testnuhs}\\
        &-\int_{\boundaryGhcero} \Qs[\boundaryGhcero]((\dermat(\partial_{\conor}\n)\cdot\btau))\btauhunit\cdot \liftgamma{\testnuhs},
    \end{split}
\end{equation}
and then we proceed again as in~\eqref{eq: estimate boundary integral Dn mu on defect} using also the smoothness of $\dermat(\partial_{\conor}\n)\cdot\btau$ and interpolation estimates. Finally, the first term on the right-hand side of~\eqref{eq: main dt dn} is bounded as
\begin{equation*}
    \left|\dt \intgammas \dn\cdot\testnuhs\right|\lesssim h^p\normLdosgs{\testnuhs}.
\end{equation*}
For the second term on the right-hand side of~\eqref{eq: main dt dn} we use the already obtained bound for $\normLdosgs{ \dn\argu}$, and get
\begin{equation*}
    \left|\intgammas\dn\cdot \testnuhs\Divgs\vs\right|\lesssim h^p\normLdosgs{\testnuhs},
\end{equation*}
which leads to the last bound of the assertion of this proposition.
\end{proof}

\subsection{Error equations}\label{sec: error equations}

We define the following error functions which belong to $\Shs$, $\Shscero$, $\Shscero^3$ and $\Otauhs$, respectively, for all $t>0$:
\begin{align*}
    \ex&=\lifts{\idh}-\ids, &\eH&=\lifts{\Hh}-\Hs, \\
    \ev&=\lifts{\vh}-\vs,& \en&=\lifts{\nh}-\ns,
\end{align*}
where we use the identity functions $\idh=\Xh\circ(\Xh)^{-1}$ and $\ids=\Xs\circ(\Xs) ^{-1}$. 
We thus obtain the \emph{error equations} by subtracting~\eqref{eq: weakh*-velocity},~\eqref{eq: weakh*-curvature} and~\eqref{eq: weakh*-normal} from~\eqref{eq: weakh-velocity},~\eqref{eq: weakh-curvature} and~\eqref{eq: weakh-normal}, respectively
\begin{align}\label{eq: error-pos}
    \ederx&=\ev \\
    \label{eq: error-vel}
\ev &= \lifts{\Qs[\Gammah] (-\Hh\nh)} - \Qs[\Gammas] (-\Hs\ns)- \dv 
\end{align}
\begin{equation}\label{eq: error-curvature}
\begin{split}
    \int_{\Gammah} \dermat \lifth{\eH} \testHh &+ \int_{\Gammah} \gradgh \lifth{\eH}\cdot \gradgh \testHh\\
    =&-\left(\int_{\Gammah}\gradgh (\Hs)^{\Gammah}\cdot \gradgh \testHh-\intgammas \gradghs \Hs\cdot \gradghs \testHhs\right)\\
&-\left(\int_{\Gammah} \dermat ((\Hs)^{\Gammah}) \testHh-\intgammas \dermat \Hs \testHhs\right)\\
&+\int_{\Gammah} |A_h|^2 \Hh \testHh-\intgammas |A_h^*|^2 \Hs \testHhs+\intgammas \dH \testHhs.
\end{split}
\end{equation}
\begin{equation}\label{eq: error-normal}
\begin{split}
    \intgammah \dermat \lifth{\en} &\cdot\testnuh + \intgammah \gradgh \lifth{\en}: \gradgh \testnuh
    \\
    =&-\left(\int_{\Gammah}\gradgh (\ns)^{\Gammah}: \gradgh \testnuh-\intgammas \gradghs \ns: \gradghs \testnuhs\right)\\
&-\left(\intgammah \dermat ((\ns)^{\Gammah})\cdot \testnuh-\intgammas \dermat \ns \cdot\testHhs\right)\\
&+\int_{\Gammah} |A_h|^2 \nh \cdot\testnuh-\intgammas |A_h^*|^2 \ns \cdot\testnuhs\\
&+ \int_{\boundaryGhcero} \alpha_{\partial,h}\conorh\cdot \testnuh-\int_{\boundaryGhcero} \alpha_{\partial}^*\conors\cdot \testnuhs+\intgammas \dn \cdot\testnuhs.
\end{split}
\end{equation}

\subsection{Relating discrete surfaces}\label{sec: relating discrete surfaces}

We start observing that due to the continuity of the two discrete approximations of $X$ and the fact that they coincide at $t=0$, they remain close for some time.

\begin{lemma}\label{lem: bound for splines parameterizations}
    Let $p\geq2$, and for $h>0$ let $\Xh$ be the spline parameterization of $\Gammah$ given by the numerical solution of Problem~\ref{weak discrete problem MCF}, and let $\Xs$ be as in~\eqref{eq: quasi-int surf}. 
    Then, there exist $t_0\in[0,T]$ and $h_0>0$ such that 
    \begin{equation}\label{eq: bound EX}
        \normWunomega{\Xh\argu-\Xs\argu}\le h^{(p-1)/2},\quad\forall t\in[0,t_0],
        \ \forall h\in(0,h_0].
    \end{equation}
    Also, Assumption~\ref{ass: regularity Xh} holds, i.e.,
    $\Xh$ and its inverse are bounded in $\Wunoinf(\Omega)$, uniformly in $h$ for all $0<h\le h_0$, and all $0\le t \le t_0$.
    \end{lemma}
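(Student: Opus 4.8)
The plan is to prove \eqref{eq: bound EX} by a bootstrap/continuation argument in time, combining the semi-discrete error equations with the already established consistency and projection estimates. Define $E_{\bs X}(t) := \Xh\argu - \Xs\argu$ in $\Wunoinf(\Omega)$, which vanishes at $t=0$ since $\Xhcero = \Xscero$. Let $t_0$ be the supremum of all times $\bar t \le T$ such that $\normWunomega{E_{\bs X}\argu} \le h^{(p-1)/2}$ holds on $[0,\bar t]$; by continuity of both parameterizations and the initial coincidence, $t_0 > 0$. The goal is to show that this \emph{a priori} bound, on the time interval where it is assumed, can be improved to a strictly better bound (say $\le \tfrac12 h^{(p-1)/2}$, or a bound with a higher power of $h$), from which a standard connectedness argument forces $t_0$ not to be the first failure time, for $h$ small enough.

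First I would record the consequences of the \emph{a priori} bound. As long as $\normWunomega{E_{\bs X}} \le h^{(p-1)/2}$ with $p \ge 2$, the quantity $h^{(p-1)/2} \to 0$, so $\Xh\argu$ stays in a fixed $\Wunoinf$-neighborhood of $\Xs\argu$, hence of $\X\argu$; this yields uniform bi-Lipschitz bounds on $\Xh\argu$ and $\invXh\argu$ (giving Assumption~\ref{ass: regularity Xh}), uniform equivalence of norms between the three surfaces $\Gammat$, $\Gammast$, $\Gammaht$, and uniform bounds on the metric tensors, area elements, and the discrete geometric quantities on $\Gammaht$ — all the ingredients needed to make sense of the lifts $\lifth{\cdot}$ and to control the geometric perturbation errors between $\Gammaht$ and $\Gammast$ exactly as Lemma~\ref{Lem: geometric perturbation} does between $\Gammast$ and $\Gammat$. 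Next I would use the position error equation $\ederx = \ev$, i.e.\ $\partial_t(\Xh-\Xs) = (\lifts{\vh} - \vs)\circ(\text{parameterization})$ on $\Omega$, so that
\begin{equation*}
\normWunomega{E_{\bs X}\argu} \le \int_0^t \normWunomega{\partial_s E_{\bs X}(s)}\,ds \lesssim \int_0^t \bigl(\norm{\ev(s)}_{\Hk(\Gammas(s))} + \text{l.o.t.}\bigr)\,ds,
\end{equation*}
using inverse estimates to pass from $\Wunoinf$ on $\Omega$ to $\Hk$ on $\Gammas$ (this is where $p \ge 2$ is needed, and this costs a factor $h^{-1}$ or $h^{-2}$). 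So everything reduces to bounding $\norm{\ev}_{\Hk}$, which by the velocity error equation \eqref{eq: error-vel} and the definition \eqref{eq: weakh*-velocity} of the defect $\dv$ splits into the consistency term $\norm{\dv} \lesssim h^p$ (Proposition~\ref{prop: consistency estimates}) plus the difference $\lifts{\Qs[\Gammah](-\Hh\nh)} - \Qs[\Gammas](-\Hs\ns)$, which I would control by the stability of $\Qs$ together with $\norm{\eH}_{\Hk} + \norm{\en}_{\Hk} + \norm{E_{\bs X}}_{\Wunoinf}$ (the last from comparing the two quasi-interpolant operators on the two nearby surfaces).

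The main obstacle is that bounding $\norm{\eH}_{\Hk}$ and $\norm{\en}_{\Hk}$ genuinely requires the full energy/stability estimates of Section~\ref{sec: stability} (Lemmas~\ref{Lem: energy estimate ex ev}--\ref{Lem: energy estimate en} and Proposition~\ref{prop: stability}), which are presumably proved \emph{after} this lemma; so the honest structure is either (i) this lemma is used only to \emph{bootstrap} the regularity Assumption~\ref{ass: regularity Xh} on a short interval $[0,t_0]$ so that the stability machinery can even be set up, with the precise power $h^{(p-1)/2}$ chosen generously enough that it is easy to verify by a crude Grönwall argument using only $\partial_t E_{\bs X} = \ev$ and a rough (non-optimal, e.g.\ $O(h^{(p+1)/2})$ after the $h^{-1}$ inverse loss) bound on $\ev$; or (ii) it is proved jointly with the stability proposition by a single continuation argument. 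I would adopt option (i): on the interval where $\normWunomega{E_{\bs X}} \le h^{(p-1)/2}$, all constants in the error equations are uniform, a Grönwall inequality of the form $\normWunomega{E_{\bs X}\argu} \le C\int_0^t (h^{p} h^{-2} + \normWunomega{E_{\bs X}(s)})\,ds$ (the $h^{-2}$ being the worst-case inverse-estimate loss when passing $H^1$ errors of $\eH,\en$ through to $\Wunoinf$ bounds and then re-inserting, plus a crude \emph{a priori} $H^1$ bound on $\eH, \en$ that follows from energy estimates with non-sharp constants) gives $\normWunomega{E_{\bs X}\argu} \le C\,h^{p-2}\,(e^{Ct}-1)$, and since $p \ge 2$ wait—this needs $p\ge 3$; so in fact one uses that the energy method gives $\eH,\en$ bounded in $H^1$ by $O(h^p + \norm{E_{\bs X}})$ \emph{without} the inverse loss (the inverse loss only appears in the reverse direction), yielding $\normWunomega{\partial_t E_{\bs X}} \lesssim h^{-1}(h^p + \normWunomega{E_{\bs X}})$ and hence $\normWunomega{E_{\bs X}\argu} \lesssim h^{p-1}e^{Ch^{-1}t}$, which on a sufficiently short interval $t \le t_0 = O(h\log(1/h))$, or more simply on any fixed small interval after absorbing, stays below $h^{(p-1)/2}$ for $h$ small; strict inequality then contradicts maximality of $t_0$ unless $t_0$ can be taken as a fixed positive constant (or all of $[0,T]$), completing the proof. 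The delicate point to get right is the exact accounting of inverse-estimate powers of $h$ versus the exponent $(p-1)/2$, and ensuring the continuation argument closes with strict inequality; I would state the Grönwall step carefully and defer the sharp $H^1$ bounds on $\eH,\en$ to the stability section, noting only that the crude bounds needed here follow from the same energy identities with non-optimal constants.
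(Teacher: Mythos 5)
There is a genuine mismatch between what this lemma needs and what you try to prove. The statement only asserts the bound \eqref{eq: bound EX} on \emph{some} short interval $[0,t_0]$, and the paper's proof is correspondingly soft: since $\Xhcero=\Xscero$, the map $\EX=\Xh-\Xs$ satisfies $\EX(0)=0$ and $t\mapsto\EX(t)$ is continuous into $W^{1,\infty}(\Omega)$, so the bound holds on a short time interval; Assumption~\ref{ass: regularity Xh} then follows from \eqref{eq: bound EX} together with the uniform bound \eqref{eq: uniform bound Xs} on $\nabla\Xs$. No error equations, consistency estimates, or Gr\"onwall arguments enter at this stage. The quantitative continuation you describe is deliberately deferred: $t_0$ is later taken maximal (Lemma~\ref{lem: bounds for numerical approx}), and Proposition~\ref{prop: stability} improves the a priori bound from $h^{(p-1)/2}$ to $C h^{p}$ in $H^1$ by Gr\"onwall with $h$-independent constants, applies an inverse estimate \emph{once at the end} to get $C h^{p-1}\le h^{(p-1)/2}$, and then redefines $h_0$ so that $t_0=T$. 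You correctly sensed this structure (your option (i)), but you did not commit to it, and the crude Gr\"onwall you substitute does not close.

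Concretely, two steps in your argument fail. First, the ``crude energy bounds'' on $e_{\Hm},e_{\n}$ that you feed into the velocity estimate are circular here: Lemmas~\ref{Lem: energy estimate ex ev}--\ref{Lem: energy estimate en} themselves require the norm equivalences of Lemma~\ref{Lem: equivalence of discrete norms}, whose hypothesis \eqref{eq: uniform bound for Ex} is exactly the bound you are trying to establish; at this point of the paper those estimates are not yet available, which is precisely why the lemma must be proved by continuity alone. Second, your exponent bookkeeping does not deliver the claim: placing the inverse-estimate loss $h^{-1}$ inside the time integral yields $\normWunomega{\EX(t)}\lesssim h^{p-1}e^{Ch^{-1}t}$, which exceeds $h^{(p-1)/2}$ on any $h$-independent interval as $h\to 0$; restricting to $t\lesssim h\log(1/h)$ gives a $t_0$ that shrinks with $h$, contradicting the requirement (needed later) that $t_0$ be independent of $h$. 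The fix is the paper's ordering: run Gr\"onwall purely at the $H^1$ level, where the defects are $O(h^p)$ and the kernel is $O(1)$, and only convert to $W^{1,\infty}$ by a single inverse inequality after the time integration --- but that belongs to Proposition~\ref{prop: stability}, not to this lemma.
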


    \begin{proof}
        Let $\EX$ be the pull-back map $\EX=\ex\circ\Xs=\Xh-\Xs$. 
        Due to the fact that $\EX:[0,T]\to W^{1,\infty}(\Omega)$ is continuous in time
        and $\EX\argu[0]=0$, for each $h>0$ there exists $t_0>0$ (which can be chosen independent of $h$, for $h<h_0$, and some $h_0>0$) such that~\eqref{eq: bound EX} holds.
        From~\eqref{eq: bound EX} and using~\eqref{eq: uniform bound Xs} we deduce that, taking $h_0$ as in Lemma~\ref{Lem: uniform bounds xs vs hs nus}, Assumption~\ref{ass: regularity Xh} holds.
    \end{proof}

Although we do not use the matrix-vector formulation used in \cite{KLL2019} and \cite{KLLP2017}, with arguments analogous to those in~\cite[Lemmas 4.1, 4.2 and 4.3]{KLLP2017} for relating discrete surfaces, we obtain the following lemma. 
In order to prove it, we relate the different discrete surfaces using a new (artificial) motion $\Xb(\cdot,\theta): \Omega \rightarrow \Gammabar(\theta)$, with $\theta\in[0,1]$, such that  $\Gammabar(0)=\Gammast$, $\Gammabar(1)=\Gammat$ for some fixed $t\in[0,t_0]$ where the previous Lemma is fulfilled.

\begin{lemma}\label{Lem: equivalence of discrete norms} For $\theta\in [0,1]$, let $\Gammabar(\theta)$ be the surface parameterized by $\Xb(\cdot,\theta)=\Xs+\theta\EX$, with 
$\EX$ the pull-back map $\EX=\ex\circ\Xs=\Xh-\Xs$. If 
\begin{equation}\label{eq: uniform bound for Ex}
    \normLinfomega{\nabla\EX}< c_{\X},
\end{equation}
with $c_{\X}>0$ possibly depending on $\X$.
Then, there exists $h_0>0$ such that, for all $0<h\le h_0$ we have:
\begin{enumerate}[(i)]
    \item For $1\leq q\leq \infty$, and $\funss{\eta} \in W^{1,p}(\Gammas)$,
    \begin{align}\label{eq: equivalence discrete norms}
        \norm{\liftgbarra{\funss{\eta}}}_{W^{1,q}(\Gammabart)}\lesssim\norm{\funss{\eta}}_{W^{1,q}(\Gammas)}.
    \end{align}
    \item \label{item: equivalence norms} Also, the $\Lp(\Gammabart)$ and $\Hk(\Gammabart)$ norms are $h$-uniformly equivalent for $\theta\in[0,1]$.
\end{enumerate}
\end{lemma}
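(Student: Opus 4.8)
\emph{Proof plan.} The strategy is to transport every norm appearing in the statement down to the parametric domain $\Omega$, where $\Gammabart$ is described by the explicit map $\Xb(\cdot,\theta)=\Xs\argu+\theta\,\EX$, and to compare the associated first fundamental forms
\[
G_{\Gammabart}:=(\nabla\Xb(\cdot,\theta))^T\nabla\Xb(\cdot,\theta)=(\nabla\Xs+\theta\nabla\EX)^T(\nabla\Xs+\theta\nabla\EX)
\]
by elementary perturbation arguments. The first observation is that, for any $\funss{\eta}$ on $\Gammas$, the lift $\liftgbarra{\funss{\eta}}$ has, by the very definition of the lifts (pullback to $\Omega$, then push-forward), the \emph{same} pullback to $\Omega$ as $\funss{\eta}$, namely $H:=\funss{\eta}\circ\Xs\argu$. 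Hence both $\norm{\funss{\eta}}_{W^{1,q}(\Gammas)}$ and $\norm{\liftgbarra{\funss{\eta}}}_{W^{1,q}(\Gammabart)}$ are comparable to $\norm{H}_{W^{1,q}(\Omega)}$, up to the standard parametrization-dependent equivalence of norms between a function on a surface and its pullback (recalled right after~\eqref{eq: chain rule Omega-Gamma}, and used already in~\eqref{eq: equivalence Gs-G norms}). Thus~\eqref{eq: equivalence discrete norms} reduces to showing that the equivalence constants attached to $\Xb(\cdot,\theta)$ can be chosen independently of $h$ and of $\theta\in[0,1]$, i.e. to uniform $\Wunoinf(\Omega)$ bounds on $\Xb(\cdot,\theta)$ and on its inverse.

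The upper bounds are immediate: $\nabla\Xb(\cdot,\theta)=\nabla\Xs+\theta\nabla\EX$, so $\normLinfomega{\nabla\Xb(\cdot,\theta)}\le\normLinfomega{\nabla\Xs}+\normLinfomega{\nabla\EX}\lesssim 1$ by~\eqref{eq: uniform bound Xs} and the hypothesis~\eqref{eq: uniform bound for Ex}; consequently $q_{\Gammabart}=\sqrt{\det G_{\Gammabart}}$ is bounded above uniformly. For the lower bound I would argue as follows: since $\X$ is a smooth diffeomorphism, $G_{\Gamma}$ is uniformly positive definite on $[0,T]$, and then, by the estimate on $I-G_{\Gammas}G_{\Gamma}^{-1}$ in Lemma~\ref{Lem: Error estimates for G and q} together with the smoothness of $\X$, $G_{\Gammas}$ is uniformly positive definite for $h\le h_0$, say $\xi^TG_{\Gammas}\xi\ge c_0|\xi|^2$. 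Writing $G_{\Gammabart}=G_{\Gammas}+\theta(\nabla\Xs^T\nabla\EX+\nabla\EX^T\nabla\Xs)+\theta^2\nabla\EX^T\nabla\EX$, the perturbation is bounded in $\Linfomega$ by $C\normLinfomega{\nabla\EX}\le C\,c_{\X}$, so choosing the threshold $c_{\X}$ small enough (depending only on $c_0$ and $\normLinfomega{\nabla\X}$, hence only on $\X$) yields $\xi^TG_{\Gammabart}\xi\ge\tfrac{c_0}{2}|\xi|^2$ for all $\theta\in[0,1]$ and $h\le h_0$. This gives a uniform lower bound on $q_{\Gammabart}$ and a uniform $\Linfomega$ bound on the matrix $\nabla\Xb(\cdot,\theta)\,G_{\Gammabart}^{-1}$, which is precisely the matrix that appears in the chain rule~\eqref{eq: chain rule Omega-Gamma}/\eqref{eq: chain rule gradients}. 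Global injectivity of $\Xb(\cdot,\theta)$ — needed in order to speak of $(\Xb(\cdot,\theta))^{-1}\in\Wunoinf$ — follows from the argument already used in Lemma~\ref{lem: bound for splines parameterizations}: $\Xb(\cdot,\theta)$ is a continuous homotopy between the bi-Lipschitz homeomorphisms $\Xs=\Xb(\cdot,0)$ and $\Xh=\Xb(\cdot,1)$ whose Jacobian stays uniformly nondegenerate, so each $\Xb(\cdot,\theta)$ is itself a bi-Lipschitz homeomorphism, with constants independent of $h$ and $\theta$.

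With these uniform bounds in hand, claim (i) is routine: changing variables with $q_{\Gammabart}\cong 1$ handles the $L^q(\Gammabart)$ part, and the chain rule~\eqref{eq: chain rule gradients} together with the uniform bounds on $\nabla\Xb(\cdot,\theta)\,G_{\Gammabart}^{-1}$ (and on $\nabla\X\,G_{\Gamma}^{-1}$, $\nabla\Xs\,G_{\Gammas}^{-1}$) handles the tangential gradient, giving $\norm{\liftgbarra{\funss{\eta}}}_{W^{1,q}(\Gammabart)}\lesssim\norm{H}_{W^{1,q}(\Omega)}\lesssim\norm{\funss{\eta}}_{W^{1,q}(\Gammas)}$ with constants independent of $h$ and $\theta$; the case $q=\infty$ is identical, with the area element playing no role. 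For claim~(ii) I would simply note that, for $\theta,\theta'\in[0,1]$, a function on $\Gammabart$ and its lift to $\Gammabar(\theta')$ share the common pullback $H$ on $\Omega$, so the uniform equivalences just proved (applied with $q=2$ and $k=0,1$) give $\norm{\cdot}_{\Lp(\Gammabart)}\cong\norm{H}_{\Lp(\Omega)}\cong\norm{\cdot}_{\Lp(\Gammabar(\theta'))}$ and likewise for the $\Hk$ norms, with constants depending only on $\X$.

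\textbf{Main obstacle.} The delicate step is the uniform control of the artificial metric: showing that $G_{\Gammabart}$ remains uniformly positive definite for \emph{every} $\theta\in[0,1]$ and all small $h$, and that the threshold $c_{\X}$ in~\eqref{eq: uniform bound for Ex} can be fixed in a compatible way (it must be smaller than the coercivity constant of $G_{\Gammas}$, which is only available once $h\le h_0$ via Lemma~\ref{Lem: Error estimates for G and q}); once this is settled, everything else is standard change-of-variables bookkeeping.
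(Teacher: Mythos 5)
Your proposal is correct and follows essentially the same route as the paper: both work entirely in the reference domain, reduce the claim to uniform $W^{1,\infty}$ control of $\Xb(\cdot,\theta)=\Xs+\theta\EX$ together with nondegeneracy of the induced first fundamental form under the smallness condition on $\nabla\EX$, and then conclude via the chain rule; your metric-perturbation argument for positive definiteness of $G_{\Gammabar}(\theta)$ is just a minor variant of the paper's full-rank argument based on the identity $(G_{\Gammas})^{-1}(\nabla\Xs)^T\nabla\Xb(\cdot,\theta)=I_2+\theta\,(G_{\Gammas})^{-1}(\nabla\Xs)^T\nabla\EX$, and your common-pullback proof of (ii) is equivalent to the paper's "swap the roles of $\Xs$ and $\Xh$" step.
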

\begin{proof}
   The main difference with \cite{KLLP2017} is that, in order to prove~\eqref{eq: equivalence discrete norms}, we use the parameterization defined in the reference domain.
   Then, since $\nabla \Xb\argu[\theta]=\nabla(\Xs+\theta\EX)$ and~\eqref{eq: uniform bound Xs} holds, we have that $ \normLinfomega{\nabla\Xb\argu[\theta]}$ is bounded uniformly in $h$ and $\theta$. Furthermore, $\nabla \Xb\argu[\theta]$ will be a full rank matrix if 
   \[\normLinfomega{(G_\Gammas)^{-1}(\nabla\Xs)^T\nabla\EX}<1. \]
   Then, by Lemma~\ref{lem: bound for splines parameterizations}, we can choose $h_0>0$ such that 
   \begin{equation*}
       \normLinfomega{\nabla\EX} \le h^{\frac{p-1}{2}}<\dfrac{1}{C_{\X}}:=c_{\X},
       \quad\forall h\in(0,h_0],
   \end{equation*}
  where $C_{\X}$, depending of $\X$, satisfies
  $\normLinfomega{(G_\Gammas)^{-1}(\nabla\Xs)^T}\leq  C_{\X}$. 

  Thus, we obtain that $G_{\Gammabar}\argu[\theta]$ is non-singular with $\normLinfomega{G_{\Gammabar}\argu[\theta]^{-1}}$ uniformly bounded. Thus, the following is well defined
\begin{align*}
        (\gradgbarra \liftgbarra{\funss{\eta}})\circ \Xb\argu[\theta]&=\nabla \Xb\argu[\theta] G_{\Gammabar}\argu[\theta]^{-1}\nabla (\liftgbarra{\funss{\eta}}\circ \Xb\argu[\theta])\\
        &=\nabla \Xb\argu[\theta] G_{\Gammabar}\argu[\theta]^{-1}\nabla (\funss{\eta}\circ \Xs)\\
         &=\nabla \Xb\argu[\theta] G_{\Gammabar}\argu[\theta]^{-1}(\nabla \Xs)^T (\gradghs\funss{\eta})\circ \Xs,
   \end{align*}
and we can deduce~\eqref{eq: equivalence discrete norms}. 

Finally, by swapping the roles of $\Xs$ and $\Xh$ in the definition of $\Xb$, we deduce that the $\Lp(\Gammabart)$ and $\Hk(\Gammabart)$ norms are $h$-uniformly equivalent for all $\theta\in[0,1]$.

\end{proof}
\begin{remark}
    Note that the condition $\normLinfomega{(G_\Gammas)^{-1}(\nabla\Xs)^T\nabla\EX}<1$ implies that $\normLinfs{\gradghs\ex}<1$ 
    for the equivalence of discrete norms, using that $(\gradghs\ex)\circ \Xs=\nabla\EX(G_\Gammas)^{-1}(\nabla\Xs)^T$ and the equivalence of norms between pullback and pushforward. 
    This condition arises from the fact that    
   \[(G_\Gammas)^{-1}(\nabla\Xs)^T\nabla \Xb\argu[\theta]=I_2+\theta(\funss{G})^{-1}(\nabla\Xs)^T\nabla\EX.\]
\end{remark}

Before we prove the stability we need the following auxiliary results.
\begin{lemma}\label{Lem: estimates discrete bilinear forms} Let $\funss{\phi}$ and $\funss{\psi}$ be functions defined on $\Gammas$ and  let $\eta $ be defined on $\Gammah$, such that $ \norm{\funss{\phi}}_{\Lp[\infty](\Gammas)}\leq C$, the following quantities exist and \eqref{eq: uniform bound for Ex} holds, then
\begin{equation}\label{eq: bound funh - funs}
\begin{split}
\bigg|\intgammah {\eta}\lifth{\funss{\psi}} -\intgammas  \funss{\phi}\funss{\psi}\bigg|
\lesssim{}& \normLdosgs{\lifts{{ \eta}}-\funss{\phi}} \normLdosgs{\funss{\psi}}\\
&+\normLdosgs{\gradghs\ex} \normLdosgs{\funss{\psi}}.
\end{split}
\end{equation}
   If $ \norm{\funss{\phi}}_{\Wunoinf(\Gammas)}\leq C$ then,
   \begin{equation}\label{eq: bound Bhgradh - Bsgradhs}
       \begin{split}
   \Big|       \intgammah  \gradgh&\lifth{\funss{\phi}} \cdot \mathcal{B}(\vh) \gradgh\lifth{\funss{\psi}}
           -\intgammas\gradghs\funss{\phi}\cdot \mathcal{B}(\vs)\gradghs\funss{ \phi}\Big|
           \\
           &\lesssim \left(\norm{\gradghs \ex}_{\Lp(\Gammas)}+\normLdosgs{\gradghs \ev}\right)\normLdosgs{\gradghs\funss{\psi}},
       \end{split}
   \end{equation}
  where $\mathcal{B}(\vh)$ and $\mathcal{B}(\vs)$ are defined as in~\ref{eq: teo transp-grad-grad} with $\vh$ and $\vs$ instead of $\vel$.
\end{lemma}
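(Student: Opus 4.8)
## Proof Proposal for Lemma~\ref{Lem: estimates discrete bilinear forms}

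The plan is to treat each of the two estimates by pulling everything back to the parametric domain $\Omega$, where the surfaces $\Gammah$ and $\Gammas$ share the same domain and only the geometric mappings $\Xh$ and $\Xs$ differ. In this way the differences of integrals become integrals over $\Omega$ of products involving $\EX = \Xh - \Xs$ and its gradient, which we control via~\eqref{eq: uniform bound for Ex} and~\eqref{eq: uniform bound Xs}. Recall $(\gradghs\ex)\circ\Xs = \nabla\EX\,G_{\Gammas}^{-1}(\nabla\Xs)^T$, so that $\normLdosgs{\gradghs\ex} \cong \normLinfomega{\nabla\EX}$ up to $h$-uniform constants, which is exactly the quantity that appears on the right-hand sides.

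For~\eqref{eq: bound funh - funs}, I would change variables in both integrals to write
\[
\intgammah \eta\,\lifth{\funss{\psi}} = \intomega E\,\Psi\,q_{\Gammah},
\qquad
\intgammas \funss{\phi}\,\funss{\psi} = \intomega \Phi\,\Psi\,q_{\Gammas},
\]
where capital letters denote pullbacks and $q_{\Gammah},q_{\Gammas}$ the area elements. Then split
\[
E\,q_{\Gammah} - \Phi\,q_{\Gammas}
= (E - \Phi)\,q_{\Gammah} + \Phi\,(q_{\Gammah}-q_{\Gammas}),
\]
bound the first term by $\normLdosgs{\lifts{\eta}-\funss{\phi}}\,\normLdosgs{\funss{\psi}}$ using norm equivalence between $\Gammas$ and $\Omega$ (and the uniform bound on $q_{\Gammah}$ coming from~\eqref{eq: uniform bound for Ex}), and bound the second term by estimating $\|q_{\Gammah}-q_{\Gammas}\|_{\Linfomega} \lesssim \normLinfomega{\nabla\EX}$ — this is the now-familiar Lipschitz dependence of $\sqrt{\det G}$ on $\nabla\X$, proved exactly as in Lemma~\ref{Lem: Error estimates for G and q} — together with $\|\Phi\|_{\Linfomega}\le C$ and $\normLinfomega{\nabla\EX}\cong\normLdosgs{\gradghs\ex}$. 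This yields the two stated terms.

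For~\eqref{eq: bound Bhgradh - Bsgradhs} I would proceed similarly but now the chain rule for gradients~\eqref{eq: chain rule gradients}–\eqref{eq: chain rule Omega-Gamma} must be used to express $\gradgh\lifth{\funss{\phi}}$ and $\mathcal{B}(\vh)$ in parametric coordinates: each becomes a product of $\nabla\Xh$, $G_{\Gammah}^{-1}$, $\nabla(\partial_t\Xh)$ (for the $\mathcal{B}$ part) and $\nabla\Phi$, times the area element. The key is to telescope: replace $\Xh$ by $\Xs$ one factor at a time, so that each summand carries a difference that is either $\nabla\EX$ (contributing $\norm{\gradghs\ex}_{\Lp(\Gammas)}$), or $\nabla\partial_t\EX = \nabla(\partial_t\Xh-\partial_t\Xs)$ which, after pushing forward, is controlled by $\normLdosgs{\gradghs\ev}$ since $\ev = \ederx$ by~\eqref{eq: error-pos} and $(\gradghs\ederx)\circ\Xs$ has the same structure; the remaining factors are uniformly bounded in $L^\infty$ thanks to~\eqref{eq: uniform bound for Ex}, \eqref{eq: uniform bound Xs}, \eqref{eq: uniform bound xs vs}, and $\norm{\funss{\phi}}_{\Wunoinf(\Gammas)}\le C$. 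Collecting the telescoped terms and bounding $\nabla\Phi$ in $L^2$ by $\normLdosgs{\gradghs\funss{\psi}}$ (and symmetrically for $\funss{\phi}$) gives the assertion.

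The main obstacle I anticipate is the bookkeeping in the second estimate: $\mathcal{B}(\vel) = \Divg\vel\,I_3 - (\gradg\vel + (\gradg\vel)^T)$ already involves first derivatives of the velocity, so in parametric coordinates it is a somewhat intricate rational expression in $\nabla\Xh$, $(\nabla\Xh)^{-1}$-type factors, and $\nabla\partial_t\Xh$; one must be careful that all the ``denominators'' ($G_{\Gammah}^{-1}$, $q_{\Gammah}^{-1}$) stay uniformly bounded, which is where the hypothesis~\eqref{eq: uniform bound for Ex} is essential — it guarantees $\nabla\Xh = \nabla\Xs(I + G_{\Gammas}^{-1}(\nabla\Xs)^T\nabla\EX)$ has the second factor close to the identity, hence invertible with uniformly bounded inverse, as already exploited in Lemma~\ref{Lem: equivalence of discrete norms}. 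Once those uniform bounds are in hand, the rest is a routine (if lengthy) telescoping argument of the type carried out repeatedly in Lemma~\ref{Lem: geometric perturbation}.
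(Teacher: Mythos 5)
Your route --- pulling both integrals back to the parametric domain and telescoping in $\nabla\EX$ and $\nabla\partial_t\EX$ --- is genuinely different from the paper's proof, which instead interpolates between the two discrete surfaces through $\Xb(\cdot,\theta)=\Xs+\theta\EX$, writes each difference as $\int_0^1\frac{d}{d\theta}\int_{\Gammabart}(\cdots)\,d\theta$, and then applies the transport formulae \eqref{eq: teo transp}, \eqref{eq: teo transp-grad-grad} and the commutation rule \eqref{eq: dermat grad} with the artificial velocity $\liftgbarra{\ex}$ (and $\bar{\bs v}(\theta)=\liftgbarra{\vs}+\theta\liftgbarra{\ev}$ for the $\mathcal{B}$-term). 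Your observation that $\ev\circ\Xs=\partial_t\Xh-\partial_t\Xs=\partial_t\EX$ plays exactly the role that $\dermat_\theta\mathcal{B}(\bar{\bs v})$ plays in the paper, and in principle the parametric telescoping, done in the spirit of Lemmas~\ref{Lem: Error estimates for G and q} and~\ref{Lem: Error estimates matrices Eg and EBg}, can deliver both estimates; it avoids the intermediate surfaces altogether, at the price of more explicit bookkeeping with $G_{\Gammah}^{-1}$, $q_{\Gammah}^{\pm1}$ and $\nabla\partial_t\Xh$.

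There is, however, one step that fails as written: the claimed equivalence $\normLdosgs{\gradghs\ex}\cong\normLinfomega{\nabla\EX}$. The identity $(\gradghs\ex)\circ\Xs=\nabla\EX\,G_{\Gammas}^{-1}(\nabla\Xs)^T$ gives equivalence of $L^2$ norms with $L^2$ norms (or $L^\infty$ with $L^\infty$), never of an $L^\infty$ norm with an $L^2$ norm; the direction you actually need in \eqref{eq: bound funh - funs}, namely $\normLinfomega{\nabla\EX}\lesssim\normLdosgs{\gradghs\ex}$, is false in general and would cost a factor $h^{-1}$ through an inverse estimate. Consequently your bound for the second term, via $\norm{q_{\Gammah}-q_{\Gammas}}_{\Linfomega}$, does not produce the right-hand side of the lemma. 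The repair is simple: use the pointwise Lipschitz bound $|q_{\Gammah}-q_{\Gammas}|\lesssim|\nabla\EX|$ (legitimate since $\nabla\Xh=\nabla\Xs+\nabla\EX$ stays in a compact set by \eqref{eq: uniform bound Xs} and \eqref{eq: uniform bound for Ex}), apply H\"older as $L^\infty$--$L^2$--$L^2$ with the $L^2$ slot on $q_{\Gammah}-q_{\Gammas}$, and conclude with $\norm{\nabla\EX}_{\Lp(\Omega)}\lesssim\normLdosgs{\gradghs\ex}$, which follows from $\nabla\EX=\big[(\gradghs\ex)\circ\Xs\big]\nabla\Xs$. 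The same discipline is required throughout \eqref{eq: bound Bhgradh - Bsgradhs}: in each telescoped term exactly one factor ($\nabla\EX$, or $\nabla\partial_t\EX$ yielding $\normLdosgs{\gradghs\ev}$) may occupy the $L^2$ slot, $\nabla\Psi$ is the other $L^2$ factor, and $\nabla\Phi$ must be taken in $L^\infty$ --- which is precisely why the lemma assumes $\norm{\funss{\phi}}_{\Wunoinf(\Gammas)}\leq C$; your remark about treating $\funss{\phi}$ ``symmetrically'' in $L^2$ cannot work, since then either the error factor or $\gradghs\funss{\psi}$ would have to be measured in $L^\infty$, which is not available.
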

\begin{proof}
    We consider that $t$ is fixed in $[0,T]$. Therefore, we omit the $t$ argument from the proof. Let $\Gammabart$ be as in Lemma \ref{Lem: equivalence of discrete norms}. Note that $\Xh$ and $\Xs$ are independent of $\theta$ and $\Gammabar(0)=\Gammat$ and $\Gammabar(1)=\Gammast$. This intermediate surface evolves with velocity 
    \begin{equation*}
      \vel_{\Gammabart}
      =\partial_{\theta}\Xb\circ\Xb^{-1} 
      = (\EX)\circ\Xb^{-1}=\liftgbarra{\ex}.
    \end{equation*}
Thus, using Lemma \ref{Lem: equivalence of discrete norms}, we have
    \begin{equation}\label{eq: bound velgbar}
      \norm{\vel_{\Gammabart}}_{\Wunoinf(\Gammabart)}\lesssim \norm{\ex}_{\Wunoinf(\Gammas)}\leq c,
    \end{equation}
for all $0\leq\theta\leq1$. Since $\liftgbarra{\funss{\phi}}$ is the transport of a $\theta$-independent function $\funss{\phi}\circ \Xs$ to $\Gammabart$, we can use~\eqref{eq: transport property} to get $\dermat \liftgbarra{\funss{\phi}}=0$. Then defining $ \bar{\bs\varepsilon}\argu[\theta]:=\liftgbarra{\funss{\phi}}+\theta\liftgbarra{\funss{\eta}-\funss{\bs\phi}}$ and using \eqref{eq: teo transp} 
\begin{equation*}
\begin{split}
(I)&=\intgammah {\eta}\lifth{\funss{\psi}} -\intgammas  \funss{\phi} \funss{\psi}=\int^0_1\frac{d}{d\theta}\int_{\Gammabar} \bar{\bs\varepsilon}\argu[\theta]\liftlambda{\funss{\psi}}\\
&=\int^0_1\int_{\Gammabar} \dermat \bar{\bs\varepsilon}\argu[\theta] \liftlambda{\funss{\psi}}+\bar{\bs\varepsilon}\argu[\theta] \liftlambda{\funss{\psi}}\Divgbar\vel_{\Gammabar}.
\end{split}
\end{equation*}
Considering that $\dermat \bar{\bs\varepsilon}\argu[\theta]=\liftgbarra{\funss{\eta}-\funss{\phi}}$, the regularity assumption in each case, H\"older's inequality and using Lemma \ref{Lem: equivalence of discrete norms}, the first estimate is obtained.



For the second estimate we apply piecewise \eqref{eq: teo transp},
\begin{align*}
(II)
={}& \intgammah  \gradgh\lifth{\funss{\phi}} \cdot \mathcal{B}(\vh) \gradgh\lifth{\funss{\psi}}
-\intgammas\gradghs\funss{\phi}\cdot \mathcal{B}(\vs)\gradghs\funss{ \phi}
\\
={}&\int_{0}^{1}\frac{d}{d \theta}\intgammabar\nabla_{\Gammabar} \liftlambda{\funss{\phi}}\cdot \mathcal{B}(\bar{\bs v}) \gradgbarra \liftlambda{\funss{\psi}}\\
={}&\int_{0}^{1}\intgammabar \dermat_\theta (\gradgbarra \liftlambda{\funss{\phi}}\cdot  \mathcal{B}(\bar{\bs v}) \gradgbarra \liftlambda{\funss{\psi}})\\
&+\int_{0}^{1}\intgammabar\Divgbar\vel_{\Gammabar}\gradgbarra \liftlambda{\funss{\phi}}\cdot \mathcal{B}(\bar{\bs v}) \gradgbarra  \liftlambda{\funss{\psi}},
\end{align*}
where we have defined $\bar{\bs v}\argu[\theta]:=\liftgbarra{\vs}+\theta \liftgbarra{\ev}$.
Then, for the first term we use the following formula  \cite[Lemma 2.6]{DKM2013},
	\begin{equation}\label{eq: dermat grad}
        \begin{split}
            \dermat (\gradgbarra g)=&\gradgbarra \dermat  g -(\gradgbarra \vel_{\Gammabar}^T -\n_{\Gammabar}  \tensor\n_{\Gammabar}  \gradgbarra \vel_{\Gammabar} )\gradgbarra g\\
            =&\gradgbarra \dermat  g - D(\vel_{\Gammabar},\n_{\Gammabar}) \gradgbarra g,
        \end{split}
	\end{equation} 
 where $\n_{\Gammabar}$ is the normal vector of $\Gammabar$, for which we have $\norm{\n_{\Gammabart}}_{\Lp[\infty](\Gammabart)}\leq C$ for all $\theta$ and for $h\leq h_0$ such that~\eqref{eq: uniform bound for Ex} is fulfilled. Furthermore, by~\eqref{eq: bound velgbar}, we have $\norm{D(\vel_{\Gammabar},\n_{\Gammabar})}_{\Lp[\infty](\Gammabart)}\leq C$ and $\norm{D(\vel_{\Gammabar},\n_{\Gammabar})}_{\Lp(\Gammabart)}\leq \normLdosgs{\gradghs\ex}$ for all $\theta\in [0,1]$.

 Thus, 
 \begin{equation*}
    \begin{split}
    \dermat (\gradgbarra &\liftlambda{\funss{\phi}} \cdot \mathcal{B}(\bar{\vel}\gradgbarra \liftlambda{\funss{\psi}}))\\
={}& \gradgbarra\dermat \liftlambda{\funss{\phi}}\cdot\mathcal{B}(\bar{\vel})\gradgbarra \liftlambda{\funss{\psi}} -\gradgbarra\liftlambda{\funss{\phi}}\cdot\mathcal{B}(\bar{\vel})D(\vel_{\Gammabar},\n_{\Gammabar}) \gradgbarra \liftlambda{\funss{\psi}}\\
&+ \gradgbarra \liftlambda{\funss{\phi}}\cdot\mathcal{B}(\bar{\vel})\gradgbarra \dermat \liftlambda{\funss{\psi}}-\gradgbarra \liftlambda{\funss{\phi}}\cdot\mathcal{B}(\bar{\vel})D(\vel_{\Gammabar},\n_{\Gammabar}) \gradgbarra \liftlambda{\funss{\psi}}\\
&+ \gradgbarra \liftlambda{\funss{\phi}}\cdot \dermat \mathcal{B}(\bar{\vel})\gradgbarra \liftlambda{\funss{\psi}}\\
={}&-2\gradgbarra \liftlambda{\funss{\phi}}\cdot\mathcal{B}(\bar{\vel})D(\vel_{\Gammabar},\n_{\Gammabar}) \gradgbarra \liftlambda{\funss{\psi}}+ \gradgbarra \liftlambda{\funss{\phi}}\cdot \dermat \mathcal{B}(\bar{\vel})\gradgbarra \liftlambda{\funss{\psi}},
    \end{split}
\end{equation*}   
where we have used that $\dermat_\theta \liftgbarra{\funss{\phi}}=\dermat_\theta \liftgbarra{\funss{\psi}}=0$.

Now, we use the fact that both the quantities related to the normal vector to $\Gammabar$ and its velocity are uniformly bounded. That is, \( \norm{\n_{\Gammabart}}_{\Lp[\infty](\Gammabart)}\leq C \) for all \( \theta \) and for \( h\leq h_0 \) such that~\eqref{eq: uniform bound for Ex} holds. Then,
$\norm{D(\vel_{\Gammabar},\n_{\Gammabar})}_{\Lp[\infty](\Gammabart)}\leq C$ for all $\theta\in [0,1]$, which is guaranteed by~\eqref{eq: bound velgbar}.  

Finally, we will use the fact that $\norm{D(\vel_{\Gammabar},\n_{\Gammabar})}_{\Lp(\Gammabart)}\leq \normLdosgs{\gradghs\ex}$ for all $\theta\in [0,1]$, thanks to~\eqref{eq: bound velgbar}. Together with  
\begin{align*}
    \norm{\mathcal{B}(\bar{\vel})}_{\Lp[\infty](\Gammabart)}\leq 3 \norm{\gradgbarra\bar{\vel}}_{\Lp[\infty](\Gammabart)}\leq C\normLinfs{\gradghs\vs}\leq C,
\end{align*}
~\eqref{eq: equivalence discrete norms} and the assumptions about $\testnuhs$ we deduce
\begin{align*}
|II|\lesssim &\norm{D(\vel_{\Gammabar},\n_{\Gammabar})}_{\Lp(\Gammabart)}\normLdosgs{\gradghs\funss{\psi}}+\norm{\dermat_\theta\mathcal{B}(\bar{\vel})}_{\Lp(\Gammabart)}\normLdosgs{\gradghs\funss{\psi}}\\
&+ \norm{\gradgbarra\vel_{\Gammabar}}_{\Lp(\Gammabart)}\normLdosgs{\gradghs\funss{\psi}}\\
\lesssim &\left(\norm{\gradghs \ex}_{\Lp(\Gammas)}+\normLdosgs{\gradghs \ev}\right)\normLdosgs{\gradghs\funss{\psi}},
\end{align*}
which is the second assertion of this lemma.
For last inequality we have used that 
\begin{align*}
    \dermat_\theta\mathcal{B}(\bar{\vel})&=\dermat_\theta(\Divgbar\bar{\vel}-(\gradgbarra\bar{\vel}+(\gradgbarra\bar{\vel}^T)))\\
    &=\mathcal{B}(\dermat_\theta \bar{\vel})-\text{tr}(F(\theta))+F(\theta)+F(\theta)^T\\
    &=\mathcal{B}(\liftgbarra{\ev})-\text{tr}(F(\theta))+F(\theta)+F(\theta)^T,
\end{align*}
where $F(\theta):=\gradgbarra\bar{\vel}((\gradgbarra\vel_{\Gammabar})^T-\n_{\Gammabar}\tensor\n_{\Gammabar}\gradgbarra\vel_{\Gammabar})$. And thus, 
\begin{align*}
    \norm{\dermat_\theta\mathcal{B}(\bar{\vel})}_{\Lp(\Gammabart)}&\leq 3\norm{\gradgbarra\liftgbarra{\ev}}_{\Lp(\Gammabart)}+3\norm{F(\theta)}_{\Lp(\Gammabart)}\\
    &\lesssim \normLdosgs{\gradghs \ev}+\norm{\gradghs \ex}_{\Lp(\Gammas)}.
\end{align*}
\end{proof}

If we consider $\mathcal{B}(\bs w)$ to be the identity matrix for all $\bs w$ in~\eqref{eq: bound Bhgradh - Bsgradhs}, then $\mathcal{B}(\bs w)$ is independent of $\theta$, whence $\dermat_\theta \mathcal{B}(\bs w) = 0$ for all $\bs w$. This leads to the first result of the following Corollary.
And, if we consider \( \mathcal{B}(\bs w) \) equal to the identity matrix for all \( \bs w \) in the inequality~\eqref{eq: bound Bhgradh - Bsgradhs}, then \( \mathcal{B}(\bs w) \) is independent of \( \theta \), whence \( \dermat_\theta \mathcal{B}(\bs w)=0 \) for all \( \bs w \), and we will have the second inequality of the following Corollary.  
\begin{corollary}\label{coro: bound gradh  gradhs}
    Let $\funss{\phi}$ and $\funss{\psi}$ be as in Lemma~\ref{Lem: estimates discrete bilinear forms}. If $ \norm{\funss{\phi}}_{\Wunoinf(\Gammas)}\leq C$ then, for $0<h\leq h_0$ such that~\eqref{eq: uniform bound for Ex} fulfilled,
    \begin{equation}\label{eq: bound funslifth - funs}
        \begin{split}
      \left|  \intgammah \lifth{\funss{\phi}}\lifth{\funss{\psi}} -\intgammas  \funss{\phi}\funss{\psi}\right|
        \lesssim{}
        &\normLdosgs{\gradghs\ex} \normLdosgs{\funss{\psi}}.
        \end{split}
        \end{equation}
  and
    \begin{equation}\label{eq: bound gradh - gradhs}
        \begin{split}
       \Big|     \intgammah 
             \gradgh\lifth{\funss{\phi}} \cdot \gradgh\lifth{\funss{\psi}}
            -&\intgammas\gradghs\funss{\psi}\cdot \gradghs\funss{ \phi}\Big|\\
            &\lesssim \norm{\gradghs \ex}_{\Lp(\Gammas)}\normLdosgs{\gradghs\funss{\psi}}.
        \end{split}
    \end{equation} 
\end{corollary}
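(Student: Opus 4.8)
The plan is to obtain both bounds directly from Lemma~\ref{Lem: estimates discrete bilinear forms} by specializing its two estimates; all material derivatives below are taken along the artificial $\theta$-motion $\Xb(\cdot,\theta) = \Xs + \theta\EX$ of Lemma~\ref{Lem: equivalence of discrete norms}.

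First I would derive \eqref{eq: bound funslifth - funs} from \eqref{eq: bound funh - funs} by the choice $\eta := \lifth{\funss{\phi}}$, the lift of $\funss{\phi}$ from $\Gammas$ to $\Gammah$. Since the lifts between $\Gammas$ and $\Gammah$ are mutually inverse (both being defined by pullback to the parametric domain $\Omega$ followed by push-forward), one has $\lifts{(\lifth{\funss{\phi}})} = \funss{\phi}$, so the term $\normLdosgs{\lifts{\eta} - \funss{\phi}}$ on the right-hand side of \eqref{eq: bound funh - funs} vanishes identically, while its left-hand side becomes exactly $\bigl|\intgammah \lifth{\funss{\phi}}\lifth{\funss{\psi}} - \intgammas \funss{\phi}\funss{\psi}\bigr|$. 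The hypothesis $\norm{\funss{\phi}}_{\Lp[\infty](\Gammas)} \le C$ required to invoke the lemma is implied by the assumed $\norm{\funss{\phi}}_{\Wunoinf(\Gammas)} \le C$, so \eqref{eq: bound funslifth - funs} follows with no further work.

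Next I would obtain \eqref{eq: bound gradh - gradhs} by repeating the proof of \eqref{eq: bound Bhgradh - Bsgradhs} with the matrix field $\mathcal{B}$ replaced throughout by the constant identity $I_3$. With $\Gammabart$ and $\vel_{\Gammabart} = \liftgbarra{\ex}$ as in Lemma~\ref{Lem: equivalence of discrete norms}, one writes $\intgammah \gradgh\lifth{\funss{\phi}} \cdot \gradgh\lifth{\funss{\psi}} - \intgammas \gradghs\funss{\phi}\cdot \gradghs\funss{\psi} = \int_0^1 \frac{d}{d\theta}\intgammabar \gradgbarra\liftlambda{\funss{\phi}} \cdot \gradgbarra\liftlambda{\funss{\psi}}$, applies \eqref{eq: teo transp} piecewise, and uses the commutator identity \eqref{eq: dermat grad} for $\dermat(\gradgbarra g)$. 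Because $\dermat\liftgbarra{\funss{\phi}} = \dermat\liftgbarra{\funss{\psi}} = 0$ and $\mathcal{B} \equiv I_3$ is constant in $\theta$ (so the term involving $\dermat\mathcal{B}$ vanishes), the only surviving contributions are the two copies of $\gradgbarra\liftlambda{\funss{\phi}} \cdot D(\vel_{\Gammabar},\n_{\Gammabar})\gradgbarra\liftlambda{\funss{\psi}}$ produced by \eqref{eq: dermat grad}, together with the divergence term $\Divgbar\vel_{\Gammabar}\,\gradgbarra\liftlambda{\funss{\phi}} \cdot \gradgbarra\liftlambda{\funss{\psi}}$ from \eqref{eq: teo transp}. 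Using $\norm{D(\vel_{\Gammabar},\n_{\Gammabar})}_{\Lp(\Gammabart)} \lesssim \normLdosgs{\gradghs\ex}$ and $\norm{\Divgbar\vel_{\Gammabar}}_{\Lp(\Gammabart)} \lesssim \normLdosgs{\gradghs\ex}$, the uniform bound $\norm{\funss{\phi}}_{\Wunoinf(\Gammas)} \le C$, and the $h$-uniform norm equivalence on $\Gammabart$ from Lemma~\ref{Lem: equivalence of discrete norms}, one arrives at $\lesssim \norm{\gradghs\ex}_{\Lp(\Gammas)}\normLdosgs{\gradghs\funss{\psi}}$.

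Both arguments are routine once Lemma~\ref{Lem: estimates discrete bilinear forms} is in hand; the only point I would check carefully is that, in the decomposition of $\dermat(\gradgbarra\liftlambda{\funss{\phi}} \cdot \mathcal{B}(\bar{\vel})\gradgbarra\liftlambda{\funss{\psi}})$ appearing in that proof, the dependence on $\ev$ enters solely through the term $\gradgbarra\liftlambda{\funss{\phi}} \cdot \dermat\mathcal{B}(\bar{\vel})\gradgbarra\liftlambda{\funss{\psi}}$ --- precisely the term annihilated by taking $\mathcal{B} \equiv I_3$ --- so that the resulting estimate genuinely drops the $\ev$ contribution present in \eqref{eq: bound Bhgradh - Bsgradhs}. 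No additional estimates are needed.
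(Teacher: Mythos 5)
Your proposal is correct and follows essentially the paper's route: the second estimate \eqref{eq: bound gradh - gradhs} is obtained exactly as in the paper, by running the argument for \eqref{eq: bound Bhgradh - Bsgradhs} with $\mathcal{B}\equiv I_3$, so that $\dermat_\theta\mathcal{B}=0$ and the $\ev$-contribution disappears, leaving only the $D(\vel_{\Gammabar},\n_{\Gammabar})$ and $\Divgbar\vel_{\Gammabar}$ terms, each controlled by $\norm{\gradghs\ex}_{\Lp(\Gammas)}$ via the norm equivalences of Lemma~\ref{Lem: equivalence of discrete norms}. Your derivation of the first estimate, specializing \eqref{eq: bound funh - funs} to $\eta=\lifth{\funss{\phi}}$ so that $\lifts{\eta}-\funss{\phi}=0$, is a clean (and arguably clearer) reading of the paper's somewhat garbled justification, and is fully correct since the $\Lp[\infty]$ hypothesis of the Lemma follows from the assumed $\Wunoinf(\Gammas)$ bound on $\funss{\phi}$.
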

Note that in Lemma~\ref{Lem: estimates discrete bilinear forms} and Corollary~\ref{coro: bound gradh gradhs} we are considering discrete surfaces but not discrete functions.

In summary, to conclude this section, it is worth noting the following:

\begin{quote}\it
If the parametrizations of two surfaces parametrized over the same reference domain are sufficiently close, then we will have norm equivalence between functions and their corresponding lifts. This allows us to estimate differences between bilinear forms as in Corollary~\ref{coro: bound gradh gradhs}.
\end{quote}

This statement is made more precise in the following Corollary.
\begin{corollary}\label{coro: equivalent norms and bounds bilinear forms}
    Let $\Lambda$ and $\Sigma$ be two sufficiently smooth surfaces, parameterized by $\X_{\Lambda}:\Omega\to\Rn[3]$ and $\X_{\Sigma}:\Omega\to\Rn[3]$. Let $f$ and $g$ be two functions defined on $\Sigma$. If  
    \begin{equation}\label{eq: Error parameterizations}
        \normWunomega{\X_{\Lambda}-\X_{\Sigma}}\leq C,
    \end{equation}
    then the norms $\Lp(\Lambda)$ and $\Lp(\Sigma)$, as well as the $\Hk$ norms, are equivalent. Moreover, if $\norm{f}_{\Wunoinf(\Sigma)}\leq C$, then  
    \begin{equation}
        \begin{split}
       \left| \int_{\Lambda} (f)^{\Lambda}\,(g)^{\Lambda} -\int_{\Sigma} f\, g\right|
        \lesssim{}
        &\norm{\nabla_{\Sigma}e_{\X}}_{\Lp(\Sigma)} \norm{g}_{\Lp(\Sigma)} .
        \end{split}
    \end{equation}
    and  
    \begin{equation}
        \begin{split}
         \Big|   \int_{\Lambda}
            \nabla_{\Lambda} (f)^{\Lambda}\, \cdot \nabla_{\Lambda}(g)^{\Lambda} 
            -&\int_{\Sigma} \nabla_{\Sigma}f\cdot \nabla_{\Sigma}g\Big|\lesssim \norm{\nabla_{\Sigma}e_{\X}}_{\Lp(\Sigma)} \norm{\nabla_{\Sigma}g}_{\Lp(\Sigma)} .
        \end{split}
    \end{equation} 
    where the lifts are defined via the parameterizations, i.e., $(f)^{\Lambda}=f\circ\X_{\Sigma}\circ(\X_{\Lambda})^{-1}$.
\end{corollary}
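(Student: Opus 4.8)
The plan is to deduce this Corollary from the artificial-homotopy argument already used for Lemma~\ref{Lem: equivalence of discrete norms} and Lemma~\ref{Lem: estimates discrete bilinear forms}: the statement is precisely the abstract counterpart of those results, obtained by replacing $\Xs$ by $\X_{\Sigma}$, $\Xh$ by $\X_{\Lambda}$, $\ex$ by $e_{\X}:=\X_{\Lambda}-\X_{\Sigma}$, and with $\mathcal{B}(\cdot)$ taken to be the identity $I_{3}$ in~\eqref{eq: bound Bhgradh - Bsgradhs}. First I would set, for $\theta\in[0,1]$, $\X_{\theta}:=\X_{\Sigma}+\theta e_{\X}$ and let $\Lambda_{\theta}:=\X_{\theta}(\Omega)$, so that $\Lambda_{0}=\Sigma$ and $\Lambda_{1}=\Lambda$; writing $G_{\Sigma}:=(\nabla\X_{\Sigma})^{T}\nabla\X_{\Sigma}$, one has $G_{\Sigma}^{-1}(\nabla\X_{\Sigma})^{T}\nabla\X_{\theta}=I_{2}+\theta\,G_{\Sigma}^{-1}(\nabla\X_{\Sigma})^{T}\nabla e_{\X}$, so that if the constant $C$ in~\eqref{eq: Error parameterizations} is below a threshold depending only on $\normLinfomega{G_{\Sigma}^{-1}(\nabla\X_{\Sigma})^{T}}$ (hence only on $\X_{\Sigma}$), then $\nabla\X_{\theta}$ has full rank and $G_{\Lambda_{\theta}}:=(\nabla\X_{\theta})^{T}\nabla\X_{\theta}$ together with $G_{\Lambda_{\theta}}^{-1}$ are bounded in $\Linfomega$ uniformly in $\theta\in[0,1]$. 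As in part (ii) of Lemma~\ref{Lem: equivalence of discrete norms} (with the roles of $\X_{\Sigma}$ and $\X_{\Lambda}$ exchanged) this gives the stated equivalence of the $\Lp(\Lambda)$ and $\Lp(\Sigma)$ norms, and likewise of the $\Hk$ norms, as well as the uniform bound $\norm{(f)^{\Lambda_{\theta}}}_{W^{1,\infty}(\Lambda_{\theta})}\lesssim\norm{f}_{\Wunoinf(\Sigma)}$.

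Next I would obtain the two integral estimates by differentiating along the homotopy. The velocity of $\Lambda_{\theta}$ is $\vel_{\theta}:=(\partial_{\theta}\X_{\theta})\circ\X_{\theta}^{-1}=e_{\X}\circ\X_{\theta}^{-1}$, i.e.\ the transport of the $\theta$-independent map $e_{\X}$, so $\dermat_{\theta}(f)^{\Lambda_{\theta}}=\dermat_{\theta}(g)^{\Lambda_{\theta}}=0$ by~\eqref{eq: transport property}; moreover, by the chain rule~\eqref{eq: chain rule Omega-Gamma} and the uniform bounds above, $\norm{\Div_{\Lambda_{\theta}}\vel_{\theta}}_{\Lp(\Lambda_{\theta})}\lesssim\norm{\nabla_{\Sigma}e_{\X}}_{\Lp(\Sigma)}$ and $\norm{\nabla_{\Lambda_{\theta}}\vel_{\theta}}_{L^{\infty}(\Lambda_{\theta})}\le C$. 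For the first estimate, the transport formula~\eqref{eq: teo transp} gives
\[
\int_{\Lambda}(f)^{\Lambda}(g)^{\Lambda}-\int_{\Sigma}f\,g=\int_{0}^{1}\int_{\Lambda_{\theta}}(f)^{\Lambda_{\theta}}(g)^{\Lambda_{\theta}}\,\Div_{\Lambda_{\theta}}\vel_{\theta}\,d\theta,
\]
and Hölder's inequality ($L^{\infty}$--$\Lp$--$\Lp$) together with $\norm{(f)^{\Lambda_{\theta}}}_{L^{\infty}(\Lambda_{\theta})}\lesssim C$ and the norm equivalence yields the claim. For the gradient estimate I would differentiate $\int_{\Lambda_{\theta}}\nabla_{\Lambda_{\theta}}(f)^{\Lambda_{\theta}}\cdot\nabla_{\Lambda_{\theta}}(g)^{\Lambda_{\theta}}$, combining~\eqref{eq: teo transp} with~\eqref{eq: dermat grad}: since $\dermat_{\theta}(f)^{\Lambda_{\theta}}=0$ one gets $\dermat_{\theta}\big(\nabla_{\Lambda_{\theta}}(f)^{\Lambda_{\theta}}\big)=-D(\vel_{\theta},\n_{\Lambda_{\theta}})\nabla_{\Lambda_{\theta}}(f)^{\Lambda_{\theta}}$, where the normals $\n_{\Lambda_{\theta}}$ are uniformly bounded and $\norm{D(\vel_{\theta},\n_{\Lambda_{\theta}})}_{L^{\infty}(\Lambda_{\theta})}\le C$, $\norm{D(\vel_{\theta},\n_{\Lambda_{\theta}})}_{\Lp(\Lambda_{\theta})}\lesssim\norm{\nabla_{\Sigma}e_{\X}}_{\Lp(\Sigma)}$; inserting this and $\norm{\nabla_{\Lambda_{\theta}}(f)^{\Lambda_{\theta}}}_{L^{\infty}(\Lambda_{\theta})}\lesssim\norm{f}_{\Wunoinf(\Sigma)}\le C$ into the transport formula and applying Hölder as before gives the second estimate. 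These are exactly the computations in Lemma~\ref{Lem: estimates discrete bilinear forms} and Corollary~\ref{coro: bound gradh gradhs}; in particular the $\dermat_{\theta}\mathcal{B}$ contributions that appear there are absent here since $\mathcal{B}=I_{3}$ is constant in $\theta$.

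The hard part is not any single step—each is the routine Hölder bookkeeping already carried out in Lemma~\ref{Lem: estimates discrete bilinear forms}—but rather making precise that the geometry stays uniformly controlled along the whole artificial path. Concretely, one must verify that the smallness threshold for the constant $C$ in~\eqref{eq: Error parameterizations} can indeed be chosen to depend only on $\X_{\Sigma}$ (through $\normLinfomega{G_{\Sigma}^{-1}(\nabla\X_{\Sigma})^{T}}$), so that for every $\theta\in[0,1]$ the matrix $\nabla\X_{\theta}$ is non-degenerate and $G_{\Lambda_{\theta}}^{-1}$, $\vel_{\theta}$, $\n_{\Lambda_{\theta}}$ and $D(\vel_{\theta},\n_{\Lambda_{\theta}})$ are bounded independently of $\theta$. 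I would therefore state this smallness requirement on $C$ explicitly when invoking~\eqref{eq: Error parameterizations}.
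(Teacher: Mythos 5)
Your proposal is correct and follows essentially the same route as the paper, whose proof simply repeats the homotopy argument of Lemma~\ref{Lem: equivalence of discrete norms} and the computation of Corollary~\ref{coro: bound gradh gradhs} with $\X_{\Sigma},\X_{\Lambda}$ in place of $\Xs,\Xh$ and $\mathcal{B}=I_3$. Your explicit remark that the constant in~\eqref{eq: Error parameterizations} must lie below a threshold depending on $\X_{\Sigma}$ (so that $\nabla\X_{\theta}$ stays non-degenerate along the path) is a sensible sharpening of what the paper leaves implicit, mirroring condition~\eqref{eq: uniform bound for Ex}.
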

\begin{proof}
    The proof consists of following the same steps as in Lemma~\ref{Lem: equivalence of discrete norms}, and then applying the same steps as in Corollary~\ref{coro: bound gradh gradhs}.
\end{proof}

\subsection{Stability}\label{sec: stability}
In this section, we present the stability analysis of the semi-discrete scheme. The main result is Proposition~\ref{prop: stability}. 

We start with three lemmas that contain key energy estimates for the stability proof. However, before that, we present an auxiliary Lemma which will play a key role in the estimates of this section.

\begin{lemma}\label{lem: bounds for numerical approx}
If $p\geq2$, there exists $h_0>0$ and $t_0\in[0,T]$ such that 
        \begin{equation}\label{eq: bounds for numerical approx}
        \begin{split}
            \normWunost{\ex\argu}&\leq h^{(p-1)/2},\\
            \normWunost{\ev\argu}&\leq h^{(p-1)/2},\\
            \normWunost{\eH\argu}&\leq h^{(p-1)/2},\\
            \normWunost{\en\argu}&\leq h^{(p-1)/2},\\
        \end{split}
    \end{equation}
    for all $0<h\leq h_0$ and all $t\in [0,t_0]$. 
    Moreover, there exists $C$ independent of $h$ and $t$ such that
    \begin{equation}\label{eq: uniform bound numerical approx}
         \begin{gathered}
          \normWunost{\idh}+\normWunost{\vh}+\normWunost{\Hh}+\normWunost{\nh}\leq C,
     \end{gathered}
    \end{equation}
    for all $0<h\leq h_0$ and all $t\in [0,t_0]$. 
\end{lemma}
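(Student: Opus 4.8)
The plan is to prove Lemma~\ref{lem: bounds for numerical approx} by a standard bootstrapping (continuation) argument in time, relying on the stability result Proposition~\ref{prop: stability} (not reproduced in the excerpt but referenced immediately after this lemma) together with the inverse estimates for spline spaces and the uniform bounds on the theoretical discrete approximations $\ids,\vs,\Hs,\ns$ established in Lemma~\ref{Lem: uniform bounds xs vs hs nus}. First I would observe that the error functions $\ex,\ev,\eH,\en$ all vanish at $t=0$ by the choice of discrete initial data ($\Xhcero=\Xscero$, $\kappa_{h,0}=\Qs[\Gamma_{h,0}]\kappa_0=\Hscero$, $\nhcero=\Rs[\Gamma_{h,0}]\n_0=\nscero$, $\vel_{h,0}=\Qs[\Gamma_{h,0}](-\kappa_{h,0}\n_{h,0})=\vsn[0]$), and that each error function is continuous in time into the relevant $W^{1,\infty}$ space. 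Hence there is a maximal time interval $[0,t_0]$ on which the four bounds in~\eqref{eq: bounds for numerical approx} hold, and on that interval, in particular, the smallness condition~\eqref{eq: uniform bound for Ex} of Lemma~\ref{Lem: equivalence of discrete norms} is satisfied (since $\normLinfomega{\nabla\EX}\cong\normLinfs{\gradghs\ex}\le h^{(p-1)/2}<c_{\X}$ for $h$ small, using $p\ge2$), and Lemma~\ref{lem: bound for splines parameterizations} applies.

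Next I would invoke Proposition~\ref{prop: stability}, which provides $\Hk$-type stability estimates of the form $\max_{t\in[0,t_0]}\big(\normHunogst{\ex}+\normHunogst{\ev}+\normHunogst{\eH}+\normHunogst{\en}\big)\lesssim h^p$, valid precisely on the interval where the $W^{1,\infty}$-smallness conditions hold (this is the usual interplay between stability and the continuation argument). Combining this $\Hk$ estimate with the inverse inequality for spline functions on $\Gammast$, which costs a factor $h^{-1}$ (valid because the mesh is quasi-uniform and $\Xs,\invXs$ are uniformly bi-Lipschitz by Lemma~\ref{lem: bound for splines parameterizations} and~\eqref{eq: uniform bound Xs}), gives
\[
\normWunost{\ex}+\normWunost{\ev}+\normWunost{\eH}+\normWunost{\en}
\lesssim h^{-1}\big(\normHunogst{\ex}+\dots+\normHunogst{\en}\big)\lesssim h^{-1}h^{p}=h^{p-1}.
\]
Since $p\ge2$, $h^{p-1}\le h^{(p-1)/2}$ holds strictly (with room to spare) once $h\le h_0$ for $h_0$ small enough, so each of the four quantities is $\le\tfrac12 h^{(p-1)/2}$, say. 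This strict improvement of the a priori bound is exactly what a continuation argument needs: if $t_0$ were strictly less than the existence time of the semi-discrete solution, continuity in time would let us extend the interval, contradicting maximality; hence the bounds~\eqref{eq: bounds for numerical approx} hold on all of $[0,t_0]$ with $t_0$ as in Lemma~\ref{lem: bound for splines parameterizations} (or up to the final time $T$ if the a priori estimates propagate that far). Finally, the uniform bounds~\eqref{eq: uniform bound numerical approx} follow by the triangle inequality $\normWunost{\idh}\le\normWunost{\ex}+\normWunost{\ids}\le h^{(p-1)/2}+C\le C'$, and likewise for $\vh=\ev+\vs$, $\Hh=\eH+\Hs$, $\nh=\en+\ns$, using the uniform bounds on $\ids,\vs,\Hs,\ns$ from Lemma~\ref{Lem: uniform bounds xs vs hs nus} together with the equivalence of norms on $\Gammaht$ and $\Gammast$ guaranteed by Corollary~\ref{coro: equivalent norms and bounds bilinear forms} once $\Xh$ and $\Xs$ are $W^{1,\infty}(\Omega)$-close.

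The main obstacle is the circularity inherent in the bootstrap: Proposition~\ref{prop: stability} presumably only yields the $O(h^p)$ bound under the hypothesis that the $W^{1,\infty}$-smallness~\eqref{eq: uniform bound for Ex} (and possibly analogous bounds on $\eH,\en$) already holds, while here we are trying to establish that very smallness. The resolution — which I would make explicit — is the standard ``defect of one power of $h$'' trick: one assumes the weaker a priori bound $h^{(p-1)/2}$, uses it to validate the hypotheses of the stability proposition, obtains the stronger conclusion $h^{p-1}$, and checks that for $p\ge2$ we have $p-1>(p-1)/2$, so the assumed bound is strictly improved, closing the loop. One must also be careful that the inverse estimate is applied on $\Gammast$ (where the mesh is quasi-uniform by construction) and not on $\Gammaht$ (whose regularity is only being established), and that the norm equivalences between $\Gammaht$, $\Gammast$ and $\Gammat$ invoked along the way are all the $h$-uniform ones from~\eqref{eq: equivalence Gs-G norms}, Lemma~\ref{Lem: equivalence of discrete norms} and Corollary~\ref{coro: equivalent norms and bounds bilinear forms}, which is legitimate exactly on the interval where~\eqref{eq: uniform bound for Ex} holds. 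Everything else is routine.
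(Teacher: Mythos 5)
There is a genuine structural problem with your argument: it is circular relative to the role this lemma plays in the paper. You propose to derive~\eqref{eq: bounds for numerical approx} from the $O(h^p)$ stability estimate of Proposition~\ref{prop: stability} plus an inverse inequality, but Proposition~\ref{prop: stability} is proved \emph{after} this lemma and its proof depends on it: the energy estimates of Lemmas~\ref{Lem: energy estimate ex ev}--\ref{Lem: energy estimate en} require the uniform $W^{1,\infty}$ bounds~\eqref{eq: uniform bound numerical approx} on $\idh,\vh,\Hh,\nh$ (e.g.\ through Lemma~\ref{Lem: estimates discrete bilinear forms} and the bounds on $|A_h|$, $\Divgh\vh$, $\mathcal{B}(\vh)$), and those uniform bounds are precisely the second assertion of the present lemma. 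The lemma itself claims much less than your proof attempts to deliver: it only asserts the existence of \emph{some} $t_0\in(0,T]$ and $h_0>0$ on which the a priori bounds hold. The paper's proof is therefore a soft continuity argument: the errors vanish at $t=0$ (for $\ex$, $\en$) or are $O(h^{p-1})\le h^{(p-1)/2}$ there, and since $t\mapsto$ error is continuous into $W^{1,\infty}$, a maximal positive $t_0$ exists; the uniform bounds~\eqref{eq: uniform bound numerical approx} then follow by the triangle inequality and Lemma~\ref{Lem: uniform bounds xs vs hs nus}, exactly as in your last step. The bootstrap you describe (Gronwall bound $h^p$, inverse estimate giving $h^{p-1}<h^{(p-1)/2}$, hence $t_0=T$ after shrinking $h_0$) is not part of this lemma at all; it is carried out at the end of the proof of Proposition~\ref{prop: stability}, once the energy machinery is available on $[0,t_0]$. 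Your acknowledgement of the circularity does not remove it, because to "validate the hypotheses of the stability proposition" you already need the conclusion of this lemma on a nontrivial time interval — which is the only thing the lemma is actually asked to provide.

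A secondary inaccuracy: you claim $\eH(0)=0$ on the grounds that $\kappa_{h,0}=\Qs[\Gamma_{h,0}]\kappa_0$ coincides with $\Hs(0)$, but $\Hs=\Rscero\Hm$ is the Ritz projection, not the quasi-interpolant, so in general $\eH(0)\neq 0$ (and similarly $\vel_{h,0}=\Qs[\Gamma_{h,0}](-\kappa_{h,0}\n_{h,0})$ need not equal $\vs(0)=\Qs\vel(0)$). What one actually uses is the estimate $\normWunost{\eH\argu[0]}\lesssim h^{p-1}\le h^{(p-1)/2}$ for $h$ small, which is enough to start the continuity argument. With the heavy machinery removed and the initial-data claim corrected, your final triangle-inequality step for~\eqref{eq: uniform bound numerical approx} matches the paper's proof.
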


The idea of this proof is analogous to those used in~\cite{KLL2019}, but we include it here for completeness.

\begin{proof}
    Notice that $\ex\argu[0]=0$, $\ev\argu[0]=0$ and also $\en(0)=0$ because $\n_{h,0} = \Rs[\Gammas] \n_0$. Also, recalling that $p>1$, $\normWunost{\eH\argu[0]}\lesssim h^{p-1} \le h^{(p-1)/2}$ for $h$ sufficiently small.
    By continuity, for a given $h_0>0$ there exists $t_0>0$ such that $t_0$ is the maximum time at which all the inequalities in~\eqref{eq: bounds for numerical approx} are satisfied for all $t\in[0,t_0]$, and $0<h\le h_0$. 

Finally, to see the uniform bounds of the numerical solutions, we proceed as follows for $(u_h,u^*)\in\{(\idh,\ids),(\vh,\vs),(\Hh,\Hs),(\nh,\ns)\}$:
\[
    \normWunost{u_h}\leq \normWunost{u_h-u^*}+\normWunost{u^*}.
\]
We now use~\eqref{eq: bounds for numerical approx} to uniformly bound the first term for $h\leq h_0$. For the second term, the uniform bounds of the different operators defined in Section~\ref{sec: spline approximation estimates} are used, that is, Lemma~\ref{Lem: uniform bounds xs vs hs nus}, along with the equivalence of norms.
\end{proof}

\begin{remark}\label{rem: implicancias de uniform bound EX}
    From Lemma~\ref{lem: bound for splines parameterizations}, we know there exists \( h_0>0 \) such that~\eqref{eq: bound EX} is satisfied for all \( t\in[0,t_0] \). Then, the condition~\eqref{eq: uniform bound for Ex} in Lemma~\ref{Lem: equivalence of discrete norms} is satisfied for \( 0<h\leq h_0 \) and \( 0\leq t \leq t_0 \). Therefore, the statements of Lemma~\ref{Lem: estimates discrete bilinear forms} and Corollary~\ref{coro: bound gradh gradhs} hold for \( 0<h\leq h_0 \) and \( 0\leq t \leq t_0 \).
\end{remark}

\begin{lemma}\label{Lem: energy estimate ex ev}
    Let $t_0>0$ and $h_0>0$ be such that Lemma~\ref{lem: bound for splines parameterizations} is satisfied. Then, for all $t\in[0,t_0]$ and all $0<h<h_0$ we have
    \begin{equation}\label{eq: energy estimate ex}
        \dt\normcuad{\ex}_{\Hunogammas}\lesssim\normcuad{ \ev}_{\Hunogammas} + \normcuad{\ex}_{\Hunogammas},
    \end{equation}
    and 
    \begin{align}\label{eq: energy estimate ev}
        \normHunogs{\ev}^2 &\lesssim \normHunogs{\eH}^2+ \normHunogs{\en}^2+ \normHunogs{\dv}^2. 
    \end{align}
\end{lemma}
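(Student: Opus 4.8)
The plan is to prove the two estimates separately, in both cases using the error equations \eqref{eq: error-pos} and \eqref{eq: error-vel} together with the auxiliary results on relating discrete surfaces from Section~\ref{sec: relating discrete surfaces} (Corollary~\ref{coro: bound gradh gradhs}, valid for $t \le t_0$ by Remark~\ref{rem: implicancias de uniform bound EX}).

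For \eqref{eq: energy estimate ex}, I would start from the error equation $\ederx = \ev$ on $\Gammas$, i.e.\ $\dermat \ex = \ev$. The idea is to apply the transport formulae \eqref{eq: teo transp} and \eqref{eq: teo transp-grad-grad} on $\Gammas$ to differentiate $\normcuad{\ex}_{\Hunogammas} = \intgammas |\ex|^2 + |\gradghs \ex|^2$ with respect to $t$, with the surface $\Gammast$ moving with velocity $\vs$. This yields
\[
\dt \normcuad{\ex}_{\Hunogammas} = \intgammas 2\,\ex \cdot \dermat \ex + \ex\cdot \ex \Divgs\vs + 2\,\gradghs\ex : \gradghs\dermat\ex + \gradghs\ex : \mathcal{B}(\vs)\gradghs\ex.
\]
Substituting $\dermat \ex = \ev$, using the Cauchy--Schwarz inequality on the cross terms, and using that $\normWunost{\vs} \le C$ (Lemma~\ref{Lem: uniform bounds xs vs hs nus}), so that $\Divgs\vs$ and $\mathcal{B}(\vs)$ are uniformly bounded in $L^\infty(\Gammas)$, we bound each term by $\normcuad{\ev}_{\Hunogammas} + \normcuad{\ex}_{\Hunogammas}$, which is \eqref{eq: energy estimate ex}. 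Some care is needed with the term $\gradghs\ex : \gradghs\ev$, which is controlled by $\frac12(\normcuad{\gradghs\ex}_{\Ldosgammas} + \normcuad{\gradghs\ev}_{\Ldosgammas})$.

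For \eqref{eq: energy estimate ev}, I would work from \eqref{eq: error-vel}: $\ev = \lifts{\Qs[\Gammah](-\Hh\nh)} - \Qs[\Gammas](-\Hs\ns) - \dv$. Adding and subtracting $\Qs[\Gammas]$ applied to the lift of $-\Hh\nh$ to $\Gammas$, the non-conformity between $\Gammah$ and $\Gammas$ enters through the geometric discrepancy between the two quasi-interpolants, which by the consistency-type estimates and Corollary~\ref{coro: bound gradh gradhs} (or Corollary~\ref{coro: equivalent norms and bounds bilinear forms}) is controlled by $\norm{\gradghs\ex}_{\Ldosgammas}$. Since $\Qs[\Gammas]$ is linear and $H^1$-stable (Lemma~\ref{Lem: global error estimate quasi int on gamma}), it suffices to bound $\normHunogs{\lifts{\Hh\nh} - \Hs\ns}$. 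Writing $\lifts{\Hh}\lifts{\nh} - \Hs\ns = (\lifts{\Hh} - \Hs)\lifts{\nh} + \Hs(\lifts{\nh} - \ns) = \eH\,\lifts{\nh} + \Hs\,\en$, and using the $W^{1,\infty}$ uniform bounds on $\Hh, \nh, \Hs, \ns$ (from Lemma~\ref{lem: bounds for numerical approx} and Lemma~\ref{Lem: uniform bounds xs vs hs nus}), together with the product rule in $H^1$ and the fact that for $H^1 \cap L^\infty$ functions the product is controlled, one gets $\normHunogs{\lifts{\Hh\nh} - \Hs\ns} \lesssim \normHunogs{\eH} + \normHunogs{\en}$. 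Collecting all contributions,
\[
\normHunogs{\ev} \lesssim \normHunogs{\eH} + \normHunogs{\en} + \norm{\gradghs\ex}_{\Ldosgammas} + \normHunogs{\dv},
\]
and then absorbing $\norm{\gradghs\ex}_{\Ldosgammas}$ — the stated estimate \eqref{eq: energy estimate ev} as written omits the $\ex$ term, so presumably it is either absorbed into the later Gronwall argument or the authors intend $\normHunogs{\ex}$ to appear; I would state it with the $\ex$ term and note it contributes harmlessly to the subsequent stability estimate. Squaring gives the claimed inequality.

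The main obstacle I anticipate is the careful treatment of the geometric non-conformity in \eqref{eq: error-vel}: precisely, comparing $\lifts{\Qs[\Gammah](-\Hh\nh)}$ with $\Qs[\Gammas](-\Hs\ns)$ requires relating the quasi-interpolant on $\Gammah$ with the one on $\Gammas$, which is exactly the kind of estimate packaged in Corollaries~\ref{coro: bound gradh gradhs} and~\ref{coro: equivalent norms and bounds bilinear forms}, but applied to the nonlinear term $-\Hh\nh$ rather than to a fixed function. One must first replace $\Gammah$-quantities by their lifts to $\Gammas$ using the uniform $W^{1,\infty}$ bounds on the numerical solution (Lemma~\ref{lem: bounds for numerical approx}), which is only valid up to the time $t_0$, and keep track that the resulting perturbation is of size $\norm{\gradghs\ex}_{\Ldosgammas}$ rather than an uncontrolled $O(1)$ term. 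Everything else is a routine application of Cauchy--Schwarz, Young's inequality, the transport formulae, and the $L^\infty$ bounds already established.
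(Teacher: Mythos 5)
Your treatment of \eqref{eq: energy estimate ex} is exactly the paper's argument (transport formulae on $\Gammast$ with velocity $\vs$, $\dermat\ex=\ev$, Young's inequality and the uniform bound on $\vs$), and your decomposition $\lifts{\Hh\nh}-\Hs\ns=\eH\,\lifts{\nh}+\Hs\,\en$ with the $W^{1,\infty}$ bounds is also how the paper concludes. However, for \eqref{eq: energy estimate ev} your proposal has a genuine gap: you introduce a ``geometric discrepancy'' between $\lifts{\Qs[\Gammah](-\Hh\nh)}$ and $\Qs[\Gammas](\lifts{-\Hh\nh})$, estimate it by $\norm{\gradghs\ex}_{\Ldosgammas}$, and then concede you cannot prove the lemma as stated and would add an $\ex$ term. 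The missing idea is that this discrepancy is identically zero: both quasi-interpolants are defined through the \emph{same} parametric operator $\Q$ on $\Omega$, and the lift from $\Gammaht$ to $\Gammast$ is defined as pullback to $\Omega$ followed by push-forward. Hence
\begin{equation*}
\lifts{\Qs[\Gammah](-\Hh\nh)}
=\Big(\Q\big((-\Hh\nh)\circ\Xh\big)\Big)\circ(\Xs)^{-1}
=\Qs[\Gammas]\big(\lifts{-\Hh\nh}\big),
\end{equation*}
so the error equation \eqref{eq: error-vel} gives directly $\ev=\Qs[\Gammas]\big(\lifts{-\Hh\nh}-(-\Hs\ns)\big)-\dv$, and the $H^1$-stability of $\Qs[\Gammas]$ yields \eqref{eq: energy estimate ev} with no $\ex$ contribution at all. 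This commutation of the lift with the quasi-interpolant is precisely the structural advantage of the generic lift via the global parameterizations that the paper emphasizes (cf.\ Remark~\ref{remark: material velocity}); Corollaries~\ref{coro: bound gradh gradhs} and~\ref{coro: equivalent norms and bounds bilinear forms} are needed for comparing \emph{bilinear forms} on $\Gammaht$ and $\Gammast$ (as in the curvature and normal estimates), but not here. Without this observation your argument proves only a weaker statement than the lemma, so the proposal as written does not establish \eqref{eq: energy estimate ev}.
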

\begin{proof}
    Let $t\in[0,t_0]$, and $0<h\le h_0$. 
    Then Lemma~\ref{Lem: equivalence of discrete norms}~(\ref{item: equivalence norms}) is fulfilled and we have equivalence of norms.
    Using the transport formula~\eqref{eq: teo transp} the fact that $\dermat \ex =\ev$, we have
    \begin{align*}
        \dt\normcuad{\ex}_{\Hunogammas} ={}& 
        2\intgammas \ex\cdot\ev + \ex\cdot\ex\, \Divgs\vs \\
        & + 2\intgammas \gradghs\ex : \gradghs\ev 
        + \gradghs\ex : \mathcal{B}(\vs)\gradghs\ex,
    \end{align*}
    Young's inequality and the uniform bound for $\vs$ from Lemma~\ref{Lem: uniform bounds xs vs hs nus} lead to~\eqref{eq: energy estimate ex}. 
    Considering that $\lifts{\Qs[\Gammah] (-\Hh\nh)}=\Qs[\Gammas]\lifts{-\Hh\nh}$, we get the following estimate for $\ev$
    \begin{align*}
        \normHunogs{\ev} &= \normHunogs{\Qs[\Gammas] (\lifts{-\Hh\nh}-\Hs\ns)}+  \normHunogs{\dv}\\
        &\lesssim \normHunogs{\lifts{-\Hh\nh}-\Hs\ns}+  \normHunogs{\dv}. 
        \end{align*}
    The uniform bound for the Ritz projections 
    from Sections~\ref{sec: Linear Ritz projection} and~\ref{sec: Nonlinear Ritz projection}, yields the remaining assertion~\eqref{eq: energy estimate ev}.
   
\end{proof}
\begin{lemma}\label{Lem: energy estimate eH}
    Let $t_0>0$ and $h_0>0$ be such that Lemma~\ref{lem: bound for splines parameterizations} is satisfied. Then, for all $t\in[0,t_0]$ and all $0<h<h_0$ we have
    \begin{equation}\label{eq: energy estimate eH}
        \begin{split}
            \dt\normcuad{  \eH}_{\Hunogammas}
            \lesssim{} &
        {-\frac{d}{d t}\left(\int_{\Gammah} \gradgh \lifth{\Hs}\cdot \gradgh  \lifth{\eH}-\intgammas \gradghs \Hs\cdot \gradgh  \eH\right)}\\
        &+\normcuad{\en}_{\Hunogammas}+\normcuad{\ex}_{\Hunogammas}+\normcuad{\ev}_{\Hunogammas}\\
        & +\normcuad{\eH}_{\Hunogammas}+\normcuad{\dH}_{\Ldosgammas}.
        \end{split}
        \end{equation}
\end{lemma}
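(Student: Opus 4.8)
The plan is to test the error equation~\eqref{eq: error-curvature} with $\testHh = (\eH)^{\Gammah}$ and convert the left-hand side into a time derivative of $\|\eH\|_{\Hk(\Gammas)}^2$ by using the transport formulae on $\Gammah$ together with the equivalence of discrete norms (Lemma~\ref{Lem: equivalence of discrete norms}(\ref{item: equivalence norms}), valid for $0<h\le h_0$ and $0\le t\le t_0$ by Remark~\ref{rem: implicancias de uniform bound EX}). Concretely, applying~\eqref{eq: teo transp} and~\eqref{eq: teo transp-grad-grad} on the discrete surface $\Gammah$,
\begin{align*}
\int_{\Gammah}\dermat (\eH)^{\Gammah}\,(\eH)^{\Gammah}+\int_{\Gammah}\gradgh(\eH)^{\Gammah}\cdot\gradgh(\eH)^{\Gammah}
= {}&\tfrac12\dt\int_{\Gammah}|(\eH)^{\Gammah}|^2+|\gradgh(\eH)^{\Gammah}|^2\\
&-\tfrac12\int_{\Gammah}|(\eH)^{\Gammah}|^2\Divgh\vh-\int_{\Gammah}\gradgh(\eH)^{\Gammah}\cdot\mathcal B(\vh)\gradgh(\eH)^{\Gammah},
\end{align*}
so the first two terms on the left of~\eqref{eq: error-curvature} (tested at $\testHh=(\eH)^{\Gammah}$) reproduce, up to the lower-order terms $\int_{\Gammah}|(\eH)^{\Gammah}|^2\Divgh\vh$ and $\int_{\Gammah}\gradgh(\eH)^{\Gammah}\cdot\mathcal B(\vh)\gradgh(\eH)^{\Gammah}$, half the time derivative of $\|\eH\|_{\Hk(\Gammah)}^2\cong\|\eH\|_{\Hk(\Gammas)}^2$. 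The two lower-order terms are bounded by $\|\eH\|_{\Hk(\Gammas)}^2$ using the uniform bound $\|\vh\|_{\Wunoinf(\Gammas)}\le C$ from~\eqref{eq: uniform bound numerical approx}, since $\mathcal B(\vh)$ involves only first derivatives of $\vh$.

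Next I would estimate the right-hand side of~\eqref{eq: error-curvature} term by term. The defect term $\int_{\Gammas}\dH\,(\eH)^*$ is controlled by $\tfrac12\|\dH\|_{\Ldosgammas}^2+\tfrac12\|\eH\|_{\Ldosgammas}^2$ by Young. The bilinear-form discrepancies between $\Gammah$ and $\Gammas$ — namely $\int_{\Gammah}\gradgh(\Hs)^{\Gammah}\cdot\gradgh(\eH)^{\Gammah}-\int_{\Gammas}\gradghs\Hs\cdot\gradghs\eH$ and $\int_{\Gammah}\dermat((\Hs)^{\Gammah})(\eH)^{\Gammah}-\int_{\Gammas}\dermat\Hs\,\eH$ — are handled by Corollary~\ref{coro: bound gradh gradhs}: the first is $\lesssim\|\gradghs\ex\|_{\Ldosgammas}\|\gradghs\eH\|_{\Ldosgammas}\lesssim\|\ex\|_{\Hk(\Gammas)}^2+\|\eH\|_{\Hk(\Gammas)}^2$; the second requires a little care because of the material derivative of the transported function, but writing $\dermat((\Hs)^{\Gammah})=(\dermat\Hs)^{\Gammah}$ (since transport through $\Xh$ commutes with lifting) it reduces to an application of~\eqref{eq: bound funslifth - funs} with $\funss{\phi}=\dermat\Hs$ (uniformly bounded in $\Wunoinf$ by Lemma~\ref{Lem: uniform bound linear dermat Ru}), yielding $\lesssim\|\gradghs\ex\|_{\Ldosgammas}\|\eH\|_{\Ldosgammas}\lesssim\|\ex\|_{\Hk(\Gammas)}^2+\|\eH\|_{\Hk(\Gammas)}^2$. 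For the curvature nonlinearity I would split $\int_{\Gammah}|A_h|^2\Hh\,(\eH)^{\Gammah}-\int_{\Gammas}|A_h^*|^2\Hs\,\eH$ by adding and subtracting: first replace $\Gammah$ by $\Gammas$ via~\eqref{eq: bound funh - funs}/Corollary~\ref{coro: bound gradh gradhs} (paying $\|\gradghs\ex\|_{\Ldosgammas}$), then on $\Gammas$ expand $|A_h|^2\Hh-|A_h^*|^2\Hs = (|A_h|^2-|A_h^*|^2)\Hh + |A_h^*|^2(\lifts{\Hh}-\Hs)$, note $A_h-A_h^* = \gradghs(\en)+(\text{geometric terms in }\ex)$ so $|A_h|^2-|A_h^*|^2$ is controlled by $\|\gradghs\en\|_{\Ldosgammas}+\|\gradghs\ex\|_{\Ldosgammas}$ up to the uniform $\Wunoinf$ bounds on $\nh,\ns,\Hh$; this produces $\|\en\|_{\Hk(\Gammas)}^2+\|\ex\|_{\Hk(\Gammas)}^2+\|\eH\|_{\Hk(\Gammas)}^2$.

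The one term that is genuinely not absorbed is $\dt\big(\int_{\Gammah}\gradgh(\Hs)^{\Gammah}\cdot\gradgh(\eH)^{\Gammah}-\int_{\Gammas}\gradghs\Hs\cdot\gradghs\eH\big)$: it arises precisely because, to turn $\int_{\Gammah}\dermat(\Hs)^{\Gammah}(\eH)^{\Gammah}+\int_{\Gammah}\gradgh(\Hs)^{\Gammah}\cdot\gradgh(\eH)^{\Gammah}$ into a form matching the $\Gammas$-inner products, one must add and subtract a full time derivative of a cross gradient term. This is the standard device from~\cite{KLL2019,KLLP2017}: the term is kept on the right-hand side and will be integrated in time later (in Proposition~\ref{prop: stability}), where it is bounded at the endpoints using Corollary~\ref{coro: bound gradh gradhs} and a Gronwall/absorption argument. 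So I would \emph{not} try to bound it pointwise in $t$; I would simply carry it along, which is exactly the form stated in~\eqref{eq: energy estimate eH}. Collecting all the estimates and using Young's inequality throughout to convert products into squared norms gives the claimed bound. The main obstacle is organizing the curvature-nonlinearity splitting so that every factor is matched either with a uniformly-bounded $\Wunoinf$ quantity or with one of $\ex,\ev,\eH,\en$, and making sure the $\Gammah\to\Gammas$ surface comparison (Corollary~\ref{coro: bound gradh gradhs}) is only ever applied to functions with the required $\Wunoinf(\Gammas)$ bound — which is guaranteed on $[0,t_0]$ by Lemma~\ref{lem: bounds for numerical approx} and Lemma~\ref{Lem: uniform bounds xs vs hs nus}.
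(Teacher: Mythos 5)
There is a genuine gap, and it lies at the very core of the energy mechanism. You test the error equation~\eqref{eq: error-curvature} with $\testHh=\lifth{\eH}$, and your displayed identity claims that
\begin{equation*}
\int_{\Gammah}\dermat \lifth{\eH}\,\lifth{\eH}+\int_{\Gammah}\bigl|\gradgh\lifth{\eH}\bigr|^2
\end{equation*}
equals $\tfrac12\dt\bigl(\|\lifth{\eH}\|^2_{\Ldosgammah}+\|\gradgh\lifth{\eH}\|^2_{\Ldosgammah}\bigr)$ up to $\Divgh\vh$- and $\mathcal B(\vh)$-corrections. That is false: the transport formula~\eqref{eq: teo transp-grad-grad} gives $\tfrac12\dt\|\gradgh\lifth{\eH}\|^2_{\Ldosgammah}=\int_{\Gammah}\gradgh\lifth{\eH}\cdot\gradgh\dermat\lifth{\eH}+\tfrac12\int_{\Gammah}\gradgh\lifth{\eH}\cdot\mathcal B(\vh)\gradgh\lifth{\eH}$, i.e.\ producing the time derivative of the gradient part requires the pairing of $\gradgh\lifth{\eH}$ with $\gradgh\dermat\lifth{\eH}$, which simply does not appear when you test with $\eH$. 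With your test function the left-hand side only yields $\tfrac12\dt\normcuad{\eH}_{\Ldosgammas}+\normcuad{\gradghs\eH}_{\Ldosgammas}$ (up to lower order), and no manipulation can upgrade this to the quantity $\dt\normcuad{\eH}_{\Hunogammas}$ that the lemma asserts. The paper instead tests with $\testHh=\lifth{\dermat\eH}$, $\testHhs=\dermat\eH$: this produces $\normcuad{\lifth{\dermat\eH}}_{\Ldosgammah}$ plus $\int_{\Gammah}\gradgh\lifth{\eH}\cdot\gradgh\lifth{\dermat\eH}\ge\tfrac12\dt\normcuad{\gradgh\lifth{\eH}}_{\Ldosgammah}-c\normcuad{\gradgh\lifth{\eH}}_{\Ldosgammah}$ on the left, the $\normcuad{\dermat\eH}_{\Ldosgammas}$ term is then used to absorb (via Young) all right-hand-side products of the form $(\cdots)\normLdosgs{\dermat\eH}$, including the defect term $\intgammas\dH\,\dermat\eH$, and the missing $L^2$ part of the time derivative is recovered at the end from $\tfrac12\dt\normcuad{\lifth{\eH}}_{\Ldosgammah}\le\normcuad{\dermat\lifth{\eH}}_{\Ldosgammah}+c\normcuad{\lifth{\eH}}_{\Ldosgammah}$.

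A second, related inaccuracy concerns the total-derivative term on the right of~\eqref{eq: energy estimate eH}. In the paper it arises because the cross term $\int_{\Gammah}\gradgh\lifth{\Hs}\cdot\gradgh\lifth{\dermat\eH}-\intgammas\gradghs\Hs\cdot\gradghs\dermat\eH$ contains the \emph{material derivative} of $\eH$ under the gradient, and Corollary~\ref{coro: bound gradh gradhs} cannot be applied to it directly without paying $\normLdosgs{\gradghs\dermat\eH}$, which is not controlled; the transport formula~\eqref{eq: teo transp-grad-grad} is then used in reverse to trade it for the total time derivative $-\dt\bigl(\int_{\Gammah}\gradgh\lifth{\Hs}\cdot\gradgh\lifth{\eH}-\intgammas\gradghs\Hs\cdot\gradghs\eH\bigr)$ plus terms involving $\dermat\Hs$ and $\mathcal B(\vh)$, $\mathcal B(\vs)$, which are bounded by $\normLdosgs{\gradghs\ex}$ and $\normLdosgs{\gradghs\ev}$ times $\normLdosgs{\gradghs\eH}$ via~\eqref{eq: bound gradh - gradhs} and~\eqref{eq: bound Bhgradh - Bsgradhs}. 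With your choice of test function this term would never arise (you would bound the cross term directly), so your argument does not reproduce the structure of the stated estimate, and in any case it cannot, since the quantity it controls on the left is strictly weaker than $\dt\normcuad{\eH}_{\Hunogammas}$. Your treatment of the nonlinearity $|A_h|^2\Hh$ versus $|A_h^*|^2\Hs$ and your use of the uniform bounds and of Lemma~\ref{Lem: estimates discrete bilinear forms} are in the right spirit, but the proof needs to be rebuilt around testing with $\dermat\eH$.
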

\begin{proof}
    Since $\dermat \lifth{\eH}=\lifth{\dermat \eH}$, substituting $\testHh= \lifth{\dermat \eH}$ and $\testHhs =\dermat \eH$ in equation~\eqref{eq: error-curvature} we obtain
\begin{equation*}
\begin{split}
\| \lifth{\dermat \eH}\|^2_{\Ldosgammah} &+ \intgammah \gradgh \lifth{\eH}\cdot \gradgh \lifth{\dermat \eH}\\
={}&-\left({\intgammah \gradgh \lifth{\Hs}\cdot \gradgh \lifth{\dermat \eH}-\intgammas \gradghs \Hs\cdot \gradghs \dermat \eH}\right)\\
&-\left({\intgammah \dermat (\Hs)^{\Gammah} \lifth{\dermat \eH} -\intgammas \dermat \Hs \dermat \eH}\right)\\
&+{\int_{\Gammah} |A_h|^2 \Hh \lifth{\dermat \eH}-\intgammas |A_h^*|^2 \Hs \dermat \eH}+\intgammas \dH \dermat \eH.
\end{split}
\end{equation*}
Applying Lemma~\ref{Lem: estimates discrete bilinear forms} to the second difference and Corollary~\ref{coro: bound gradh gradhs} to the third one, we get
\begin{equation*}
    \begin{split}
    \| \lifth{\dermat \eH}&\|^2_{\Ldosgammah} + \intgammah \gradgh \lifth{\eH}\cdot \gradgh \lifth{\dermat \eH}\\
    \lesssim{}& \underbrace{-\left(\intgammah\gradgh \lifth{\Hs}\cdot \gradgh \lifth{\dermat \eH}-\intgammas \gradghs \Hs\cdot \gradghs \dermat \eH\right)}_{(A)}
 \\
 &+\left(\normLdosgs{\gradghs\ex}+\normLdosgs{\lifts{|A_h|^2\Hh}-|\funss{A}|^2\Hs}\right)\normLdosgs{\dermat \eH}\\
 &+\normLdosgs{\dH}\normLdosgs{\dermat \eH},
\end{split}
\end{equation*}
To bound~$(A)$ we use~\eqref{eq: teo transp-grad-grad} to rewrite it as follows
\begin{align*}
(A)
={}&-\frac{d}{d t}\left(\int_{\Gammah} \gradgh \lifth{\Hs}\cdot \gradgh  \lifth{\eH}-\intgammas \gradghs \Hs\cdot \gradgh  \eH\right)\\
&+\left(\int_{\Gammah}\gradgh \dermat \lifth{\Hs}\cdot \gradgh \lifth{\eH}-\intgammas\gradghs \dermat \Hs\cdot \gradgh \eH\right)\\
&+\left(\intgammah\gradgh \lifth{\Hs}\cdot \mathcal{B}(\vh) \gradgh  \lifth{\eH}-\intgammas\gradghs \Hs\cdot \mathcal{B}(\vs) \gradgh  \eH\right)\\
\lesssim{}& -\frac{d}{d t}\left(\int_{\Gammah} \gradghs \lifth{\Hs}\cdot \gradgh  \lifth{\eH}-\intgammas \gradgh \Hs\cdot \gradgh  \eH\right)\\
&+\norm{\gradghs \ex}_{\Ldosgammas}\norm{ \gradghs \eH}_{\Ldosgammas}+\norm{\gradghs{\ev}}_{\Ldosgammas}\norm{\gradghs {\eH}}_{\Ldosgammas},
\end{align*}
where we have used~\eqref{eq: bound gradh - gradhs} and~\eqref{eq: bound Bhgradh - Bsgradhs}.

For the left-hand side, using~\eqref{eq: teo transp-grad-grad} we have 
\begin{align*}
&\normcuad{\lifth{\dermat \eH}}_{\Ldosgammah} + \intgammah \gradgh \lifth{\eH}\cdot \gradgh \lifth{\dermat \eH}\\
&\qquad\geq \| \lifth{\dermat \eH} \|^2_{L^2(\Gammah)} +\frac{1}{2}\dt\normcuad{ \gradgh \lifth{\eH}}_{L^2(\Gammah)}-c\| \gradgh \lifth{\eH}\|^2_{L^2(\Gammah)}.
\end{align*}
Thus, the combination of the above estimates yields
\begin{equation*}
\begin{split}
&\hspace{-20pt}\normcuad{\lifth{\dermat \eH} }_{\Ldosgammah} +\frac{1}{2}\dt \normcuad{\gradgh \lifth{\eH}}_{\Ldosgammah}\\
\lesssim {}&
{-\dt\left(\int_{\Gammah} \gradgh \lifth{\Hs}\cdot \gradgh  \lifth{\eH}-\intgammas \gradghs \Hs\cdot \gradgh  \eH\right)}\\
&+ \norm{\gradghs {\ev}}_{\Ldosgammas}\norm{\gradghs {\eH}}_{\Ldosgammas}+ \norm{\gradghs\ex}_{\Ldosgammas}\norm{\gradghs{\eH}}_{\Ldosgammas}\\
&+\left(\normLdosgs{\gradghs\ex}+\normLdosgs{\lifts{|A_h|^2\Hh}-|\funss{A}|^2\Hs}\right)\normLdosgs{\dermat \eH}\\
 &+\normLdosgs{\dH}\normLdosgs{\dermat \eH}+\normcuad{ \gradghs \eH}_{\Ldosgammas}.
\end{split}
\end{equation*}
Finally, using 
\begin{align*}
    \normLdosgs{\lifts{|A_h|^2\Hh}-|\funss{A}|^2\Hs}\lesssim  \normLdosgs{\gradghs\en}+ \normLdosg{\gradghs\eH},
\end{align*}
and 
\begin{equation*}
    \frac{1}{2}\dt\| \lifth{\eH}\|^2_{\Ldosgammah}\leq\| \dermat \lifth{\eH} \|^2_{\Ldosgammah} +c \|\lifth{\eH} \|^2_{\Ldosgammah},
\end{equation*} together with equivalence of norms and   
Young's inequality we arrive at the desired estimate.
\end{proof}

\begin{lemma}\label{Lem: energy estimate en}
    Let $t_0>0$ and $h_0>0$ be such that Lemma~\ref{lem: bound for splines parameterizations} is satisfied. Then, for all $t\in[0,t_0]$ and all $0<h<h_0$ we have
    \begin{equation}\label{eq: energy estimate en}
        \begin{aligned}
            \dt\normcuad{\en}_{\Hunogammas}&+ \dt\normcuad{\dermat \lifth{\en}}_{\Ldosgammah} + \normcuad{\gradgh\dermat \lifth{\en}}_{\Ldosgammah}\\
            \lesssim 
            -&\dt\left(\intgammah \gradgh \lifth{\ns} \cdot \gradgh  \lifth{\en} 
            - \intgammas \gradghs \ns \cdot \gradgh \en\right) \\
            + &\normcuad{\en}_{\Hunogammas} + \normcuad{\ex}_{\Hunogammas} + \normcuad{\ev}_{\Hunogammas} \\
            + &\normLdosgs{\dermat\en}^2+\normcuad{\dermat\dn}_{\Ldosgammas} + \normcuad{\dn}_{\Ldosgammas}.
        \end{aligned}
     \end{equation}
\end{lemma}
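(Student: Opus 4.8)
The plan is to follow the same energy-estimate strategy as in Lemma~\ref{Lem: energy estimate eH}, but now for the vector-valued equation~\eqref{eq: error-normal} for $\en$, which carries the extra boundary terms stemming from the weak formulation of the normal. First I would substitute $\testnuh = \lifth{\dermat\en}$ and $\testnuhs = \dermat\en$ in the error equation~\eqref{eq: error-normal}; this is legitimate since $\dermat\en \in \Otauhs$ (because $\en \in \Otauhs$ and the trace-orthogonality constraint is preserved under $\dermat$, exactly as shown at the end of the proof of Lemma~\ref{Lem: def stability non linear dermat Ru}). Using $\dermat\lifth{\en} = \lifth{\dermat\en}$ and the transport-type identity~\eqref{eq: teo transp-grad-grad} on the term $\intgammah \gradgh\lifth{\en}:\gradgh\lifth{\dermat\en}$, the left-hand side becomes, up to lower-order terms controlled by $\normcuad{\gradgh\lifth{\en}}_{\Ldosgammah}$,
\[
\normcuad{\lifth{\dermat\en}}_{\Ldosgammah} + \tfrac12\dt\normcuad{\gradgh\lifth{\en}}_{\Ldosgammah}.
\]
To produce the $\dt$ of $\normcuad{\en}_{\Ldosgammas}$ appearing on the left of~\eqref{eq: energy estimate en} I would add the elementary inequality $\tfrac12\dt\|\lifth{\en}\|^2_{\Ldosgammah} \le \|\lifth{\dermat\en}\|^2_{\Ldosgammah} + c\|\lifth{\en}\|^2_{\Ldosgammah}$, as done for $\eH$.

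Next I would estimate the right-hand side of~\eqref{eq: error-normal} term by term. The gradient-difference term $\bigl(\intgammah \gradgh(\ns)^{\Gammah}:\gradgh\lifth{\dermat\en} - \intgammas\gradghs\ns:\gradghs\dermat\en\bigr)$ is handled exactly as the term $(A)$ in Lemma~\ref{Lem: energy estimate eH}: apply~\eqref{eq: teo transp-grad-grad} to extract the leading $-\dt\bigl(\intgammah\gradgh\lifth{\ns}:\gradgh\lifth{\en} - \intgammas\gradghs\ns:\gradgh\en\bigr)$, and bound the remaining two integrals via~\eqref{eq: bound gradh - gradhs} and~\eqref{eq: bound Bhgradh - Bsgradhs}, giving $\lesssim (\norm{\gradghs\ex}_{\Ldosgammas} + \norm{\gradghs\ev}_{\Ldosgammas})\norm{\gradghs\en}_{\Ldosgammas}$ (here the uniform bound $\normWunost{\ns}\le C$ from Lemma~\ref{Lem: uniform bounds xs vs hs nus} is used). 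The material-derivative difference term $\bigl(\intgammah\dermat((\ns)^{\Gammah})\cdot\lifth{\dermat\en} - \intgammas\dermat\ns\cdot\dermat\en\bigr)$ is bounded by Lemma~\ref{Lem: estimates discrete bilinear forms}, estimate~\eqref{eq: bound funh - funs}, using the uniform bound on $\dermat\ns$ from Lemma~\ref{Lem: uniform bound non-linear dermat Ru}, producing $\normLdosgs{\gradghs\ex}\normLdosgs{\dermat\en}$. The Weingarten-term difference $\int_{\Gammah}|A_h|^2\nh\cdot\lifth{\dermat\en} - \intgammas|A_h^*|^2\ns\cdot\dermat\en$ is treated as $(III)$ in Lemma~\ref{Lem: energy estimate eH}, using $\normLdosgs{\lifts{|A_h|^2\nh}-|A_h^*|^2\ns} \lesssim \normLdosgs{\gradghs\en} + \normLdosgs{\eH}$ (note $A_h = \gradgh\nh$ so $A_h - A_h^* \cong \gradghs\en$ after lifting), together with~\eqref{eq: bound funh - funs} and the uniform $W^{1,\infty}$ bounds on the numerical solution from Lemma~\ref{lem: bounds for numerical approx}; the defect term $\intgammas\dn\cdot\dermat\en$ is absorbed by Young's inequality into $\normcuad{\dn}_{\Ldosgammas} + \epsilon\normLdosgs{\dermat\en}^2$.

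The genuinely new piece is the boundary difference $\int_{\boundaryGhcero}\alpha_{\partial,h}\conorh\cdot\lifth{\dermat\en} - \int_{\boundaryGhcero}\alpha_{\partial}^*\conors\cdot\dermat\en$. Since $\boundaryGhcero=\boundaryGscero$ is fixed and independent of $t$, no surface-perturbation error appears here; writing $\alpha_{\partial,h}\conorh - \alpha_{\partial}^*\conors = (\curvbdryh\cdot\nh)(\nh\times\btauh) - (\curvbdryh\cdot\ns)(\ns\times\btauh)$ (recall $\btaus=\btauh$, $\curvbdrys=\curvbdryh$) and expanding the difference by adding and subtracting cross terms, each factor difference is either $\curvbdryh\cdot(\nh-\ns)=\curvbdryh\cdot\lifts{\en}$ restricted to the boundary, or $(\nh-\ns)\times\btauh$, i.e. $O(\en|_{\boundaryGhcero})$ in $L^2(\boundaryGhcero)$; using the uniform bounds~\eqref{eq: uniform bound tangent normalb}, $\norm{\curvbdryh}_{\Lp[\infty]}, \norm{\btauh}_{\Lp[\infty]}\le C$, and the uniform $L^\infty$ bounds on $\nh$, $\ns$, this boundary term is bounded by $c\norm{\en}_{\Lp(\boundaryGhcero)}\norm{\dermat\en}_{\Lp(\boundaryGhcero)}$. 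I would then invoke the trace inequality~\eqref{eq: trace ineq} on both factors with a small parameter $\epsilon$, so that $\norm{\dermat\en}_{\Lp(\boundaryGhcero)}^2$ contributes $\epsilon\normcuad{\gradgh\lifth{\dermat\en}}_{\Ldosgammah} + \epsilon^{-1}\normLdosgs{\dermat\en}^2$ — the $\epsilon\normcuad{\gradgh\lifth{\dermat\en}}$ part being absorbed into the left-hand side for $\epsilon$ small — and $\norm{\en}_{\Lp(\boundaryGhcero)}^2$ contributes $\normcuad{\en}_{\Hunogammas}$ after norm equivalence. The main obstacle, as anticipated in the ``Main Challenges'' paragraph of the introduction, is precisely the appearance of $\normLdosgs{\dermat\en}^2$ (the $L^2$-norm on the boundary of the time derivative of the normal error) as a critical term: one must make sure that the $\epsilon$-absorption leaves a clean $+\normLdosgs{\dermat\en}^2$ on the right-hand side of~\eqref{eq: energy estimate en} rather than a term that cannot later be closed in the Gronwall argument of Proposition~\ref{prop: stability}. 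Collecting all contributions, using equivalence of norms on $\Gammah$ and $\Gammas$ (valid by Lemma~\ref{Lem: equivalence of discrete norms} and Remark~\ref{rem: implicancias de uniform bound EX} for $t\le t_0$, $h\le h_0$) and Young's inequality to split every product $\norm{\cdot}_{\Ldosgammas}\norm{\gradghs\en}_{\Ldosgammas}$, we arrive at the stated estimate~\eqref{eq: energy estimate en}.
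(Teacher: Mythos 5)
Your first half faithfully reproduces what the paper treats as ``part I'' of the argument: testing~\eqref{eq: error-normal} with $\testnuh=\lifth{\dermat\en}$, $\testnuhs=\dermat\en$ (correctly justified, since $\dermat\en\in\Otauhs$), handling the gradient, material-derivative and Weingarten differences as in Lemma~\ref{Lem: energy estimate eH}, and reducing the boundary difference to $c\norm{\en}_{\Lp(\boundaryGhcero)}\norm{\dermat\en}_{\Lp(\boundaryGhcero)}$. But there is a genuine gap at your absorption step. Testing the error equation once only yields $\normcuad{\lifth{\dermat\en}}_{\Ldosgammah}+\tfrac12\dt\normcuad{\gradgh\lifth{\en}}_{\Ldosgammah}$ (plus lower-order terms) on the left-hand side; it contains no term $\normcuad{\gradgh\dermat\lifth{\en}}_{\Ldosgammah}$. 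Hence, when you apply the trace inequality~\eqref{eq: trace ineq} to $\norm{\dermat\en}_{\Lp(\boundaryGhcero)}$, the piece $\epsilon\normcuad{\gradgh\dermat\lifth{\en}}_{\Ldosgammah}$ has nothing on the left to be absorbed into, and the argument stalls exactly at the ``critical term'' you yourself flag. The symptom is already visible in the statement: your proof never generates the left-hand terms $\dt\normcuad{\dermat\lifth{\en}}_{\Ldosgammah}$ and $\normcuad{\gradgh\dermat\lifth{\en}}_{\Ldosgammah}$ of~\eqref{eq: energy estimate en}, nor the right-hand term $\normcuad{\dermat\dn}_{\Ldosgammas}$, and you offer no mechanism by which $\dermat\dn$ could enter at all.

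The missing idea is a second energy estimate obtained by differentiating the error equation~\eqref{eq: error-normal} in time (via the transport formulae~\eqref{eq: teo transp} and~\eqref{eq: teo transp-grad-grad}) and testing the differentiated identity again with $\testnuh=\lifth{\dermat\en}$, $\testnuhs=\dermat\en$. This produces $\tfrac12\dt\normcuad{\dermat\lifth{\en}}_{\Ldosgammah}+\normcuad{\gradgh\dermat\lifth{\en}}_{\Ldosgammah}$ on the left, and brings in $\intgammas\dermat\dn\cdot\dermat\en$ (hence $\normcuad{\dermat\dn}_{\Ldosgammas}$), the time derivatives of the Weingarten terms, and the differentiated boundary terms $\partial_t(\alpha_{\partial,h}\conorh)-\partial_t(\alpha_{\partial}^*\conors)$ on the right; these are then controlled with Lemma~\ref{Lem: estimates discrete bilinear forms}, Corollary~\ref{coro: bound gradh gradhs}, and the uniform bounds~\eqref{eq: uniform bound nonlinear dermat Ru},~\eqref{eq: uniform bound numerical approx}. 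Only after adding this second estimate to your ``part I'' can the trace inequality absorb $\normcuad{\dermat\en}_{\Lp(\boundaryGhcero)}$ into $\normcuad{\gradgh\dermat\lifth{\en}}_{\Ldosgammah}$, leaving the clean $\normLdosgs{\dermat\en}^2$ on the right, which is what the Gronwall argument in Proposition~\ref{prop: stability} needs. A minor additional slip: in your bound for the Weingarten difference the curvature error $\eH$ should not appear; the relevant bound is in terms of $\gradghs\en$ and $\en$ only.
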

\begin{proof}
    We proceed by setting $\testnuhs=\dermat \en$ and $\testnuh= \lifth{\dermat\en}$ in~\eqref{eq: error-normal} and following similar steps to those employed in Lemma~\ref{Lem: energy estimate eH} we arrive at
    \begin{equation*}
        \begin{split}
            \dt\normcuad{ \en}_{\Hunogammas}
            \lesssim {}&
        {-\dt\left(\intgammah \gradgh \lifth{\ns}\cdot \gradgh  \lifth{\en}-\intgammas \gradghs \ns\cdot \gradgh  \en\right)}\\
        &+\normcuad{\en}_{\Hunogammas}+\normcuad{\ex}_{\Hunogammas}+\normcuad{\ev}_{\Hunogammas}\\
        &+ \int_{\boundaryGhcero} \left(\alpha_{\partial,h}\conorh - \alpha_{\partial}^*\conors\right)\cdot \dermat\en 
        +\normcuad{\dn}_{\Ldosgammas}.
        \end{split}
        \end{equation*}
        The main difference with the curvature error estimation is on the boundary terms, where we now focus our attention.
By adding and subtracting $\alpha_{\partial}^*\conorh=\ns\cdot \curvbdryh(\nh\times \btauh)$,
we have
\begin{align*}
    \int_{\boundaryGhcero} \left(\alpha_{\partial,h}\conorh - \alpha_{\partial}^*\conors\right)\cdot \dermat\en
    &=\int_{\boundaryGhcero} \left(\en\cdot \curvbdryh\conorh - \alpha_{\partial}^*(\en\times \btauh)\right)\cdot \dermat\en\\
    &\lesssim \norm{\en}_{\Lp(\boundaryGhcero)}\norm{\dermat\en}_{\Lp(\boundaryGhcero)}.
\end{align*}
Then, using Young's inequality 
and trace inequality to bound $\norm{\en}_{\Lp(\boundaryGhcero)}$, 
we get
\begin{equation}\label{eq: error equation for en parte I}
    \begin{split}
        \dt\normcuad{  \en}_{\Hunogammas}
        \lesssim {}&
    {-\dt\left(\intgammah \gradgh \lifth{\ns}\cdot \gradgh  \lifth{\en}-\intgammas \gradghs \ns\cdot \gradgh  \en\right)}\\
    &+\normcuad{\en}_{\Hunogammas}+\normcuad{\ex}_{\Hunogammas}+\normcuad{\ev}_{\Hunogammas}\\
    &+
    \normcuad{\dermat \lifth{\en}}_{\Lp(\boundaryGhcero)} +\normcuad{\dn}_{\Ldosgammas}.
    \end{split}
    \end{equation}

To deal with the critical term $\norm{\dermat\en}_{\Lp(\boundaryGhcero)}$ we are going to deduce a new energy estimate. For that, we differentiate in time~\eqref{eq: error-normal} and use~\eqref{eq: teo transp} and~\eqref{eq: teo transp-grad-grad} to get the following
\begin{equation}\label{eq: dt error-normal}
    \begin{split}
        \intgammah &\dermat\dermat \lifth{\en} \cdot\testnuh +\intgammah \gradgh\dermat \lifth{\en} :\gradgh\testnuh\\
        &         + \intgammah \dermat\lifth{\en}\cdot\testnuh \Divgh\vh+\intgammah \gradgh \lifth{\en}: \mathcal{B}(\vh)\gradgh \testnuh \\
        =&-\left(\intgammah\gradgh \dermat(\ns)^{\Gammah}: \gradgh \testnuh-\intgammas\gradghs \dermat\ns: \gradgh \testnuhs\right)\\
        &-\left(\intgammah \gradgh \lifth{\ns}: \mathcal{B}(\vh)\gradgh \testnuh-\intgammas \gradghs \ns: \mathcal{B}(\vs)\gradghs \testnuhs\right)\\
    &-\left(\intgammah \dermat\dermat \lifth{\ns}\cdot \testnuh-\intgammas \dermat\dermat \ns \cdot\testnuhs\right)\\
    &-\left(\intgammah \dermat \lifth{\ns}\cdot \testnuh \Divgh\vh-\intgammas \dermat\ns\cdot \testnuhs \Divgs\vs\right)\\
    &+\intgammah \dermat(|A_h|^2 \nh) \cdot\testnuh-\intgammas \dermat(|A_h^*|^2 \ns)\cdot \testnuhs\\
    &+\intgammah|A_h|^2 \nh \cdot\testnuh\Divgh\vh-\intgammas |A_h^*|^2\ns\cdot \testnuhs\Divgs\vs\\
    &+ \int_{\boundaryGhcero} \partial_t(\alpha_{\partial,h}\conorh)\cdot \testnuh-\int_{\boundaryGhcero} \partial_t(\alpha_{\partial}^*\conors)\cdot \testnuhs+\dt\intgammas \dn \cdot\testnuhs\\
    \end{split}
    \end{equation} 
    
        We observe that the first term in the left hand side satisfies
    \begin{align*}
        \intgamma \dermat&\dermat \lifth{\en} \cdot\dermat \lifth{\en}=\dfrac{1}{2}\dt\normcuad{\dermat \lifth{\en}}_{\Ldosgammah}- \dfrac{1}{2}\intgammah|\dermat \lifth{\en}|^2\Divgh\vh.
    \end{align*}
   Then, setting $\testnuhs=\dermat \en$ and $\testnuh= \lifth{\dermat\en}$ and using~\eqref{eq: teo transp} once more to differentiate the last term, we obtain
\begin{equation*}
    \begin{split}
        \dfrac{1}{2}\dt&\normcuad{\dermat \lifth{\en}}_{\Ldosgammah} + \normcuad{\gradgh\dermat \lifth{\en}}_{\Ldosgammah} 
        \\
       ={}& -\intgammah \gradgh \lifth{\en}: \mathcal{B}(\vh)\gradgh\dermat \lifth{\en}
       +\dfrac{1}{2}\intgammah|\dermat \lifth{\en}|^2\Divgh\vh
       \\
        &-\left(\intgammah\gradgh \dermat(\ns)^{\Gammah}: \gradgh \lifth{\dermat\en}-\intgammas\gradghs \dermat\ns: \gradghs \dermat\en\right)\\
        &-\left(\intgammah \gradgh \lifth{\ns}: \mathcal{B}(\vh)\gradgh \lifth{\dermat \en}-\intgammas \gradghs \ns: \mathcal{B}(\vs)\gradghs\dermat \en\right)\\
    &-\left(\intgammah \dermat\dermat \lifth{\ns}\cdot \lifth{\dermat\en}-\intgammas \dermat\dermat \ns \cdot\dermat\en\right)\\
    &-\left(\intgammah \dermat \lifth{\ns}\cdot \lifth{\dermat\en} \Divgh\vh-\intgammas \dermat\ns\cdot \dermat\en \Divgs\vs\right)\\
    &+\intgammah \dermat(|A_h|^2 \nh) \cdot\lifth{\dermat\en}-\intgammas \dermat(|A_h^*|^2 \ns)\cdot \dermat\en\\
    &+\intgammah|A_h|^2 \nh \cdot\lifth{\dermat\en}\Divgh\vh-\intgammas |A_h^*|^2\ns\cdot \dermat\en\Divgs\vs\\
    &+ \int_{\boundaryGhcero} \partial_t(\alpha_{\partial,h}\conorh)\cdot \lifth{\dermat\en}-\int_{\boundaryGhcero} \partial_t(\alpha_{\partial}^*\conors)\cdot \dermat\en\\
    &+\intgammas \dermat\dn \cdot\dermat\en+\intgammas \dn \cdot\dermat\en\Divgs\vs.
    \end{split}
    \end{equation*}

By the uniform bound of $\vh$ and equivalence of norms~\eqref{eq: equivalence discrete norms} we have
\begin{align*}
    -\intgammah &\gradgh \lifth{\en}: \mathcal{B}(\vh)\gradgh\dermat \lifth{\en}
    +\dfrac{1}{2}\intgammah|\dermat \lifth{\en}|\Divgh\vh\\
    &\lesssim \normLdosgh{ \gradghs \lifth{\en}}\normLdosgh{\gradgh\dermat \lifth{\en}}+\normLdosgs{ \dermat\en}^2.
\end{align*}

Applying 
Corollary~\ref{coro: bound gradh gradhs}, Lemma~\ref{Lem: equivalence of discrete norms} and the uniform bounds~\eqref{eq: uniform bound nonlinear dermat Ru},~\eqref{eq: uniform bound numerical approx}, we obtain
\begin{align*}
    &-\left(\intgammah\gradgh \dermat(\ns)^{\Gammah}: \gradgh \lifth{\dermat\en}-\intgammas\gradghs \dermat\ns: \gradghs \dermat\en\right)\\
    &-\left(\intgammah \gradgh \lifth{\ns}: \mathcal{B}(\vh)\gradgh \lifth{\dermat \en}-\intgammas \gradghs \ns: \mathcal{B}(\vs)\gradghs\dermat \en\right)\\
&-\left(\intgammah \dermat\dermat \lifth{\ns}\cdot \lifth{\dermat\en}-\intgammas \dermat\dermat \ns \cdot\dermat\en\right)\\
&-\left(\intgammah \dermat\dermat \lifth{\ns}\cdot \lifth{\dermat\en}-\intgammas \dermat\dermat \ns \cdot\dermat\en\right)\\
&\qquad\leq \left(\normLdosgs{\gradghs\ex}+\normLdosgs{\gradghs\ev}\right)\normLdosgs{\gradghs\dermat\en},
\end{align*}
and 
\begin{align*}
    &\intgammah \dermat(|A_h|^2 \nh) \cdot\lifth{\dermat\en}-\intgammas \dermat(|A_h^*|^2 \ns)\cdot \dermat\en\\
    &+\intgammah|A_h|^2 \nh \cdot\lifth{\dermat\en}\Divgh\vh-\intgammas |A_h^*|^2\ns\cdot \dermat\en\Divgs\vs\\
    &\lesssim\left(\normLdosgs{\lifts{\dermat(|A_h|^2 \nh)}-\dermat(|A_h^*|^2 \ns)}+\normLdosgs{\gradghs\ex}\right)\normLdosgs{\dermat\en}\\
    &+\left(\normLdosgs{\lifts{|A_h|^2\nh\Divgh\vh }-|A_h^*|^2 \ns\Divgs\vs}+\normLdosgs{\gradghs\ex}\right)\normLdosgs{\dermat\en}.
\end{align*}

Thus, by combining all of the aforementioned elements, we obtain
\begin{equation*}
    \begin{split}
        \dfrac{1}{2}&\dt\normcuad{\dermat \lifth{\en}}_{\Ldosgammah} + \normcuad{\gradgh\dermat \lifth{\en}}_{\Ldosgammah}\\
       &\lesssim \normLdosgh{ \gradgh \lifth{\en}}\normLdosgh{\gradgh\dermat \lifth{\en}}\\
          &+\left(\normLdosgs{\gradghs\ex}+\normLdosgs{\gradghs\ev}\right)\normLdosgs{\gradghs\dermat\en}\\
    &+\left(\normLdosgs{\lifts{\dermat(|A_h|^2 \nh)}-\dermat(|A_h^*|^2 \ns)}+\normLdosgs{\gradghs\ex}\right)\normLdosgs{\dermat\en}\\
    &+\left(\normLdosgs{\lifts{|A_h|^2\nh\Divgh\vh }-|A_h^*|^2 \ns\Divgs\vs}+\normLdosgs{\gradghs\ex}\right)\normLdosgs{\dermat\en}\\
    &+ \int_{\boundaryGhcero} \partial_t(\alpha_{\partial,h}\conorh)\cdot \lifth{\dermat\en}-\int_{\boundaryGhcero} \partial_t(\alpha_{\partial}^*\conors)\cdot \dermat\en\\
    &+\norm{\dermat\dn}_{\Ldosgammas}\normHunogs{\dermat\en}+\normLdosgs{ \dn} \normLdosgs{\dermat\en}.
    \end{split}
    \end{equation*}

We now recall that
    \begin{align*}
     \dermat (|A_h|^2)&=2(\gradg (\dermat\nh):A_h-A_hD_h:A_h),\\
     \dermat (|\funss{A}|^2)&=2(\gradghs (\dermat\ns):\funss{A}-\funss{A}\funss{D}:\funss{A}),
 \end{align*}
 where $D_h=((\gradg \vh)^T-\n_{\Gammah}\tensor \n_{\Gammah}\gradg \vh)$ and similarly for $\funss{D}$, with $\n_{\Gammas}$ and $\n_{\Gammah}$ the normal vectors of $\Gammas$ and $\Gammah$ respectively. Therefore,
\begin{align*}
    \normLdosgs{\lifts{\dermat(|A_h|^2 \nh)}-\dermat(|A_h^*|^2 \ns)}\lesssim\normLdosgs{\gradghs\dermat\en}+&\normLdosgs{\gradghs\en}\\
     +\normLdosgs{\gradghs\ev}+&\normLdosgs{\dermat\en},
\end{align*}
and
\[
    \normLdosgs{\lifts{|A_h|^2\Divgh\vh }-|A_h^*|^2 \ns\Divgs\vs}\lesssim\normHunogs{\en}+\normLdosgs{\gradghs\ev}.
\]

Summarizing,
\begin{equation*}
    \begin{split}
        \dfrac{1}{2}\dt&\normcuad{\dermat \lifth{\en}}_{\Ldosgammah} + \normcuad{\gradgh\dermat \lifth{\en}}_{\Ldosgammah}\\
       \lesssim{}& \normLdosgh{ \gradgh \lifth{\en}}\normLdosgh{\gradgh\dermat \lifth{\en}}\\
          &+\left(\normLdosgs{\gradghs\ex}+\normLdosgs{\gradghs\ev}\right)\normLdosgs{\gradghs\dermat\en}\\
    &+\left(\normLdosgs{\gradghs\dermat\en}+\normHunogs{\en}
    +\normLdosgs{\gradghs\ev}+\right)\normLdosgs{\dermat\en}\\
    &+\left(\normLdosgs{\dermat\en}+\normLdosgs{\gradghs\ex}\right)\normLdosgs{\dermat\en}\\
    &+ \int_{\boundaryGhcero} \partial_t(\alpha_{\partial,h}\conorh)\cdot \lifth{\dermat\en}-\int_{\boundaryGhcero} \partial_t(\alpha_{\partial}^*\conors)\cdot \dermat\en\\
    &+\norm{\dermat\dn}_{\Ldosgammas}\norm{\dermat\en}_{\Ldosgammas}+\normLdosgs{ \dn} \normLdosgs{\dermat\en}.
    \end{split}
    \end{equation*}

    We now focus on the boundary terms of the last bound.
    We first recall that $\partial_t\alpha_{\partial,h}=\dermat\nh\cdot\curvbdryh$, $\partial_t\conorh=\dermat\nh\times\btauh$, $\partial_t\alpha_{\partial}^*=\dermat\ns\cdot\curvbdryh$, $\partial_t\conors=\dermat\ns\times\btauh$. Then, adding and subtracting $\partial_t\alpha_{\partial}^*\conorh$ and $\alpha_{\partial,h}\conors$ we have
\begin{align*}
    \int_{\boundaryGhcero} \partial_t(\alpha_{\partial,h}\conorh)&\cdot \lifth{\dermat\en}-\int_{\boundaryGhcero} \partial_t(\alpha_{\partial}^*\conors)\cdot \dermat\en\\
    ={}&\int_{\boundaryGhcero} \big((\dermat\en\cdot \curvbdryh)\conorh + \partial_t\alpha_{\partial}^*(\en\times \btauh)\big)\cdot \dermat\en\\
    &+\int_{\boundaryGhcero} \big(\alpha_{\partial,h}(\dermat\en\times\btauh)+ (\en\cdot\curvbdryh)\partial_t\conors\big)\cdot \dermat\en\\
    \lesssim{}& \norm{\en}_{\Lp(\boundaryGhcero)}\norm{\dermat\en}_{\Lp(\boundaryGhcero)}+c_1\normcuad{\dermat\en}_{\Lp(\boundaryGhcero)}.
\end{align*}
Finally, using equivalence of norms, applying Young's inequality and absorbing the $\normLdosgs{\gradghs\dermat\en}$ term on the left-hand side we deduce that
\begin{equation}\label{eq: error equation for en parte II}
    \begin{aligned}
        \dfrac{1}{2}\dt&\normcuad{\dermat \lifth{\en}}_{\Ldosgammah} + \dfrac{1}{2}\normcuad{\gradgh\dermat \lifth{\en}}_{\Ldosgammah} \\
       &\qquad\lesssim \normLdosgh{ \gradgh \lifth{\en}}^2\normLdosgs{\gradghs\ex}+\normLdosgs{\gradghs\ev}\\
       &\qquad\quad +\normHunogs{\en}^2+\normLdosgs{\dermat\en}^2+\normcuad{\en}_{\Hunogammas} \\
       &\qquad\quad +
       \normcuad{\dermat\en}_{\Lp(\boundaryGhcero)}+c_1\normcuad{\dermat\en}_{\Lp(\boundaryGhcero)} \\
       &\qquad\quad +\normcuad{\dermat\dn}_{\Ldosgammas}+\normLdosgs{ \dn}^2+ \normLdosgs{\dermat\en}^2.
    \end{aligned}
\end{equation}

Finally, adding~\eqref{eq: error equation for en parte I} and~\eqref{eq: error equation for en parte II} we deduce
\begin{equation}\label{eq: error equation for en}
    \begin{aligned}
        \dt&\normcuad{  \en}_{\Hunogammas}+ \dfrac{1}{2}\dt\normcuad{\dermat \lifth{\en}}_{\Ldosgammah} + \dfrac{1}{2}\normcuad{\gradgh\dermat \lifth{\en}}_{\Ldosgammah}\\
        &\lesssim 
        -\dt\left(\intgammah \gradgh \lifth{\ns} \cdot \gradgh  \lifth{\en} 
        - \intgammas \gradghs \ns \cdot \gradgh \en\right) \\
        &\quad + \normcuad{\en}_{\Hunogammas} + \normcuad{\ex}_{\Hunogammas} + \normcuad{\ev}_{\Hunogammas} +\normcuad{\dermat\dn}_{\Ldosgammas}\\
        &\quad +\normcuad{\dermat\en}_{\Lp(\boundaryGhcero)}+\normLdosgs{\dermat\en}^2 + \normcuad{\dn}_{\Ldosgammas}.
    \end{aligned}
\end{equation}
Using trace inequality~\eqref{eq: trace ineq} with $0<\epsilon<1$, 
we absorb the term $\normcuad{\gradgh\dermat \lifth{\en}}_{\Ldosgammah}$ to the left-hand side and the estimate~\eqref{eq: energy estimate en} follows.
\end{proof}

The following proposition is the main result of this section and entails the global existence of the discrete solution and optimal error estimates when compared to the auxiliary discrete functions.

\begin{proposition}\label{prop: stability}
    Let $p\geq 2$, and assume that the exact solution is sufficiently smooth such that the defects satisfy
    \begin{equation}\label{eq: bound of defects}
        \begin{split}
            \norm{\dv\argu}\lesssim{}h^p,\quad
            \| \dH\argu\|_{\Ldosgammast}&\lesssim{} h^p,\;\;\;\\
            \| \dn\argu\|_{\Ldosgammast}+\| \dermat\dn\argu\|_{\Ldosgammast}&\lesssim{} h^p,  
        \end{split}
	\end{equation}
    for all $t\in [0,T]$ and $h>0$ sufficiently small, and the initial errors satisfy
\begin{equation}\label{eq: inicial error}
        \begin{split}
            \norm{\ex\argu[0]}_{\Hunogammast[0]}&
            +\norm{\eH(0)}_{\Hunogammas}
            \\
            &
            +\norm{\en(0)}_{\Hunogammas}
            +\norm{\dermat\en(0)}_{\Ldosgammas}
\lesssim{}h^p.
        \end{split}
    \end{equation}
Then, there exists $h_0>0$ such that for all $0<h\leq h_0$ the semidiscrete solutions exist for all $t\in [0,T]$ and the following error estimates are satisfied
	\begin{equation}\label{eq: stability bound}
        \begin{split}
        \norm{\ex(t)}_{\Hk(\Gammast))}&+\norm{\ev(t)}_{\Hk(\Gammast))}
        \\
        &+\norm{\eH(t)}_{\Hk(\Gammast))}+\norm{\en(t)}_{\Hk(\Gammast))}
		\lesssim{}h^p.
        \end{split}
	\end{equation}
\end{proposition}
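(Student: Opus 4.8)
The plan is to run a standard Gronwall-type bootstrap argument on the energy estimates from Lemmas~\ref{Lem: energy estimate ex ev},~\ref{Lem: energy estimate eH} and~\ref{Lem: energy estimate en}, working on the time interval $[0,t_0]$ where Lemma~\ref{lem: bound for splines parameterizations} and Lemma~\ref{lem: bounds for numerical approx} guarantee the needed uniform bounds and norm equivalences. First I would introduce the combined energy quantity
\begin{equation*}
E(t) := \normcuad{\ex(t)}_{\Hunogammas} + \normcuad{\eH(t)}_{\Hunogammas} + \normcuad{\en(t)}_{\Hunogammas} + \normcuad{\dermat\lifth{\en}(t)}_{\Ldosgammah},
\end{equation*}
and add the three lemmas. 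The left-hand side gives $\dt E(t)$ plus the nonnegative dissipative terms $\normcuad{\gradgh\dermat\lifth{\en}}_{\Ldosgammah}$ (which we keep or discard as needed). On the right-hand side we must deal with the three ``$-\dt(\cdots)$'' terms coming from the transport formula applied to $\gradgh\lifth{\Hs}\cdot\gradgh\lifth{\eH}$, $\gradgh\lifth{\ns}\cdot\gradgh\lifth{\en}$, and the analogous velocity term. These are handled exactly as in~\cite{KLL2019}: integrate the differential inequality in time from $0$ to $t$, so each $-\dt(\cdots)$ becomes a boundary term $-\big[\cdots\big]_0^t$; at $t=0$ it vanishes (or is $O(h^p)$) by~\eqref{eq: inicial error}, and at time $t$ it is estimated by $\big|\intgammah\gradgh\lifth{\Hs}\cdot\gradgh\lifth{\eH}-\intgammas\gradghs\Hs\cdot\gradghs\eH\big|\lesssim \normLdosgs{\gradghs\ex}\normLdosgs{\gradghs\eH}$ via Corollary~\ref{coro: bound gradh gradhs} and the uniform bound on $\Hs$, hence absorbed into $\tfrac14 E(t)$ plus $c\normcuad{\ex}_{\Hunogammas}$ after Young. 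The same works for the $\ns$ and $\vel$ terms using the uniform bounds of Lemma~\ref{Lem: uniform bounds xs vs hs nus}.

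Next I would eliminate $\ev$ and the $\dermat$-boundary/dissipation terms. From~\eqref{eq: energy estimate ev}, $\normcuad{\ev}_{\Hunogammas}\lesssim \normcuad{\eH}_{\Hunogammas}+\normcuad{\en}_{\Hunogammas}+\normcuad{\dv}_{\Hunogammas}$, so every occurrence of $\normcuad{\ev}$ on the right-hand sides is replaced and folded into $E(t)$ plus the defect $h^{2p}$. The term $\normcuad{\dermat\en}_{\Lp(\boundaryGhcero)}$ in~\eqref{eq: energy estimate en} is controlled by the trace inequality~\eqref{eq: trace ineq} with a small $\epsilon$: $\normcuad{\dermat\en}_{\Lp(\boundaryGhcero)}\le c_T(\epsilon\normcuad{\gradgh\dermat\lifth{\en}}_{\Ldosgammah}+\epsilon^{-1}\normcuad{\dermat\lifth{\en}}_{\Ldosgammah})$; the first piece is absorbed by the $\normcuad{\gradgh\dermat\lifth{\en}}$ on the left (this is precisely why the dissipation term must be retained), and the second is part of $E(t)$. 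Likewise $\normLdosgs{\dermat\en}^2$ is bounded by $E(t)$ using norm equivalence between $\Gammas$ and $\Gammah$ from Lemma~\ref{Lem: equivalence of discrete norms}. After these substitutions the differential inequality collapses to
\begin{equation*}
\dt E(t) \lesssim E(t) + h^{2p}, \qquad 0\le t\le t_0,
\end{equation*}
and Gronwall together with $E(0)\lesssim h^{2p}$ from~\eqref{eq: inicial error} yields $E(t)\lesssim h^{2p}$ on $[0,t_0]$, hence~\eqref{eq: stability bound} on $[0,t_0]$ (the $\ev$ bound following from~\eqref{eq: energy estimate ev}).

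The remaining point, and the one I expect to be the main obstacle, is upgrading $[0,t_0]$ to all of $[0,T]$, i.e.\ closing the bootstrap so that $t_0$ can be taken equal to $T$. The constants $h_0$ and $t_0$ of Lemma~\ref{lem: bounds for numerical approx} were defined as the largest time at which the a priori smallness assumptions $\normWunost{\ex},\normWunost{\ev},\normWunost{\eH},\normWunost{\en}\le h^{(p-1)/2}$ hold. Having proved $E(t)\lesssim h^{2p}$ on $[0,t_0]$, I would use the inverse estimates of~\cite{BBCHS2006} (valid since $p\ge2$) to pass from the $\Hk$ bound $\norm{\en(t)}_{\Hunogammas}\lesssim h^p$ to $\normWunost{\en(t)}\lesssim h^{-1}\cdot h^p = h^{p-1} < h^{(p-1)/2}$ for $h$ small, and similarly for $\ex,\ev,\eH$; this strict inequality shows the a priori bounds are not saturated at $t_0$, so by continuity they persist on a slightly larger interval, contradicting the maximality of $t_0$ unless $t_0=T$. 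One must check the inverse estimate is applied on the correct (quasi-interpolated) surface and that its constant is $h$-uniform, which follows from the regularity of $\Xs$ and Assumption~\ref{ass: regularity Xh}; the circularity is broken because the inverse-estimate loss is only one power of $h$ while we have a margin of $h^{p-1}$ versus the threshold $h^{(p-1)/2}$, and $p\ge2$ makes $p-1 \ge (p-1)/2 + (p-1)/2 > (p-1)/2$ strictly. Finally, global existence on $[0,T]$ follows because the semidiscrete problem is a finite-dimensional ODE system whose solution can only fail to extend if it blows up, which the uniform bounds preclude. Collecting the estimates on $[0,T]$ gives~\eqref{eq: stability bound}.
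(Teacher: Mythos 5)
Your proposal is correct and follows essentially the same route as the paper: add the three energy lemmas, eliminate $\ev$ via~\eqref{eq: energy estimate ev}, integrate in time so the $-\dt(\cdots)$ terms become endpoint evaluations absorbed by Young's inequality (using the discrete-surface comparison results and the uniform bounds on $\Hs$, $\ns$), apply Gronwall with the defect and initial-error assumptions, and then close the bootstrap with an inverse estimate turning the $O(h^p)$ $H^1$ bound into an $O(h^{p-1})$ $W^{1,\infty}$ bound that beats the $h^{(p-1)/2}$ threshold of Lemma~\ref{lem: bounds for numerical approx}, extending $t_0$ to $T$. The only cosmetic slip is the mention of an ``analogous velocity'' $-\dt$ term (there are only two such terms, for $\eH$ and $\en$), which does not affect the argument.
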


\begin{proof}
We start the proof considering $t_0>0$ and $h_0> 0$ such that Lemmas~\ref{Lem: energy estimate ex ev},~\ref{Lem: energy estimate eH} and~\ref{Lem: energy estimate en} are satisfied. 
Since $t$ will be fixed on $[0,t_0]$, we will omit this argument $t$ to simplify the notation.

Replacing~\eqref{eq: energy estimate ev} in~\eqref{eq: energy estimate ex}, and integrating the resulting inequalities in time, we have
\begin{equation}\label{eq: int energy ex}
    \begin{aligned}
        &\normcuad{\ex\argu}_{\Hunogammast}
        \\
        &\quad\lesssim\int_{0}^{t}\left(\normcuad{ \en\argu[s]}_{\Hunogammast[s]} 
        + \normcuad{\eH\argu[s]}_{\Hunogammast[s]}
        + \normcuad{\dv\argu[s]}_{\Hunogammast[s]}
        \right)ds\\
        &\quad\quad+\int_{0}^{t}\normcuad{\ex\argu[s]}_{\Hunogammast[s]}ds+\normcuad{\ex\argu[0]}_{\Hunogammast[0]}.
    \end{aligned}
\end{equation}
We now want to replace~\eqref{eq: energy estimate ev} in~\eqref{eq: energy estimate eH}. First we observe that
Lemma~\ref{Lem: equivalence of discrete norms} and Young's inequality imply, for some constant $C>0$ and all $\epsilon > 0$,
\begin{align*}
  \int_{\Gammaht[t]} \gradght \lifth{\Hs}\argu[t]&\cdot \gradght  \lifth{\eH}\argu[t]-\int_{\Gammast} \gradghst \Hs\argu\cdot \gradghst  \eH\argu\\
  &\le \frac{C}{\epsilon} \normcuad{\ex\argu}_{\Hunogammast}+ \epsilon \normcuad{\eH\argu}_{\Hunogammast}.
\end{align*}
for all $t\in[0,t_0]$. 
This allows us to bound, after replacing~\eqref{eq: energy estimate ev} in~\eqref{eq: energy estimate eH}, and absorbing the term $\normcuad{\eH\argu}_{\Hunogammast}$ on the left-hand side, the curvature error as follows:
\begin{equation}\label{eq: int energy eH}
    \begin{aligned}
        &\normcuad{ \eH\argu}_{\Hunogammast}
        \lesssim  \normcuad{\ex\argu[t]}_{\Hunogammast}+\normcuad{\ex\argu[0]}_{\Hunogammast[0]}+\normcuad{\eH\argu[0]}_{\Hunogammast[0]}\\ 
        &\quad+\int_{0}^{t}\left( \normcuad{\eH\argu[s]}_{\Hunogammast[s]}+\normcuad{\en\argu[s]}_{\Hunogammast[s]}\right)ds\\
        &\quad+\int_{0}^{t}\left(\normcuad{\ex\argu[s]}_{\Hunogammast[s]}
        +\normcuad{\dH\argu[s]}_{\Ldosgammast[s]}
        +\normcuad{\dv\argu[s]}_{\Hunogammast[s]}
        \right)ds.
    \end{aligned}
\end{equation}
Analogously, from~\eqref{eq: energy estimate en} we have
\begin{equation}\label{eq: int energy en}
    \begin{aligned}
        &\hspace{-20pt}\normcuad{ \en\argu}_{\Hunogammast}+ \normcuad{\dermat \en\argu}_{\Ldosgammast} + \int_{0}^{t}\normcuad{\gradghs\dermat\en\argu[s]}_{\Ldosgammast[s]}ds\\
        \lesssim{}&\normcuad{\ex\argu[t]}_{\Hunogammast}+\normcuad{\eH\argu[t]}_{\Hunogammast}\\
        &+\normcuad{\ex\argu[0]}_{\Hunogammast[0]}+\normcuad{\en\argu[0]}_{\Hunogammast[0]} +\normcuad{\dermat \en\argu[0]}_{\Ldosgammast[0]}\\
        &+ \int_0^t\left(\normcuad{\en\argu[s]}_{\Hunogammast[s]} + \normcuad{\eH\argu[s]}_{\Hunogammast[s]} \normcuad{\ex\argu[s]}_{\Hunogammast[s]} \right)ds \\
        &+\int_0^t\Big(\normLdosgs{\dermat\en\argu[s]}^2+\normcuad{\dermat\dn\argu[s]}_{\Ldosgammast}\\
        &\qquad\quad + \normcuad{\dn\argu[s]}_{\Ldosgammast[s]}
        +\normcuad{\dv\argu[s]}_{\Hunogammast[s]}\Big)ds.
    \end{aligned}
\end{equation}
Adding~\eqref{eq: int energy ex},~\eqref{eq: int energy eH} and~\eqref{eq: int energy en}, we get
\begin{equation}
	\begin{split}
			\normcuad{\ex\argu[t]}_{\Hunogammast}
            &+	\normcuad{\eH\argu[t]}_{\Hunogammast}
            \\
            &\qquad + \normcuad{\en\argu[t]}_{\Hunogammast}
            +\normcuad{\dermat\en\argu[t]}_{\Ldosgammast}\\
		\lesssim{}&
        \normcuad{\ex\argu[0]}_{\Hunogammast[0]}
        +\normcuad{\eH\argu[0]}_{\Hunogammast[0]}
        \\
        &+\normcuad{\en\argu[0]}_{\Hunogammast[0]}
        +\normcuad{\dermat \en\argu[0]}_{\Ldosgammast[0]}
        \\
		&+\int_{0}^{t}\Big(
            \normcuad{\ex\argu[s]}_{\Hunogammast[s]}
            +
            \normcuad{\eH\argu[s]}_{\Hunogammast[s]} 
            \\ 
        &\qquad\quad +
        \normcuad{\en\argu[s]}_{\Hunogammast[s]} 
        +
        \normcuad{\dermat\en\argu[s]}_{\Ldosgammast}
        \Big)ds
        \\
		&+\int_0^t\Big(\normcuad{\dv\argu[s]}_{\Hunogammast[s]}+\normcuad{\dH\argu[s]}_{\Ldosgammast[s]}
        \\
        &\qquad\quad+\normcuad{\dermat\dn\argu[s]}_{H_h^{-1}(\Gammast[s])} + \normcuad{\dn\argu[s]}_{\Ldosgammast[s]}
       \Big)ds.
	\end{split}
\end{equation}
By Gronwall's inequality, using the assumptions~\eqref{eq: bound of defects}--\eqref{eq: inicial error}, we obtain, for all $0\le t \le t_0$ and $0<h\le h_0$,
\begin{equation*}
	\begin{split}
			\norm{\ex\argu[t]}_{\Hunogammast}
            +	\norm{\eH\argu[t]}_{\Hunogammast}+	\norm{\en\argu[t]}_{\Hunogammast}
            &+\norm{\dermat\en\argu[t]}_{\Ldosgammast}
            \\
            &\qquad\le C_1 e^{C_2t_0}h^p,
    \end{split}
\end{equation*}
for some positive constants $C_1$, $C_2$, independent of $h$ and $t$.
Thus, using inverse inequality, this last estimate, and once again~\eqref{eq: energy estimate ev}, we have for $t\in[0,t_0]$, and $0<h\le h_0$,
\begin{equation*}
	\begin{split}
			\normWunost{\ex\argu[t]}
            &+	\normWunost{\eH\argu[t]}
            \\
            &\qquad +	\normWunost{\en\argu[t]}
            +\normWunost{\ev\argu[t]}
            \\
            \le{}& \frac{C_3}h \Big(\norm{\ex\argu[t]}_{\Hunogammast}
            +	\norm{\eH\argu[t]}_{\Hunogammast}
            \\
            &\qquad+	\norm{\en\argu[t]}_{\Hunogammast}
            +\norm{\ev\argu[t]}_{\Hunogammast}\Big)
            \\
            \le{}& C_1\,C_3 e^{C_2t_0}h^{p-1}.
    \end{split}
\end{equation*}
We now redefine $h_0$ to satisfy $C_1\,C_3 e^{C_2T}h_0^{p-1} \le h_0^{(p-1)/2}$, whence~\eqref{eq: bounds for numerical approx} holds for $t_0=T$, thereby implying the bound~\eqref{eq: stability bound} for $t\in[0,T]$. The assertion of the Proposition thus follows.
\end{proof}

\subsection{Convergence}
\label{sec: convergence}

In this section we combine the previous error estimates to finally deduce the error bounds for the solution of the semidiscrete problem stated as Problem~\ref{weak discrete problem MCF}.

\begin{theorem}\label{teo: convergencia}
    Assume that the initial data $\X^0$ is such that Problem~\ref{weak problem MCF} admits an exact solution $(\X\argu, \vel\argu, \Hm\argu, \n\argu)$ sufficiently smooth, with $\Gammat=\X\argu(\Omega)$ a non-degenerate and regular surface with a piecewise smooth boundary $\partial\Gammat=\partial\Gamma_0$ for all $t\in[0,T]$. 
    Consider the semi-discrete Problem~\ref{weak discrete problem MCF}, with the spline spaces $\Sparam^3$, $\Shcero$, $\Shcero^3$ and $\Otauh$ as defined in Sections~\ref{sec: surface approximation} and~\ref{sec: spline approximation estimates}, with polynomial degree $p\ge 2$ and smoothness $C^\ell$, $0\le \ell \le p-1$.
    Then, there exists $h_0>0$ such that for all $0<h\leq h_0$, Problem~\ref{weak discrete problem MCF} admits a unique solution  
     $(\Xh\argu, \vh\argu, \Hh\argu, \nh\argu)$ on $[0,T]$. Moreover
    we have the following estimate for the spline parametrization
    \begin{equation*}
    \norm{\X\argu-\Xh\argu}_{\Hk(\Omega)^3}\lesssim h^p,
    \end{equation*}
    and also, 
    \begin{align*}
	\norm{\liftgamma{\idh}-\idg}_{\Hunogammat^3}&\lesssim h^p,\quad \norm{\liftgamma{\vh}-\vel}_{\Hunogammat^3}\lesssim h^p,\\
	\norm{\liftgamma{\nh}-\n}_{\Hunogammat^3}&\lesssim h^p,\quad \norm{\liftgamma{\Hh}-\Hm}_{\Hunogammat^3}\lesssim h^p,
	\end{align*}
 where the hidden constants depend on the regularity of the solution, but are independent of $h$.
\end{theorem}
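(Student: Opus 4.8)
\textbf{Proof proposal for Theorem~\ref{teo: convergencia}.}
The plan is to combine the three main ingredients already established: the consistency estimates for the defects (Proposition~\ref{prop: consistency estimates}), the stability estimate for the semi-discrete scheme (Proposition~\ref{prop: stability}), and the approximation properties of the auxiliary discrete functions $\ids$, $\vs$, $\Hs = \Rscero\Hm$, $\ns = \Rs\n$ on the surface $\Gammast$ (Corollary after~\eqref{eq: quasi-int surf}, Proposition~\ref{prop: error estimate linear Ru}, Proposition~\ref{prop: error estimate non-linear Ru}). First I would verify that the hypotheses of Proposition~\ref{prop: stability} are met: the bounds~\eqref{eq: bound of defects} on the defects $\dv$, $\dH$, $\dn$ and $\dermat\dn$ are precisely the content of Proposition~\ref{prop: consistency estimates} under the stated smoothness of the exact solution, and the initial error bound~\eqref{eq: inicial error} follows from the definitions of the discrete initial data $\X_{h,0} = \Q\X_0$, $\kappa_{h,0} = \Qs[\Gamma_{h,0}]\kappa_0$, $\n_{h,0} = \Rs[\Gamma_{h,0}]\n_0$ together with $\Xhcero = \Xscero$: in particular $\ex(0) = 0$, $\en(0) = 0$, $\dermat\en(0) = 0$ (arguing as in the proof of Lemma~\ref{lem: bounds for numerical approx} using that $\dermat\Rs[\Gammas]\bs u\in\Otauhs$), and $\eH(0) = \lifts{\kappa_{h,0}} - \Rscero[\Gamma]\kappa_0$ has a $\Hk$ bound of order $h^p \le h^p$ by Proposition~\ref{prop: error estimate linear Ru}. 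Hence Proposition~\ref{prop: stability} gives, for $h$ small, the global existence of the semi-discrete solution on $[0,T]$ and the bound~\eqref{eq: stability bound}:
\[
\norm{\ex(t)}_{\Hunogammast} + \norm{\ev(t)}_{\Hunogammast} + \norm{\eH(t)}_{\Hunogammast} + \norm{\en(t)}_{\Hunogammast} \lesssim h^p,
\quad 0\le t\le T.
\]

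Next I would pass from the error-function estimates on $\Gammast$ to estimates on $\Gammat$ via a triangle inequality, exactly as indicated in the paragraph preceding Definition~\ref{def: defects}. For each variable $u\in\{\idg,\vel,\Hm,\n\}$ with discrete counterpart $u_h\in\{\idh,\vh,\Hh,\nh\}$ and auxiliary function $u^*\in\{\ids,\vs,\Hs,\ns\}$, write
\[
\norm{u - \liftgamma{u_h}}_{\Hunogammat}
\le \norm{u - \liftgamma{u^*}}_{\Hunogammat} + \norm{\liftgamma{u^* - u_h}}_{\Hunogammat}
\lesssim \norm{u - \liftgamma{u^*}}_{\Hunogammat} + \norm{e_u}_{\Hunogammast},
\]
where the last step uses the equivalence of norms~\eqref{eq: equivalence Gs-G norms} between a function on $\Gammast$ and its lift to $\Gammat$. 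The second term is $O(h^p)$ by the stability bound just obtained. The first term is $O(h^p)$ by the approximation properties of the auxiliary functions: $\norm{\idg - \liftgamma{\ids}}_{\Hunogammat}\lesssim h^p$ follows from~\eqref{eq: error estimate X*} and the norm equivalence between $\Omega$ and the surfaces; $\norm{\vel - \liftgamma{\vs}}_{\Hunogammat}\lesssim h^p$ follows from~\eqref{uniform bound Q on surface}-type approximation of the quasi-interpolant $\Qs[\Gammas]$; $\norm{\Hm - \liftgamma{\Hs}}_{\Hunogammat}\lesssim h^p$ from Proposition~\ref{prop: error estimate linear Ru}; and $\norm{\n - \liftgamma{\ns}}_{\Hunogammat}\lesssim h^p$ from Proposition~\ref{prop: error estimate non-linear Ru}. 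For the velocity one must additionally note that $\ev = \lifts{\vh} - \vs$ is controlled by the stability bound and then combined with the consistency estimate $\normHunogs{\dv}\lesssim h^p$ from Proposition~\ref{prop: consistency estimates}.

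Finally, for the parametrization estimate $\norm{\X\argu - \Xh\argu}_{\Hk(\Omega)^3}\lesssim h^p$, I would use that $\X_h - \X = (\X_h - \Xs) + (\Xs - \X)$, where $\norm{\Xs\argu - \X\argu}_{W^{1,q}(\Omega)}\lesssim h^p$ by~\eqref{eq: error estimate X*}, and $\X_h - \Xs = \EX$ is the pull-back of $\ex$, which satisfies $\normHunogmega{\EX\argu}\cong\norm{\ex\argu}_{\Hunogammast}\lesssim h^p$ by the norm equivalence between $\Omega$ and $\Gammast$ (valid since $\Xs$ and its inverse are uniformly bounded in $W^{1,\infty}$, by Lemma~\ref{Lem: uniform bounds xs vs hs nus}). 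Uniqueness of the semi-discrete solution follows from the local-in-time well-posedness of the (locally Lipschitz) ODE-PDE system of Problem~\ref{weak discrete problem MCF} together with the a priori bounds that prevent blow-up or degeneration on $[0,T]$. The main obstacle here is essentially bookkeeping: one must make sure that the smoothness assumptions quoted in Theorem~\ref{teo: convergencia} (``the exact solution is sufficiently smooth'') do imply the specific regularity conditions listed after Proposition~\ref{prop: consistency estimates} — in particular the boundary regularity $\partial_\conor\n, \dermat(\partial_\conor\n)\in\Hk[p+1](\boundaryG)^3$ and $\dermat\dermat\n\in\Hk[p+1]_\partial(\Gammat)^3$ — so that Propositions~\ref{prop: consistency estimates} and~\ref{prop: stability} are applicable with the stated exponents; this is where the argument is most delicate but requires no new idea.
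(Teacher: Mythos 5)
Your overall architecture is the same as the paper's: verify the hypotheses of Proposition~\ref{prop: stability} via Proposition~\ref{prop: consistency estimates} and the initial data, then combine the stability bound with the approximation estimates for $\ids,\vs,\Hs,\ns$ through the triangle inequality and norm equivalence, and recover the parametrization estimate from $\EX$. However, there is one genuine gap: your claim that $\dermat\en(0)=0$. It is true that $\en(0)=0$ because $\nhcero=\ns(0)=\Rs\n_0$, but the material derivatives do \emph{not} coincide. The function $\dermat\nh(0)$ is determined by the semi-discrete evolution equation~\eqref{eq: weakh-normal} evaluated at $t=0$ (solve the mass-matrix system with right-hand side built from $\nhcero$), whereas $\dermat\ns(0)=\dermat\Rs\n(0)$ is determined by differentiating the defining relation of the nonlinear Ritz projection, as in Lemma~\ref{Lem: def stability non linear dermat Ru}. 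Both belong to $\Otauhs$, but membership in the same discrete space does not make them equal; their difference is precisely of defect size. Indeed, the defect $\dn$ measures the failure of $\ns$ to satisfy the discrete evolution equation, so one can only expect $\normLdosgs{\dermat\en(0)}\lesssim h^p$, not $\dermat\en(0)=0$.

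Establishing this bound is the most delicate part of the paper's proof of the theorem and cannot be waved away by the argument you give for $\dermat\Rs\bs u\in\Otauhs$: one must write the variational identity satisfied by $\dermat\nh(0)$ from~\eqref{eq: weakh-normal} at $t=0$, insert the continuous equation $\dermat\n(0)=\Delta_{\Gammat[0]}\n_0+|A_0|^2\n_0$ tested against lifted discrete test functions (which produces the extra boundary term $\int_{\boundaryG}(\partial_{\conor_0}\n_0\cdot\btau)\btau\cdot\liftgamma{\testnuh}$ because $\liftg{\Otauh}\not\subset\Otau$), add and subtract $\lifth{\dermat\Rs\n(0)}$, and then bound all resulting differences by $ch^p\norm{\testnuh}_{\Lp(\Gammaht[0])}$ using the geometric perturbation lemmas, the Ritz error estimates, and the same boundary treatment as in the proof of Proposition~\ref{prop: consistency estimates}; choosing $\testnuh=\dermat\nh(0)-\lifth{\dermat\ns(0)}$ then yields the $L^2$ bound. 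Without this step your verification of assumption~\eqref{eq: inicial error} is incomplete, and Proposition~\ref{prop: stability} cannot yet be invoked. The remainder of your argument (consistency, triangle inequality, norm equivalences, the estimate for $\X-\Xh$ on $\Omega$) follows the paper and is fine.
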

\begin{proof}
For any variable $u\in\{\X,\idg,\vel,\Hm,\n\}$ we have
\begin{align*}
    \norm{\liftgamma{u_h}-u}_{\Hunogamma}
    &\leq 
    \norm{\liftgamma{u_h}-\liftgamma{u^*}}_{\Hunogamma}+\norm{\liftgamma{u^*}-u}_{\Hunogamma}.
    \end{align*}
Where $u^*\in\{\Xs,\ids,\vs,\Hs,\ns\}$ y $u_h\in\{\Xh,\idh,\vh,\Hh,\nh\}$.

In each case, the second term is bounded by \( ch^p \) due to the interpolation estimates in Lemma~\ref{Lem: global error estimate quasi int on gamma} and in Propositions~\ref{prop: error estimate dermat linear Ru} and~\ref{prop: error estimate non-linear Ru}. 

Now we will see that the assumptions~\eqref{eq: bound of defects} and~\eqref{eq: inicial error} of Proposition~\ref{prop: stability} are satisfied, which will thereby imply that the first terms are also bounded by $c\,h^p$, arriving at the assertion of the theorem.

By Proposition~\ref{prop: consistency estimates}, there exists $h_0>0$ such that assumption~\eqref{eq: bound of defects} holds for all $t\in[0,T]$ and all $0<h\le h_0$. 
So it only remains to check the initial errors satisfy the assumption~\eqref{eq: inicial error}. 

Due to interpolation estimates, the initial error $\ex\argu[0]$ for the position satisfies $\norm{\ex\argu[0]}_{\Hunogammast[0]}\lesssim h^p$.
For $\eH$, in $t=0$ we have
\begin{align*}
\norm{\eH\argu[0]}&_{\Hk(\Gammast[0])}=\norm{\lifts{\Hh\argu[0]}-\Hs\argu[0]}_{\Hk(\Gammast[0])}\\
&\lesssim\norm{\Qs[{\Gammaht[0]}]\Hmcero-\lifth{\Hmcero}}_{\Hk(\Gammaht[0])}+\norm{\lifts{\Hmcero}-\Rs[{\Gammast[0]}]\Hmcero}_{\Hk(\Gammast[0])}\\
&\lesssim h^p,
\end{align*}
thanks again to the interpolation error and the error in the Ritz projection.
Since we define $\n_{h,0}=\ns_{0}$, we know that $\en\argu[0]=0$,
whence it only remains to show that $\norm{\dermat \en(0)}\lesssim h^p$.
To do this we will again consider a decomposition of the initial error
\begin{align*}
\norm{\dermat\en\argu[0]}_{\Lp(\Gammast[0])}&=\norm{\lifts{\dermat\nh\argu[0]}-\dermat\ns\argu[0]}_{\Lp(\Gammast[0])}.
\end{align*} 
Since we do not have initial data for $\dermat\nh(t)$, we must obtain an estimate for this error by considering the definition of $\dermat\nh(0)$. This is defined using equation~\eqref{eq: weakh-normal} at time $t=0$, which reads
\begin{equation*}
\begin{aligned}
\int_{\Gammaht[0]} \dermat   \nh\argu[0] \cdot \testnuh =&- \int_{\Gammaht[0]}\nabla_{\Gammaht[0]}\nhcero \cdot \nabla_{\Gammaht[0]} \testnuh \\
&+ \ \int_{\Gammaht[0]} |A_{h,0}|^2 \nhcero \cdot \testnuh+\int_{\boundaryGhcero} \alpha_{\partial,h,0}\conor_{h,0}\cdot \testnuh,
\end{aligned}
\end{equation*}
for all $\testnuh\in\Otauh$. Then, using again that $\nhcero=\ns(0)=\Rs[{\Gammast[0]}]\n(0)$, with $\Rs$ the nonlinear Ritz projection from Definition~\ref{def: non linear Ru trace in Otauh}, we obtain 
\begin{equation}
    \begin{aligned}
    \int_{\Gammaht[0]} \dermat   \nh\argu[0] \cdot \testnuh =&- \int_{\Gammat[0]}\nabla_{\Gammat[0]}\ncero \cdot \nabla_{\Gammat[0]} (\testnuh)^{\Gammat[0]} \\
    &+ \ \int_{\boundaryG} \alpha_{\partial,0}\conor_{0}\cdot (\testnuh)^{\Gammat[0]}+ \ \int_{\Gammaht[0]} |A_{h,0}|^2 \n_{h,0} \cdot \testnuh\\
    &+\lambda\left(\int_{\Gammaht[0]}\ncero \cdot  \testnuh-\int_{\Gammat[0]}\ncero\cdot (\testnuh)^{\Gammat[0]}\right). \label{eq: weakh-normal-cero}
    \end{aligned}
    \end{equation}
Since $\n$ is solution of the continuous problem $\dermat \n(0)=\Delta_{\Gammat[0]}\ncero+|A_{0}|^2\ncero$, we know that for any $\testnu\in \Hk(\Gammat[0])$ we have
\begin{equation*}
    \begin{split}
        \int_{\Gammat[0]}\dermat \n(0)\cdot\testnu =&-\int_{\Gammat[0]}\nabla_{\Gammat[0]}\ncero : \nabla_{\Gammat[0]} \testnu+ \int_{\boundaryG}\alpha_{\partial,0}\conor_{0} \cdot \testnu\\
        &+ \int_{\boundaryG}(\partial_{\conor_{0}}\ncero \cdot \btau)\btau\cdot\testnu+\int_{\Gammat[0]} |A_{0}|^2 \ncero \cdot \testnu.
    \end{split}
\end{equation*}
Then, we rewrite~\eqref{eq: weakh-normal-cero} as
\begin{equation*}
    \begin{aligned}
    \int_{\Gammaht[0]} \dermat   \nh\argu[0] \cdot \testnuh =&\int_{\Gammat[0]}\dermat \n(0)\cdot(\testnuh)^{\Gammat[0]} -\int_{\boundaryG}(\nabla_{\Gammat[0]}\ncero\conor_{0} \cdot \btau)\btau\cdot(\testnuh)^{\Gammat[0]}\\
    &+  \ \int_{\Gammaht[0]} |A_{h,0}|^2 \n_{h,0} \cdot \testnuh-\int_{\Gammat[0]} |A_{0}|^2 \ncero \cdot (\testnuh)^{\Gammat[0]}\\
    &+\lambda\left(\int_{\Gammaht[0]}\ncero \cdot  \testnuh-\int_{\Gammat[0]}\ncero\cdot (\testnuh)^{\Gammat[0]}\right). 
    \end{aligned}
    \end{equation*}
Adding and subtracting $(\dermat\ns(0))^{\Gammaht[0]}=(\dermat\Rs[{\Gammast[0]}]\n(0))^{\Gammaht[0]}$ in the integral of the left-hand side, we have
\begin{equation*}
\begin{aligned}
 \int_{\Gammaht[0]}&\left(\dermat   \nh\argu[0]-(\dermat\ns(0))^{\Gammaht[0]}\right)  \cdot \testnuh \\
 =&\int_{\Gammat[0]}\dermat \n(0)\cdot(\testnuh)^{\Gammat[0]} - \int_{\Gammaht[0]}\lifth{\dermat\Rs[{\Gammast[0]}]\n(0)}  \cdot \testnuh\\
 &+  \ \int_{\Gammaht[0]} |A_{h,0}|^2 \n_{h,0} \cdot \testnuh-\int_{\Gammat[0]} |A_{0}|^2 \ncero \cdot (\testnuh)^{\Gammat[0]}\\
 &+\lambda\left(\int_{\Gammaht[0]}\ncero \cdot  \testnuh-\int_{\Gammat[0]}\ncero\cdot (\testnuh)^{\Gammat[0]}\right)\\
 &-\int_{\boundaryG}(\partial_{\conor_{0}}\ncero \cdot \btau)\btau\cdot{(\testnuh)}^{\Gammat[0]}.
\end{aligned}
\end{equation*}
The first three differences in the right-hand side can be estimated as in Section~\ref{sec: consistency}, and are all bounded by $ch^p\norm{\testnuh}_{\Lp(\Gammaht[0])}$. For the last term we can also use the same technique we have used in~\ref{sec: consistency}, more precisely, in the proof of Proposition~\ref{prop: consistency estimates}. Finally, taking $\testnuh=\dermat \nh\argu[0]-(\dermat\ns(0))^{\Gammaht[0]}$, we get that $\dermat\en\argu[0]$ satisfies the desired estimate~\eqref{eq: inicial error}.

We have thus proved that the assumptions of Proposition~\ref{prop: stability} hold, and the assertion of the theorem follows.
\end{proof}

\section{Numerical Experiments}\label{sec: num exp}

\subsection{Matrix vector form}
We present here the matrix-vector formulation of scheme~\eqref{weak discrete problem MCF}. We collect the coefficients of the solutions $\Xh\argu\in \Sparam^3$, $\vh\argu\in \Shcero^3$, $\Hh\argu\in \Shcero$ and $\nh \argu\in\Otauh$, respectively, in the following column vectors
\begin{align*}
    \bs x \in \mathbb{R}^{3N},\quad \vel  \in \mathbb{R}^{3N},
    \quad \bs{\kappa} \in \mathbb{R}^{N},\quad \n  \in \mathbb{R}^{3N}.
\end{align*}
Let $I_{3}$ the $3\times 3$ identity matrix, we denote by 
$\bs M^{\Gammah}=I_{3}\tensor M^{\Gammah}$ ($\bs M^{\Gammah}_0=I_{3}\tensor M^{\Gammah}_0$) and 
$\bs A^{\Gammah}=I_{3}\tensor A^{\Gammah}$ ($\bs A^{\Gammah}_0=I_{3}\tensor A^{\Gammah}_0$),
where 
$M^{\Gammah}$ ($M^{\Gammah}_0$) 
and 
$A^{\Gammah}$ ($A^{\Gammah}_0$)
are the usual mass and stiffness matrices with basis functions in 
$\Sh$ ($\Shcero$).
More precisely, the surface-dependent mass matrix $M^{\Gammah}$ ($M^{\Gammah}_0$) and stiffness matrix $A^{\Gammah}$ ($A^{\Gammah}_0$) on the surface ${\Gammah}$, defined by the spline function $\Xh$ with coefficients $\bs x$, are given by
\begin{align*}
    M^{\Gammah}_{ij}=\int_{\Gammah} \basisfg[j]\basisfg[i],&\qquad A^{\Gammah}_{ij}=\int_{\Gammah} \gradgh\basisfg[j]\cdot \gradgh\basisfg[i],& i,j=1,2,\dots,N\ (N_0),
\end{align*}
where $\{\basisfg\}_{j=1}^{N \ (N_0)}$ is a basis of $\Sh$ ($\Shcero$).

We define the following nonlinear functions $f_1(\bs x,\bs \Hm,\n)\in \mathbb{R}^{N}$, $f_2(\bs x,\n)\in \mathbb{R}^{3N}$ and $f_b(\bs x,\n)\in \mathbb{R}^{3N}$ by
\begin{align*}
    f_1(\bs x,\bs \Hm,\n)_{i} &= \int_{\Gammah} |A_h|^2 \,\Hh \,\basisfg[i],  \\
    f_2(\bs x,\n)_{k} &=\int_{\Gammah} |A_h|^2 \,\nh \cdot {\bs b}_\partial^k\\
    f_b(\bs x,\n)_k &=\int_{\boundaryGhcero} \alpha_{\partial,h}\,\conorh \cdot {\bs b}_\partial^k,
\end{align*}
for $i=1,\dots, N$ and $k=1,\dots,N_\partial$, where $\alpha_{\partial,h}=\nh\cdot\curvbdryh$ and $ (\conorh)_{\ell}=(\nh)_{\ell}\times \btauh$, with $\Hh$ and $\nh$ the spline functions corresponding to the vectors $\bs \Hm$, $\n$, respectively. Also, $\Gammah$ is the image of the reference domain $\Omega$ by $\Xh(t)$, the spline function corresponding to ${\bs x}(t)$.

Then,~\eqref{weak discrete problem MCF} can be written in the following matrix-vector form:

\begin{problem} 
    Find ${\bs x}:[0,T]\to \R^{3N}$, ${\bs \kappa}:[0,T]\to \R^{N_0}$, ${\n}:[0,T]\to \R^{N_\partial}$ such that, 
    \begin{align}
    M^{\Gammah}_0 \dot{\bs{\Hm}}+ A^{\Gammah}_0\bs\Hm ={}& f_1(\bs x,\bs \Hm,\n) \label{eq: mat-vec kappa} \\
    \bs M^{\Gammah}_\partial \dot{\n}+\bs A^{\Gammah}_\partial\n ={}& f_2(\bs x,\n)+f_b(\bs x,\n)  \label{eq: mac-vec nu}  \\
            \dot{\bs x} ={}& \vel \label{eq: mac-vec vel}
    \end{align}
    where $\vel$ is given by the coefficients of the spline function $\Qs[\Gammah](-\Hh\nh)$,
    and $\Gammah(t)$ is the image of the reference domain $\Omega$ by $\Xh(t)$, the spline function corresponding to ${\bs x}(t)$.
    Here, we let $\bs M^{\Gammah}_\partial$ and 
$\bs A^{\Gammah}_\partial$  denote the usual mass and stiffness matrices obtained when using a set
$\{ {\bs b}_\partial^i \}_{i=1}^{N_\partial}$ of basis functions of $\Otauh$. Obtaining such a basis is very complicated, and instead of doing this, we resort to a Lagrange multiplier formulation which we describe below.
    \end{problem}

The equation for $\bs\Hm$ has the same structure as the analogous one in~\cite{KLL2019}, the only difference being that we use discrete spaces with zero boundary values.
The main changes are in the equation corresponding to the evolution of the normal vector $\nh$. 
There are at least two ways to implement the spaces $\Otauh$: using constraints or constructing a local basis of  $\Otauh$. 
We have decided to use constraints, which is simpler and more realistic than building a set of local basis functions of $\Otauh$.
In order to do this, we use techniques similar to those that are normally used for various saddle point problems in partial differential equations.

The evolution equation for $\nh$ consists in finding $\nh:[0,T]\to \Otauh$ such that, for all $\testnuh\in\Otauh$,
\begin{equation*}
    \int_{\Gammah} \dermat   \nh \cdot \testnuh + \int_{\Gammah}\gradgh\nh \cdot \gradgh \testnuh = \ \int_{\Gammah} |A_h|^2 \nh \cdot \testnuh+\int_{\boundaryGhcero} \alpha_{\partial,h}\conorh\cdot \testnuh.
\end{equation*}
Recalling from~\eqref{def:Otauh} the definition of $\Otauh$:
\begin{equation*}
    \Otauh:= \left\{ \bs\phi_h \in \Sh: \int_{\partial\Gammah^0}(\bs\phi_h\cdot \btauhunit)w_h=0, \,\text{for all}\ w_h\in \mathcal{S}_{h}^\partial  \right\},
\end{equation*}
this equation is equivalent to finding $\nh:[0,T]\to \Sh^3$ and a Lagrange multiplier $\lambda_h:[0,T]\to\Sh$ such that
\begin{align*}
    \int_{\Gammah} \dermat   \nh \cdot \bs z_h + \int_{\Gammah}\gradgh\nh \cdot \gradgh \bs z_h + \int_{\boundaryGhcero} (\btauhunit\cdot \bs z_h)\lambda_h&=
     \ \int_{\Gammah} |A_h|^2 \nh \cdot \testnuh&\\
      & +\int_{\boundaryGhcero} \alpha_{\partial,h}\conorh\cdot \testnuh,&\\
    \int_{\boundaryGhcero} (\btauhunit\cdot \nh)w_h&=0&
\end{align*}
for all $\bs z_h\in \Sh^3$ and all $w_h\in\Sh$. 

The equivalent matrix-vector formulation of~\eqref{eq: mac-vec nu} will be the following, for $\n : [0,T]\to \R^{3N}$:
\begin{align*}
    \bs M^{\Gammah} \dot{\n}+\bs A^{\Gammah}\n + \bs S^T \bs\lambda=&\, f_2(\bs x,\n)+f_b(\bs x,\n)\\
    \bs S \n  =&\,\bs 0,
\end{align*}
where the $3N\times3N$ matrix $\bs S$ is given by
\begin{equation*}
    \bs S = 
    \begin{bmatrix}
        S^1 &  0_{N} &  0_{N}\\
         0_{N} & S^2 &  0_{N}\\
         0_{N} &  0_{N} & S^3
        \end{bmatrix} ,
\end{equation*}
with $0_{N}$ the $N\times N$-zero matrix and $S^k_{ij}=\int_{\boundaryGhcero} \basisfg[i]\basisfg[j](\bs e_k\cdot \btauhunit)=\int_{\boundaryGhcero} \basisfg[i]\basisfg[j](\btauhunit)_k$, for $i,j=1,\dots,N$ and $k=1,2,3$, with $\bs
 e_k$ are the standard unit vectors of $\mathbb{R}^3$.

The final matrix-vector form of the problem now reads

 \begin{problem}\label{matrix-vec discrete problem MCF}
    Find ${\bs x}:[0,T]\to \R^{3N}$, ${\bs \kappa}:[0,T]\to \R^{N_0}$, ${\n}:[0,T]\to \R^{3N}$, ${\bs\lambda}:[0,T]\to \R^{N}$ such that, 
    \begin{align*}
    M^{\Gammah}_0 \dot{\bs{\Hm}}+ A^{\Gammah}_0\bs\Hm ={}& f_1(\bs x,\bs \Hm,\n)  \\
    \bs M^{\Gammah} \dot{\n}+\bs A^{\Gammah}\n + \bs S^T \bs\lambda={}& f_2(\bs x,\n)+f_b(\bs x,\n)\\
    \bs S \n  ={}&\bs 0,\\
                    \dot{\bs x} ={}& \vel
    \end{align*}
    where $\vel$ is given by the coefficients of the spline function $\Qs[\Gammah](-\Hh\nh)$,
    and $\Gammah(t)$ is the image of the reference domain $\Omega$ by $\Xh(t)$, the spline function corresponding to ${\bs x}(t)$.
    
    \end{problem}

\begin{remark}It is important to clarify that the weak formulation with Lagrange multiplier is used only as a tool for the implementation of the numerical scheme, and is the one we have used in the numerical experiments reported below.
We believe that it is important to mention this Lagrange multiplier formulation, in order to show that the scheme~\eqref{weak discrete problem MCF} can be implemented, and it is not just theoretical.
\end{remark}

\subsection{Numerical results}
\graphicspath{{imagenes}}
The experiments in this section are implemented using the Octave software, particularly employing the GeoPDEs package~\cite{VAZQUEZ2016}. Two numerical experiments are presented, both using biquadratic spline spaces globally $C^1$. We have used a mesh with $400$ elements and $484$ DOFs for both experiments.
For the time discretization of the problem~\eqref{weak discrete problem MCF} we consider the linearly implicit $q$-step Backward Differentiation Formulas of order $q=2$.
An analysis of the fully discretized method falls beyond the scope of this article, and will be subject of future work.

\paragraph{Example 1} 
In Figure~\ref{fig: superficie incial plano perturbado}, we plot the initial surface with its mean curvature, and we show the evolution of the area of approximated surface during the time interval $[0,0.8]$. The area of $\Gamma_{h,0}$ was $4.0442$ and at time $0.8$ was $4.0000036$. 
\begin{figure}[!ht]
  \centering
  \begin{subfigure}[b]{0.49\linewidth}
    \includegraphics[width=\linewidth]{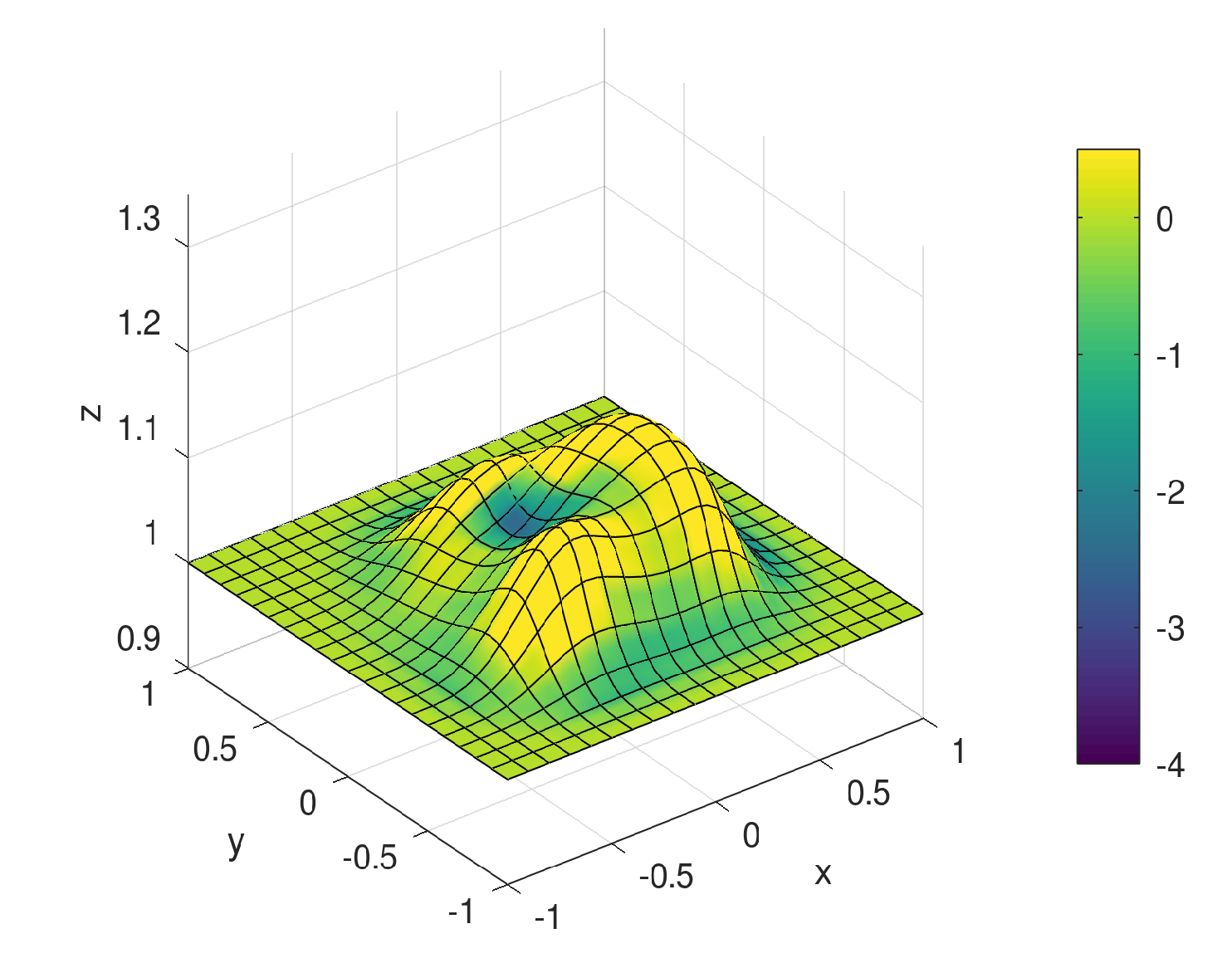}
    \label{fig:westminster_lateral}
  \end{subfigure}\hspace{5mm}%
  \begin{subfigure}[b]{0.4\linewidth}
    \includegraphics[width=\linewidth]{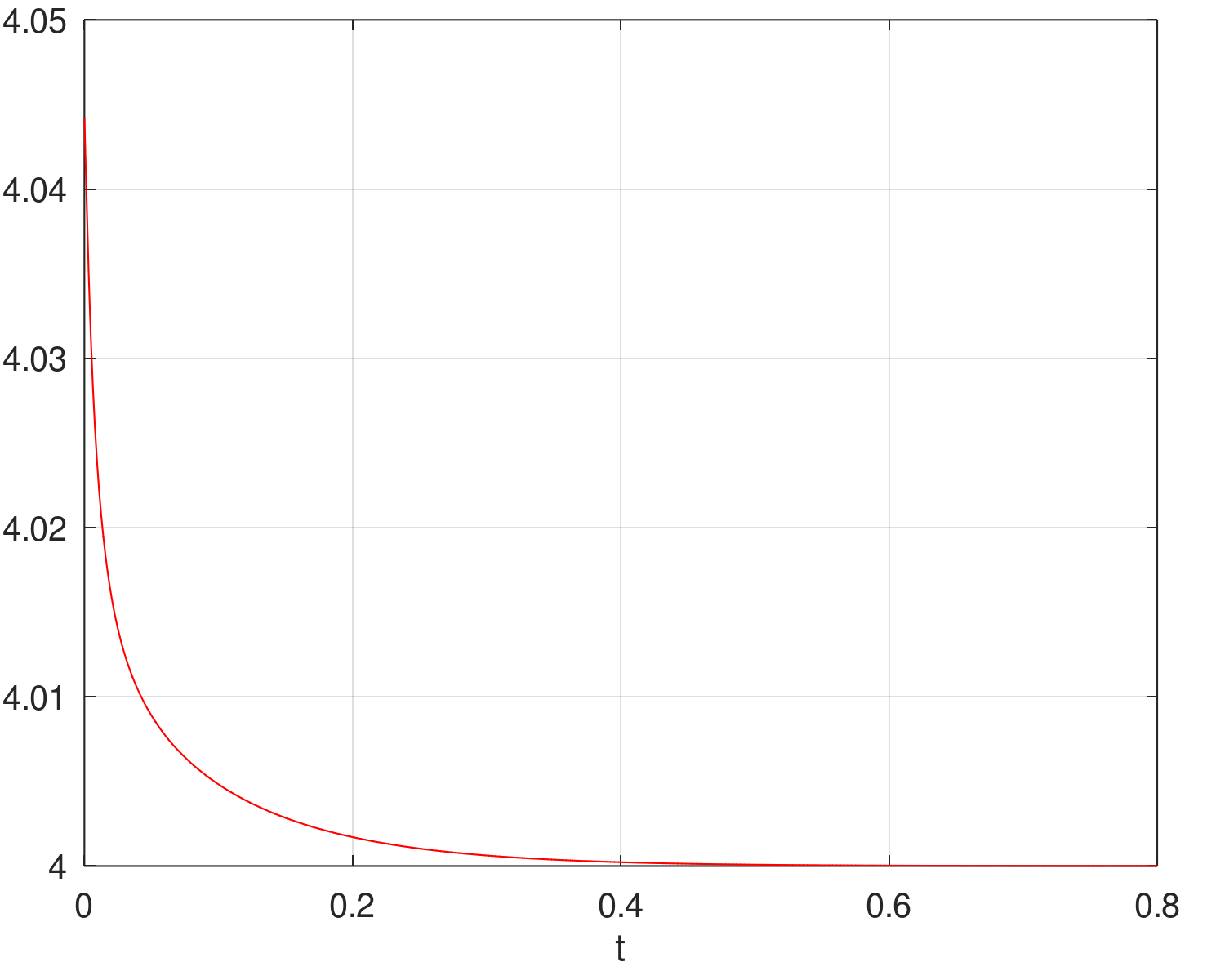} 
    \label{fig:westminster_lateral_2}
  \end{subfigure}
  \caption{Initial surface (left), with the mean curvature represented by its colors. Evolution of approximate area $|\Gammaht|$ against time during the time interval $[0,0.8]$ (right). The area of $\Gamma_{h,0}$ was $4.0442$ and at time $0.8$ was $4.0000036$.}
  \label{fig: superficie incial plano perturbado}
\end{figure}

In Figure~\ref{fig: superficies aproximadas plano perturbado} we show the evolution of the surface from Example 1 from two angles. The time step size was $\deltat= 0.0015625$. 



\begin{figure}[!ht]
  \centering
  \begin{subfigure}[b]{0.49\linewidth}
    \includegraphics[width=\linewidth]{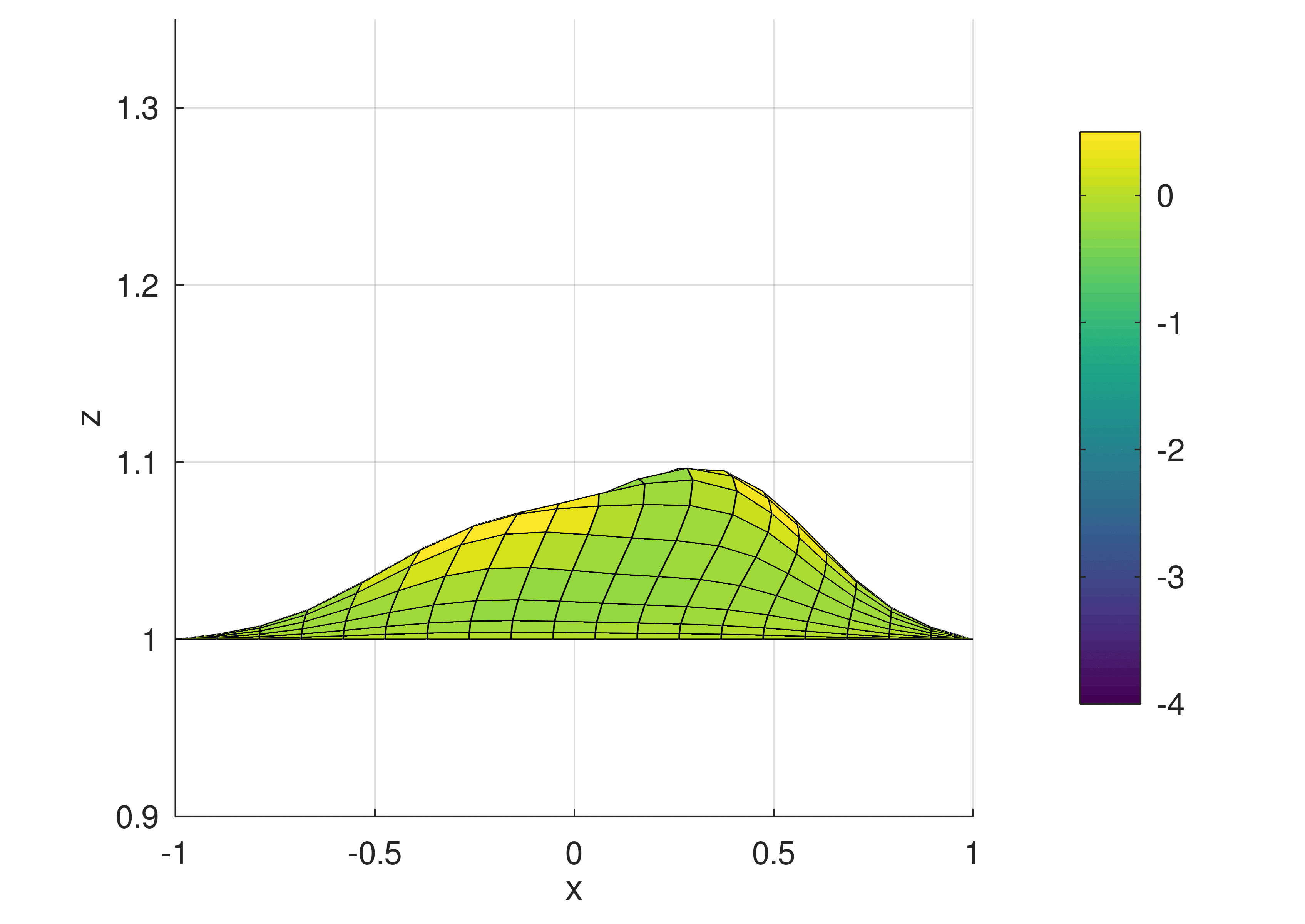}
  \end{subfigure}
  \begin{subfigure}[b]{0.49\linewidth}
    \includegraphics[width=\linewidth]{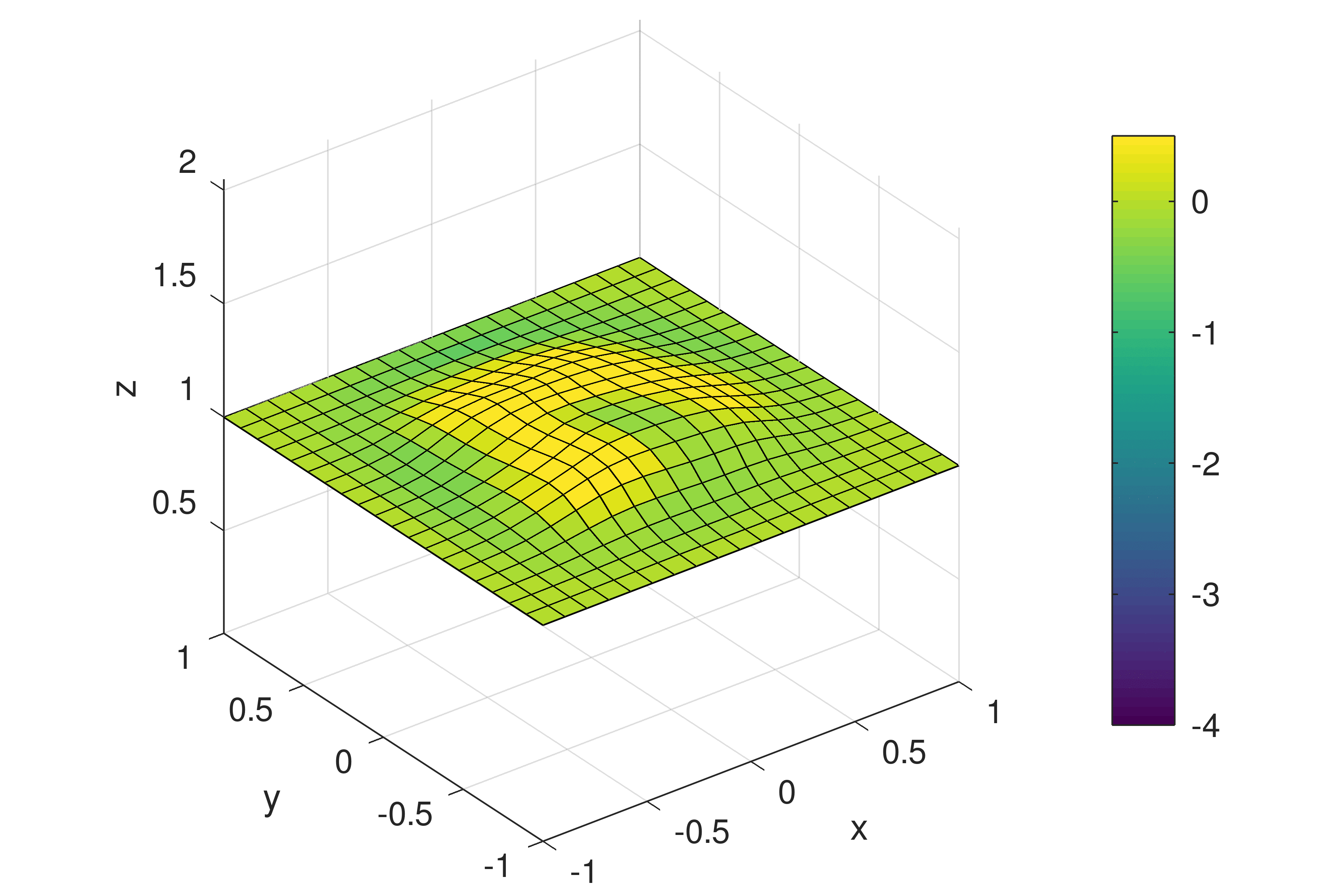}
  \end{subfigure}
  \vspace{0.5cm} 
  \begin{subfigure}[b]{0.49\linewidth}
    \includegraphics[width=\linewidth]{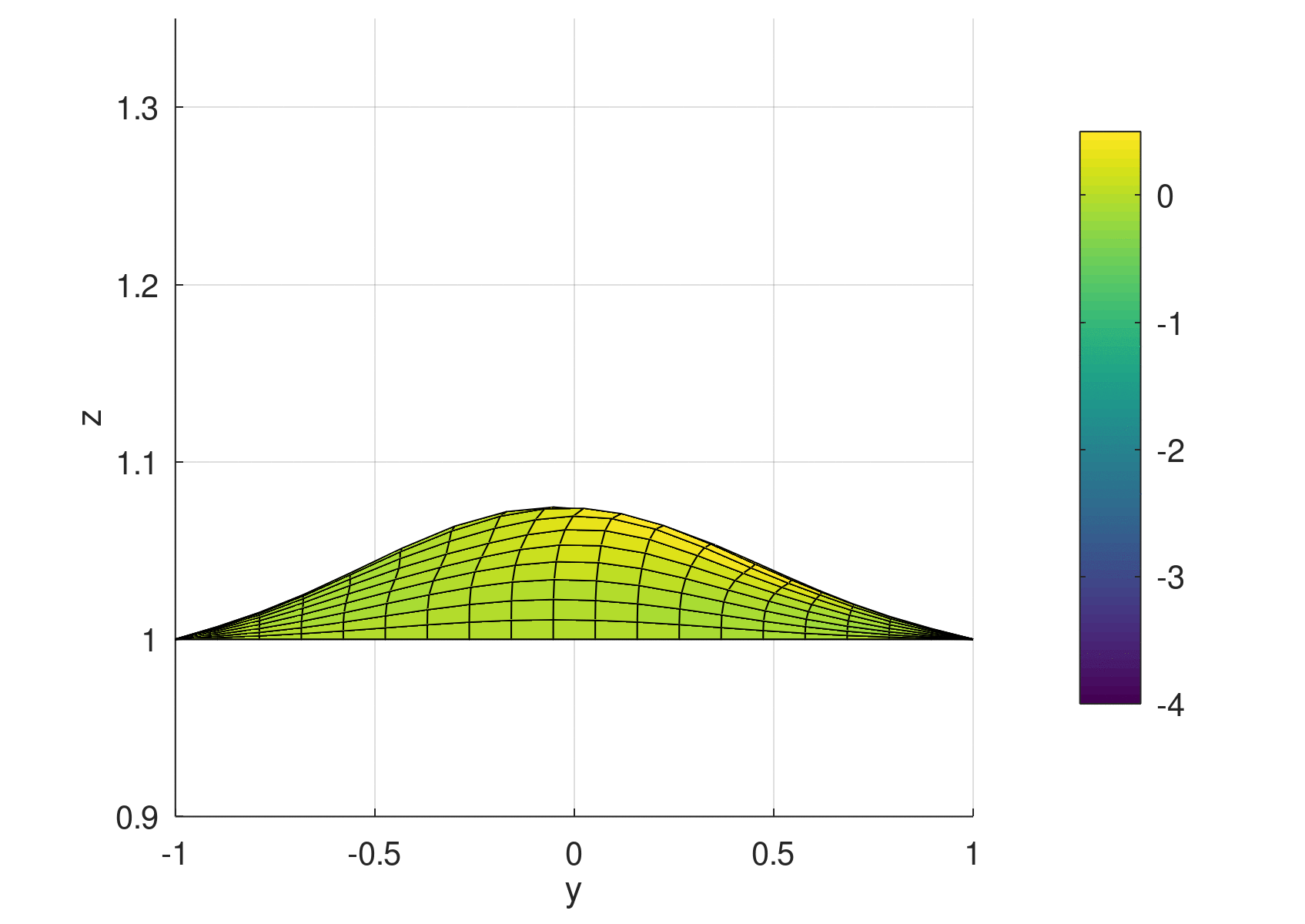} 
  \end{subfigure}
  \begin{subfigure}[b]{0.49\linewidth}
    \includegraphics[width=\linewidth]{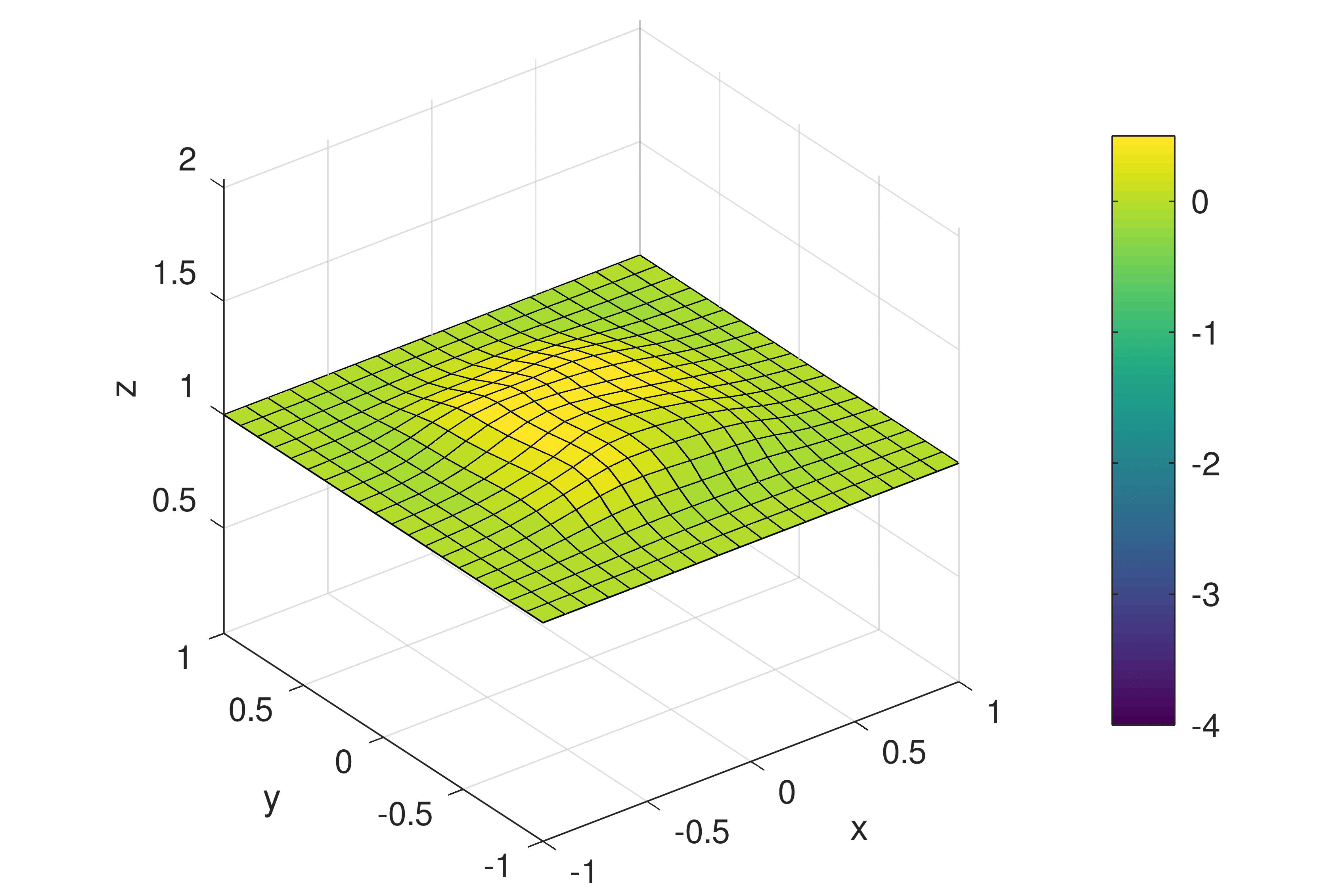} 
  \end{subfigure}
  \vspace{0.5cm}
  \begin{subfigure}[b]{0.49\linewidth}
    \includegraphics[width=\linewidth]{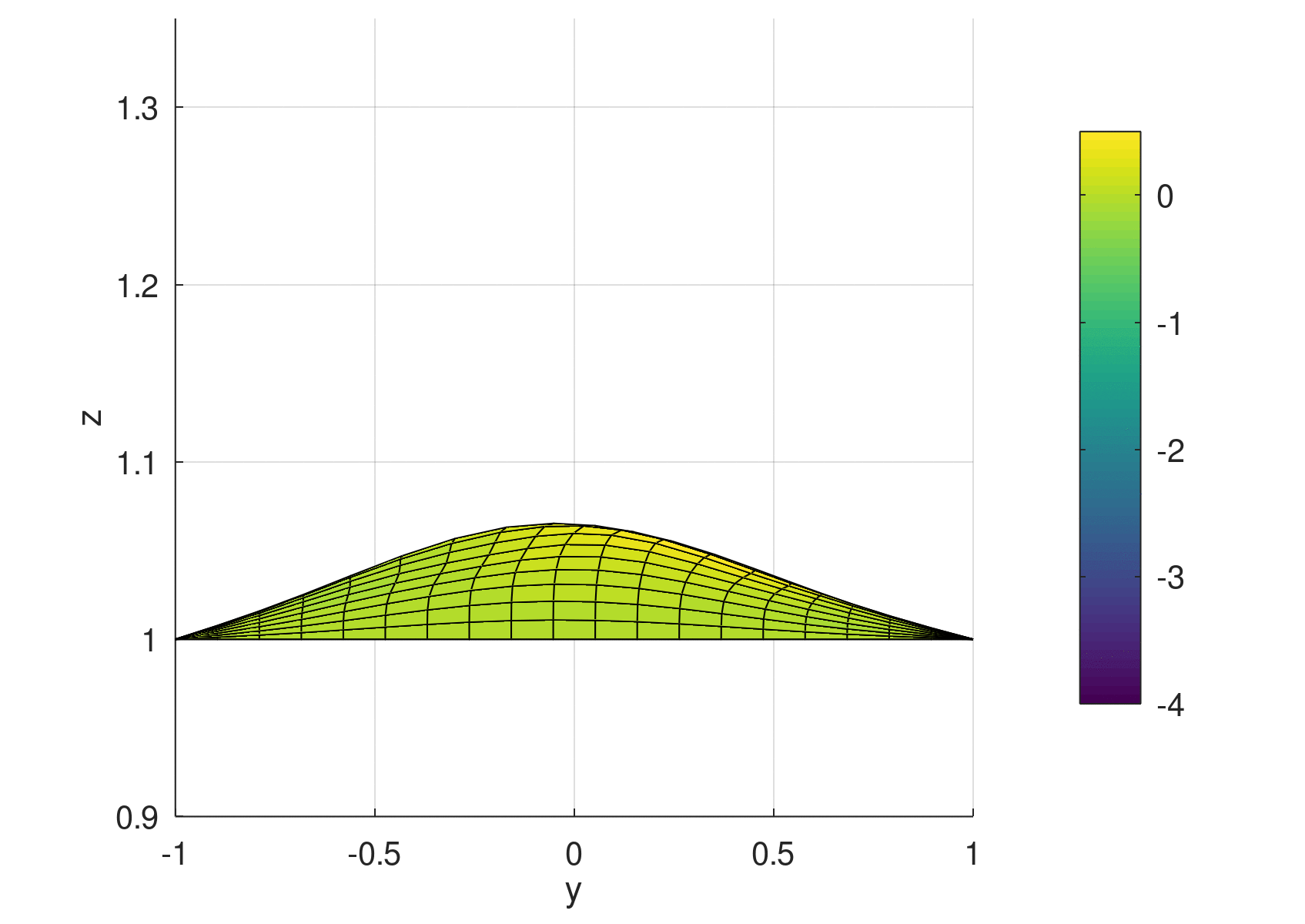}
  \end{subfigure}
  \begin{subfigure}[b]{0.49\linewidth}
    \includegraphics[width=\linewidth]{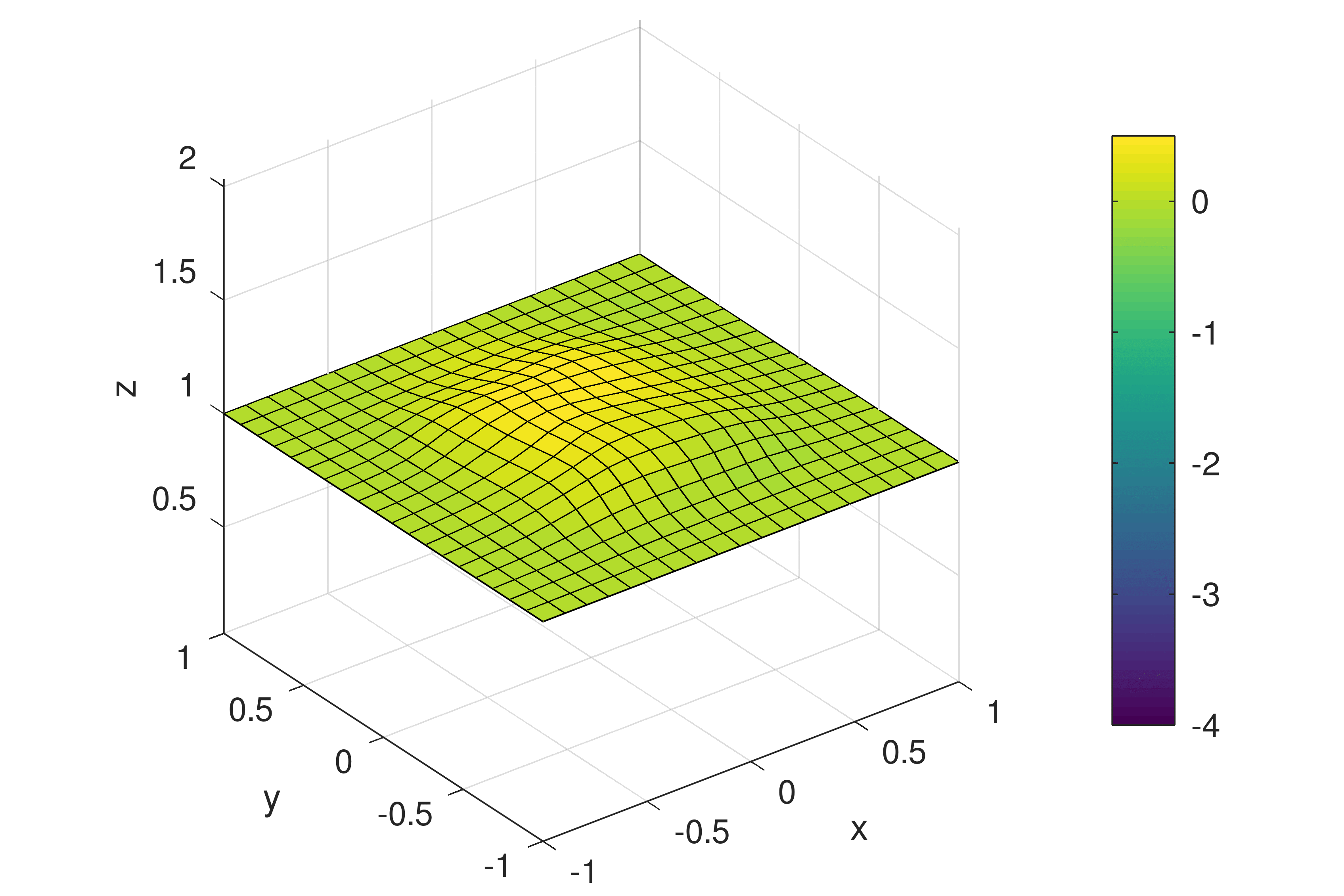}
  \end{subfigure}
  \caption{Evolution of discrete surface from Example 1. Solution at time $t=0.015$ (top), $t=0.03$ (middle), $t=0.05$ (bottom). The colors indicate the value of the discrete variable $\Hh$. It is worth noticing that even though the solution gets flatter with time, the mesh keeps some distortion.}
  \label{fig: superficies aproximadas plano perturbado}
\end{figure}

\paragraph{Example 2} Now we consider a surface whose boundary is not planar. It is a square patch of a sphere, with an initial area of $5.859$. In Figure~\ref{fig: superficie inicial esfera}, we plot $\Gamma_{h,0}$ along with its initial mean curvature. On the right, we show the evolution of the area of approximated surface during the time interval $[0,0.9]$. The area at time $0.9$ was $4.354$.

\begin{figure}[!ht]
    \centering
    \begin{subfigure}[b]{0.49\linewidth}
    \includegraphics[width=\linewidth]{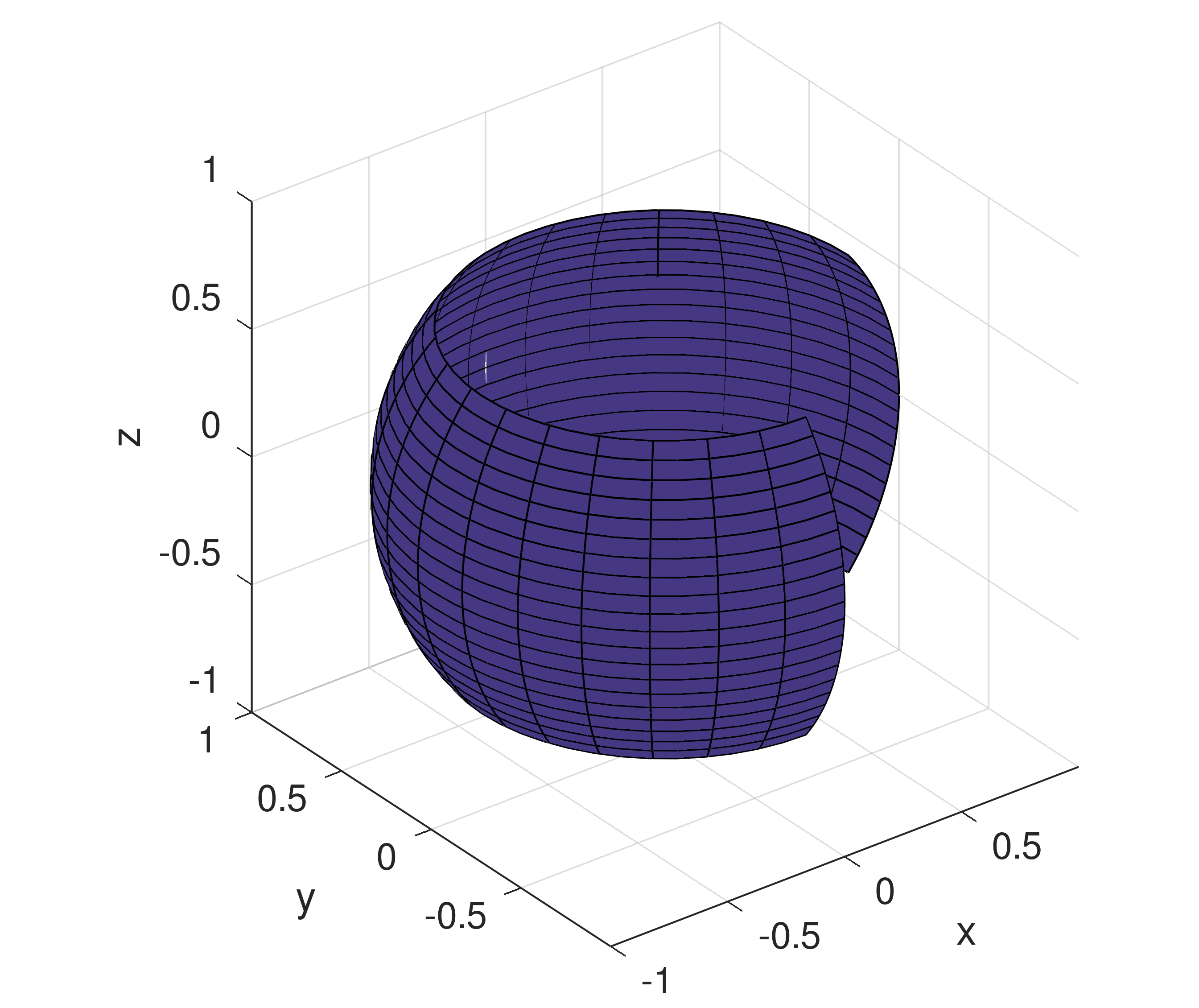}
    \end{subfigure}\hfill
    \begin{subfigure}[b]{0.49\linewidth}
    \includegraphics[width=\linewidth]{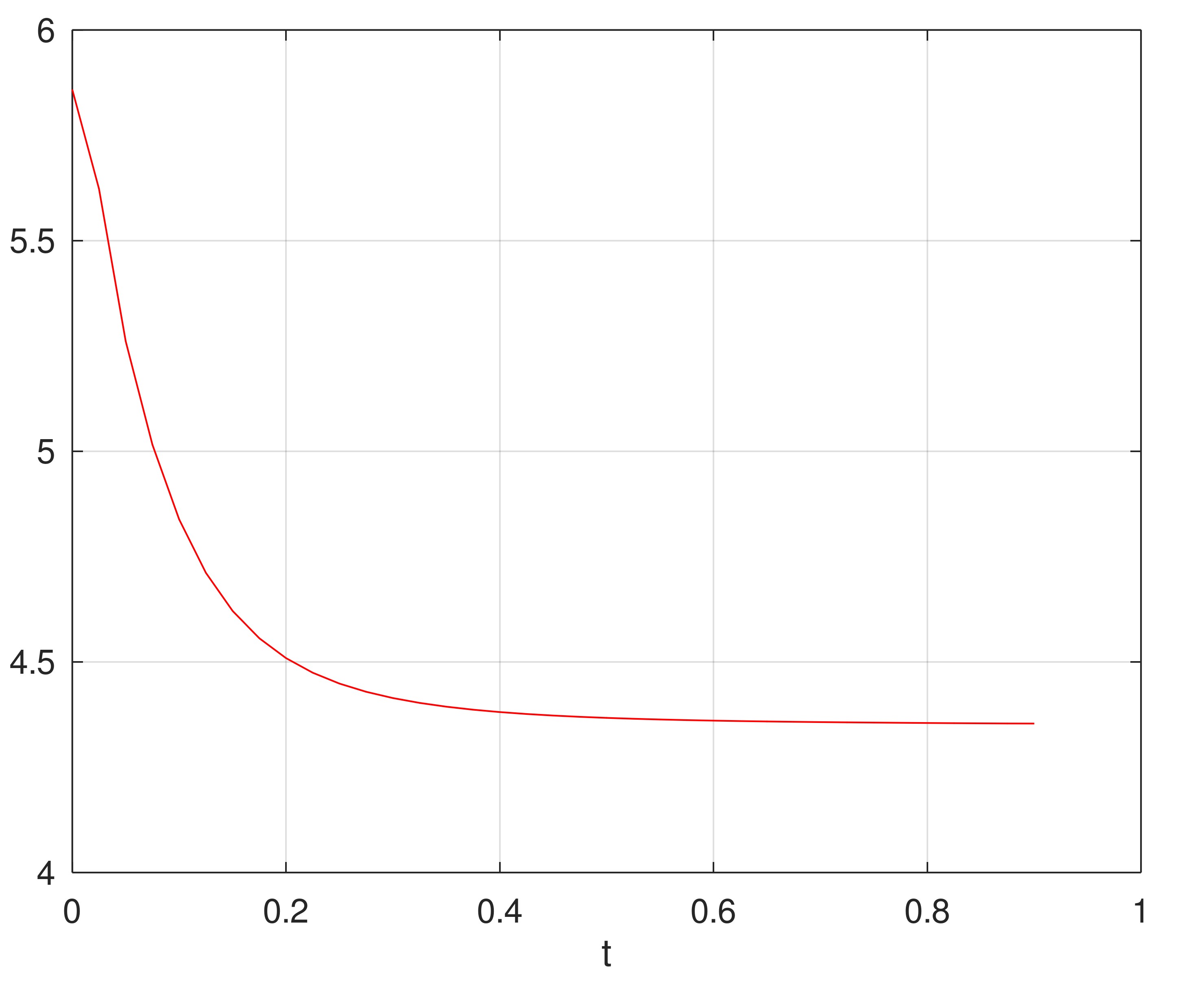}
    \end{subfigure}
    \caption{Initial surface of Example 2, with constant mean curvature $-2$ (left). Evolution of area of $\Gammaht$ against time (right), starting at $5.859$ at $t=0$, and ending at $4.354$ at $t=0.9$.}
    \label{fig: superficie inicial esfera}
    \end{figure}  

    In Figure~\ref{fig: superficies aproximadas esfera}, we present the evolution of the surface $\Gammah$ for different time steps, using its color to show the value of the variable $\Hh$. We consider the interval $[0,0.9]$ with time step size of $\deltat=0.025$. 

  \begin{figure}[!ht]
    \centering
    \begin{subfigure}[b]{0.49\linewidth}
      \includegraphics[width=\linewidth]{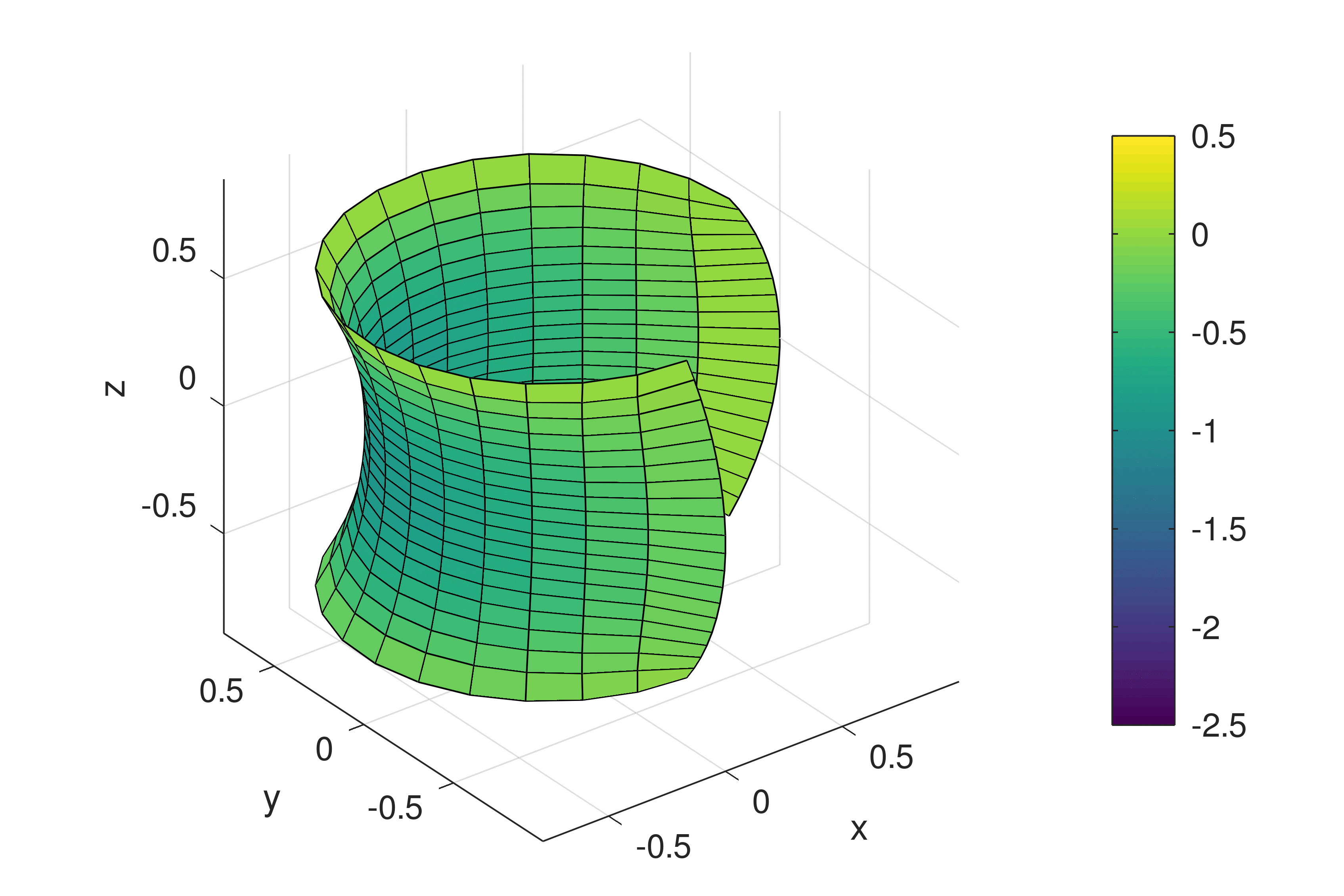}
    \end{subfigure}
    \begin{subfigure}[b]{0.49\linewidth}
      \includegraphics[width=\linewidth]{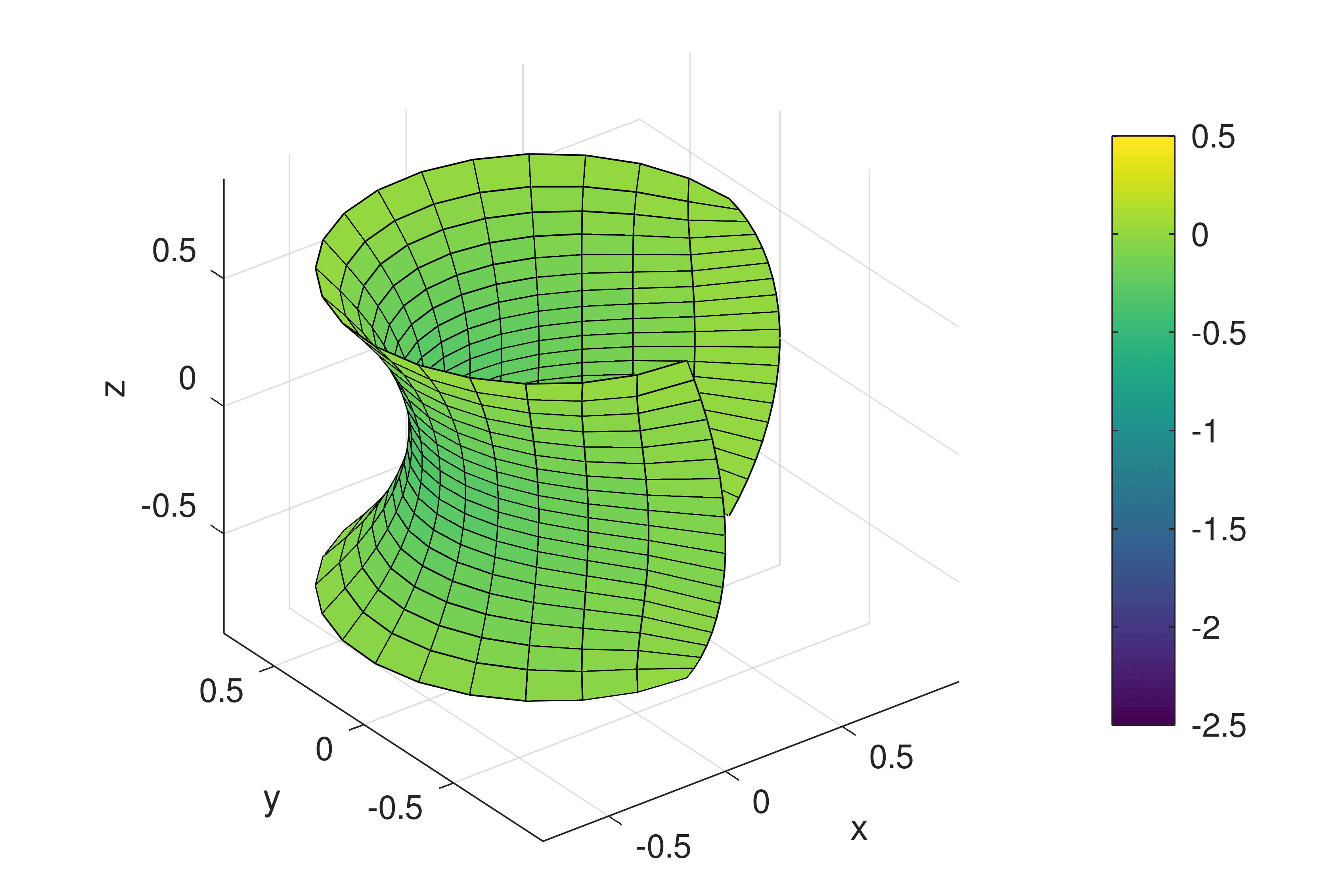}
    \end{subfigure}
    \vspace{0.5cm} 
    \begin{subfigure}[b]{0.49\linewidth}
      \includegraphics[width=\linewidth]{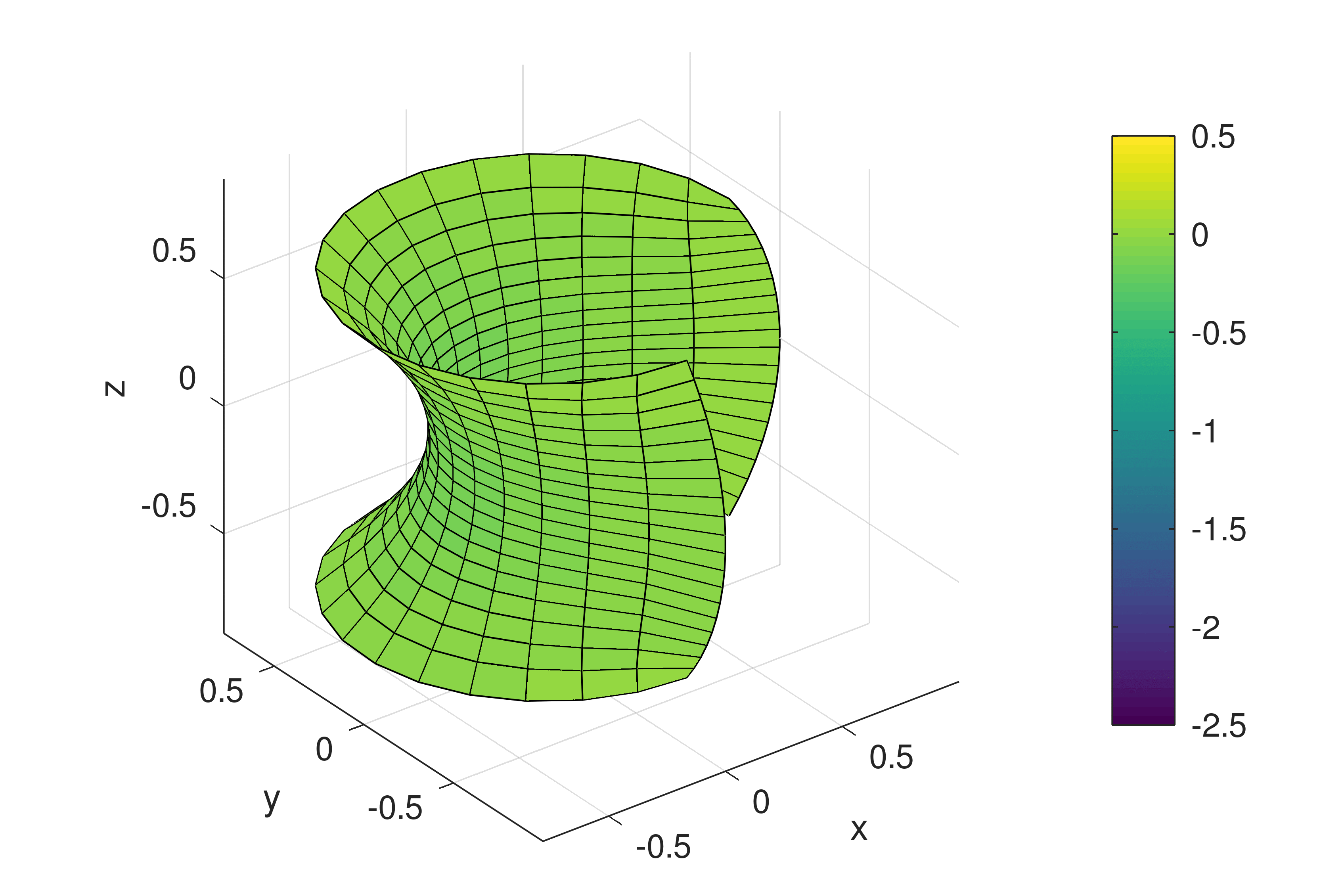} 
    \end{subfigure}
    \begin{subfigure}[b]{0.49\linewidth}
      \includegraphics[width=\linewidth]{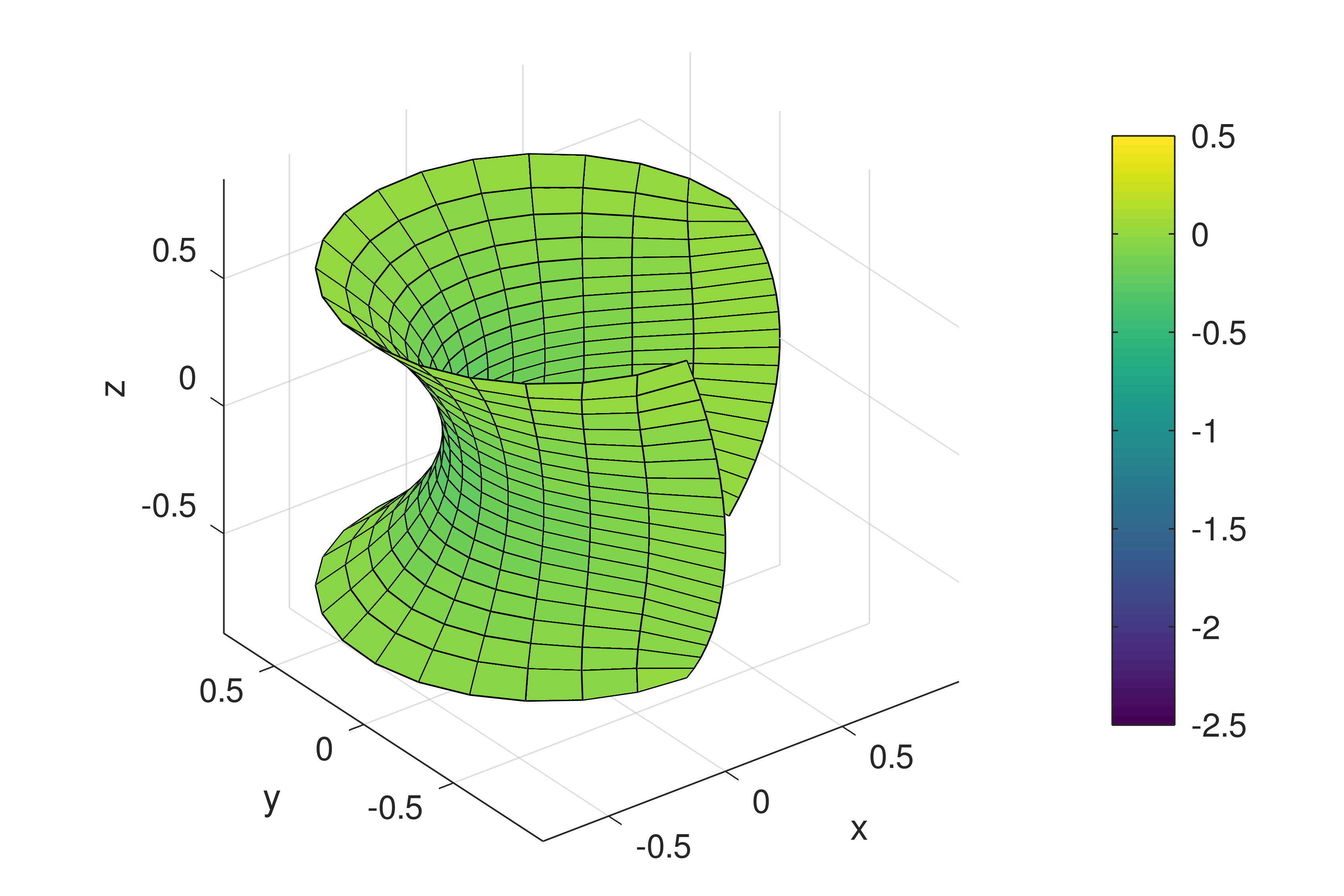} 
    \end{subfigure}
    \caption{Evolving surface at times $t=0.25$ (top-left), $t=0.5$ (top-right), $t=0.75$ (bottom-left) and $t=0.9$ (bottom-right). The colors indicate the value of the discrete variable $\Hh$. }
    \label{fig: superficies aproximadas esfera}
  \end{figure}

  \subsection*{Acknowledgements}
  The authors were partially supported by Agencia Nacional de Promoci\'on Cient\'ifica y Tecnol\'ogica through grant PICT-2020-SERIE A-03820, and by Universidad Nacional del Litoral through grants CAI+D-2020 50620190100136LI and CAI+D-2024 85520240100018LI.

\bibliographystyle{abbrv}
\bibliography{references}

\begin{thebibliography}{10}

\bibitem{AFKL2021}
K.~B. Akrivis~G., Feischl~M. and L.~C.
\newblock Higher-order linearly implicit full discretization of the
  {L}andau–{L}ifshitz–{G}ilbert equation.
\newblock {\em Mathematics of Computation}, 90(349):995--1038, 2021.

\bibitem{BL2024}
G.~Bai and B.~Li.
\newblock A new approach to the analysis of parametric finite element
  approximations to mean curvature flow.
\newblock {\em Found. Comut. Math.}, 24(5):1673--1737, Oct. 2024.

\bibitem{BGN2007}
J.~W. Barrett, H.~Garcke, and R.~Nürnberg.
\newblock A parametric finite element method for fourth order geometric
  evolution equations.
\newblock {\em Journal of Computational Physics}, 222(1):441--467, 2007.

\bibitem{BGN2008a}
J.~W. Barrett, H.~Garcke, and R.~Nürnberg.
\newblock On the parametric finite element approximation of evolving
  hypersurfaces in r3.
\newblock {\em Journal of Computational Physics}, 227(9):4281--4307, 2008.

\bibitem{BGN2020275}
J.~W. Barrett, H.~Garcke, and R.~Nürnberg.
\newblock Chapter 4 - parametric finite element approximations of
  curvature-driven interface evolutions.
\newblock In A.~Bonito and R.~H. Nochetto, editors, {\em Geometric Partial
  Differential Equations - Part I}, volume~21 of {\em Handbook of Numerical
  Analysis}, pages 275--423. Elsevier, 2020.

\bibitem{BDQ2016}
A.~Bartezzaghi, L.~Dedè, and A.~Quarteroni.
\newblock Isogeometric analysis of geometric partial differential equations.
\newblock {\em Computer Methods in Applied Mechanics and Engineering},
  311:625--647, 2016.

\bibitem{BBCHS2006}
Y.~Bazilevs, L.~Beir\~{a}o~da Veiga, J.~A. Cottrell, T.~J.~R. Hughes, and
  G.~Sangalli.
\newblock Isogeometric analysis: Approximation, stability and error estimates
  for h-refined meshes.
\newblock {\em Mathematical Models and Methods in Applied Sciences},
  16(07):1031--1090, 2006.

\bibitem{BDN20201}
A.~Bonito, A.~Demlow, and R.~H. Nochetto.
\newblock Chapter 1 - finite element methods for the laplace–beltrami
  operator.
\newblock In A.~Bonito and R.~H. Nochetto, editors, {\em Geometric Partial
  Differential Equations - Part I}, volume~21 of {\em Handbook of Numerical
  Analysis}, pages 1--103. Elsevier, 2020.

\bibitem{VBSB2014}
L.~B. da~Veiga, A.~Buffa, G.~Sangalli, and R.~Vázquez.
\newblock Mathematical analysis of variational isogeometric methods.
\newblock {\em Acta Numerica}, 23:157–287, 2014.

\bibitem{DD1973}
J.~Douglas, Jr and T.~Dupont.
\newblock Galerkin methods for parabolic equations with nonlinear boundary
  conditions.
\newblock {\em Numer. Math. (Heidelb.)}, 20(3):213--237, June 1973.

\bibitem{Dzi1991}
G.~Dziuk.
\newblock An algorithm for evolutionary surfaces.
\newblock {\em Numer. Math.}, 58(6):603--611, 1991.

\bibitem{DE2007}
G.~Dziuk and C.~M. Elliott.
\newblock Finite elements on evolving surfaces.
\newblock {\em IMA J. Numer. Anal.}, 27(2):262--292, 2007.

\bibitem{DKM2013}
G.~Dziuk, D.~Kr{\"o}ner, and T.~M{\"u}ller.
\newblock Scalar conservation laws on moving hypersurfaces.
\newblock {\em Interfaces Free Bound.}, 15(2):203--236, 2013.

\bibitem{ER2020}
C.~M. Elliott and T.~Ranner.
\newblock {A unified theory for continuous-in-time evolving finite element
  space approximations to partial differential equations in evolving domains}.
\newblock {\em IMA Journal of Numerical Analysis}, 41(3):1696--1845, 11 2020.

\bibitem{Huisken1984}
G.~Huisken.
\newblock {Flow by mean curvature of convex surfaces into spheres}.
\newblock {\em Journal of Differential Geometry}, 20(1):237 -- 266, 1984.

\bibitem{K2018}
B.~Kovács.
\newblock {High-order evolving surface finite element method for parabolic
  problems on evolving surfaces}.
\newblock {\em IMA Journal of Numerical Analysis}, 38(1):430--459, 03 2017.

\bibitem{KLL2019}
B.~Kovács, B.~Li, and C.~Lubich.
\newblock A convergent evolving finite element algorithm for mean curvature
  flow of closed surfaces.
\newblock {\em Numer. Math.}, 143(4):797--853, 2019.

\bibitem{KLLP2017}
B.~Kovács, B.~Li, C.~Lubich, and C.~A. Power~Guerra.
\newblock Convergence of finite elements on an evolving surface driven by
  diffusion on the surface.
\newblock {\em Numerische Mathematik.}, 137:643–689, 2017.

\bibitem{Li2021}
B.~Li.
\newblock Convergence of dziuk's semidiscrete finite element method for mean
  curvature flow of closed surfaces with high-order finite elements.
\newblock {\em SIAM Journal on Numerical Analysis}, 59(3):1592--1617, 2021.

\bibitem{Sch2007}
L.~Schumaker.
\newblock {\em Spline functions: basic theory}.
\newblock Cambridge university press, 2007.

\bibitem{Sto1996}
A.~Stone.
\newblock {A boundary regularity theorem for mean curvature flow}.
\newblock {\em Journal of Differential Geometry}, 44(2):371 -- 434, 1996.

\bibitem{VAZQUEZ2016}
R.~Vázquez.
\newblock A new design for the implementation of isogeometric analysis in
  octave and matlab: Geopdes 3.0.
\newblock {\em Computers and Mathematics with Applications}, 72(3):523--554,
  2016.

\end{thebibliography}

\end{document}